\newtheorem{thmm}{Theorem}[section]
\newtheorem{cor}[thmm]{Corollary}
\newtheorem{lem}[thmm]{Lemma}
\newtheorem{prop}[thmm]{Proposition}
\begin{document}
\begin{frontmatter}

\title{Conditional ergodicity in infinite dimension\thanksref{T1}}
\runtitle{Conditional ergodicity in infinite dimension}
\thankstext{T1}{Supported in part by NSF Grants DMS-10-05575 and
CAREER-DMS-1148711.}

\begin{aug}
\author[A]{\fnms{Xin Thomson} \snm{Tong}\ead[label=e2]{tong@cims.nyu.edu}}
\and
\author[B]{\fnms{Ramon} \snm{van Handel}\corref{}\ead[label=e3]{rvan@princeton.edu}}
\runauthor{X. T. Tong and R. van Handel}
\affiliation{Courant Institute of Mathematical Sciences\\ and Princeton University}
\address[A]{Department of Mathematics\\
    Courant Institute\\
    \quad of Mathematical Sciences\\
    New York, New York 10012\\
    USA\\
    \printead{e2}}
\address[B]{Sherrerd Hall Room 227\\
Princeton University\\
Sherrerd Hall\\
Princeton, New Jersey 08544\\
USA\\
\printead{e3}}
\end{aug}

\received{\smonth{8} \syear{2012}}
\revised{\smonth{6} \syear{2013}}

%
\begin{abstract}
The goal of this paper is to develop a general method to establish
conditional ergodicity of infinite-dimensional Markov chains. Given a
Markov chain in a product space, we aim to understand the ergodic
properties of its conditional distributions given one of the components.
Such questions play a fundamental role in the ergodic theory of nonlinear
filters. In the setting of Harris chains, conditional ergodicity has been
established under \mbox{general} nondegeneracy assumptions. Unfortunately,
Markov chains in infinite-dimensional state spaces are rarely amenable to
the classical theory of Harris chains due to the singularity of their
transition probabilities, while topological and functional methods that
have been developed in the ergodic theory of infinite-dimensional Markov
chains are not well suited to the investigation of conditional
distributions. We must therefore develop new measure-theoretic tools in
the ergodic theory of Markov chains that enable the investigation of
conditional ergodicity for infinite dimensional or weak-* ergodic
processes. To this end, we first develop local counterparts of zero--two
laws that arise in the theory of Harris chains. These results give rise
to ergodic theorems for Markov chains that admit asymptotic couplings or
that are locally mixing in the sense of H.~F\"ollmer, and to a
non-Markovian ergodic theorem for stationary absolutely regular sequences.
We proceed to show that local ergodicity is inherited by conditioning
on a
nondegenerate observation process. This is used to prove stability and
unique ergodicity of the nonlinear filter. Finally, we show that our
abstract results can be applied to infinite-dimensional Markov processes
that arise in several settings, including dissipative stochastic partial
differential equations, stochastic spin systems and stochastic
differential delay equations.
\end{abstract}

%
\begin{keyword}[class=AMS]
\kwd{37A30} 
\kwd{37L55} 
\kwd{60G35} 
\kwd{60H15} 
\kwd{60J05} 
\end{keyword}
\begin{keyword}
\kwd{Ergodic theory of infinite-dimensional Markov processes and
non-Markov processes}
\kwd{zero--two laws}
\kwd{conditional ergodicity}
\kwd{nonlinear filtering}
\end{keyword}
\end{frontmatter}

\tableofcontents[alignleft,level=2]

\section{Introduction}
\label{sec:intro}

The classical ergodic theory of Markov chains in general state spaces
has achieved a rather definitive form in the theory of Harris chains
\cite{Rev84,Num84,MT09}, which provides necessary and sufficient
conditions for the convergence of the transition probabilities in total
variation to an invariant measure. While this theory is formulated in
principle for any measurable state space, it is well known that its
applicability extends in practice mainly to finite-dimensional
situations. In infinite dimension, the transition probabilities from
different initial conditions tend to be mutually singular even in the
most trivial examples, so that total variation convergence is out of the
question. For this reason, many infinite-dimensional Markov processes,
including stochastic partial differential equations, interacting
particle systems and stochastic equations with memory, lie outside the
scope of the classical theory. Instead, a variety of different
approaches, including topological \cite{DZ96,HMS11,KPS10}, functional
\cite{Mar04,HS89} coupling and duality \cite{Lig05} methods, have been
employed to investigate the ergodicity of infinite-dimensional models.

The goal of this paper is to investigate questions of \emph{conditional}
ergodicity in infinite dimension. Consider a Markov chain
$(X_n,Y_n)_{n\ge0}$ taking values in a product space $E\times F$
(continuous time processes are considered analogously). The aim of
conditional ergodic theory is to understand the ergodic properties of
one component of the process $(X_n)_{n\ge0}$ under the conditional
distribution given the other component $(Y_n)_{n\ge0}$. Even when the
process $(X_n,Y_n)_{n\ge0}$ is ergodic, the inheritance of ergodicity
under conditioning is far from obvious and does not always hold. The
history of such problems dates back to an erroneous result of Kunita~\cite{Kun71}, where the inheritance of ergodicity was taken for granted
(see \cite{vH12} and the references therein). The long-standing problem
of establishing conditional ergodicity under general assumptions was
largely resolved in \cite{vH09,TvH12}, where it is shown that the
inheritance of ergodicity holds under a mild nondegeneracy assumption
when $(X_n,Y_n)_{n\ge0}$ is a Harris chain. Numerous other results in
this area, both of a qualitative and quantitative nature, are reviewed
in \cite{CR11}. All these results are, however, essentially restricted
to the setting of Harris chains, so that their applicability to
infinite-dimensional models is severely limited. In this paper, we
develop the first results of this kind that are generally applicable
beyond the Harris setting and, in particular, that allow to establish
conditional ergodicity in a wide range of infinite-dimensional models.

To give a flavor of the type of problems that our theory will address,
let us briefly describe one example that will be given in Section~\ref{sec:examples} below. Consider the velocity field $u$ of a fluid that is
modeled as a Navier--Stokes equation
\[
du = \bigl\{\nu\Delta u-(u\cdot\nabla)u-\nabla p \bigr\} \,dt + d\tilde w,\qquad
\nabla \cdot u=0 %
\]
with white in time, spatially smooth random forcing $d\tilde w$.
At regular time intervals $t_n=n\delta$, the velocity field is sampled
at the spatial locations $z_1,\ldots,z_r$ with some additive Gaussian noise
$\xi_n$, which yields the observations
\[
Y_n^i = u(t_n,z_i)+
\xi_n^i,\qquad i=1,\ldots,r. %
\]
Such models arise naturally in data assimilation problems \cite{Stu10}.
The process $(X_n,Y_n)_{n\ge0}$ with
$X_n=u(t_n,\cdot)$ is an infinite-dimensional Markov chain.
Classical ergodicity questions include the existence and uniqueness
of an invariant probability $\lambda$, and the convergence to equilibrium
property
\[
\bigl|\mathbf{E}^{x} \bigl[f(X_n) \bigr]-\lambda(f)\bigr|
\mathop{\longrightarrow}^{n\to\infty}0 %
\]
for a sufficiently large class of functions $f$ and initial conditions $x$.
Such questions are far from straightforward for Navier--Stokes equations
and have formed a very active area of research in recent years
(see, e.g., \cite{HM06,Mat07,KS12}). In contrast, we are interested
in the question of conditional ergodicity
\[
\bigl|\mathbf{E}^{x} \bigl[f(X_n)|\mathcal{F}^Y_{0,\infty}
\bigr]- \mathbf{E}^\lambda \bigl[f(X_n)|\mathcal{F}^Y_{0,\infty}
\bigr]\bigr| \mathop{\longrightarrow}^{n\to\infty}0 %
\]
(where $\mathcal{F}^Y_{m,n}=\sigma\{Y_m,\ldots,Y_n\}$),
or, more importantly, its causal counterpart
\[
\bigl|\mathbf{E}^{x} \bigl[f(X_n)|\mathcal{F}^Y_{0,n}
\bigr]- \mathbf{E}^\lambda \bigl[f(X_n)\bigr|\mathcal{F}^Y_{0,n}
\bigr]\bigr| \mathop{\longrightarrow}^{n\to\infty}0, %
\]
which corresponds to stability of the nonlinear filter
$\pi_n^\mu=\mathbf{P}^\mu[X_n\in\cdot |\mathcal{F}^Y_{0,n}]$. In
contrast to convergence to equilibrium of the underlying model,
conditional ergodicity properties yield convergence to equilibrium
of the estimation error of the model given the observations \cite{Kun71}
or the long-term stability of the conditional distributions to
perturbations (such as those that arise in the investigation of
numerical filtering algorithms), cf. \cite{vH09spa}. The interplay
between ergodicity and conditioning is of intrinsic interest in
probability theory and in measurable dynamics, where it is closely
related to notions of relative mixing \cite{Rud04}, and lies at the
heart of stability problems that arise in data assimilation and
nonlinear filtering. The main results of this paper will allow us
to establish conditional ergodicity in a wide range of infinite-dimensional
models, including dissipative stochastic partial differential equations
such as the above Navier--Stokes model, stochastic spin systems,
and stochastic differential delay equations (detailed examples are given
in Section~\ref{sec:examples}).

One of the main difficulties in the investigation of conditional
ergodicity is that conditioning on an infinite observation sequence
$\mathcal{F}^Y_{0,\infty}$ is a very singular operation. Under the
conditional distribution, the unobserved process $(X_n)_{n\ge0}$
remains a Markov chain, albeit an inhomogeneous one with random
transition probabilities depending on the realized path of the
observations $(Y_n)_{n\ge0}$ (in the stationary case this is a Markov
chain in a random environment in the sense of Cogburn and Orey
\cite{Cog84,Ore91}). These conditional transition probabilities are
defined abstractly as regular conditional probabilities, but no explicit
equations are available even in the simplest examples. There is
therefore little hope of analyzing the properties of the conditional
chain ``by hand,'' and one must find a way to deduce the requisite
ergodic properties from their unconditional counterparts. On the other
hand, conditioning is an essentially measure-theoretic operation, and it
is unlikely that the most fruitful approaches to ergodic theory in
infinite dimension, such as topological properties or functional
inequalities, are preserved by the conditional distributions. To move
beyond the setting of Harris chains, we therefore aim to find a way to
encode such weak ergodic properties in a measure-theoretic fashion that
can be shown to be preserved under conditioning.

A central insight of this paper is that certain basic elements of the
classical theory admit \emph{local} formulations
that do not rely on the Markov property. The simplest of these is a
local zero--two law (Section~\ref{sec:loc02})
that characterizes, for a given $E$-valued Markov chain
$(X_n)_{n\ge0}$ and measurable map $\iota\dvtx E\to E'$ to another
space~$E'$, the following total variation ergodic property:
\[
\bigl\|\mathbf{P}^x \bigl[ \bigl(\iota(X_k)
\bigr)_{k\ge n}\in\cdot \bigr]- \mathbf{P}^{x'} \bigl[ \bigl(
\iota(X_k) \bigr)_{k\ge n}\in\cdot \bigr] \bigr\|\mathop{\longrightarrow}^{n\to
\infty}0 \qquad\mbox{for all }x,x'\in E. %
\]
If $\iota$ is injective, then this reduces to the ergodic property of
a Harris chain. By choosing different functions $\iota$,
however, we will find that such results are applicable far beyond the
setting of Harris chains. Let us emphasize that when $\iota$ is not
injective the process $(\iota(X_n))_{n\ge0}$ is generally not Markov,
so that our local ergodic theorems are fundamentally non-Markovian in nature.

In certain cases, this local notion of ergodicity can be applied
directly to infinite-dimensional Markov chains. When the entire chain
does not converge to equilibrium in total variation, it may still be the
case that each finite-dimensional projection of the chain converges in
the above sense. To our knowledge, this \emph{local mixing} property
was first proposed by F\"ollmer \cite{Fol79} in the context of
interacting particle systems; a similar idea appears in \cite{Mat02} for
stochastic Navier--Stokes equations with sufficiently nondegenerate
forcing. By choosing $\iota$ to be a finite-dimensional projection, we
obtain a very useful characterization of the local mixing property
(Section~\ref{sec:locmix}). Our results can also be applied directly to
non-Markovian processes, for example, we will obtain a non-Markovian
ergodic theorem that provides an apparently new characterization of
stationary absolutely regular sequences (Section~\ref{sec:ergnonm}).

While local mixing can be verified in various infinite-dimensional
models, this generally requires a fair amount of nondegeneracy. In truly
degenerate situations, we introduce another idea that exploits
topological properties of the model (Section~\ref{sec:ergwk}). In
dissipative models and in many other Markov chains that converge weakly
to equilibrium, it is possible to construct a coupling of two copies
$X_n,X_n'$ of the chain such that $d(X_n,X_n')\to0$ (cf. \cite{HMS11}). Of course, this need not imply any form of total
variation convergence. Consider, however, the perturbed process
$f(X_n)+\eta_n$ where $f\dvtx E\to\mathbb{R}$ is a Lipschitz function and
$(\eta_n)_{n\ge0}$ is an i.i.d. sequence of auxiliary Gaussian random
variables. When the asymptotic coupling converges sufficiently rapidly,
the process $(f(X_n)+\eta_n)_{n\ge0}$ will be ergodic in the above
total variation sense by the Kakutani theorem. We have thus transformed
a topological property into a measure-theoretic one, which is amenable to
our local ergodic theorems by considering the augmented Markov chain
$(X_n,\eta_n)_{n\ge0}$ with $\iota(x,\eta)=f(x)+\eta$. The added noise
can ultimately be deconvolved, which yields weak-* ergodic theorems for
the original chain $(X_n)_{n\ge0}$ by purely measure-theoretic means.

The local ergodic theorems developed in Section~\ref{sec:zerotwo} are of
independent interest. However, the full benefit of our approach emerges
in the development of the conditional ergodic theory that is undertaken
in Sections \ref{sec:conderg} and \ref{sec:filter}. First, we develop in
Section~\ref{sec:cond02} a conditional counterpart to the local zero--two
law that characterizes the conditional absolute regularity property of a
stationary (non-Markovian) sequence. The remainder of Section~\ref{sec:conderg} is devoted to the inheritance problem. In short, we
show that under a generalization of the nondegeneracy assumption on the
observations
that was introduced in \cite{vH09,TvH12}, the local ergodicity property
is inherited when we condition on the observed component of the model.
Together with the ideas developed in Section~\ref{sec:zerotwo}, this
allows us to obtain various filter stability results in Section~\ref{sec:filter}. After introducing the relevant setting and notations
in Section~\ref{sec:fsetting}, we first develop a general local filter
stability theorem in Section~\ref{sec:flocal}. In Section~\ref{sec:fstab}, we give concrete filter stability theorems for Markov
chains that are locally mixing or that admit asymptotic couplings. We
also investigate unique ergodicity of the filtering process in the
spirit of \cite{Kun71}. Finally, in Section~\ref{sec:conttime}, we
extend our main results to Markov processes in continuous time.
Our general approach in these sections is inspired by the ideas
developed in \cite{vH09} in the Harris setting. However, as is
explained in Section~\ref{sec:conderg}, the approach used in
\cite{vH09,TvH12} relies crucially on the Markov property, and the same
method can therefore not be used in the local setting. Instead, we
develop here a new (and in fact somewhat more direct) method for
establishing the inheritance property that does not rely on
Markov-specific arguments.

To illustrate the wide applicability of our results, we develop in
Section~\ref{sec:examples} several infinite-dimensional examples that
were already mentioned above. Our aim is to demonstrate that the
assumptions of our main results can be verified in several quite
distinct settings. In order not to unduly lengthen the paper, we have
restricted attention to a number of examples whose ergodic properties
are readily verified using existing results in the literature.

Let us conclude the \hyperref[sec:intro]{Introduction} by briefly highlighting two directions
that are not addressed in this paper. First, we emphasize that all the
results in this paper, which rely at their core on martingale convergence
arguments, are qualitative in nature. The development of quantitative
filter stability results is an interesting problem, and this remains
challenging even in finite-dimensional models (cf. \cite{CR11} and the
references therein). Second, let us note that while our theory allows the
unobserved process $X_n$ to be infinite-dimensional under mild conditions,
the main regularity assumptions of this paper (Assumptions
\ref{aspt:locerg} and \ref{aspt:nondeg} below) typically require in
practice that the observations $Y_n$ are ``effectively''
finite-dimensional, for reasons that are discussed in Remark~\ref{rem:idimobs} below. As is illustrated by the examples in
Section~\ref{sec:examples}, our general setting covers a wide range of
models of
practical interest. Nonetheless, conditional ergodicity problems with
degenerate infinite-dimensional observations are of significant interest
in their own right and require separate consideration. In the latter
setting, new probabilistic phenomena can arise; such issues will be
discussed elsewhere.

\begin{rem}[(A note on terminology)]
Throughout this paper, we will use the term \emph{ergodicity} in a broad
sense to denote the asympotic insensitivity of a (possibly inhomogeneous
or random) Markov process to its initial condition. This differs from the
use of the term in the theory of measurable dynamics, where ergodicity
strictly refers to triviality of the invariant \mbox{$\sigma$-}field of a
dynamical system \cite{Wal82}. Unfortunately, no consistent usage of
these terms has emerged in the probabilistic literature. In the theory of
Markov chains, ergodicity is often used to denote either convergence to an
invariant probability \cite{Rev84,Num84,MT09}, or insensitivity to the
initial condition \cite{Haj58}, \cite{Kal02}, Theorem~20.10. In the
absence of a commonly accepted usage and as many different forms of such
properties will appear throughout this paper, we have chosen not to
introduce an overly precise terminology to distinguish between different
notions of ergodicity: to avoid any confusion, the specific ergodic
properties pertaining to each result will always be specified explicitly.
\end{rem}

\section{Local ergodic theorems}
\label{sec:zerotwo}

The goal of this section is to develop a number of simple but powerful
measure-theoretic ergodic theorems that are applicable beyond the
classical setting of Harris chains. Our main tools are the \emph{local}
zero--two laws developed in Section~\ref{sec:loc02}. In the following
subsections, it is shown how these results can be applied in various
different settings. In Section~\ref{sec:locmix}, we consider a notion of
local mixing for Markov chains, due to F\"ollmer \cite{Fol79}, that
provides a natural measure-theoretic generalization of Harris chains
\cite{Rev84} to the infinite-dimensional setting. In Section~\ref{sec:ergnonm},
we obtain an ergodic theorem for non-Markov processes
that yields a new characterization of stationary absolutely regular
sequences. Finally, in Section~\ref{sec:ergwk}, we show how these results
can be combined with the notion of asymptotic coupling (see, e.g.,
\cite{HMS11}) to obtain ergodic theorems in the weak convergence topology
by purely measure-theoretic means.

Throughout this section, we will work in the following canonical
setup. Let $(E,\mathcal{E})$ be a measurable space, and let
$(X_k)_{k\in\mathbb{Z}}$ be the $E$-valued coordinate process defined
on the canonical path space $(\Omega,\mathcal{F})$. That is, we define
$\Omega=E^\mathbb{Z}$, $\mathcal{F}=\mathcal{E}^\mathbb{Z}$, and
$X_k(\omega)=\omega(k)$. We define for $m<n$
\[
X_{m,n}=(X_k)_{m\le k\le n},\qquad \mathcal{F}_{m,n}=
\sigma\{X_{m,n}\}, \qquad\mathcal{F}_+ = \mathcal{F}_{0,\infty},\qquad
\mathcal{F}_- = \mathcal{F}_{-\infty,0}. %
\]
We also define the canonical shift
$\Theta\dvtx \Omega\to\Omega$ as $\Theta(\omega)(n)=\omega(n+1)$.

We will denote by $\mathcal{P}(Z)$ the set of probability measures on
a measurable space $(Z,\mathcal{Z})$, and for $\mu,\nu\in\mathcal{P}(Z)$
we denote by $\|\mu-\nu\|_{\mathcal{Z}_0}$
the total variation of the signed measure $\mu-\nu$ on the $\sigma$-field
$\mathcal{Z}_0\subseteq\mathcal{Z}$, that is,
\[
\|\mu-\nu\|_{\mathcal{Z}_0}= 2\sup_{A\in\mathcal{Z}_0}\bigl|\mu(A)-\nu(A)\bigr|.
\]
For simplicity, we will write $\|\mu-\nu\|=\|\mu-\nu\|_\mathcal{Z}$.
Let us recall that if $K,K'$ are finite kernels and if $\mathcal{Z}$ is
countably
generated, then $x\mapsto\|K(x,\cdot)-K'(x,\cdot)\|$ is measurable
(see, e.g., \cite{vH09}, Lemma~2.4). In this setting, we have
\[
\biggl\|\int \bigl\{K(x,\cdot)-K'(x,\cdot) \bigr\} \mu(dx) \biggr\|\le \int
\bigl\|K(x,\cdot)-K'(x,\cdot)\bigr\| \mu(dx) %
\]
by Jensen's inequality. Moreover, if
$\mathcal{Z}_n\downarrow\mathcal{Z}_\infty:= \bigcap_n\mathcal{Z}_n$
is a decreasing family of $\sigma$-fields, then a simple martingale
argument (e.g., \cite{Der76}, page 117) yields
\[
\|\mu-\nu\|_{\mathcal{Z}_n} \mathop{\longrightarrow}^{n\to\infty} \|\mu-\nu
\|_{\mathcal{Z}_\infty}. %
\]
These facts will be used repeatedly throughout the paper.

\subsection{Local zero--two laws}
\label{sec:loc02}

Let $P\dvtx E\times\mathcal{E}\to[0,1]$ be a transition kernel on
$(E,\mathcal{E})$, and denote by $\mathbf{P}^\mu$ be the probability
measure on $\mathcal{F}_+$ such that $(X_k)_{k\ge0}$ is Markov with
transition kernel $P$ and initial law $X_0\sim\mu\in\mathcal{P}(E)$.
If $P$ is Harris and aperiodic, the Markov chain is ergodic
in the sense that
\[
\bigl\|\mu P^n-\nu P^n\bigr\| = \bigl\|\mathbf{P}^\mu-
\mathbf{P}^\nu\bigr\|_{\mathcal{F}_{n,\infty}}
 \mathop{\longrightarrow}^{n\to\infty}0 \qquad\mbox{for
all }\mu,\nu\in\mathcal{P}(E) %
\]
(cf. \cite{Rev84}, Theorem~6.2.2). Unfortunately, this mode of
convergence can be restrictive in complex models. For example,
when the state space $E$ is infinite-dimensional, such strong convergence
will rarely hold: it is often the case in this setting that $\mu
P^n\perp\nu P^n$
for all $n\ge0$ (cf. Section~\ref{sec:locmix}).

At the heart of this paper lies a simple idea. When total
variation convergence of the full chain fails, it may still be the case
that total variation convergence holds when the chain restricted to a
smaller $\sigma$-field $\mathcal{E}^0\subset\mathcal{E}$: that is, we
intend to establish convergence of $\iota(X_k)$ where
$\iota\dvtx (E,\mathcal{E})\to(E,\mathcal{E}^0)$ is the identity map. As will
become clear in the sequel, such \emph{local} total
variation convergence is frequently sufficient to deduce convergence of
the full chain in a weaker probability distance, while at the same time
admitting a powerful measure-theoretic ergodic theory that will be
crucial for the study of conditional ergodicity in complex models.

The key results of this section are a pair of local zero--two laws that
characterize the local total variation convergence of Markov processes.
Let us fix $\mathcal{E}^0\subseteq\mathcal{E}$ throughout this section,
and define the $\sigma$-fields
\[
\mathcal{F}_{m,n}^0 = \bigvee
_{m\le k\le n} X_k^{-1} \bigl(
\mathcal{E}^0 \bigr),\qquad m<n. %
\]
A central role will be played by the local tail $\sigma$-field
\[
\mathcal{A}^0 = \bigcap_{n\ge0}
\mathcal{F}_{n,\infty}^0. %
\]
Finally, for $x\in E$ we will denote for simplicity
$\mathbf{P}^x=\mathbf{P}^{\delta_x}$.

It is important to note that the local process $\iota(X_k)$ is generally
not Markov, so that the marginal distribution at a fixed time does not
determine the future of this process. Thus, one cannot restrict attention
to the marginal distance $\|\mu P^n-\nu P^n\|_{\mathcal{E}^0}$, but one
must instead consider the entire infinite future
$\|\mathbf{P}^\mu-\mathbf{P}^\nu\|_{\mathcal{F}_{n,\infty}^0}$. Of
course, when $\mathcal{E}^0=\mathcal{E}$, these notions coincide.

\begin{thmm}[(Local zero--two law)]
\label{thmm:loc02}
The following are equivalent.
\begin{enumerate}[1.]
\item[1.] The Markov chain is locally ergodic:
\[
\bigl\|\mathbf{P}^\mu-\mathbf{P}^\nu\bigr\|_{\mathcal{F}_{n,\infty}^0}
\mathop{\longrightarrow}^{n\to\infty}0\qquad \mbox{for every }\mu,\nu\in\mathcal{P}(E). %
\]
\item[2.] The local tail $\sigma$-field is trivial:
\[
\mathbf{P}^\mu(A)\in\{0,1\} \qquad\mbox{for every }A\in
\mathcal{A}^0\mbox{ and } \mu\in\mathcal{P}(E). %
\]
\item[3.] The Markov chain is locally irreducible: there exists
$\alpha>0$ such that
\[
\forall x,x'\in E, \exists n\ge0 \mbox{ such that }\qquad \bigl\|
\mathbf{P}^x-\mathbf{P}^{x'}\bigr\|_{\mathcal{F}_{n,\infty}^0}\le2-\alpha.
\]
\end{enumerate}
\end{thmm}

Zero-two laws of this type appear naturally in the theory of Harris
chains \cite{Rev84,Der76,OS70}. It is somewhat surprising that the
Markov property proves to be inessential in the proof, which enables the
present local formulation.

\begin{pf*}{Proof of Theorem~\ref{thmm:loc02}}
We prove $2\Rightarrow1\Rightarrow3\Rightarrow2$.

($2\Rightarrow1$).
Assumption~2 implies that $\mathbf{P}^\mu(A)=\mathbf{P}^\nu(A)$
for all $A\in\mathcal{A}^0$
[if not, then $\mathbf{P}^\rho(A)=1/2$ for
$\rho=(\mu+\nu)/2$, a contradiction]. Therefore,
\[
\bigl\|\mathbf{P}^\mu-\mathbf{P}^\nu\bigr\|_{\mathcal{F}^0_{n,\infty}}
\mathop{\longrightarrow}^{n\to\infty} \bigl\|\mathbf{P}^\mu-\mathbf{P}^\nu
\bigr\|_{\mathcal{A}^0}=0. %
\]

($1\Rightarrow3$). This is obvious.

($3\Rightarrow2$).
Assume that condition $2$ does not hold.
Then there exists $A\in\mathcal{A}^0$ and $\mu\in\mathcal{P}(E)$ such that
$0<\mathbf{P}^\mu(A)<1$. Define $f=\mathbf{1}_A-\mathbf{1}_{A^c}$, and note
that
\[
\mathbf{E}^{X_n} \bigl[f\circ\Theta^{-n} \bigr] =
\mathbf{E}^\mu[f|\mathcal{F}_{0,n}]\mathop{\longrightarrow}
^{n\to\infty}f,
\qquad\mathbf{P}^\mu\mbox{-a.s.} %
\]
by the Markov property and the martingale convergence theorem.
(Recall that for any $\mathcal{A}^0$-measurable function
$f$, the function $f\circ\Theta^{-n}$ is unambiguously defined and
$\mathcal{A}^0$-measurable for every $n\in\mathbb{Z}$, cf. \cite{Rev84},
pages~186--187.)

Define the probability measure $\mathbf{Q}$
on $\Omega\times\Omega$ as $\mathbf{P}^\mu\otimes\mathbf{P}^\mu$, and
denote by $(X_n,X_n')_{n\ge0}$ the coordinate process on $\Omega\times
\Omega$.
Fix $\alpha>0$. Then
\[
\mathbf{Q} \bigl[\bigl|\mathbf{E}^{X_n} \bigl[f\circ\Theta^{-n}
\bigr]- \mathbf{E}^{X_n'} \bigl[f\circ\Theta^{-n} \bigr]\bigr|>2-
\alpha \bigr] \mathop{\longrightarrow}^{n\to\infty} 2 \mathbf{P}^\mu(A)
\mathbf{P}^\mu \bigl(A^c \bigr)>0. %
\]
Thus, there exist $N\ge0$ and $x,x'\in E$ such that
$|\mathbf{E}^{x}[f\circ\Theta^{-N}]-
\mathbf{E}^{x'}[f\circ\Theta^{-N}]|>2-\alpha$. But note
that $|f|\le1$ and $f\circ\Theta^{-N}$ is
$\mathcal{A}^0$-measurable. Therefore,
\[
\bigl\|\mathbf{P}^{x}-\mathbf{P}^{x'}\bigr\|_{\mathcal{F}_{n,\infty}^0} \ge \bigl\|
\mathbf{P}^{x}-\mathbf{P}^{x'}\bigr\|_{\mathcal{A}^0} \ge\bigl|
\mathbf{E}^{x} \bigl[f\circ\Theta^{-N} \bigr]-
\mathbf{E}^{x'} \bigl[f\circ\Theta^{-N} \bigr]\bigr|>2-\alpha
\]
for all $n\ge0$. As $\alpha>0$ is arbitrary, condition 3 is contradicted.
\end{pf*}

The characterization in Theorem~\ref{thmm:loc02} does not require the
existence of an invariant probability. However, when such a probability
exists, we can obtain a useful stationary variant of the local zero--two
law that will be proved next. The advantage of the stationary
zero--two law is that it does not require uniform control in condition 3.
On the other hand, the resulting convergence only holds for almost every
initial condition.

\begin{thmm}[(Local stationary zero--two law)]
\label{thmm:sloc02}
Suppose $\mathcal{E}^0$ is countably generated.
Given a $P$-invariant probability $\lambda$,
the following are equivalent:
\begin{enumerate}[1.]
\item[1.] The Markov chain is a.e. locally ergodic:
\[
\bigl\|\mathbf{P}^x-\mathbf{P}^\lambda\bigr\|_{\mathcal{F}_{n,\infty}^0}
\mathop{\longrightarrow}^{n\to\infty}0\qquad \mbox{for }\lambda\mbox{-a.e. } x, %
\]
or, equivalently,
\[
\bigl\|\mathbf{P}^x-\mathbf{P}^{x'}\bigr\|_{\mathcal{F}_{n,\infty}^0}
\mathop{\longrightarrow}^{n\to\infty}0 \qquad\mbox{for }\lambda\otimes\lambda\mbox{-a.e. }
\bigl(x,x' \bigr). %
\]
\item[2.] The local tail $\sigma$-field is a.e. trivial:
\[
\mathbf{P}^x(A)=\mathbf{P}^x(A)^2=
\mathbf{P}^{x'}(A) \qquad\forall A\in\mathcal{A}^0, \lambda\otimes
\lambda\mbox{-a.e. } \bigl(x,x' \bigr). %
\]
\item[3.] The Markov chain is a.e. locally irreducible:
\[
\mbox{for }\lambda\otimes\lambda\mbox{-a.e. } \bigl(x,x' \bigr),
\exists n\ge0 \mbox{ such that} \qquad\bigl\|\mathbf{P}^x-\mathbf{P}^{x'}
\bigr\|_{\mathcal{F}_{n,\infty}^0}<2. %
\]
\end{enumerate}
\end{thmm}

\begin{pf}
The equivalence of the two statements of condition 1 follows from
\begin{eqnarray*}
\bigl\|\mathbf{P}^x-\mathbf{P}^\lambda\bigr\|_{\mathcal{F}_{n,\infty}^0} &\le&
\int\bigl\|\mathbf{P}^x-\mathbf{P}^{x'}\bigr\|_{\mathcal{F}_{n,\infty}^0} \lambda
\bigl(dx' \bigr),
\\
\bigl\|\mathbf{P}^x-\mathbf{P}^{x'}\bigr\|_{\mathcal{F}_{n,\infty}^0} &\le& \bigl\|
\mathbf{P}^x-\mathbf{P}^\lambda\bigr\|_{\mathcal{F}_{n,\infty}^0}+ \bigl\|
\mathbf{P}^{x'}-\mathbf{P}^\lambda\bigr\|_{\mathcal{F}_{n,\infty}^0}.
\end{eqnarray*}
The proofs of $2\Rightarrow1\Rightarrow3$ are identical to the
corresponding proofs in Theorem~\ref{thmm:loc02}. It therefore remains to
prove $3\Rightarrow2$. To this end, define
\[
\beta_n \bigl(x,x' \bigr)= \bigl\|\mathbf{P}^x-
\mathbf{P}^{x'}\bigr\|_{\mathcal{F}_{n,\infty}^0},\qquad \beta \bigl(x,x'
\bigr)= \bigl\|\mathbf{P}^x-\mathbf{P}^{x'}\bigr\|_{\mathcal{A}^0}.
\]
As $\mathcal{E}^0$ is countably generated, the maps $\beta_n$ are
measurable. Moreover, as $\beta_n\downarrow\beta$ pointwise as
$n\to\infty$, the map $\beta$ is measurable also.

By the Markov property, we have $\mathbf{E}^x(\mathbf{1}_A\circ\Theta)=
\int P(x,dz) \mathbf{P}^{z}(A)$ for every $x$ and
$A\in\mathcal{F}_{n-1}^0$. Thus we obtain by Jensen's inequality
\[
\bigl\|\mathbf{P}^x-\mathbf{P}^{x'}\bigr\|_{\mathcal{F}_{n,\infty}^0} \le \int
P(x,dz) P \bigl(x',dz' \bigr) \bigl\|\mathbf{P}^z-
\mathbf{P}^{z'}\bigr\|_{\mathcal{F}_{n-1,\infty}^0}, %
\]
so $\beta_n\le(P\otimes P)\beta_{n-1}$.
Thus $\beta\le(P\otimes P)\beta$ by dominated convergence.

Define
$\mathbf{Q}=\mathbf{P}^\lambda\otimes\mathbf{P}^\lambda$ and
$\mathbf{Q}^{x,x'}=\mathbf{P}^x\otimes\mathbf{P}^{x'}$ on
$\Omega\times\Omega$. Then
\[
\mathbf{E}_{\mathbf{Q}} \bigl[\beta \bigl(X_{n+1},X_{n+1}'
\bigr)|X_{0,n},X_{0,n}' \bigr] = (P\otimes P)
\beta \bigl(X_{n},X_{n}' \bigr) \ge \beta
\bigl(X_{n},X_{n}' \bigr),\qquad \mathbf{Q}\mbox{-a.s.} %
\]
Thus, $\beta(X_n,X_n')$ is a bounded and stationary submartingale under
$\mathbf{Q}$, and
\[
\mathbf{E}_{\mathbf{Q}} \bigl[\bigl|\beta \bigl(X_0,X_0'
\bigr)-\beta \bigl(X_n,X_n' \bigr)\bigr| \bigr]
= \mathbf{E}_{\mathbf{Q}} \bigl[\bigl|\beta \bigl(X_k,X_k'
\bigr)-\beta \bigl(X_{n+k},X_{n+k}' \bigr)\bigr|
\bigr] \mathop{\longrightarrow}^{k\to\infty}0 %
\]
by stationarity and the martingale convergence theorem. It follows
that $\beta(X_0,X_0')=\beta(X_n,X_n')$ for all $n\ge0$,
$\mathbf{Q}$-a.s.
By disintegration, there is a measurable set $H'\subseteq\Omega\times
\Omega$
with $(\lambda\otimes\lambda)(H')=1$ such that
\[
\mathbf{Q}^{x,x'} \bigl[\beta \bigl(x,x' \bigr)=\beta
\bigl(X_n,X_n' \bigr) \mbox{ for all }n\ge0
\bigr]=1 \qquad\mbox{for all } \bigl(x,x' \bigr)\in H'.
\]

In the remainder of the proof, we assume that condition 3 holds, and
we fix a measurable set $H\subseteq H'$ with $(\lambda\otimes\lambda)(H)=1$
such that
\[
\forall \bigl(x,x' \bigr)\in H, \exists n\ge0 \mbox{ such that}\qquad
\beta_n \bigl(x,x' \bigr)<2. %
\]
Suppose condition 2 does not hold. Then there exist $A\in\mathcal{A}^0$
and $(x,x')\in H$ such that either $0<\mathbf{P}^x(A)<1$ or
$\mathbf{P}^x(A)\ne\mathbf{P}^{x'}(A)$. Define
$f=\mathbf{1}_A-\mathbf{1}_{A^c}$ and fix $\alpha>0$.
Proceeding as in the proof of Theorem~\ref{thmm:loc02}, we find that
\begin{eqnarray*}
&&\mathbf{Q}^{x,x'} \bigl[\bigl|\mathbf{E}^{X_n} \bigl[f\circ
\Theta^{-n} \bigr]- \mathbf{E}^{X_n'} \bigl[f\circ
\Theta^{-n} \bigr]\bigr|>2-\alpha \bigr]\\
&&\qquad \mathop{\longrightarrow}^{n\to\infty}
\mathbf{P}^x(A)\mathbf{P}^{x'} \bigl(A^c
\bigr)+ \mathbf{P}^x \bigl(A^c \bigr)\mathbf{P}^{x'}(A)>0.
\end{eqnarray*}
Note that as $|f|\le1$ and $f\circ\Theta^{-n}$ is
$\mathcal{A}^0$-measurable, we have
\[
\bigl|\mathbf{E}^{X_n} \bigl[f\circ\Theta^{-n} \bigr]-
\mathbf{E}^{X_n'} \bigl[f\circ\Theta^{-n} \bigr]\bigr| \le\beta
\bigl(X_n,X_n' \bigr) = \beta
\bigl(x,x' \bigr),\qquad \mathbf{Q}^{x,x'}\mbox{-a.s.} %
\]
It follows that $\beta(x,x')>2-\alpha$, and we therefore have
$\beta(x,x')=2$ as $\alpha>0$ was arbitrary.
But by construction there exists $n\ge0$ such that
$\beta(x,x')\le\beta_n(x,x')<2$, and we have the desired contradiction.
\end{pf}

Theorems \ref{thmm:loc02} and \ref{thmm:sloc02}, while elementary, play a
fundamental role in our theory. In the following subsections, we will
see that these results have a broad range of applicability that goes far
beyond the setting of Harris chains.

\subsection{Local mixing in infinite dimension}
\label{sec:locmix}

Markov chains in an infinite-dimensional state space are rarely amenable
to the classical theory of Harris chains. The key obstacle is that total
variation convergence requires nonsingularity of the transition
probabilities. This is not restrictive in finite dimension, but fails
in infinite dimension even in the most trivial examples.

\begin{example}
\label{ex:trivial}
Let $(X_k)_{k\ge0}$ be the Markov chain in
$\{-1,+1\}^\mathbb{N}$ such that each coordinate
$(X_k^i)_{k\ge0}$ is an independent Markov chain in $\{-1,+1\}$
with transition probabilities $0<p_{-1,+1}=p_{+1,-1}<1/2$.
Clearly, each coordinate is a Harris
chain, and the law of $X_n$ converges weakly as $n\to\infty$ to its
unique invariant measure $\lambda$ for any initial condition.
Nonetheless, $\delta_{1^{\mathbb{N}}}P^n$ and $\lambda$ are mutually
singular for all $n\ge0$ ($\delta_{1^{\mathbb{N}}}P^n$ and $\lambda$
possess i.i.d. coordinates with a different law), so
$X_n$ cannot converge in total variation.
\end{example}

As the classical measure-theoretic theory fails to yield satisfactory
results, the ergodic theory of infinite-dimensional Markov chains is
frequently approached by means of topological methods. A connection
between topological methods and local zero--two laws will be investigated
in Section~\ref{sec:ergwk} below. On the other hand, one may seek a
purely measure-theoretic counterpart of the notion of a Harris chain
that is adapted to the infinite-dimensional setting. We now describe
such a notion due to F\"ollmer \cite{Fol79}.

Throughout this section, we adopt the same setting as in Section~\ref{sec:loc02}. To formalize the notion of an infinite-dimensional
Markov chain, we assume that the state space $(E,\mathcal{E})$ is
contained in a countable product: that is, there exist a countable set~$I$ and measurable spaces $(E^i,\mathcal{E}^i)$ such that
\[
(E,\mathcal{E}) \subseteq\prod_{i\in I}
\bigl(E^i,\mathcal{E}^i \bigr). %
\]
Each $i\in I$ plays the role of a single dimension of the model.
We will write $x=(x^i)_{i\in I}$ for $x\in E$, and for $J\subseteq I$
we denote by $x^J=(x^i)_{i\in J}$ the natural projection of $x$ onto
$\prod_{i\in J}E^i$. For $m<n$, we define
the quantities $X_{m,n}^J$ and $\mathcal{F}_{m,n}^J$ in the obvious manner.
Moreover, we define the local tail $\sigma$-fields
\[
\mathcal{A}^J = \bigcap_{n\ge0}
\mathcal{F}^J_{n,\infty}, \qquad\mathcal{A}_{\mathrm{loc}} = \bigvee
_{|J|<\infty}\mathcal{A}^J. %
\]
That is, $\mathcal{A}_{\mathrm{loc}}$ is generated by the asymptotic events
associated to all finite-dimensional projections of the infinite-dimensional
chain.

We now introduce F\"ollmer's notion of local mixing, which states that
each finite-dimensional projection of the model converges in total
variation (the term ``local ergodicity'' would be more in line with the
terminology used in this paper, but we will conform to the definition
given in \cite{Fol79}). Let us emphasize, as in the previous section, that
the finite-dimensional projection of an infinite-dimensional Markov chain
is generally not Markov.

\begin{defn}[(Local mixing)]
\label{def:follmer}
A Markov chain $(X_k)_{k\ge0}$ taking values in the countable product
space $(E,\mathcal{E}) \subseteq\prod_{i\in I}(E^i,\mathcal{E}^i)$ is
\emph{locally mixing} if
\[
\bigl\|\mathbf{P}^\mu-\mathbf{P}^\nu\bigr\|_{\mathcal{F}_{n,\infty}^J}
\mathop{\longrightarrow}^{n\to\infty}0 \qquad\mbox{for all } \mu,\nu\in\mathcal{P}(E)\mbox{ and } J
\subseteq I, |J|<\infty. %
\]
\end{defn}

In the finite-dimensional case $|I|<\infty$, this definition reduces to
the ergodic property of Harris chains. Moreover, in the
infinite-dimensional setting, F\"ollmer~\cite{Fol79} proves a
characterization of local mixing in complete analogy with the
Blackwell--Orey equivalence in the theory of Harris chains \cite{Rev84}, Chapter~6.
It therefore appears that local mixing is the natural
measure-theoretic generalization of the Harris theory to the infinite
dimensional setting.

Unfortunately, the characterization given in \cite{Fol79} is of limited
use for the purpose of establishing the local mixing property of a given
Markov chain: only a very strong verifiable sufficient condition is
given there (in the spirit of the Dobrushin uniqueness condition for
Gibbs measures). The missing ingredient is a zero--two law,
which we can now give as a simple corollary of the results in
Section~\ref{sec:loc02}. This completes the characterization of local mixing
given in \cite{Fol79}, and provides a concrete tool to verify this
property.

\begin{cor}[(Local mixing theorem)]
\label{cor:locmix}
The following are equivalent.
\begin{enumerate}[1.]
\item[1.]$(X_k)_{k\ge0}$ is locally mixing.
\item[2.]$\mathcal{A}_{\mathrm{loc}}$ is $\mathbf{P}^\mu$-trivial
for every $\mu\in\mathcal{P}(E)$.
\item[3.] For every $J\subseteq I$, $|J|<\infty$, there exists $\alpha>0$
such that
\[
\forall x,x'\in E, \exists n\ge0 \mbox{ such that} \qquad\bigl\|
\mathbf{P}^x-\mathbf{P}^{x'}\bigr\|_{\mathcal{F}_{n,\infty}^J}\le2-\alpha.
\]
\end{enumerate}
\end{cor}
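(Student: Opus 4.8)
The plan is to deduce Corollary \ref{cor:locmix} from Theorem \ref{thm:loc02} applied with a suitable choice of sub-$\sigma$-field $\mathcal{E}^0$, together with a countability argument to pass from finitely many coordinates to the full collection of finite-dimensional projections. First I would observe that for each fixed finite $J\subseteq I$, the sub-$\sigma$-field $\mathcal{E}^J\subseteq\mathcal{E}$ generated by the coordinate projection $x\mapsto x^J$ plays exactly the role of $\mathcal{E}^0$ in section \ref{sec:loc02}: with this choice one has $\mathcal{F}^0_{m,n}=\mathcal{F}^J_{m,n}$ and $\mathcal{A}^0=\mathcal{A}^J$. Therefore Theorem \ref{thm:loc02} gives, for each finite $J$, the equivalence of (i$_J$) $\|\mathbf{P}^\mu-\mathbf{P}^\nu\|_{\mathcal{F}^J_{n,\infty}}\to0$ for all $\mu,\nu$; (ii$_J$) $\mathcal{A}^J$ is $\mathbf{P}^\mu$-trivial for all $\mu$; and (iii$_J$) the local irreducibility condition with the $\sigma$-field $\mathcal{F}^J_{n,\infty}$.

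Next I would assemble these coordinatewise statements. The implications $1\Rightarrow 3$ and $3\Rightarrow 2$ of the corollary are then immediate: local mixing (Definition \ref{def:follmer}) is precisely the conjunction of (i$_J$) over all finite $J$, condition 3 of the corollary is the conjunction of (iii$_J$) over all finite $J$, and condition 2 of the corollary — triviality of $\mathcal{A}_{\rm loc}=\bigvee_{|J|<\infty}\mathcal{A}^J$ — certainly implies triviality of each $\mathcal{A}^J$, so the per-$J$ equivalence in Theorem \ref{thm:loc02} closes the cycle $1\Rightarrow 3\Rightarrow$ (each $\mathcal{A}^J$ trivial) and it only remains to see that each $\mathcal{A}^J$ being $\mathbf{P}^\mu$-trivial implies $\mathcal{A}_{\rm loc}$ itself is $\mathbf{P}^\mu$-trivial, i.e.\ $2\Rightarrow$ (each $\mathcal{A}^J$ trivial) is the trivial direction and the reverse is the substantive one.

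The one genuine point, and the step I expect to be the main (if modest) obstacle, is therefore: \emph{if $\mathcal{A}^J$ is $\mathbf{P}^\mu$-trivial for every finite $J\subseteq I$, then $\mathcal{A}_{\rm loc}=\bigvee_{|J|<\infty}\mathcal{A}^J$ is $\mathbf{P}^\mu$-trivial.} A finite union of finite sets is finite, so the family $\{\mathcal{A}^J : |J|<\infty\}$ is upward directed: for $J_1,J_2$ finite, $\mathcal{A}^{J_1}\vee\mathcal{A}^{J_2}\subseteq\mathcal{A}^{J_1\cup J_2}$, and $\mathcal{A}^{J_1\cup J_2}$ is $\mathbf{P}^\mu$-trivial by hypothesis. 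Hence the algebra $\bigcup_{|J|<\infty}\mathcal{A}^J$ is a $\mathbf{P}^\mu$-trivial algebra generating $\mathcal{A}_{\rm loc}$, and triviality passes to the generated $\sigma$-field by a routine approximation argument (any $A\in\mathcal{A}_{\rm loc}$ is approximated in $\mathbf{P}^\mu$-measure by sets in the algebra, forcing $\mathbf{P}^\mu(A)\in\{0,1\}$); equivalently, the Kolmogorov-type zero-one argument applies since an upward-directed union of independent (here, trivial) $\sigma$-fields has trivial generated $\sigma$-field. This establishes $2\Rightarrow 1$ and completes the equivalence. I would present the argument as: the per-$J$ reductions via Theorem \ref{thm:loc02}, the directedness observation, and the approximation lemma, keeping the write-up to a few lines since all the real content is already in section \ref{sec:loc02}.
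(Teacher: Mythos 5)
Your proposal is correct and follows essentially the same route as the paper: apply Theorem \ref{thm:loc02} with $\mathcal{E}^0=\mathcal{E}^J$ for each finite $J$, then note that triviality of every $\mathcal{A}^J$ is equivalent to triviality of $\mathcal{A}_{\rm loc}$. Your directedness/approximation step is fine but more than necessary, since the collection of $\mathbf{P}^\mu$-trivial events is itself a $\sigma$-field, so triviality of the generators of $\mathcal{A}_{\rm loc}$ immediately gives triviality of $\mathcal{A}_{\rm loc}$.
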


\begin{pf}
Note that $\mathcal{A}_{\mathrm{loc}}$ is $\mathbf{P}^\mu$-trivial if and
only if $\mathcal{A}^J$ is $\mathbf{P}^\mu$-trivial for all
$|J|<\infty$. Thus the result follows immediately from
Theorem~\ref{thmm:loc02}.
\end{pf}

Condition 3 of Corollary~\ref{cor:locmix} can be used directly to verify
the local mixing property in infinite-dimensional models that possess a
sufficient degree of nondegeneracy. For example, in the setting of
stochastic Navier--Stokes equations with additive noise (cf. Section~\ref{sec:exsns}), the approach developed in \cite{EMS01,Mat02}
can be
used to show that condition 3 holds under the assumption that every
Fourier mode is forced by an independent Brownian motion (in this
setting, each dimension $i\in I$ corresponds to a single Fourier mode of
the system).
However, in degenerate models (e.g., where some modes are
unforced or when the noise is not additive), local mixing may be
difficult or impossible to establish. In Section~\ref{sec:ergwk} below,
we will introduce a technique that will significantly extend the
applicability of our results.

\begin{rem}
One can of course also obtain a stationary counterpart of
Corollary~\ref{cor:locmix} by applying Theorem~\ref{thmm:sloc02} rather than
Theorem~\ref{thmm:loc02}. As the result
is essentially identical, we do not state it explicitly.
\end{rem}

\subsection{Ergodicity of non-Markov processes}
\label{sec:ergnonm}

As the local zero--two laws introduced in Section~\ref{sec:loc02} are
essentially non-Markovian, they can be used to investigate the ergodic
theory of non-Markov processes. Let us illustrate this idea by
developing a new (to the best of our knowledge) characterization of
stationary absolutely regular sequences.

In this section, we assume that $(E,\mathcal{E})$ is a Polish space
(the Polish assumption is made to ensure the existence of regular
conditional probabilities), and
let $\mathbf{P}$ be a stationary probability measure on
$(\Omega,\mathcal{F})$. Let us recall the well-known notion of absolute
regularity \cite{VR59} (sometimes called $\beta$-mixing).

\begin{defn}[(Absolute regularity)]
\label{def:absreg}
A stationary sequence $(X_k)_{k\in\mathbb{Z}}$ is said to be
\emph{absolutely regular} if the following holds:
\[
\bigl\|\mathbf{P}[X_{-\infty,0}, X_{k,\infty}\in\cdot ]-
\mathbf{P}[X_{-\infty,0}\in\cdot ]\otimes \mathbf{P}[X_{k,\infty}\in\cdot
]\bigr\| \mathop{\longrightarrow}^{k\to\infty}0. %
\]
\end{defn}

We obtain the following characterization.

\begin{cor}
\label{cor:absreg}
Let $(X_k)_{k\in\mathbb{Z}}$ be a stationary sequence. Choose any
version
$\mathbf{P}^{x_{-\infty,0}}$ of the regular conditional
probability $\mathbf{P}[ \cdot |\mathcal{F}_-]$, and
define the measure
$\mathbf{P}^-=\mathbf{P}[X_{-\infty,0}\in\cdot ]$.
The following are equivalent:
\begin{enumerate}[1.]
\item[1.]$(X_k)_{k\in\mathbb{Z}}$ is absolutely regular.
\item[2.]$\|\mathbf{P}^{x_{-\infty,0}}-\mathbf{P}\|_{\mathcal{F}_{k,\infty}}
\mathop{\longrightarrow}\limits^{k\to\infty}0$ for $\mathbf{P}^-$-a.e. $x_{-\infty,0}$.
\item[3.] For $\mathbf{P}^-\otimes\mathbf{P}^-$-a.e. $(x_{-\infty,0},
\tilde x_{-\infty,0})$, there exists $k\ge0$ such that
$\mathbf{P}^{x_{-\infty,0}}$ and
$\mathbf{P}^{\tilde x_{-\infty,0}}$ are not
mutually singular on $\mathcal{F}_{k,\infty}$.
\end{enumerate}
\end{cor}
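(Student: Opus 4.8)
The plan is to recognize that Corollary \ref{cor:absreg} is essentially Theorem \ref{thm:sloc02} applied not to the forward chain but to the time-reversed ``chain,'' with the roles of past and future interchanged. Concretely, under $\mathbf{P}$ stationary, the sequence $(X_{-k})_{k\ge 0}$ together with the conditioning $\sigma$-field $\mathcal{F}_-$ plays the role of a Markov chain started from its ``initial'' $\sigma$-algebra, and the tail we must control is $\mathcal{A} = \bigcap_{k\ge 0}\mathcal{F}_{k,\infty}$ (the choice $\mathcal{E}^0 = \mathcal{E}$, so the ``local'' tail is the ordinary tail). The first step is to rewrite absolute regularity in the form of statement 2. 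Since $\mathbf{P}[X_{-\infty,0},X_{k,\infty}\in\cdot\,]$ disintegrates over $\mathcal{F}_-$ as $\int \mathbf{P}^{x_{-\infty,0}}[X_{k,\infty}\in\cdot\,]\,\mathbf{P}^-(dx_{-\infty,0})\otimes\delta_{x_{-\infty,0}}$, and the product measure disintegrates as $\int \mathbf{P}[X_{k,\infty}\in\cdot\,]\,\mathbf{P}^-(dx_{-\infty,0})\otimes\delta_{x_{-\infty,0}}$, one has the identity
$$
	\|\mathbf{P}[X_{-\infty,0},X_{k,\infty}\in\cdot\,]-
	\mathbf{P}[X_{-\infty,0}\in\cdot\,]\otimes\mathbf{P}[X_{k,\infty}\in\cdot\,]\|
	= \int \|\mathbf{P}^{x_{-\infty,0}}-\mathbf{P}\|_{\mathcal{F}_{k,\infty}}\,\mathbf{P}^-(dx_{-\infty,0}),
$$
using that the $\mathcal{F}_-$-component is matched exactly on both sides. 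This shows $1\Leftrightarrow 2$ directly: the left side $\to 0$ iff the integrand $\to 0$ in $L^1(\mathbf{P}^-)$, which (since the integrand is bounded by $2$ and, being a decreasing limit along $k$ of measurable functions by the $\mathcal{F}_{k,\infty}\downarrow\mathcal{A}$ martingale fact quoted in the preamble, converges $\mathbf{P}^-$-a.e.) is equivalent to $\mathbf{P}^-$-a.e.\ convergence.

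The next step is $2\Rightarrow 3$, which is trivial, since total variation convergence to $0$ on $\mathcal{F}_{k,\infty}$ forces $\|\mathbf{P}^{x_{-\infty,0}}-\mathbf{P}^{\tilde x_{-\infty,0}}\|_{\mathcal{F}_{k,\infty}} < 2$ for large $k$ (for $\mathbf{P}^-\otimes\mathbf{P}^-$-a.e.\ pair, via the triangle inequality through $\mathbf{P}$), and $\|\cdot\| < 2$ on a $\sigma$-field is precisely non-mutual-singularity there. The real work is $3\Rightarrow 2$, and here I would run the submartingale argument from the proof of Theorem \ref{thm:sloc02} essentially verbatim, reading ``$\mathbf{P}^x$'' as ``$\mathbf{P}^{x_{-\infty,0}}$'' and ``$\lambda$'' as ``$\mathbf{P}^-$.'' Define $\beta_k(x_{-\infty,0},\tilde x_{-\infty,0}) = \|\mathbf{P}^{x_{-\infty,0}}-\mathbf{P}^{\tilde x_{-\infty,0}}\|_{\mathcal{F}_{k,\infty}}$ and $\beta = \|\cdot\|_{\mathcal{A}}$, both measurable because $(E,\mathcal{E})$ Polish is countably generated. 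The key structural inputs are: (i) by stationarity of $\mathbf{P}$, the process $k\mapsto\beta(X_{-\infty,0}\circ\Theta^k,\, \tilde X_{-\infty,0}\circ\Theta^k)$ under $\mathbf{P}^{x_{-\infty,0}}\otimes\mathbf{P}^{\tilde x_{-\infty,0}}$ (equivalently the shifted conditional laws) is a bounded stationary submartingale — the submartingale property coming from the tower property for conditional expectations in place of the Chapman–Kolmogorov step $\beta_n\le (P\otimes P)\beta_{n-1}$; (ii) bounded stationary submartingales are a.s.\ constant along the orbit, by the same $L^1$-convergence-plus-stationarity trick used in Theorem \ref{thm:sloc02}. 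Then, arguing by contradiction exactly as there: if statement 2 fails there is a tail event $A\in\mathcal{A}$ and a pair $(x_{-\infty,0},\tilde x_{-\infty,0})$ in a full-measure set with either $0<\mathbf{P}^{x_{-\infty,0}}(A)<1$ or $\mathbf{P}^{x_{-\infty,0}}(A)\ne\mathbf{P}^{\tilde x_{-\infty,0}}(A)$; martingale convergence (now Lévy's upward theorem for the filtration $\mathcal{F}_{k,\infty}$ generated as $k\to-\infty$, or the analogous shift argument on tail-measurable $f = \mathbf{1}_A - \mathbf{1}_{A^c}$) forces $\beta(x_{-\infty,0},\tilde x_{-\infty,0}) = 2$, contradicting $\beta(x_{-\infty,0},\tilde x_{-\infty,0}) = \beta(\text{shift}) < 2$ guaranteed by statement 3 together with the a.s.-constancy along the orbit.

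The main obstacle I anticipate is handling the time-reversal cleanly: in Theorem \ref{thm:sloc02} the Markov property is what makes $\mathbf{E}^{X_n}[f\circ\Theta^{-n}] = \mathbf{E}^\mu[f\mid\mathcal{F}_{0,n}]$ and what gives $\beta_n\le(P\otimes P)\beta_{n-1}$; here there is no Markov property, so I must instead lean entirely on stationarity of $\mathbf{P}$ and on the fact that $\mathbf{P}^{x_{-\infty,0}}$ is by construction a version of $\mathbf{P}[\,\cdot\mid\mathcal{F}_-]$. The submartingale inequality should follow from: for $A\in\mathcal{F}_{k,\infty}$, $\mathbf{E}^{x_{-\infty,0}}[\mathbf{1}_A] = \mathbf{E}^{x_{-\infty,0}}[\mathbf{P}[A\mid\mathcal{F}_{-\infty,1}]]$, and $\mathbf{P}[\,\cdot\mid\mathcal{F}_{-\infty,1}]$ is (by stationarity) a shifted copy of $\mathbf{P}^{\cdot}$, so Jensen applied to the conditional expectation over the one extra coordinate $X_1$ yields $\beta_k \le$ (average over $X_1,\tilde X_1$ of $\beta_{k}$ of the shifted pair) — one must be careful to index shifts so that the extra coordinate is the one being integrated out, exactly mirroring the ``$P(x,dz)$'' integration in Theorem \ref{thm:sloc02}. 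Once this bookkeeping is set up correctly, every remaining step is a transcription of the proof of Theorem \ref{thm:sloc02}, so I would phrase the write-up as ``the proof is identical to that of Theorem \ref{thm:sloc02} upon replacing the Chapman–Kolmogorov decomposition by the tower property and using stationarity of $\mathbf{P}$,'' and only spell out the disintegration identity for $1\Leftrightarrow 2$ in full.
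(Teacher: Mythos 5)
Your plan is correct, and for the equivalence $1\Leftrightarrow 2$ it is exactly the paper's argument: the identity you write is Lemma \ref{lem:margtv} (note its nontrivial half, the inequality ``$\ge$'', is not just ``the $\mathcal{F}_-$-components match'' --- it needs the measurable optimal set coming from a measurable version of the Radon--Nikodym density between kernels), after which boundedness and monotonicity in $k$ give the $L^1$/a.e.\ equivalence. Where you diverge from the paper is in the treatment of $2\Leftrightarrow 3$: you assert ``here there is no Markov property'' and propose to re-run the whole submartingale proof of Theorem \ref{thm:sloc02} by hand, replacing Chapman--Kolmogorov by the tower property and tracking shifts carefully. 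The paper avoids all of that bookkeeping by observing that the history process $Z_k=X_{-\infty,k}$ \emph{is} a Markov chain --- with kernel $Q(z,A)=\mathbf{P}^z[(z,X_1)\in A]$ and invariant probability $\mathbf{P}^-$ by stationarity --- so Theorem \ref{thm:sloc02} applies verbatim with $\mathcal{E}^0$ the $\sigma$-field generated by the time-zero coordinate of $E^{\mathbb{Z}_-}$; the local $\sigma$-fields $\mathcal{F}^0_{n,\infty}$ for the $Z$-chain are then exactly $\mathcal{F}_{n,\infty}$, and conditions 2 and 3 of the corollary are literally conditions 1 and 3 of that theorem. Your by-hand transcription reconstructs the same computations (the tower-property step is precisely the Chapman--Kolmogorov step for $Q$, and the shift identity you need is the Markov property of the history chain, which spares you the null-set care you would otherwise have to take with versions of $\mathbf{P}^{x_{-\infty,0}}$), so nothing in your plan fails; it is simply longer, and the indexing slip $\beta_k$ versus $\beta_{k-1}$ for the shifted pair, which you flag yourself, disappears once the Markovization is used.
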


We remark that the notation here was chosen in direct analogy with the
usual notation for Markov chains, and should be thought of in this spirit:
just as $\mathbf{P}^x$ denotes the law
of a Markov chain started at the point $x$,
$\mathbf{P}^{x_{-\infty,0}}$ denotes the law of a non-Markovian process
given the initial history $x_{-\infty,0}$.

The proof of Corollary~\ref{cor:absreg} requires a basic property of the
total variation distance that we state here as a lemma for future
reference.

\begin{lem}
\label{lem:margtv}
Let $H_1,H_2$ be Polish spaces, and let $X_i(x_1,x_2)=x_i$.
Let $\mathbf{R},\mathbf{R}'$ be probabilities on $H_1\times H_2$.
If $\mathbf{R}[X_1\in\cdot ]=\mathbf{R}'[X_1\in\cdot ]$, then
\[
\bigl\|\mathbf{R}-\mathbf{R}'\bigr\| = \mathbf{E}_{\mathbf{R}} \bigl[\bigl\|
\mathbf{R}[X_2\in\cdot |X_1]- \mathbf{R}'[X_2
\in\cdot |X_1]\bigr\| \bigr]. %
\]
\end{lem}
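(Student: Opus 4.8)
The statement to prove is Lemma \ref{lem:margtv}: when two probabilities $\mathbf{R},\mathbf{R}'$ on a product of Polish spaces $H_1\times H_2$ have the same first marginal, their total variation distance equals the $\mathbf{R}$-average of the total variation distance of their conditional laws given $X_1$. The plan is to pass from total variation as a supremum over events to its dual (or "$L^1$") representation, disintegrate both measures over the common first marginal, and then recognize the integrand.

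First I would use the standard identity $\|\mathbf{R}-\mathbf{R}'\| = \sup\{\mathbf{E_R}[g]-\mathbf{E}_{\mathbf{R}'}[g] : g \text{ measurable}, \|g\|_\infty\le 1\}$ (equivalently, twice the supremum over events, or the $L^1$-norm of the density difference with respect to $\mathbf{R}+\mathbf{R}'$); this is the form in which the factor of $2$ in the paper's definition is absorbed cleanly. Since $H_1,H_2$ are Polish, I can fix a common version of the first marginal $\mathbf{Q}=\mathbf{R}[X_1\in\cdot\,]=\mathbf{R}'[X_1\in\cdot\,]$ and regular conditional probabilities $K(x_1,\cdot)=\mathbf{R}[X_2\in\cdot\,|X_1=x_1]$ and $K'(x_1,\cdot)=\mathbf{R}'[X_2\in\cdot\,|X_1=x_1]$, so that $\mathbf{R}(dx_1\,dx_2)=\mathbf{Q}(dx_1)K(x_1,dx_2)$ and similarly for $\mathbf{R}'$. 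The measurability of $x_1\mapsto\|K(x_1,\cdot)-K'(x_1,\cdot)\|$ is exactly the fact quoted from \cite[Lemma 2.4]{vH09} in the preamble, so the right-hand side $\mathbf{E_R}[\|K(X_1,\cdot)-K'(X_1,\cdot)\|] = \int \|K(x_1,\cdot)-K'(x_1,\cdot)\|\,\mathbf{Q}(dx_1)$ is well-defined.

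For the inequality "$\le$": given any $g$ with $\|g\|_\infty\le 1$, Fubini gives $\mathbf{E_R}[g]-\mathbf{E}_{\mathbf{R}'}[g]=\int\mathbf{Q}(dx_1)\big(\int g(x_1,x_2)\,K(x_1,dx_2)-\int g(x_1,x_2)\,K'(x_1,dx_2)\big)$, and the inner difference is pointwise bounded by $\|K(x_1,\cdot)-K'(x_1,\cdot)\|$ by the dual characterization applied on $H_2$ for each fixed $x_1$; integrating against $\mathbf{Q}$ and taking the supremum over $g$ yields the bound. For the reverse inequality "$\ge$", I would produce a near-optimal test function by selection: for each $x_1$ there is a set (or a $\pm1$-valued function) witnessing the total variation on $H_2$ within $\varepsilon$; a measurable-selection argument — or, more elementarily, taking $g(x_1,x_2)=\mathrm{sign}\big(\frac{dK(x_1,\cdot)}{d\mu_{x_1}}-\frac{dK'(x_1,\cdot)}{d\mu_{x_1}}\big)$ with $\mu_{x_1}=K(x_1,\cdot)+K'(x_1,\cdot)$, whose joint measurability follows from that of the disintegration — gives a single measurable $g$ on $H_1\times H_2$ with $\|g\|_\infty\le 1$ for which $\mathbf{E_R}[g]-\mathbf{E}_{\mathbf{R}'}[g]$ equals (or approximates within $\varepsilon$) the desired integral. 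Taking $\varepsilon\downarrow 0$ closes the argument.

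**Main obstacle.** The only genuinely delicate point is the joint measurability needed for the "$\ge$" direction: one must know that the optimal (or $\varepsilon$-optimal) $H_2$-test function can be chosen to depend measurably on $x_1$. The cleanest route is to invoke the existence of a jointly measurable disintegration on Polish spaces (so that densities against $K(x_1,\cdot)+K'(x_1,\cdot)$ are jointly measurable, and hence so is the sign function above); alternatively one can cite a measurable selection theorem. This is a routine but essential technical step; everything else is Fubini and the elementary dual description of total variation.
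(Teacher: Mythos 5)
Your proof is correct and essentially coincides with the paper's: the paper likewise disintegrates over the common first marginal, gets one inequality by bounding the conditional difference pointwise over arbitrary events, and obtains the converse by producing a measurable set $B\subseteq H_1\times H_2$ via the measurable Radon--Nikodym density between kernels \cite[V.58]{DM82}, which is exactly the jointly measurable sign-of-density witness you describe. Your use of test functions $g$ with $\|g\|_\infty\le 1$ instead of indicator sets is only a cosmetic difference.
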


\begin{pf}
By the definition of the total variation distance, we have
\[
\bigl\|\mathbf{R}[X_2\in\cdot |X_1]- \mathbf{R}'[X_2
\in\cdot |X_1]\bigr\| \ge 2 \bigl\{\mathbf{R}[A|X_1]-
\mathbf{R}'[A|X_1] \bigr\} %
\]
for every measurable $A\subseteq H_1\times H_2$. Taking the expectation
on both sides and using
$\mathbf{R}[X_1\in\cdot ]=\mathbf{R}'[X_1\in\cdot ]$, we obtain the
lower bound
\[
\bigl\|\mathbf{R}-\mathbf{R}'\bigr\| = 2\sup_A \bigl
\{\mathbf{R}(A)-\mathbf{R}'(A) \bigr\} \le \mathbf{E}_{\mathbf{R}}
\bigl[\bigl\|\mathbf{R}[X_2\in\cdot |X_1]-
\mathbf{R}'[X_2\in\cdot |X_1]\bigr\| \bigr].
\]
But by the existence of
a measurable version of the Radon--Nikodym density
between kernels \cite{DM82}, Theorem~V.58, there exists a measurable
$B\subseteq H_1\times H_2$ such that
\[
\bigl\|\mathbf{R}[X_2\in\cdot |X_1]- \mathbf{R}'[X_2
\in\cdot |X_1]\bigr\| = 2 \bigl\{\mathbf{R}[B|X_1]-
\mathbf{R}'[B|X_1] \bigr\}. %
\]
Proceeding as above yields the converse inequality.
\end{pf}

\begin{pf*}{Proof of Corollary~\ref{cor:absreg}}
Applying Lemma~\ref{lem:margtv} above to the measures
$\mathbf{P}[X_{-\infty,0},X_{k,\infty}\in\cdot ]$ and
$\mathbf{P}[X_{-\infty,0}\in\cdot ]\otimes
\mathbf{P}[X_{k,\infty}\in\cdot ]$, it follows directly that
the process $(X_k)_{k\in\mathbb{Z}}$
is absolutely regular if and only if
\[
\mathbf{E} \bigl[\bigl\| \mathbf{P}[X_{k,\infty}\in\cdot |\mathcal{F}_-]-
\mathbf{P}[X_{k,\infty}\in\cdot ]\bigr\| \bigr] \mathop{\longrightarrow}^{k\to\infty} 0.
\]
But as $\|\mathbf{P}[X_{k,\infty}\in\cdot |\mathcal{F}_-]-
\mathbf{P}[X_{k,\infty}\in\cdot ]\|$ is pointwise decreasing in $k$,
the equivalence between conditions 1 and 2 follows immediately.

Now define the $E^{\mathbb{Z}_-}$-valued process $Z_k =
X_{-\infty,k}$. Then $(Z_k)_{k\in\mathbb{Z}}$ is clearly a Markov chain
with transition kernel $Q(z,A)=\mathbf{P}^z[(z,X_1)\in A]$ and
invariant probability $\mathbf{P}^-$.
We apply Theorem~\ref{thmm:sloc02} to the Markov chain
$(Z_k)_{k\in\mathbb{Z}}$, where $\mathcal{E}^0$ in Theorem~\ref{thmm:sloc02}
is the $\sigma$-field generated by the first coordinate of
$E^{\mathbb{Z}_-}$. This yields immediately the equivalence between
conditions 2 and 3.
\end{pf*}

While conditions 1 and 2 of Corollary~\ref{cor:absreg} are standard,
condition 3 appears at first sight to be substantially weaker: all that
is needed is that, for almost every pair of initial histories, we can
couple the future evolutions with nonzero success probability. This is
reminiscent to the corresponding result for Harris chains, and one could
argue that absolutely regular sequences provide a natural generalization
of the Harris theory to non-Markov processes. In this spirit, Berbee
\cite{Ber86} has shown that absolutely regular sequences admit a
decomposition into cyclic classes much like in the Markov setting.

The elementary observation used in the proof of Corollary~\ref{cor:absreg} is that any non-Markov process $X_k$ can be made Markov
by considering the history process $Z_k=X_{-\infty,k}$. However, the
process $Z_k$ is highly degenerate: its transition probabilities are
mutually singular for any distinct pair of initial conditions. For this
reason, the classical Harris theory is of no use in investigating the
ergodicity of non-Markov processes; the \emph{local} nature of the
zero--two laws developed in Section~\ref{sec:loc02} is the key to
obtaining nontrivial results.

\begin{rem}
Along the same lines, one can also obtain a counterpart of
Theorem~\ref{thmm:loc02} for non-Markov processes.
The latter is useful for the investigation of delay equations
with infinite memory. As no new ideas are involved, we leave
the formulation of such a result to the reader.
\end{rem}

\subsection{Weak convergence and asymptotic coupling}
\label{sec:ergwk}

In the previous sections, we have employed the local zero--two laws
directly to obtain ergodic properties in the total variation distance.
However, even local total variation convergence is still too strong a
requirement in many cases of interest. In this section, we introduce a
technique that allows us to deduce ergodic properties of weak
convergence type from the local zero--two laws. This significantly
extends the range of applicability of our techniques.

Throughout this section, we adopt the same setting as in Section~\ref{sec:loc02}. We will assume in addition that the state space $E$ is
Polish and is endowed with its Borel $\sigma$-field $\mathcal{E}$ and a
complete metric $d$. Denote by $U_b(E)$ the uniformly continuous and
bounded functions on $E$, and let $\mathrm{Lip}(E)$ be the
class of functions $f\in U_b(E)$ such that $\|f\|_\infty\le1$ and
$|f(x)-f(y)|\le d(x,y)$ for all $x,y\in E$. Let $\mathcal{M}(E)$ be the
space of signed finite measures on $E$, and define the
bounded-Lipschitz norm $\|\varrho\|_{\mathrm{BL}}=\sup_{f\in\mathrm{Lip}(E)}|\varrho f|$ for
$\varrho\in\mathcal{M}(E)$. We recall for future reference that
$x\mapsto\|K(x,\cdot)-K'(x,\cdot)\|_{\mathrm{BL}}$ is measurable when
$K,K'$ are finite kernels; see, for example, \cite{vH09uo}, Lemma A.1.

A \emph{coupling} of two probability measures
$\mathbf{P}_1,\mathbf{P}_2$ on $\Omega$ is a probability measure
$\mathbf{Q}$ on $\Omega\times\Omega$ such that the first marginal of
$\mathbf{Q}$ coincides with $\mathbf{P}_1$ and the second marginal
coincides with $\mathbf{P}_2$. Let us denote the family of all couplings
of $\mathbf{P}_1,\mathbf{P}_2$ by $\mathcal{C}(\mathbf{P}_1,\mathbf{P}_2)$.
To set the stage for our result, let us
recall the coupling characterization of the total variation distance
\cite{Lin02}, page 19:
\[
\|\mathbf{P}_1-\mathbf{P}_2\|_{\mathcal{F}_{n,\infty}} = 2\min
\bigl\{\mathbf{Q} \bigl[X_{n,\infty}\ne X_{n,\infty}'
\bigr] \dvtx \mathbf{Q}\in\mathcal{C}(\mathbf{P}_1,
\mathbf{P}_2) \bigr\}. %
\]
Consider for simplicity the classical zero--two law (Theorem~\ref{thmm:loc02} for $\mathcal{E}^0=\mathcal{E}$).
Its basic condition reads: there exists $\alpha>0$ such that
\[
\forall x,x'\in E, \exists n\ge0 \mbox{ such that} \qquad\bigl\|
\mathbf{P}^x-\mathbf{P}^{x'}\bigr\|_{\mathcal{F}_{n,\infty}}\le2-\alpha.
\]
By the coupling characterization of the total variation distance,
this condition can be equivalently stated as follows:
there exists $\alpha>0$ such that
\[
\forall x,x'\in E, \exists \mathbf{Q}\in\mathcal{C} \bigl(
\mathbf{P}^x,\mathbf{P}^{x'} \bigr) \mbox{ such that}\qquad
\mathbf{Q} \Biggl[\sum_{n=0}^\infty
\mathbf{1}_{X_n\ne X_n'}<\infty \Biggr]\ge\alpha. %
\]
The message of the following theorem is that if one replaces the
discrete distance $\mathbf{1}_{X_n\ne X_n'}$
by the topological distance $d(X_n,X_n')$,
one obtains an ergodic theorem with respect to the bounded-Lipschitz
(rather than total variation) distance. This is a much weaker assumption:
it is not necessary to construct an \emph{exact} coupling where
$X_n,X_n'$ eventually coincide with positive probability,
but only an \emph{asymptotic} coupling where $X_n,X_n'$ converge toward
each other. The latter can often be accomplished even in degenerate
situations.

\begin{thmm}[(Weak-* ergodicity)]
\label{thmm:wk02}
Suppose there exists $\alpha>0$ so that
\[
\forall x,x'\in E, \exists \mathbf{Q}\in\mathcal{C} \bigl(
\mathbf{P}^x,\mathbf{P}^{x'} \bigr) \mbox{ such that}\qquad
\mathbf{Q} \Biggl[\sum_{n=0}^\infty d
\bigl(X_n,X_n' \bigr)^2<\infty
\Biggr]\ge\alpha. %
\]
Then the Markov chain is weak-* ergodic in the sense that
\[
\bigl\|\mu P^n-\nu P^n\bigr\|_{\mathrm{BL}}\mathop{\longrightarrow}^{n\to
\infty}0 \qquad\mbox{for every }\mu,\nu\in\mathcal{P}(E). %
\]
\end{thmm}

It is interesting to compare Theorem~\ref{thmm:wk02} to the weak-* ergodic
theorems obtained in \cite{HMS11}, Section~2.2, in terms of asymptotic
coupling. In contrast to those results, Theorem~\ref{thmm:wk02} requires no specific recurrence structure, Markovian
couplings, control on the coupling probability $\alpha$ as a function
of $x,x'$ or even the existence of an invariant probability.
On the other hand, Theorem~\ref{thmm:wk02} requires the
asymptotic coupling to converge sufficiently rapidly so that $\sum d(X_n,X_n')^2<\infty$ (this is not a serious issue in most
applications), while the results in \cite{HMS11} are in principle
applicable to couplings with an arbitrarily slow convergence rate. These
results are therefore complementary.

However, it should be emphasized that the feature of Theorem~\ref{thmm:wk02} that is of key importance for our purposes is that its
proof reduces the problem to the local zero--two law of Section~\ref{sec:loc02}. Using this technique, we can therefore extend the
applicability of purely measure-theoretic results that are based on
zero--two laws to a wide class of weak-* ergodic Markov chains. This
idea will be crucial to establishing conditional ergodicity in
degenerate infinite-dimensional models (see Section~\ref{sec:examples}
for examples).

\begin{pf*}{Proof of Theorem~\ref{thmm:wk02}}
Let $(\bar E,\mathcal{\bar E})=(E\times\mathbb{R},\mathcal{E}\otimes
\mathcal{B}(\mathbb{R}))$. Consider the $\bar E$-valued process
$(Z_n)_{n\ge0}$ (defined on its canonical probability space) such that
$Z_n=(X_n,\xi_n)$, where $(\xi_n)_{n\ge0}$ is an i.i.d. sequence of
standard Gaussian random variables independent of the Markov chain
$(X_n)_{n\ge0}$.\break Clearly $(Z_n)_{n\ge0}$ is itself a Markov chain.
Given $f\in\mathrm{Lip}(E)$, we will apply Theorem~\ref{thmm:loc02} to $(Z_n)_{n\ge0}$ with the local $\sigma$-field
$\mathcal{\bar E}^0=\sigma\{g\}$, $g(x,y)=f(x)+y$.

We begin by noting a standard estimate.

\begin{lem}
\label{lem:hell}
Let $(\xi_n)_{n\ge0}$ be an i.i.d. sequence of standard Gaussian
random variables, and let $(a_n)_{n\ge0}$ and
$(b_n)_{n\ge0}$ be real-valued sequences. Then
\[
\bigl\|\mathbf{P} \bigl[(a_n+\xi_n)_{n\ge0}\in\cdot
\bigr]- \mathbf{P} \bigl[(b_n+\xi_n)_{n\ge0}\in
\cdot \bigr]\bigr\|^2 \le \sum_{n=0}^\infty(b_n-a_n)^2.
\]
\end{lem}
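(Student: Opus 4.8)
The plan is to pass through the Hellinger affinity, which tensorizes over product measures and is explicitly computable for Gaussians. Write $\mathbf{P}_a=\mathbf{P}[(a_n+\xi_n)_{n\ge 0}\in\cdot\,]$ and $\mathbf{P}_b=\mathbf{P}[(b_n+\xi_n)_{n\ge 0}\in\cdot\,]$; these are the product measures $\bigotimes_{n\ge 0}N(a_n,1)$ and $\bigotimes_{n\ge 0}N(b_n,1)$ on $\mathbb{R}^{\mathbb{N}}$. If $\sum_n(b_n-a_n)^2=\infty$ there is nothing to prove, so we may assume this sum is finite.

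First I would record the general bound relating total variation to the Hellinger affinity $\varrho(\mu,\nu)=\int\sqrt{d\mu\,d\nu}$. Writing $f,g$ for the densities of $\mu,\nu$ with respect to a common dominating measure $\lambda$ and applying Cauchy--Schwarz to $|f-g|=|\sqrt f-\sqrt g|\,|\sqrt f+\sqrt g|$ gives
$$
\|\mu-\nu\| = \int|f-g|\,d\lambda \le
\Big(\int(\sqrt f-\sqrt g)^2\,d\lambda\Big)^{1/2}
\Big(\int(\sqrt f+\sqrt g)^2\,d\lambda\Big)^{1/2}
= \sqrt{2(1-\varrho)}\,\sqrt{2(1+\varrho)},
$$
using the normalization $\|\mu-\nu\|=\int|f-g|\,d\lambda$ employed in the paper, so that $\|\mu-\nu\|^2\le 4(1-\varrho(\mu,\nu)^2)$.

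Next comes tensorization together with an explicit Gaussian computation. For the finite-dimensional projections onto the first $N+1$ coordinates the affinity factorizes, $\varrho\big(\bigotimes_{n\le N}N(a_n,1),\bigotimes_{n\le N}N(b_n,1)\big)=\prod_{n\le N}\varrho(N(a_n,1),N(b_n,1))$, and completing the square in the Gaussian integral yields $\varrho(N(a,1),N(b,1))=e^{-(a-b)^2/8}$. Combining with the previous step and using $1-e^{-x}\le x$ for $x\ge 0$,
$$
\|\mathbf{P}_a-\mathbf{P}_b\|_{\mathcal{F}_{0,N}}^2
\le 4\Big(1-e^{-\frac14\sum_{n\le N}(b_n-a_n)^2}\Big)
\le \sum_{n\le N}(b_n-a_n)^2 \le \sum_{n\ge 0}(b_n-a_n)^2.
$$

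Finally I would remove the truncation. Since $\mathcal{F}_{0,N}\uparrow\mathcal{F}$, the quantity $\|\mathbf{P}_a-\mathbf{P}_b\|_{\mathcal{F}_{0,N}}$ is nondecreasing in $N$ and converges to $\|\mathbf{P}_a-\mathbf{P}_b\|$ — the increasing counterpart of the martingale fact recalled at the end of Section \ref{sec:zerotwo}, or equivalently the elementary fact that total variation is determined by its values on any algebra generating $\mathcal{F}$. Letting $N\to\infty$ in the displayed bound gives the claim. There is no genuine obstacle here, since this is a standard estimate; the only points deserving a word of care are the normalization constant in the Hellinger--total variation inequality and the passage from finite- to infinite-dimensional distributions, both routine.
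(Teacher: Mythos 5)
Your proof is correct and follows essentially the same route as the paper's: Hellinger affinity, tensorization over the product structure, the Gaussian computation $\varrho(N(a,1),N(b,1))=e^{-(a-b)^2/8}$, the bound $1-e^{-x}\le x$, and a monotone passage from finite-dimensional projections to the infinite product. The only cosmetic difference is that you derive the total-variation--Hellinger inequality directly via Cauchy--Schwarz (getting the slightly sharper constant $4(1-\varrho^2)$), whereas the paper cites the inequality $\|\mu_1\otimes\cdots\otimes\mu_n-\nu_1\otimes\cdots\otimes\nu_n\|^2\le 8[1-\prod_k H(\mu_k,\nu_k)]$ from Shiryaev; both yield the stated bound.
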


\begin{pf}
Denote by $H(\mu,\nu)=\int\sqrt{d\mu\, d\nu}$ the Kakutani--Hellinger
affinity between probability measures $\mu,\nu$.
We recall that \cite{Shi96}, Section III.9,
\[
\|\mu_1\otimes\cdots\otimes\mu_n- \nu_1
\otimes\cdots\otimes\nu_n\|^2 \le 8 \Biggl[1-\prod
_{k=1}^nH(\mu_k,
\nu_k) \Biggr]. %
\]
But a direct computation shows that $H(N(a,1),N(b,1))=
\exp(-(b-a)^2/8)$.
The result now follows directly using $1-e^{-x}\le x$ and
$n\to\infty$.
\end{pf}

Fix $x,x'\in E$ and $f\in\mathrm{Lip}(E)$, and choose $\mathbf{Q}\in
\mathcal{C}(\mathbf{P}^x,\mathbf{P}^{x'})$ as in the statement of the
theorem. By assumption, we can choose
$n\ge0$ such that
\[
\mathbf{Q} \Biggl[\sum_{k=n}^\infty d
\bigl(X_k,X_k' \bigr)^2\le
\frac{\alpha^2}{4} \Biggr]\ge\frac{3\alpha}{4}. %
\]
Let $F_n = f(X_n)+\xi_n$, and define for every real-valued sequence
$\mathbf{a}=(a_n)_{n\ge0}$ the measure
$\mu_{\mathbf{a}}=\mathbf{P}[(a_n+\xi_n)_{n\ge0}\in\cdot ]$. Then we
have for every $A\in\mathcal{B}(\mathbb{R})^{\mathbb{Z}_+}$
\[
\mathbf{P}^x[F_{n,\infty}\in A]- \mathbf{P}^{x'}[F_{n,\infty}
\in A] = \mathbf{E}_{\mathbf{Q}} \bigl[ \mu_{(f(X_k))_{k\ge n}}(A)-
\mu_{(f(X_k'))_{k\ge n}}(A) \bigr]. %
\]
Therefore, we obtain by Jensen's inequality and Lemma~\ref{lem:hell}
\begin{eqnarray*}
\bigl\|\mathbf{P}^x[F_{n,\infty}\in\cdot ]- \mathbf{P}^{x'}[F_{n,\infty}
\in\cdot ]\bigr\| &\le& \mathbf{E}_{\mathbf{Q}} \Biggl[ \Biggl(\sum
_{k=n}^\infty \bigl\{f(X_k)-f
\bigl(X_k' \bigr) \bigr\}^2
\Biggr)^{1/2} \wedge2 \Biggr]
\\
&\le& \mathbf{E}_{\mathbf{Q}} \Biggl[ \Biggl(\sum
_{k=n}^\infty d \bigl(X_k,X_k'
\bigr)^2 \Biggr)^{1/2}\wedge2 \Biggr]
\\
&\le& \frac{\alpha}{2} + 2 \biggl[1-\frac{3\alpha}{4} \biggr] = 2 - \alpha,
\end{eqnarray*}
where we have used the Lipschitz property of $f$.
Applying Theorem~\ref{thmm:loc02} as indicated at the beginning
of the proof, it follows that
\[
\bigl\|\mathbf{P}^\mu[F_{n,\infty}\in\cdot ]- \mathbf{P}^\nu[F_{n,\infty}
\in\cdot ]\bigr\|\mathop{\longrightarrow}^{n\to\infty}0 \qquad\mbox{for all }\mu,\nu\in\mathcal{P}(E).
\]
In particular, if we denote by $\xi\in\mathcal{P}(\mathbb{R})$ the
standard Gaussian measure, then
\[
\bigl\|\mu P^n f^{-1} * \xi- \nu P^n
f^{-1} * \xi\bigr\| \mathop{\longrightarrow}^{n\to\infty}0 \qquad\mbox{for all }\mu,\nu\in
\mathcal{P}(E) %
\]
(here $*$ denotes convolution).
We claim that this implies
\[
\bigl|\mu P^n f - \nu P^n f\bigr| \mathop{\longrightarrow}^{n\to\infty}0\qquad
\mbox{for all }\mu,\nu\in\mathcal{P}(E)\mbox{ and } f\in\mathrm{Lip}(E).
\]
Indeed, if we assume the contrary, then there exists
for some
$f\in\mathrm{Lip}(E)$ and $\mu,\nu\in\mathcal{P}(E)$ a
subsequence $m_n\uparrow\infty$ so that
$\inf_n|\mu P^{m_n} f - \nu P^{m_n} f|>0$. As $f$ takes values
in the compact interval $[-1,1]$, we can extract a further subsequence
$k_n\uparrow\infty$ so that $\mu P^{k_n}f^{-1}\to\mu_\infty$
and $\nu P^{k_n}f^{-1}\to\nu_\infty$ in the weak convergence topology
for some
$\mu_\infty,\nu_\infty\in\mathcal{P}([-1,1])$, and clearly
$\mu_\infty\ne\nu_\infty$ by construction. On the other hand, as
$\|\mu P^n f^{-1} * \xi- \nu P^n f^{-1} * \xi\|\to0$, we must have
$\mu_\infty*\xi=\nu_\infty*\xi$. This entails a contradiction, as the
Fourier transform of $\xi$ vanishes nowhere [so convolution by
$\xi$ is injective on $\mathcal{P}(\mathbb{R})$].

We finally claim that in fact
\[
\bigl\|\mu P^n - \nu P^n\bigr\|_{\mathrm{BL}} \mathop{\longrightarrow}^{n\to
\infty}0 \qquad\mbox{for all }\mu,\nu\in\mathcal{P}(E). %
\]
Indeed, we have shown above that the signed measure
$\varrho_n = \mu P^n-\nu P^n$ converges to zero pointwise on
$\mathrm{Lip}(E)$. As any function in $U_b(E)$ can be approximated
uniformly by bounded Lipschitz functions, this implies that
$\varrho_n\to0$ in the $\sigma(\mathcal{M}(E),U_b(E))$-topology.
A result of Pachl \cite{Pac79}, Theorem~3.2, now implies that
$\|\varrho_n\|_{\mathrm{BL}}\to0$, which concludes the proof of the
theorem.
\end{pf*}

For simplicity, we have stated the assumption of Theorem~\ref{thmm:wk02}
so that an asymptotic coupling of the entire state $X_k$ of the Markov
chain is required. The reader may easily adapt the proof of Theorem~\ref{thmm:wk02} to require only asymptotic couplings of
finite-dimensional projections as in the local mixing setting (Corollary~\ref{cor:locmix}), or to deduce a variant of this result in the setting
of non-Markov processes. However, let us emphasize that even in the
setting of Theorem~\ref{thmm:wk02}, where the asymptotic coupling is at
the level of the Markov process $X_k$, the ``smoothed'' process
$F_k=f(X_k)+\xi_k$ that appears in the proof is non-Markovian. Therefore,
the \emph{local} zero--two law is essential in order to obtain weak-*
ergodicity results from the total variation theory.

The stationary counterpart to Theorem~\ref{thmm:wk02} also follows along
the same lines. However, here a small modification is needed at
the end of the proof.

\begin{thmm}[(Stationary weak-* ergodicity)]
\label{thmm:swk02}
Let $\lambda$ be a $P$-invariant probability. Suppose that
for $\lambda\otimes\lambda$-a.e. $(x,x')\in E\times E$,
\[
\exists \mathbf{Q}\in\mathcal{C} \bigl(\mathbf{P}^x,
\mathbf{P}^{x'} \bigr) \mbox{ such that}\qquad \mathbf{Q} \Biggl[\sum
_{n=0}^\infty d \bigl(X_n,X_n'
\bigr)^2<\infty \Biggr]>0. %
\]
Then the Markov chain is a.e. weak-* ergodic in the sense that
\[
\bigl\|P^n(x,\cdot)-\lambda\bigr\|_{\mathrm{BL}}\mathop{\longrightarrow}^{n\to\infty}0
\qquad\mbox{for }\lambda\mbox{-a.e. } x\in E. %
\]
\end{thmm}

\begin{pf}
Repeating the proof of Theorem~\ref{thmm:wk02} using
Theorem~\ref{thmm:sloc02} instead of Theorem~\ref{thmm:loc02} yields the
following: for every $f\in\mathrm{Lip}(E)$, we have
\[
\bigl|P^nf(x) - \lambda f\bigr| \mathop{\longrightarrow}^{n\to\infty}0\qquad \mbox{for }\lambda
\mbox{-a.e. }x. %
\]
We would like to extend this to convergence in the bounded-Lipschitz
norm. This does not follow immediately, however, as the $\lambda$-null
set of
$x\in E$ for which the convergence fails may depend on $f\in\mathrm{Lip}(E)$.

Fix $\varepsilon>0$. Let $K\subseteq E$ be a compact set such that
$\lambda(K)\ge1-\varepsilon$, and define
$\chi(x)=(1-\varepsilon^{-1}d(x,K))_+$. Then we can estimate
\begin{eqnarray*}
\bigl\|P^n(x,\cdot)-\lambda\bigr\|_{\mathrm{BL}} &\le& \sup
_{f\in\mathrm{Lip}(E)} \bigl|P^n(f\chi) (x)-\lambda(f\chi)\bigr| +
P^n(1-\chi) (x)+\lambda(1-\chi)
\\
&\le& \sup_{f\in\mathrm{Lip}(E)} \bigl|P^n(f\chi) (x)-\lambda(f\chi)\bigr| +
\bigl|P^n\chi(x)-\lambda\chi\bigr| + 2\varepsilon.
\end{eqnarray*}
By the Arzel\`a--Ascoli theorem, we can find a finite number
of functions $f_1,\ldots,f_k\in\mathrm{Lip}(E)$ such that
$\sup_{f\in\mathrm{Lip}(E)}\min_i\|f_i\mathbf{1}_K-f\mathbf{1}_K
\|_\infty\le\varepsilon$. But note that
$|f(x)-g(x)|\le2\varepsilon+
\|f\mathbf{1}_K-g\mathbf{1}_K\|_\infty$ whenever $d(x,K)\le\varepsilon$
and $f,g\in\mathrm{Lip}(E)$. Therefore,
$\sup_{f\in\mathrm{Lip}(E)}\min_i\|f_i\chi-f\chi\|_\infty\le
3\varepsilon$,
and we have
\[
\bigl\|P^n(x,\cdot)-\lambda\bigr\|_{\mathrm{BL}} \le \max
_{i=1,\ldots,k}\bigl |P^n(f_i\chi) (x)-
\lambda(f_i\chi)\bigr| + \bigl|P^n\chi(x)-\lambda\chi\bigr| + 8
\varepsilon. %
\]
As the quantity on the right-hand side depends only on a finite
number of bounded Lipschitz functions $\chi,f_1\chi,\ldots,f_k\chi$,
we certainly have
\[
\limsup_{n\to\infty} \bigl\|P^n(x,\cdot)-\lambda
\bigr\|_{\mathrm{BL}} \le8\varepsilon \qquad\mbox{for }\lambda\mbox{-a.e. }x. %
\]
But $\varepsilon>0$ was arbitrary, so the proof is complete.
\end{pf}

\begin{rem}
The tightness argument used here is in fact more elementary than the
result of Pachl \cite{Pac79} used in the proof of Theorem~\ref{thmm:wk02}. Note, however, that Theorem~\ref{thmm:wk02} does not even
require the existence of an invariant probability, so that tightness is
not guaranteed in that setting.
\end{rem}

\section{Conditional ergodicity}
\label{sec:conderg}

In the previous section, we have developed various measure-theoretic
ergodic theorems that are applicable in infinite-dimensional or
non-Markov settings. The goal of the present section is to develop a
\emph{conditional} variant of these ideas: given a stationary process
$(Z_k,Y_k)_{k\in\mathbb{Z}}$, we aim to understand when
$(Z_k)_{k\in\mathbb{Z}}$ is ergodic conditionally on
$(Y_k)_{k\in\mathbb{Z}}$. The conditional ergodic theory developed in
this section will be used in Section~\ref{sec:filter} below to prove
stability and ergodicity of nonlinear filters.

In Section~\ref{sec:cond02}, we first develop a conditional variant of
the zero--two laws of the previous section. In principle, this result
completely characterizes the conditional absolute regularity property of
$(Z_k)_{k\in\mathbb{Z}}$ given $(Y_k)_{k\in\mathbb{Z}}$. Unfortunately,
the equivalent conditions of the zero--two law are stated in terms of the
conditional distribution $\mathbf{P}[Z\in\cdot |Y]$: this
quantity is defined abstractly as a regular conditional probability, but
an explicit expression is almost never available. Therefore, in itself,
the conditional zero--two law is very difficult to use. In contrast, the
(unconditional) ergodic theory of $(Z_k,Y_k)_{k\in\mathbb{Z}}$ can typically
be studied by direct analysis of the underlying model.

The question that we aim to address is therefore that of inheritance: if
the unconditional process $(Z_k,Y_k)_{k\in\mathbb{Z}}$ is absolutely
regular, does this imply that the process is also conditionally
absolutely regular? In general, this is not the case even when the
process is Markov (cf. \cite{vH12}). However, under a suitable
\emph{nondegeneracy} requirement, we will be able to establish
inheritance of the absolute regularity property from the unconditional
to the conditional process (Section~\ref{sec:nondeg}). Using an
additional argument (Section~\ref{sec:weiz}), we will also deduce
conditional ergodicity given the one-sided process $(Y_k)_{k\ge0}$
rather than the two-sided process $(Y_k)_{k\in\mathbb{Z}}$, as will be
needed in Section~\ref{sec:filter} below.

The inheritance of the ergodicity property under conditioning was first
established in the Markov setting in \cite{vH09,TvH12}. For a Markov
process $(X_k)_{k\ge0}$, the condition of the zero--two law states that
for a.e. initial conditions $x,x'$, there exists $n\ge0$ such that
$P^n(x,\cdot)$ and $P^n(x',\cdot)$ are not mutually singular, while the
conditional zero--two law yields essentially the same condition where the
transition kernel $P(x,\cdot)=\mathbf{P}^x[X_1\in\cdot ]$ is replaced
by the conditional transition kernel $\tilde P(x,\cdot) =
\mathbf{P}^x[X_1\in\cdot |Y]$. The key idea in the proof
of inheritance was to show that the unconditional and conditional
transition kernels are equivalent $P(x,\cdot)\sim\tilde P(x,\cdot)$
a.e., which immediately yields equivalence of the conditions of the
unconditional and conditional zero--two laws. Unfortunately, such an
approach cannot work in the non-Markov setting, as here the corresponding
argument would require us to show the equivalence of
laws of the infinite future
$\mathbf{P}[Z_{k,\infty}\in\cdot |Z_{-\infty,0}]$ and
$\mathbf{P}[Z_{k,\infty}\in\cdot |Y,Z_{-\infty,0}]$.
Such an equivalence on the infinite time interval cannot hold except in
trivial cases (even in the Markov setting). We must therefore develop a
new method to establish inheritance of the conditions of the
unconditional and conditional zero--two laws that avoids the
Markov-specific arguments in~\cite{vH09,TvH12}.

Throughout this section, we adopt the same setting and notations as in
Section~\ref{sec:zerotwo}. Here, we will assume that $E=G\times F$ where
$G,F$ are Polish spaces, and we fix a stationary probability
$\mathbf{P}$ on $(\Omega,\mathcal{F})$. We denote the components of the
coordinate process as $X_n=(Z_n,Y_n)$. Thus
$(Z_n,Y_n)_{n\in\mathbb{Z}}$ is a stationary process in $G\times F$
defined on the canonical probability space $(\Omega,\mathcal{F},\mathbf{P})$.
Let
\[
\mathcal{F}^Z_{m,n} = \sigma\{Z_{m,n}\},\qquad
\mathcal{F}^Y_{m,n} = \sigma\{Y_{m,n}\},\qquad
\mathcal{F}^Z=\mathcal{F}^Z_{-\infty,\infty},\qquad
\mathcal{F}^Y=\mathcal{F}^Y_{-\infty,\infty}, %
\]
so that $\mathcal{F}_{m,n}=\mathcal{F}^Z_{m,n}\vee\mathcal{F}^Y_{m,n}$.
We also define the tail $\sigma$-field
\[
\mathcal{A}^Z = \bigcap_{n\ge0}
\mathcal{F}^Z_{n,\infty}. %
\]
For simplicity, we will write $Z=Z_{-\infty,\infty}$ and
$Y=Y_{-\infty,\infty}$. We introduce the convention that for a
sequence $z=(z_n)_{n\in\mathbb{Z}}$, we write $z_-=(z_n)_{n\le0}$.

\begin{rem}
In the remainder of the paper, all random variables that we
encounter will take values in Polish spaces. This ensures the existence
of regular conditional probabilities and the validity of the
disintegration theorem \cite{Kal02}, Chapter~6, which will be
exploited repeatedly in our proofs.
\end{rem}

\subsection{A conditional zero--two law}
\label{sec:cond02}

In this section, we establish a conditional counterpart to Corollary~\ref{cor:absreg} that characterizes the absolute regularity property of
$(Z_n)_{n\in\mathbb{Z}}$ conditionally on $(Y_n)_{n\in\mathbb{Z}}$.
While it is difficult to apply directly, this conditional zero--two law
plays a fundamental role in the study of conditional ergodicity to be
undertaken in the following sections.

In the following, we define the probability measure $\mathbf{Y}$
and fix versions of the regular conditional probabilities $\mathbf{P}_y$,
$\mathbf{P}_y^{z_-}$ as follows:
\[
\mathbf{Y}=\mathbf{P}[Y\in\cdot ],\qquad \mathbf{P}_Y = \mathbf{P} \bigl[
\cdot |\mathcal{F}^Y \bigr], \qquad\mathbf{P}_Y^{Z_-} =
\mathbf{P} \bigl[ \cdot |\mathcal{F}^Y\vee\mathcal{F}^Z_{-\infty,0}
\bigr]. %
\]
That is, $\mathbf{Y}$ denotes the law of the observed process,
$\mathbf{P}_y$ is the conditional law of the model given a fixed
observation sequence $y$, and $\mathbf{P}_y^{z-}$ is the conditional law
of the model given a fixed observation sequence $y$ and initial history
$z_-$.

We now define the probability $\mathbf{Q}$ on
$G^\mathbb{Z}\times G^\mathbb{Z}\times F^{\mathbb{Z}}$ as
\[
\int\mathbf{1}_A \bigl(z,z',y \bigr) \mathbf{Q}
\bigl(dz,dz',dy \bigr)= \int\mathbf{1}_A
\bigl(z,z',y \bigr) \mathbf{P}_y(Z\in dz)
\mathbf{P}_y \bigl(Z\in dz' \bigr)\mathbf{Y}(dy).
\]
Denote the coordinate process on
$G^\mathbb{Z}\times G^\mathbb{Z}\times F^{\mathbb{Z}}$
as $(Z_n,Z_n',Y_n)_{n\in\mathbb{Z}}$. Evidently $\mathbf{Q}$ is the
coupling of two copies of $\mathbf{P}$ such that the observations $Y$
coincide and $Z,Z'$ are conditionally independent given $Y$.

The following is the main result of this section.

\begin{thmm}[(Conditional 0--2 law)]
\label{thmm:cond02}
The following are equivalent:
\begin{enumerate}
\item For $\mathbf{Q}$-a.e. $(z,z',y)$, we have
\[
\bigl\|\mathbf{P}_y^{z_-}-\mathbf{P}_y^{z_-'}
\bigr\|_{\mathcal{F}^Z_{n,\infty}} \mathop{\longrightarrow}^{n\to\infty}0. %
\]
\item For $\mathbf{Q}$-a.e. $(z,z',y)$, we have
\[
\mathbf{P}_y^{z_-}(A)=\mathbf{P}_y^{z_-}(A)^2=
\mathbf{P}_y^{z_-'}(A) \qquad\mbox{for all }A\in
\mathcal{A}^Z. %
\]
\item For $\mathbf{Q}$-a.e. $(z,z',y)$, there exists $n\ge0$ such that
$\mathbf{P}_y^{z_-}$ and $\mathbf{P}_y^{z_-'}$ are not
mutually singular on $\mathcal{F}^Z_{n,\infty}$.
\end{enumerate}
\end{thmm}

\begin{rem}
Informally, the first condition of Theorem~\ref{thmm:cond02} states that
conditionally on the observation sequence $Y=y$, the future of the
unobserved process $Z$ after time $n$ becomes independent of its initial
history as $n\to\infty$.
Using Lemmas \ref{lem:margtv} and \ref{lem:weiz} below,
one can show that this condition
is also equivalent to the conditional absolute
regularity property
\[
\bigl\|\mathbf{P}[Z_{-\infty,0}, Z_{k,\infty}\in\cdot |Y]-
\mathbf{P}[Z_{-\infty,0}\in\cdot |Y]\otimes \mathbf{P}[Z_{k,\infty}\in
\cdot |Y]\bigr\| \mathop{\longrightarrow}^{k\to\infty}0, \qquad\mathbf{P}\mbox{-a.s.} %
\]
exactly as in Corollary~\ref{cor:absreg}. For our purposes,
however, the condition as stated in Theorem~\ref{thmm:cond02} will be most
convenient in the sequel.
\end{rem}

The proof of Theorem~\ref{thmm:cond02} is similar in spirit to that of
Theorem~\ref{thmm:sloc02}. However, care is needed in the handling
of regular conditional probabilities. We begin by establishing some
basic facts.

An elementary but important idea that will be used several times in the
sequel is the principle of repeated conditioning. This idea is trivial
in the setting of discrete random variables. Let $X_1,X_2,X_3$ be discrete
random variables under a probability $\mathbf{R}$, and define the
conditional probabilities $\mathbf{R}_{x_1}=\mathbf{R}[ \cdot |X_1=x_1]$
and $\mathbf{R}_{x_1,x_2}=\mathbf{R}[ \cdot |X_1=x_1,X_2=x_2]$. Then
\begin{eqnarray*}
\mathbf{R}_{x_1,x_2}[X_3=x_3] &=&
\frac{\mathbf{R}[X_1=x_1,X_2=x_2,X_3=x_3]}{
\mathbf{R}[X_1=x_1,X_2=x_2]}
\\
&= &\frac{\mathbf{R}[X_2=x_2,X_3=x_3|X_1=x_1]}{
\mathbf{R}[X_2=x_2|X_1=x_1]} \\
&=& \mathbf{R}_{x_1}[X_3=x_3|X_2=x_2].
\end{eqnarray*}
Thus conditioning on two variables can be achieved by conditioning first
on one variable, and then conditioning on the second variable under the
conditional distribution. The following lemma, taken from \cite{vW83}, pages~95--96,
extends this idea to the general setting of regular
conditional probabilities. As the proof is short but insightful,
we include it for completeness.

\begin{lem}
\label{lem:weiz}
Let $H_1,H_2,H_3$ be Polish spaces, let $\mathbf{R}$ be a probability
on $H_1\times H_2\times H_3$, and let
$X_i(x_1,x_2,x_3)=x_i$, $\mathcal{G}_i=\sigma\{X_i\}$. Choose
any versions of the regular conditional probabilities
$\mathbf{R}_{X_1}=\mathbf{R}[ \cdot |\mathcal{G}_1]$ and
$\mathbf{R}_{X_1,X_2}=\mathbf{R}[ \cdot |\mathcal{G}_1\vee
\mathcal{G}_2]$, and let $\mathbf{R}^1=\mathbf{R}[X_1\in\cdot ]$.
Then for $\mathbf{R}^1$-a.e. $x_1\in H_1$,
\[
\mathbf{R}_{x_1,X_2}(A) = \mathbf{R}_{x_1}[A|
\mathcal{G}_2] \qquad\mathbf{R}_{x_1}\mbox{-a.s.} \mbox{ for all }A
\in \mathcal{G}_1\vee\mathcal{G}_2\vee
\mathcal{G}_3. %
\]
\end{lem}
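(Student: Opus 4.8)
The plan is to verify the defining properties of a regular conditional probability. Fix a version of $\mathbf{R}_{x_1}$ and $\mathbf{R}_{x_1,x_2}$. For each fixed $x_1$, the map $(x_2,A)\mapsto \mathbf{R}_{x_1,x_2}(A)$ is a kernel from $H_2$ to $\mathcal{G}_1\vee\mathcal{G}_2\vee\mathcal{G}_3$, hence defines (via composition with $X_2$) a candidate for a version of $\mathbf{R}_{x_1}[\,\cdot\,|\mathcal{G}_2]$ under $\mathbf{R}_{x_1}$. To check this I must show two things for $\mathbf{R}^1$-a.e.\ $x_1$: (i) $x_2\mapsto \mathbf{R}_{x_1,x_2}(A)$ is $\mathbf{R}_{x_1}$-a.s.\ equal to a $\mathcal{G}_2$-measurable function (which is automatic since it is literally a function of $x_2$); and (ii) for every $B\in\mathcal{G}_2$ and every $A$ in the generating $\sigma$-field, $\int_B \mathbf{R}_{x_1,x_2}(A)\,\mathbf{R}_{x_1}(dx_2) = \mathbf{R}_{x_1}(A\cap B)$. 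Because $\mathcal{G}_1\vee\mathcal{G}_2\vee\mathcal{G}_3$ is countably generated (the $H_i$ are Polish), it suffices to verify (ii) for $A$ ranging over a countable generating algebra and $B$ over a countable generating algebra of $\mathcal{G}_2$; a single $\mathbf{R}^1$-null exceptional set then works simultaneously for all such $A,B$, and a monotone class argument extends it to all $A,B$.

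The identity (ii) itself is where the tower property of conditional expectations does the work. Starting from the definition of $\mathbf{R}_{x_1,x_2}$ as a version of $\mathbf{R}[\,\cdot\,|\mathcal{G}_1\vee\mathcal{G}_2]$, for $A$ in the generating algebra and $C\in\mathcal{G}_1$, $B\in\mathcal{G}_2$ we have
$$
\mathbf{R}(A\cap C\cap B) = \int_{C\cap B} \mathbf{R}_{X_1,X_2}(A)\,\mathbf{R}(d\omega)
= \mathbf{E_R}\!\left[\mathbf{1}_C\,\mathbf{E_R}[\mathbf{1}_B\,\mathbf{R}_{X_1,X_2}(A)\mid\mathcal{G}_1]\right].
$$
On the other hand, writing the left side through $\mathbf{R}_{X_1}$ gives $\mathbf{R}(A\cap C\cap B) = \mathbf{E_R}[\mathbf{1}_C\,\mathbf{R}_{X_1}(A\cap B)]$. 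Since $C\in\mathcal{G}_1$ was arbitrary and $\mathcal{G}_1$ is countably generated, these two displays force
$$
\mathbf{E_R}[\mathbf{1}_B\,\mathbf{R}_{X_1,X_2}(A)\mid\mathcal{G}_1] = \mathbf{R}_{X_1}(A\cap B)\quad\mathbf{R}\text{-a.s.},
$$
which, after disintegrating the left side through the regular conditional probability $\mathbf{R}_{X_1}$, is exactly (ii) holding for $\mathbf{R}^1$-a.e.\ $x_1$ (for this fixed pair $A,B$). Running this over the countable families of $A$'s and $B$'s and taking the union of the null sets completes the argument.

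The main subtlety — not a deep obstacle but the place requiring care — is the interchange of "for a.e.\ $x_1$" quantifiers with the passage from a countable generating class to the full $\sigma$-field. One must fix the countable generating algebras first, obtain a single $\mathbf{R}^1$-null set outside of which (ii) holds for all generators simultaneously, and only then invoke the monotone class theorem pathwise (for each good $x_1$) to conclude (ii) for all $A\in\mathcal{G}_1\vee\mathcal{G}_2\vee\mathcal{G}_3$ and all $B\in\mathcal{G}_2$; doing the monotone class argument before fixing the null set would not be legitimate. Everything else is bookkeeping with the tower property and the defining property of regular conditional probabilities.
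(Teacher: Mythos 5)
Your proposal is correct and follows essentially the same route as the paper's proof: the core identity $\mathbf{E_R}[\mathbf{1}_B\,\mathbf{R}_{X_1,X_2}(A)\mid\mathcal{G}_1]=\mathbf{R}_{X_1}(A\cap B)$ obtained from the tower property, followed by disintegration through $\mathbf{R}_{X_1}$, and then the passage from countable generating algebras (with a single exceptional null set fixed first) to the full $\sigma$-fields via the monotone class theorem. The quantifier-order caution you highlight is exactly the care the paper takes, so there is nothing to add.
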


\begin{pf}
For given $A\in\mathcal{G}_1\vee\mathcal{G}_2\vee\mathcal{G}_3$ and
$B\in\mathcal{G}_2$, we have $\mathbf{R}$-a.s.
\[
\mathbf{R}_{X_1} \bigl[\mathbf{R}_{X_1,X_2}(A)
\mathbf{1}_B \bigr] = \mathbf{R} \bigl[\mathbf{R}[A|
\mathcal{G}_1\vee\mathcal{G}_2] \mathbf{1}_B|
\mathcal{G}_1 \bigr]= \mathbf{R}[A\cap B|\mathcal{G}_1]=
\mathbf{R}_{X_1}[A\cap B]. %
\]
By disintegration, we obtain
\[
\mathbf{R}_{x_1} \bigl[\mathbf{R}_{x_1,X_2}(A)
\mathbf{1}_B \bigr]= \mathbf{R}_{x_1}[A\cap B] \qquad\mbox{for }
\mathbf{R}^1\mbox{-a.e. } x_1. %
\]
But as $\mathcal{G}_1,\mathcal{G}_2,\mathcal{G}_3$ are
countably generated, we can ensure that for $\mathbf{R}^1$-a.e. $x_1$,
this equality holds simultaneously for all sets $A$ and $B$ in a countable
generating algebra for $\mathcal{G}_1\vee\mathcal{G}_2\vee\mathcal{G}_3$
and $\mathcal{G}_2$, respectively. By the monotone class theorem, it
follows that for $\mathbf{R}^1$-a.e. $x_1$ the equality holds for all
$A\in\mathcal{G}_1\vee\mathcal{G}_2\vee\mathcal{G}_3$ and
$B\in\mathcal{G}_2$ simultaneously, which yields the claim.
\end{pf}

Commencing with the proof of Theorem~\ref{thmm:cond02}, we begin by
obtaining a conditional counterpart of the Markov property
$\mathbf{P}^{X_n}(A)=\mathbf{E}[\mathbf{1}_A\circ\Theta^n|\mathcal{F}_{0,n}]$
that is adapted to the present setting. While the following lemma could
be deduced from Lemma~\ref{lem:weiz}, we give a direct proof along the
same lines.

\begin{lem}
\label{lem:c02-i}
For $\mathbf{P}$-a.e. $(z,y)$, we have
\[
\mathbf{P}_{\Theta^n y}^{Z_-\circ\Theta^n}(A) = \mathbf{E}^{z_-}_{y}
\bigl[\mathbf{1}_A\circ\Theta^n| \mathcal{F}^Z_{-\infty,n}
\bigr] \qquad\mathbf{P}^{z_-}_{y}\mbox{-a.s.} \mbox{ for every }A\in
\mathcal{F}^Z. %
\]
\end{lem}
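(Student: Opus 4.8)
\textbf{Proof plan for Lemma \ref{lem:c02-i}.}
The plan is to establish this ``conditional Markov property'' by the same repeated-conditioning and disintegration strategy used to prove Lemma \ref{lem:weiz}, being careful to track all the $\sigma$-fields that appear. The target identity says that conditioning $\mathbf{P}_y^{z_-}$ further on $\mathcal{F}^Z_{-\infty,n}$ and then applying the shift $\Theta^n$ produces a conditional law of the same form but indexed by the shifted observation path $\Theta^n y$ and the shifted history $Z_-\circ\Theta^n = Z_{-\infty,n}$. Note that $\mathbf{P}_y^{z_-}$ is the regular conditional probability given $\mathcal{F}^Y \vee \mathcal{F}^Z_{-\infty,0}$, so the ``big'' $\sigma$-field on the left-hand space after shifting is $\mathcal{F}^Y\vee\mathcal{F}^Z_{-\infty,n}$, and the object $\mathbf{P}_{\Theta^n y}^{Z_-\circ\Theta^n}$ is precisely a version of $\mathbf{P}[\,\cdot\,|\mathcal{F}^Y\vee\mathcal{F}^Z_{-\infty,n}]$ read off along the realized path.

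The key steps, in order, are as follows. First, I would fix $A\in\mathcal{F}^Z$ and a generic set $B$ in a countable generating algebra for $\mathcal{F}^Y\vee\mathcal{F}^Z_{-\infty,n}$, and write the tower-property identity $\mathbf{P}$-a.s.\ on the original space: since $\mathbf{P}[\,\cdot\,|\mathcal{F}^Y\vee\mathcal{F}^Z_{-\infty,n}]$ is a coarser conditioning than we want to reach, one has
$$
	\mathbf{E}\big[\mathbf{1}_{A\circ\Theta^n}\,\mathbf{1}_B\,\big|\,\mathcal{F}^Y\vee\mathcal{F}^Z_{-\infty,0}\big]
	= \mathbf{E}\Big[\mathbf{P}\big[A\circ\Theta^n\,\big|\,\mathcal{F}^Y\vee\mathcal{F}^Z_{-\infty,n}\big]\,\mathbf{1}_B\,\Big|\,\mathcal{F}^Y\vee\mathcal{F}^Z_{-\infty,0}\Big],
$$
because $\mathbf{1}_B$ is $(\mathcal{F}^Y\vee\mathcal{F}^Z_{-\infty,n})$-measurable. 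The left side is $\mathbf{E}^{z_-}_{y}[\mathbf{1}_{A\circ\Theta^n}\mathbf{1}_B]$ and the right side is $\mathbf{E}^{z_-}_{y}[\mathbf{P}[A\circ\Theta^n|\mathcal{F}^Y\vee\mathcal{F}^Z_{-\infty,n}]\,\mathbf{1}_B]$, once we disintegrate over $(z,y)$. Second, I would identify $\mathbf{P}[A\circ\Theta^n|\mathcal{F}^Y\vee\mathcal{F}^Z_{-\infty,n}]$ with $\mathbf{P}_{\Theta^n y}^{Z_-\circ\Theta^n}(A)$: this is exactly the translation-covariance of regular conditional probabilities under the measure-preserving shift $\Theta$ applied to the stationary measure $\mathbf{P}$, i.e.\ $\mathbf{P}[A\circ\Theta^n|\mathcal{F}^Y\vee\mathcal{F}^Z_{-\infty,n}] = \big(\mathbf{P}[\Theta^{-n}(\cdot)\,|\,\mathcal{F}^Y\vee\mathcal{F}^Z_{-\infty,0}]\big)\circ\Theta^n$ evaluated along the realized path, which is $\mathbf{P}_{\Theta^n y}^{Z_-\circ\Theta^n}(A)$ by the way we fixed versions. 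Third, having the equality $\mathbf{E}^{z_-}_{y}[\mathbf{1}_{A\circ\Theta^n}\mathbf{1}_B] = \mathbf{E}^{z_-}_{y}[\mathbf{P}_{\Theta^n y}^{Z_-\circ\Theta^n}(A)\,\mathbf{1}_B]$ for $\mathbf{P}$-a.e.\ $(z,y)$ and every $A$, $B$ in fixed countable generating algebras, I would invoke the monotone class theorem to upgrade, for $\mathbf{P}$-a.e.\ $(z,y)$ simultaneously, to all $B\in\mathcal{F}^Y\vee\mathcal{F}^Z_{-\infty,n}$; since $\mathbf{P}_{\Theta^n y}^{Z_-\circ\Theta^n}(A)$ is $(\mathcal{F}^Y\vee\mathcal{F}^Z_{-\infty,n})$-measurable, this identifies it $\mathbf{P}^{z_-}_y$-a.s.\ with the conditional expectation $\mathbf{E}^{z_-}_y[\mathbf{1}_{A\circ\Theta^n}|\mathcal{F}^Z_{-\infty,n}]$. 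One last bookkeeping point: the conditioning on the right of the claimed identity is only with respect to $\mathcal{F}^Z_{-\infty,n}$ rather than $\mathcal{F}^Y\vee\mathcal{F}^Z_{-\infty,n}$, but under $\mathbf{P}^{z_-}_y$ the observation path $Y$ is already fixed (degenerate), so conditioning on $\mathcal{F}^Y$ in addition does nothing; this lets me drop $\mathcal{F}^Y$ from the conditioning and obtain exactly the stated formula.

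The step I expect to be the main obstacle is the careful handling of the null sets and the simultaneity over $A$. Disintegration gives the identity for $\mathbf{P}$-a.e.\ $(z,y)$ for each \emph{fixed} $A$, but the claim asks for a single $\mathbf{P}$-null set outside which the identity holds for \emph{all} $A\in\mathcal{F}^Z$; this is where countable generation of $\mathcal{F}^Z$ (guaranteed since $G$ is Polish) and a monotone-class argument must be deployed, exactly as in the last paragraph of the proof of Lemma \ref{lem:weiz}. One must also be slightly careful that the shift-covariance of the conditional probabilities holds for the specific versions that were fixed at the start of Section \ref{sec:cond02}; since those versions were arbitrary, the identity is only asserted $\mathbf{P}$-a.s., which is all that is needed. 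The rest is routine.
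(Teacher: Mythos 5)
Your proposal is correct and follows essentially the same route as the paper's proof: test the identity against sets from countable generating algebras, use stationarity of $\mathbf{P}$ to identify $\mathbf{P}^{Z_-\circ\Theta^n}_{\Theta^nY}(A)$ as a version of the conditional probability of $A\circ\Theta^n$ given the shifted past, then disintegrate and conclude by a monotone class argument exactly as in Lemma \ref{lem:weiz}. The only (harmless) difference is that you condition on $\mathcal{F}^Y\vee\mathcal{F}^Z_{-\infty,n}$ and then discard $\mathcal{F}^Y$ using the degeneracy of $Y$ under $\mathbf{P}_y^{z_-}$, whereas the paper avoids this step by testing directly against $B\in\mathcal{F}^Z_{-\infty,n}$ and $C\in\mathcal{F}^Y\vee\mathcal{F}^Z_{-\infty,0}$, using that $\mathbf{P}^{Z_-\circ\Theta^n}_{\Theta^ny}(A)$ is $\mathcal{F}^Z_{-\infty,n}$-measurable for fixed $y$.
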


\begin{pf}
Fix $A\in\mathcal{F}^Z$, $B\in\mathcal{F}^Z_{-\infty,n}$, and
$C\in\mathcal{F}^Y\vee\mathcal{F}^Z_{-\infty,0}$. Then
\begin{eqnarray*}
\mathbf{E} \bigl[ \mathbf{E}^{Z_-}_Y \bigl[
\mathbf{P}^{Z_-\circ\Theta^n}_{\Theta^nY}(A) \mathbf{1}_B \bigr]
\mathbf{1}_C \bigr] &=& \mathbf{E} \bigl[ \mathbf{P}^{Z_-\circ\Theta^n}_{\Theta^nY}(A)
\mathbf{1}_{B\cap C} \bigr]
\\
&=& \mathbf{E} \bigl[ \mathbf{P}^{Z_-}_{Y}(A) \bigl\{
\mathbf{1}_{B\cap C} \circ\Theta^{-n} \bigr\} \bigr]
\\
&=& \mathbf{E} \bigl[\mathbf{1}_A \bigl\{\mathbf{1}_{B\cap C}
\circ\Theta^{-n} \bigr\} \bigr]
\\
&=& \mathbf{E} \bigl[ \bigl\{\mathbf{1}_A\circ\Theta^n
\bigr\} \mathbf{1}_{B\cap C} \bigr]
\\
&=& \mathbf{E} \bigl[\mathbf{E}_Y^{Z_-} \bigl[ \bigl\{
\mathbf{1}_A\circ\Theta^n \bigr\} \mathbf{1}_B
\bigr] \mathbf{1}_{C} \bigr],
\end{eqnarray*}
where we have used the stationarity of $\mathbf{P}$ and the definition
of the regular conditional probability $\mathbf{P}_y^{z_-}$.
As this holds for all
$C\in\mathcal{F}^Y\vee\mathcal{F}^Z_{-\infty,0}$, we have
\[
\mathbf{E}^{z_-}_y \bigl[ \mathbf{P}^{Z_-\circ\Theta^n}_{\Theta^ny}(A)
\mathbf{1}_B \bigr] = \mathbf{E}_y^{z_-}
\bigl[ \bigl\{\mathbf{1}_A\circ\Theta^n \bigr\}
\mathbf{1}_B \bigr] \qquad\mbox{for }\mathbf{P}\mbox{-a.e. }(z,y).
\]
We conclude by a monotone class argument
as in the proof of Lemma~\ref{lem:weiz}.
\end{pf}

In the following, we will require some basic properties of the coupled
measure~$\mathbf{Q}$. First, notice that as $\mathbf{Y}$ is stationary
and as
\[
\mathbf{P}_{Y\circ\Theta}(A)=\mathbf{P} \bigl[A|\mathcal{F}^Y
\bigr] \circ\Theta =\mathbf{E} \bigl[\mathbf{1}_A\circ\Theta|
\mathcal{F}^Y \bigr]= \mathbf{E}_Y(\mathbf{1}_A
\circ\Theta), \qquad\mathbf{P}\mbox{-a.s.}, %
\]
it is easily seen that $\mathbf{Q}$ is a stationary measure.
Moreover, defining
\[
\mathbf{Q}_y^{z_-,z_-'} = \mathbf{P}_y^{z_-}(Z
\in \cdot )\otimes \mathbf{P}_y^{z_-'}(Z\in \cdot )\otimes
\delta_y \qquad\mbox{on } G^\mathbb{Z}\times G^\mathbb{Z}
\times F^{\mathbb{Z}}, %
\]
it follows directly that $\mathbf{Q}_Y^{Z_-,Z_-'}$ is a version of the
regular conditional probability $\mathbf{Q}[ \cdot |\mathcal{F}^Y
\vee\mathcal{F}^Z_{-\infty,0}\vee\mathcal{F}^{Z'}_{-\infty,0}]$,
where $\mathcal{F}^{Z'}_{m,n}=\sigma\{Z_{m,n}'\}$.

To proceed, we define the maps
\[
\beta_n \bigl(z_-,z_-',y \bigr) = \bigl\|
\mathbf{P}^{z_-}_y -\mathbf{P}^{z_-'}_y
\bigr\|_{\mathcal{F}^Z_{n,\infty}}, \qquad\beta \bigl(z_-,z_-',y \bigr)= \bigl\|
\mathbf{P}^{z_-}_y -\mathbf{P}^{z_-'}_y
\bigr\|_{\mathcal{A}^Z}. %
\]
As $\mathcal{F}^Z_{n,\infty}$ is countably generated, the maps
$\beta_n$ are measurable. Moreover, as $\beta_n\downarrow\beta$
pointwise as $n\to\infty$, the map $\beta$ is measurable also.

The following lemma establishes the invariance of $\beta$.

\begin{lem}
\label{lem:c02-ii}
For $\mathbf{Q}$-a.e. $(z_-,z_-',y)$, we have
\[
\mathbf{Q}_y^{z_-,z_-'} \bigl[ \beta \bigl(z_-,z_-',y
\bigr)=\beta \bigl(Z_-\circ\Theta^n, Z_-'\circ
\Theta^n,\Theta^ny \bigr)\mbox{ for all }n\ge0 \bigr]=1.
\]
\end{lem}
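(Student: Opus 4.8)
The goal is to show that $\beta$, which measures the local total variation distance on $\mathcal{A}^Z$ between the conditional laws $\mathbf{P}_y^{z_-}$ and $\mathbf{P}_y^{z_-'}$, is invariant along the shift when the pair $(Z_-,Z_-')$ is run forward under $\mathbf{Q}_y^{z_-,z_-'}$. The structure mirrors the submartingale argument in Theorem \ref{thm:sloc02}, but now everything must be phrased conditionally on $\mathcal{F}^Y$, and one must be careful that the various versions of regular conditional probabilities are compatible. The plan is: (i) use the conditional Markov property of Lemma \ref{lem:c02-i} together with Lemma \ref{lem:weiz} (repeated conditioning) to derive a one-step contraction inequality $\beta_{n}\le (\text{conditional transition})\,\beta_{n-1}$; (ii) pass to the limit to get a conditional super-averaging inequality for $\beta$; (iii) recognize $\beta(Z_-\circ\Theta^n,Z_-'\circ\Theta^n,\Theta^n y)$ as a bounded stationary submartingale under $\mathbf{Q}$; (iv) invoke stationarity plus martingale convergence to force equality along the orbit, then disintegrate over $\mathcal{F}^Y\vee\mathcal{F}^Z_{-\infty,0}\vee\mathcal{F}^{Z'}_{-\infty,0}$ to get the $\mathbf{Q}_y^{z_-,z_-'}$-a.s.\ statement.

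\textbf{Step (i): the one-step inequality.} For $A\in\mathcal{F}^Z_{n,\infty}$ we have by Lemma \ref{lem:c02-i} that $\mathbf{P}_{\Theta y}^{Z_-\circ\Theta}(A\circ\Theta^{-1})=\mathbf{E}_y^{z_-}[\mathbf{1}_A\mid\mathcal{F}^Z_{-\infty,1}]$ a.s., hence $\mathbf{P}_y^{z_-}[A]=\mathbf{E}_y^{z_-}\big[\mathbf{P}_{\Theta y}^{Z_1,z_-}(A\circ\Theta^{-1})\big]$ where I abbreviate the shifted history. Taking the difference of the two measures $\mathbf{P}_y^{z_-}$, $\mathbf{P}_y^{z_-'}$ on $\mathcal{F}^Z_{n,\infty}$, applying Jensen as in the displayed inequality $\beta_n\le(P\otimes P)\beta_{n-1}$ of Theorem \ref{thm:sloc02}, and using stationarity of $\mathbf{P}$ (so that the conditional one-step kernel of $Z$ given $Y$ and the past is itself stationary), one obtains
\[
	\beta_n(z_-,z_-',y)\le
	\int \kappa_y(z_-,dz_1)\,\kappa_y(z_-',dz_1')\,
	\beta_{n-1}\big((z_-,z_1),(z_-',z_1'),y\big),
\]
where $\kappa_y$ denotes the appropriate one-step conditional transition kernel of $Z$ given $Y=y$ and the past $z_-$. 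Letting $n\to\infty$ and using dominated convergence gives $\beta\le (\kappa_y\otimes\kappa_y)\beta$. Translating this back into the language of $\mathbf{Q}$: writing $\mathbf{Q}_Y^{Z_{-\infty,n},Z_{-\infty,n}'}$ for the regular conditional probability of $\mathbf{Q}$ given $\mathcal{F}^Y\vee\mathcal{F}^Z_{-\infty,n}\vee\mathcal{F}^{Z'}_{-\infty,n}$, one gets
\[
	\mathbf{E_Q}\big[\beta(Z_-\circ\Theta^{n+1},Z_-'\circ\Theta^{n+1},\Theta^{n+1}Y)
	\,\big|\,\mathcal{F}^Y\vee\mathcal{F}^Z_{-\infty,n}\vee\mathcal{F}^{Z'}_{-\infty,n}\big]
	\ge \beta(Z_-\circ\Theta^{n},Z_-'\circ\Theta^{n},\Theta^{n}Y)
	\quad\mathbf{Q}\text{-a.s.}
\]

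\textbf{Steps (ii)--(iv): the submartingale argument.} Set $\gamma_n=\beta(Z_-\circ\Theta^n,Z_-'\circ\Theta^n,\Theta^nY)$. By the previous display $(\gamma_n)_{n\ge0}$ is a bounded ($0\le\gamma_n\le2$) submartingale under $\mathbf{Q}$ with respect to the filtration $\mathcal{G}_n=\mathcal{F}^Y\vee\mathcal{F}^Z_{-\infty,n}\vee\mathcal{F}^{Z'}_{-\infty,n}$, and by stationarity of $\mathbf{Q}$ the law of $\gamma_n$ does not depend on $n$. Exactly as in Theorem \ref{thm:sloc02}, $\mathbf{E_Q}[\,|\gamma_0-\gamma_n|\,]=\mathbf{E_Q}[\,|\gamma_k-\gamma_{n+k}|\,]\to0$ as $k\to\infty$ by stationarity and martingale convergence, forcing $\gamma_0=\gamma_n$ for all $n\ge0$ $\mathbf{Q}$-a.s.; that is, $\beta(z_-,z_-',y)=\beta(Z_-\circ\Theta^n,Z_-'\circ\Theta^n,\Theta^ny)$ for all $n\ge0$ holds $\mathbf{Q}$-a.s. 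Finally, since $\mathbf{Q}_Y^{Z_-,Z_-'}=\mathbf{Q}_y^{z_-,z_-'}$ is a version of the regular conditional probability of $\mathbf{Q}$ given $\mathcal{F}^Y\vee\mathcal{F}^Z_{-\infty,0}\vee\mathcal{F}^{Z'}_{-\infty,0}$ (noted just above the statement of the lemma), disintegrating the $\mathbf{Q}$-a.s.\ equality over this $\sigma$-field yields the claim: for $\mathbf{Q}$-a.e.\ $(z_-,z_-',y)$, $\mathbf{Q}_y^{z_-,z_-'}[\beta(z_-,z_-',y)=\gamma_n\text{ for all }n\ge0]=1$.

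\textbf{Main obstacle.} The delicate point is Step (i): unlike the Markov case of Theorem \ref{thm:sloc02}, there is no clean "transition kernel" and one must extract the conditional one-step evolution of $Z$ carefully. Concretely, one needs that $\mathbf{P}_{\Theta^n y}^{Z_-\circ\Theta^n}$ — the conditional law given the \emph{shifted} observation $\Theta^n y$ and the \emph{shifted} history — is a version of the conditional expectation of $\mathbf{1}_A\circ\Theta^n$ given $\mathcal{F}^Z_{-\infty,n}$ under $\mathbf{P}_y^{z_-}$; this is exactly what Lemma \ref{lem:c02-i} provides, and its careful statement (including the $\mathbf{P}$-a.e.\ choice of versions consistent across $n$) is what makes the Jensen/disintegration chain go through without measurability gaps. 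Once Lemma \ref{lem:c02-i} is in hand, the rest is a routine adaptation of the stationary zero-two law argument.
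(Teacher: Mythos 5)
Your proposal is correct in outline and reaches the lemma by the same engine as the paper---a bounded stationary submartingale, then stationarity plus martingale convergence forcing $\beta(Z_-,Z_-',Y)=\beta(Z_-,Z_-',Y)\circ\Theta^n$ $\mathbf{Q}$-a.s., then disintegration over $\mathcal{F}^Y\vee\mathcal{F}^Z_{-\infty,0}\vee\mathcal{F}^{Z'}_{-\infty,0}$---but you establish the submartingale inequality by a different route. You go through Lemma \ref{lem:c02-i} and a one-step conditional kernel $\kappa_y$, then translate back to $\mathbf{Q}$. The paper does not invoke Lemma \ref{lem:c02-i} in this proof at all (it is reserved for the $3\Rightarrow 2$ step of Theorem \ref{thm:cond02}): it observes that, by the conditional independence of $Z$ and $Z'$ given $Y$ under $\mathbf{Q}$, one has $\mathbf{P}^{Z_-}_Y(Z\in\cdot\,)=\mathbf{Q}[Z\in\cdot\,|\mathcal{G}_0]$ and $\mathbf{P}^{Z_-'}_Y(Z\in\cdot\,)=\mathbf{Q}[Z'\in\cdot\,|\mathcal{G}_0]$ with $\mathcal{G}_0=\mathcal{F}^Y\vee\mathcal{F}^Z_{-\infty,0}\vee\mathcal{F}^{Z'}_{-\infty,0}$, so that $\beta_k(Z_-,Z_-',Y)$ is directly a total variation distance between $\mathbf{Q}$-conditional laws; Jensen's inequality (conditioning further on $\mathcal{G}_1$) and stationarity of $\mathbf{Q}$ then yield $\beta_k(Z_-,Z_-',Y)\le\mathbf{E_Q}[\beta_{k-1}(Z_-,Z_-',Y)\circ\Theta\,|\,\mathcal{G}_0]$ without ever extracting a one-step kernel, which disposes of precisely the bookkeeping you flag as the main obstacle. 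Your variant does work, subject to two repairs: in your displayed one-step inequality the third argument of $\beta_{n-1}$ must be the shifted observation path $\Theta y$, not $y$ (your subsequent $\mathbf{Q}$-display has it right), and the ``translation into the language of $\mathbf{Q}$'' tacitly uses the factorization $\mathbf{Q}_y^{z_-,z_-'}=\mathbf{P}_y^{z_-}(Z\in\cdot\,)\otimes\mathbf{P}_y^{z_-'}(Z\in\cdot\,)\otimes\delta_y$ together with Lemma \ref{lem:weiz}; making that identification at the outset, as the paper does, is what lets one dispense with $\kappa_y$ altogether. In short, your route stays closer to the Markov template of Theorem \ref{thm:sloc02} and makes the conditional transition visible, while the paper's is shorter and sidesteps the version and measurability issues attached to conditional one-step kernels.
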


\begin{pf}
Define for simplicity $\mathcal{G}_n =
\mathcal{F}^Y\vee\mathcal{F}^Z_{-\infty,n}\vee
\mathcal{F}^{Z'}_{-\infty,n}$.
As $Z,Z'$ are conditionally independent given $Y$ under $\mathbf{Q}$,
we have
\[
\mathbf{P}^{Z_-}_Y(Z\in \cdot ) = \mathbf{Q}[Z\in\cdot |
\mathcal{G}_0]\quad \mbox{and}\quad \mathbf{P}^{Z_-'}_Y(Z\in
\cdot )= \mathbf{Q} \bigl[Z'\in\cdot |\mathcal{G}_0
\bigr],\qquad \mathbf{Q}\mbox{-a.s.} %
\]
Using Jensen's inequality and stationarity of $\mathbf{Q}$ yields
\begin{eqnarray*}
\beta_k \bigl(Z_-,Z_-',Y \bigr) &=& \bigl\|
\mathbf{Q}[Z_{k,\infty}\in\cdot |\mathcal{G}_0]- \mathbf{Q}
\bigl[Z'_{k,\infty}\in\cdot |\mathcal{G}_0 \bigr]
\bigr\|
\\
&\le& \mathbf{E}_{\mathbf{Q}} \bigl[\bigl \|\mathbf{Q}[Z_{k,\infty}\in\cdot |
\mathcal{G}_1]- \mathbf{Q} \bigl[Z'_{k,\infty}\in
\cdot |\mathcal{G}_1 \bigr] \bigr\| |\mathcal{G}_0 \bigr]
\\
&=& \mathbf{E}_{\mathbf{Q}} \bigl[ \bigl\|\mathbf{Q}[Z_{k-1,\infty}\in\cdot |
\mathcal{G}_0]- \mathbf{Q} \bigl[Z'_{k-1,\infty}\in
\cdot |\mathcal{G}_0 \bigr] \bigr\|\circ\Theta |\mathcal{G}_0
\bigr]
\\
&=& \mathbf{E}_{\mathbf{Q}} \bigl[ \beta_{k-1} \bigl(Z_-,Z_-',Y
\bigr)\circ\Theta |\mathcal{G}_0 \bigr].
\end{eqnarray*}
Letting $k\to\infty$ and using stationarity, we find that
\[
\beta \bigl(Z_-,Z_-',Y \bigr)\circ\Theta^{n} \le
\mathbf{E}_{\mathbf{Q}} \bigl[\beta \bigl(Z_-,Z_-',Y \bigr)\circ
\Theta^{n+1}|\mathcal{G}_n \bigr], %
\]
so $M_n=\beta(Z_-,Z_-',Y)\circ\Theta^n$ is a bounded submartingale under
$\mathbf{Q}$. In particular, by stationarity and the martingale
convergence theorem,
\begin{eqnarray*}
&&\mathbf{E}_{\mathbf{Q}} \bigl[\bigl|\beta \bigl(Z_-,Z_-',Y \bigr)-
\beta \bigl(Z_-,Z_-',Y \bigr)\circ\Theta^n\bigr| \bigr]\\
&&\qquad =
\mathbf{E}_{\mathbf{Q}} \bigl[\bigl|\beta \bigl(Z_-,Z_-',Y \bigr)
\circ \Theta^k- \beta \bigl(Z_-,Z_-',Y \bigr)\circ
\Theta^{n+k}\bigr| \bigr] \mathop{\longrightarrow}^{k\to\infty}0.
\end{eqnarray*}
It follows
that $\beta(Z_-,Z_-',Y)=\beta(Z_-,Z_-',Y)\circ\Theta^n$ for all $n\ge0$,
$\mathbf{Q}$-a.s., and the result follows readily by disintegration.
\end{pf}

We are now ready to complete the proof of Theorem~\ref{thmm:cond02}.

\begin{pf*}{Proof of Theorem~\ref{thmm:cond02}}
The proofs of $2\Rightarrow1\Rightarrow3$ are identical to the
corresponding proofs in Theorem~\ref{thmm:loc02}.
It remains to prove $3\Rightarrow2$.

Let us assume that condition 3 holds.
Then, using Lemmas \ref{lem:c02-i} and
\ref{lem:c02-ii}, we can find a measurable subset
$H\subset G^\mathbb{Z}\times G^\mathbb{Z}\times F^\mathbb{Z}$
of full probability $\mathbf{Q}(H)=1$ such that
the following hold for every $(z,z',y)\in H$:
\begin{longlist}[(a)]
\item[(a)]
$\mathbf{P}_y^{z_-}$ and $\mathbf{P}_y^{z_-'}$ are not
mutually singular on $\mathcal{F}^Z_{n,\infty}$ for some $n\ge0$.
\item[(b)]
$\mathbf{P}_{\Theta^n y}^{Z_-\circ\Theta^n}(A) =
\mathbf{E}^{z_-}_{y}[\mathbf{1}_A\circ\Theta^n|
\mathcal{F}^Z_{-\infty,n}]$,
$\mathbf{P}^{z_-}_{y}$-a.s. for all $A\in\mathcal{F}^Z$, $n\ge0$.
\item[(c)]
$\mathbf{P}_{\Theta^n y}^{Z_-\circ\Theta^n}(A) =
\mathbf{E}^{z_-'}_{y}[\mathbf{1}_A\circ\Theta^n|
\mathcal{F}^Z_{-\infty,n}]$,
$\mathbf{P}^{z_-'}_{y}$-a.s. for all $A\in\mathcal{F}^Z$, $n\ge0$.
\item[(d)]
$\mathbf{Q}_y^{z_-,z_-'}[
\beta(z_-,z_-',y)=\beta(Z_-\circ\Theta^n,
Z_-'\circ\Theta^n,\Theta^ny)\mbox{ for all }n\ge0]=1$.
\end{longlist}
Now suppose that condition 2 does not hold. Then we can choose
a path
$(z,z',y)\in H$ and a tail event $A\in\mathcal{A}^Z$ such that
\[
\mbox{either } 0<\mathbf{P}^{z_-}_y(A)<1\quad\mbox{or}\quad
\mathbf{P}^{z_-}_y(A)\ne\mathbf{P}^{z_-'}_y(A).
\]
Define $f=\mathbf{1}_A-\mathbf{1}_{A^c}$ and fix $\alpha>0$.
By the martingale convergence theorem,
\begin{eqnarray*}
&&\mathbf{Q}_y^{z_-,z_-'} \bigl[ \bigl| \mathbf{E}_{\Theta^n y}^{Z_-\circ\Theta^n}
\bigl(f\circ\Theta^{-n} \bigr)- \mathbf{E}_{\Theta^n y}^{Z_-'\circ\Theta^n}
\bigl(f\circ\Theta^{-n} \bigr)\bigr |>2-\alpha \bigr] \\
&&\qquad\mathop{\longrightarrow}^{n\to
\infty} {}
\mathbf{P}_y^{z_-}(A) \mathbf{P}_y^{z_-'}
\bigl(A^c \bigr)+ \mathbf{P}_y^{z_-}
\bigl(A^c \bigr) \mathbf{P}_y^{z_-'}(A)>0.
\end{eqnarray*}
Note that as $|f|\le1$ and $f\circ\Theta^{-n}$ is
$\mathcal{A}^Z$-measurable, we have
\begin{eqnarray*}
\bigl| \mathbf{E}_{\Theta^n y}^{Z_-\circ\Theta^n} \bigl(f\circ\Theta^{-n}
\bigr)- \mathbf{E}_{\Theta^n y}^{Z_-'\circ\Theta^n} \bigl(f\circ
\Theta^{-n} \bigr)\bigr | &\le& \beta \bigl(Z_-\circ\Theta^n,Z_-'
\circ \Theta^n,\Theta^n y \bigr)
\\
&=&\beta \bigl(z_-,z_-',y \bigr), \qquad\mathbf{Q}^{z_-,z_-'}_y
\mbox{-a.s.}
\end{eqnarray*}
It follows that $\beta(z_-,z_-',y)>2-\alpha$, and we therefore have
$\beta(z_-,z_-',y)=2$ as $\alpha>0$ was arbitrary.
But by construction there exists $n\ge0$ such that
$\beta(z_-,z_-',y)\le\beta_n(z_-,z_-',y)<2$, which entails the desired
contradiction.
\end{pf*}

\subsection{Nondegeneracy and inheritance}
\label{sec:nondeg}

While Theorem~\ref{thmm:cond02} in principle characterizes completely the
conditional absolute regularity property, this result is difficult to
apply directly as an explicit description of $\mathbf{P}_y^{z_-}$
(beyond its existence as a regular conditional probability) is typically
not available. On the other hand, many methods are available to
establish the absolute regularity property of the unconditional model
$(Z,Y)$. We will presently develop a technique that allows us to deduce
the conditional absolute regularity property from absolute regularity of
the unconditional model.

The essential assumption that will be needed for inheritance of the
absolute regularity property is nondegeneracy of the observations,
Definition~\ref{defn:nondeg}, which states that the conditional
distribution of any finite number of observations can be made independent
of the unobserved component by a change of measure. The precise form of
the following definition is adapted to what is needed in the proofs in the
present section, and its interpretation may not appear entirely obvious at
first sight. However, we will see in Section~\ref{sec:filter} that the
nondegeneracy assumption takes a very natural form in the Markov setting
and is typically easy to verify from the model description.

\begin{defn}[(Nondegeneracy)]
\label{defn:nondeg}
The process $(Z_k,Y_k)_{k\in\mathbb{Z}}$
is said to be \emph{nondegenerate} if for every $-\infty<m\le n<\infty$
we have
\[
\mathbf{P}[Y_{m,n}\in\cdot | \mathcal{F}_{-\infty,m-1}\vee
\mathcal{F}_{n+1,\infty}] \sim \mathbf{P} \bigl[Y_{m,n}\in\cdot |
\mathcal{F}_{-\infty,m-1}^Y\vee\mathcal{F}_{n+1,\infty}^Y
\bigr],\qquad\mathbf{P}\mbox{-a.s.} %
\]
\end{defn}

We now state the main result of this section (recall that the definition
of absolute regularity was given as Definition~\ref{def:absreg} above).

\begin{thmm}[(Inheritance of absolute regularity)]
\label{thmm:inherit}
Suppose that the stationary process $(Z_k,Y_k)_{k\in\mathbb{Z}}$ is
absolutely regular and nondegenerate. Then any (hence all) of the
conditions of Theorem~\ref{thmm:cond02} hold true.
\end{thmm}

To gain some intuition for the role played by the nondegeneracy property,
let us briefly outline the idea behind the proof. We will use
nondegeneracy to remove a finite number of observations from our
conditional distributions:
\[
\mathbf{P}^{Z_-}_Y(Z_{n,\infty}\in\cdot ) =
\mathbf{P}[Z_{n,\infty}\in\cdot |Y,Z_{-\infty,0}]\sim
\mathbf{P}[Z_{n,\infty}\in\cdot |Y_{n,\infty},Y_{-\infty,0},Z_{-\infty,0}].
\]
The right-hand side depends only on
$(Z_{-\infty,0},Y_{-\infty,0})$, $(Z_{n,\infty},Y_{n,\infty})$ which
are nearly independent for large $n$ by the absolute regularity property
(Definition~\ref{def:absreg}). Therefore, for large $n$, the right-hand
side becomes nearly independent of the initial history $Z_{-\infty,0}$ in
total variation. The above equivalence of conditional distributions
then ensures the nonsingularity of $\mathbf{P}^{z_-}_y$
given two initial histories $z_-$, which yields the third condition of
Theorem~\ref{thmm:cond02}.

To implement this approach, we need three lemmas. First, in order to use
the nondegeneracy property in the manner outlined above, we must be able
to exchange the roles of conditioning and conditioned variables. To this
end, we use an elementary property that is trivial in the discrete
setting: if $X_1,X_2,X_3$ are discrete random variables under a
probability $R$, then
\[
\frac{\mathbf{R}[X_1=x_1|X_2=x_2,X_3=x_3]} {
\mathbf{R}[X_1=x_1|X_3=x_3]} = \frac{\mathbf{R}[X_2=x_2|X_1=x_1,X_3=x_3]} {
\mathbf{R}[X_2=x_2|X_3=x_3]} %
\]
whenever one side makes sense. Thus
$\mathbf{R}[X_1\in\cdot |X_2,X_3]\sim\mathbf{R}[X_1\in\cdot |X_3]$
if and only if
$\mathbf{R}[X_2\in\cdot |X_1,X_3]\sim\mathbf{R}[X_2\in\cdot |X_3]$.
The following lemma \cite{vH09} extends this idea to the
general setting of regular conditional probabilities.

\begin{lem}
\label{lem:3.6}
Let $H_1,H_2,H_3$ be Polish spaces, and let $\mathbf{R}$ be a probability
on $H_1\times H_2\times H_3$. Define
$X_i(x_1,x_2,x_3)=x_i$ and $\mathcal{G}_i=\sigma\{X_i\}$. Then
\[
\mathbf{R}[X_1\in\cdot |\mathcal{G}_2\vee
\mathcal{G}_3] \sim \mathbf{R}[X_1\in\cdot |
\mathcal{G}_3],\qquad \mathbf{R}\mbox{-a.s.} %
\]
if and only if
\[
\mathbf{R}[X_2\in\cdot |\mathcal{G}_1\vee
\mathcal{G}_3] \sim \mathbf{R}[X_2\in\cdot |
\mathcal{G}_3],\qquad\mathbf{R}\mbox{-a.s.} %
\]
\end{lem}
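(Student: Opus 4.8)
The plan is to prove Lemma \ref{lem:3.6} by a symmetric disintegration argument: the statement is symmetric in $X_1$ and $X_2$, so it suffices to prove one implication, say that equivalence of $\mathbf{R}[X_1\in\cdot\,|\mathcal{G}_2\vee\mathcal{G}_3]$ with $\mathbf{R}[X_1\in\cdot\,|\mathcal{G}_3]$ implies the corresponding statement with the roles of $X_1,X_2$ exchanged. First I would reduce both sides to a statement about Radon--Nikodym densities on a product of Polish spaces, where such densities can be chosen jointly measurable (cf.\ \cite[V.58]{DM82}, already cited in the proof of Lemma \ref{lem:margtv}). Concretely, since $H_1,H_2,H_3$ are Polish, the joint law $\mathbf{R}$ disintegrates as $\mathbf{R}(dx_1,dx_2,dx_3) = \mathbf{R}^{3}(dx_3)\,K_3(x_3,dx_1,dx_2)$, and further $K_3(x_3,dx_1,dx_2) = K_3^{2}(x_3,dx_2)\,K(x_3,x_2,dx_1)$ where $K(x_3,x_2,\cdot)$ is a version of $\mathbf{R}[X_1\in\cdot\,|\mathcal{G}_2\vee\mathcal{G}_3]$ and $K_3^{1}(x_3,\cdot)=\int K(x_3,x_2,\cdot)\,K_3^{2}(x_3,dx_2)$ is a version of $\mathbf{R}[X_1\in\cdot\,|\mathcal{G}_3]$.

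The key observation is the analogue of the elementary discrete identity written just before the lemma. The hypothesis $\mathbf{R}[X_1\in\cdot\,|\mathcal{G}_2\vee\mathcal{G}_3]\sim\mathbf{R}[X_1\in\cdot\,|\mathcal{G}_3]$ $\mathbf{R}$-a.s.\ means precisely that for $\mathbf{R}^3$-a.e.\ $x_3$, we have $K(x_3,x_2,\cdot)\sim K_3^{1}(x_3,\cdot)$ for $K_3^{2}(x_3,\cdot)$-a.e.\ $x_2$; let $\varphi(x_1,x_2,x_3)$ be a jointly measurable version of the density $\frac{dK(x_3,x_2,\cdot)}{dK_3^{1}(x_3,\cdot)}(x_1)$. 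Then for a fixed $x_3$ the measure $K_3(x_3,dx_1,dx_2) = \varphi(x_1,x_2,x_3)\,K_3^{1}(x_3,dx_1)\,K_3^{2}(x_3,dx_2)$ is absolutely continuous with respect to the \emph{product} $K_3^{1}(x_3,\cdot)\otimes K_3^{2}(x_3,\cdot)$, with density $\varphi$. But absolute continuity with respect to a product measure is symmetric in the two coordinates: disintegrating $K_3(x_3,dx_1,dx_2)$ the other way as $K_3^{1}(x_3,dx_1)\,K'(x_3,x_1,dx_2)$, one obtains that $K'(x_3,x_1,\cdot)$ has density $x_2\mapsto \varphi(x_1,x_2,x_3)/\psi(x_1,x_3)$ with respect to $K_3^{2}(x_3,\cdot)$, where $\psi(x_1,x_3)=\int\varphi(x_1,x_2,x_3)\,K_3^{2}(x_3,dx_2)$ is the density of $K_3^{1}(x_3,\cdot)$ with respect to itself computed this way; in particular $K'(x_3,x_1,\cdot)\ll K_3^{2}(x_3,\cdot)$. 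Since $K'(x_3,x_1,\cdot)$ is a version of $\mathbf{R}[X_2\in\cdot\,|\mathcal{G}_1\vee\mathcal{G}_3]$ and $K_3^{2}(x_3,\cdot)$ a version of $\mathbf{R}[X_2\in\cdot\,|\mathcal{G}_3]$, this gives one half of the desired equivalence; running the argument with $\varphi$ replaced by $1/\varphi$ (which is $\mathbf{R}$-a.s.\ well-defined and finite because $K(x_3,x_2,\cdot)\sim K_3^{1}(x_3,\cdot)$) gives the reverse absolute continuity, hence the full equivalence. Applying the same reasoning with $X_1,X_2$ interchanged yields the converse implication, completing the proof.

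The main obstacle I anticipate is not the algebra of densities but the measure-theoretic bookkeeping: one must ensure that all the Radon--Nikodym densities appearing can be chosen \emph{jointly measurable} in $(x_1,x_2,x_3)$ so that the pointwise-in-$x_3$ manipulations assemble into genuine kernel identities holding $\mathbf{R}$-a.s., and one must handle the ``whenever one side makes sense'' caveat of the discrete identity --- i.e.\ the set where $\varphi=0$ or $\varphi=\infty$ --- carefully enough that the reciprocal density is legitimate exactly on an $\mathbf{R}$-full set. The existence of measurable density versions between kernels is precisely \cite[V.58]{DM82}, and the null-set issues are routine once one notes that the hypothesis already forces $0<\varphi<\infty$ $\mathbf{R}$-a.s.; so while the proof requires care, no genuinely new idea beyond disintegration and the symmetry of product-absolute-continuity is needed.
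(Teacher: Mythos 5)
Your argument is correct, but it is a genuinely self-contained route where the paper simply delegates: the paper's entire proof is a citation of \cite[Lemma 3.6]{vH09} together with the existence of measurable Radon--Nikodym densities between kernels \cite[V.58]{DM82}, the point being that the external lemma handles the two-variable symmetry and the measurable densities let one disintegrate over $\mathcal{G}_3$. What you do instead is prove the whole thing directly, and the heart of your proof is the observation that the hypothesis is equivalent, for $\mathbf{R}^3$-a.e.\ $x_3$, to
$$
	K_3(x_3,\cdot\,,\cdot)\sim K_3^{1}(x_3,\cdot)\otimes K_3^{2}(x_3,\cdot),
$$
a condition manifestly symmetric in the first two coordinates; this is exactly the mechanism hiding inside the cited lemma, so your proof in effect reproves \cite[Lemma 3.6]{vH09} in the conditional setting rather than invoking it. Your treatment buys self-containedness at the cost of the bookkeeping you yourself flag, and that bookkeeping does go through: joint measurability of $\varphi$ is \cite[V.58]{DM82} applied to the kernels $(x_2,x_3)\mapsto K(x_3,x_2,\cdot)$ and $(x_2,x_3)\mapsto K_3^{1}(x_3,\cdot)$; your normalizer $\psi(x_1,x_3)$ equals $1$ for $K_3^1(x_3,\cdot)$-a.e.\ $x_1$ since the $x_1$-marginal of $K_3(x_3,\cdot,\cdot)$ is $K_3^1(x_3,\cdot)$ itself, so the disintegration identity assembling $K'$ into a version of $\mathbf{R}[X_2\in\cdot\,|\mathcal{G}_1\vee\mathcal{G}_3]$ is legitimate; and the reverse absolute continuity is cleanest not as ``replace $\varphi$ by $1/\varphi$'' but as a Fubini argument: the hypothesis gives $\varphi>0$ a.e.\ under $K_3^{1}(x_3,\cdot)\otimes K_3^{2}(x_3,\cdot)$ (integrating first in $x_1$), hence for $K_3^1(x_3,\cdot)$-a.e.\ $x_1$ one has $\varphi(x_1,\cdot,x_3)>0$ $K_3^{2}(x_3,\cdot)$-a.e., which is precisely $K'(x_3,x_1,\cdot)\sim K_3^{2}(x_3,\cdot)$. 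With that phrasing your proof is complete and could replace the citation.
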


\begin{pf}
This follows from \cite{vH09}, Lemma~3.6, and the existence of
a measurable version of the Radon--Nikodym density
between kernels \cite{DM82}, Theorem~V.58.
\end{pf}

The second lemma states that conditioning on less information preserves
existence of a conditional density. This follows easily from the tower
property of conditional expectations, but we formulate it precisely for
concreteness.

\begin{lem}
\label{lem:tower}
Let $X_1,X_2,X_3$ be random variables taking values in Polish spaces
$H_1,H_2,H_3$, respectively, and define the $\sigma$-fields
$\mathcal{G}_i=\sigma\{X_i\}$.
Moreover, let $K\dvtx H_2\times\mathcal{B}(H_1)\to[0,1]$ be a transition
kernel, and
suppose that
\[
\mathbf{P}[X_1\in\cdot |\mathcal{G}_2\vee
\mathcal{G}_3] \sim K(X_2, \cdot ),\qquad \mathbf{P}\mbox{-a.s.}
\]
Then we also have
\[
\mathbf{P}[X_1\in\cdot |\mathcal{G}_2] \sim
K(X_2, \cdot ),\qquad\mathbf{P}\mbox{-a.s.} %
\]
\end{lem}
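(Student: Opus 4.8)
The plan is to reduce the claim for $\mathcal{G}_2$ to the given hypothesis for $\mathcal{G}_2\vee\mathcal{G}_3$ by integrating out $X_3$, using the tower property of conditional expectations. The key observation is that a version of $\mathbf{P}[X_1\in\cdot\,|\mathcal{G}_2]$ can be obtained by averaging $\mathbf{P}[X_1\in\cdot\,|\mathcal{G}_2\vee\mathcal{G}_3]$ against the conditional law of $X_3$ given $X_2$; since the hypothesis says the former kernel is equivalent to $K(X_2,\cdot)$, and the latter is a mixture of kernels all equivalent to $K(X_2,\cdot)$, the mixture is again equivalent to $K(X_2,\cdot)$.

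The steps I would carry out are the following. First, fix versions: let $K_{23}(X_2,X_3,\cdot)$ be a regular conditional probability for $\mathbf{P}[X_1\in\cdot\,|\mathcal{G}_2\vee\mathcal{G}_3]$, and let $L(X_2,\cdot)$ be a regular conditional probability for $\mathbf{P}[X_3\in\cdot\,|\mathcal{G}_2]$ (these exist as all spaces are Polish). Define the candidate kernel $\widetilde K(x_2,A)=\int L(x_2,dx_3)\,K_{23}(x_2,x_3,A)$; a standard computation using the tower property shows $\widetilde K(X_2,\cdot)$ is a version of $\mathbf{P}[X_1\in\cdot\,|\mathcal{G}_2]$. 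Second, invoke the measurable Radon--Nikodym density between kernels \cite[V.58]{DM82} to obtain a jointly measurable density $h(x_2,x_3,x_1)=\frac{dK_{23}(x_2,x_3,\cdot)}{dK(x_2,\cdot)}(x_1)$; the hypothesis $K_{23}(X_2,X_3,\cdot)\sim K(X_2,\cdot)$ a.s.\ means that for $\mathbf{P}$-a.e.\ $(x_2,x_3)$ we have $0<h(x_2,x_3,x_1)<\infty$ for $K(x_2,\cdot)$-a.e.\ $x_1$. Third, for a.e.\ $x_2$ this holds for $L(x_2,\cdot)$-a.e.\ $x_3$, so by Fubini the function $\bar h(x_2,x_1)=\int L(x_2,dx_3)\,h(x_2,x_3,x_1)$ is the density $\frac{d\widetilde K(x_2,\cdot)}{dK(x_2,\cdot)}$ and satisfies $\bar h(x_2,x_1)>0$ for $K(x_2,\cdot)$-a.e.\ $x_1$ (a nonnegative function with strictly positive integrand on a set of positive measure is positive a.e.\ only after we also rule out $\bar h=\infty$, which follows since $\widetilde K(x_2,\cdot)$ is a probability measure so $\bar h$ is $K(x_2,\cdot)$-integrable, hence finite a.e.). This gives $\widetilde K(X_2,\cdot)\sim K(X_2,\cdot)$ $\mathbf{P}$-a.s., which is the claim.

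The main obstacle is bookkeeping the null sets correctly: one must ensure that the a.s.\ equivalence $K_{23}(X_2,X_3,\cdot)\sim K(X_2,\cdot)$ under $\mathbf{P}$ transfers to an equivalence that holds $L(x_2,\cdot)$-a.s.\ in $x_3$ for $\mathbf{P}^{X_2}$-a.e.\ $x_2$, which is exactly the disintegration/Fubini step and needs the joint measurability of $h$ from \cite[V.58]{DM82}. Everything else is the routine tower-property verification that $\widetilde K$ is the right regular conditional probability, which I would not grind through in detail.
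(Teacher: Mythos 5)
Your proposal is correct and follows essentially the same route as the paper's proof: both invoke the measurable Radon--Nikodym density between kernels from \cite[V.58]{DM82}, average it against the conditional law of $X_3$ given $X_2$ via the tower property, and conclude equivalence from the resulting density being positive (and finite, since it integrates to one) $K(X_2,\cdot)$-a.e. The only cosmetic difference is that you phrase the null-set bookkeeping at the level of regular conditional probability kernels, whereas the paper fixes a strictly positive version of the density and removes the dependence of the exceptional set on $A$ by a monotone class argument.
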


\begin{pf}
Using existence of a measurable version of the Radon--Nikodym density
between kernels \cite{DM82}, Theorem~V.58, the assumption implies that
there is a measurable function
$h\dvtx H_1\times H_2\times H_3\to\mbox{}]0,\infty[\mbox{}$ such that
\[
\mathbf{P}[X_1\in A|\mathcal{G}_2\vee
\mathcal{G}_3] = \int\mathbf{1}_A(x)
h(x,X_2,X_3) K(X_2,dx),\qquad \mathbf{P}
\mbox{-a.s.} %
\]
for every $A\in\mathcal{B}(H_1)$. By the tower property
\[
\mathbf{P}[X_1\in A|\mathcal{G}_2] = \int
\mathbf{1}_A(x) \int h \bigl(x,X_2,x'
\bigr) \mathbf{P}_{X_2} \bigl(dx' \bigr)
K(X_2,dx),\qquad \mathbf{P}\mbox{-a.s.}, %
\]
where we fix a version of the conditional probability
$\mathbf{P}_{X_2}=\mathbf{P}[X_3\in\cdot |\mathcal{G}_2]$.
As $\mathcal{B}(H_1)$ is countably generated, the $\mathbf{P}$-exceptional
set can be chosen independent of $A$ by a monotone class argument.
This yields the claim.
\end{pf}

The third lemma will enable us to bound the total variation distance
between conditional distributions in terms of the total variation distance
between the underlying unconditional distributions.

\begin{lem}
\label{lem:cptv}
Let $H_1,H_2,H_3$ be Polish spaces, and let $\mathbf{R}$ be a probability
on $H_1\times H_2\times H_3$. Define
$X_i(x_1,x_2,x_3)=x_i$ and $\mathcal{G}_i=\sigma\{X_i\}$. Then
\begin{eqnarray*}
&&\mathbf{E}_{\mathbf{R}} \bigl[\bigl\|\mathbf{R}[X_1\in\cdot |
\mathcal{G}_2\vee\mathcal{G}_3] -\mathbf{R}[X_1
\in\cdot |\mathcal{G}_3]\bigr\| \bigr]
\\
&&\qquad\le 2 \mathbf{E}_{\mathbf{R}}\bigl[\bigl\|\mathbf{R}[X_1,X_3
\in\cdot |\mathcal{G}_2]- \mathbf{R}[X_1,X_3
\in\cdot ]\bigr\|\bigr].
\end{eqnarray*}
\end{lem}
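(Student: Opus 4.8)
The plan is to reduce the bound to a pointwise (``fiberwise'') comparison on a two-fold product space and then to read it off from Lemma~\ref{lem:margtv}. Fix once and for all versions of all the regular conditional probabilities involved, and write $\mathbf{R}^2=\mathbf{R}[X_2\in\cdot\,]$ and $\mathbf{R}_{x_2}=\mathbf{R}[\,\cdot\,|X_2=x_2]$. The first step is the principle of repeated conditioning in the form of Lemma~\ref{lem:weiz}: for $\mathbf{R}^2$-a.e.\ $x_2$,
$$
	\mathbf{R}[X_1\in\cdot\,|\mathcal{G}_2\vee\mathcal{G}_3]
	= \mathbf{R}_{x_2}[X_1\in\cdot\,|\mathcal{G}_3]
	\quad\mathbf{R}_{x_2}\mbox{-a.s.}
$$
Disintegrating the expectation on the left-hand side of the claim over $X_2$ and using this identity reduces the lemma to the fiberwise estimate
$$
	\mathbf{E}_{\mathbf{R}_{x_2}}\!\big[\|\mathbf{R}_{x_2}[X_1\in\cdot\,|\mathcal{G}_3]
	-\mathbf{R}[X_1\in\cdot\,|\mathcal{G}_3]\|\big]
	\le 2\,\|\mathbf{R}_{x_2}[(X_1,X_3)\in\cdot\,]-\mathbf{R}[(X_1,X_3)\in\cdot\,]\|,
$$
valid for $\mathbf{R}^2$-a.e.\ $x_2$; integrating this against $\mathbf{R}^2$ and using $\mathbf{R}_{x_2}[(X_1,X_3)\in\cdot\,]=\mathbf{R}[(X_1,X_3)\in\cdot\,|\mathcal{G}_2=x_2]$ then recovers the assertion.

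The fiberwise estimate follows from a general fact that I would isolate as a short auxiliary lemma: for any probabilities $\nu,\nu'$ on a product $H\times H'$ of Polish spaces with coordinate projections $\pi,\pi'$,
$$
	\mathbf{E}_\nu\!\big[\|\nu[\pi'\in\cdot\,|\pi]-\nu'[\pi'\in\cdot\,|\pi]\|\big]
	\le 2\,\|\nu-\nu'\|.
$$
To prove this, disintegrate $\nu=\nu^\pi\otimes K$ and $\nu'={\nu'}^\pi\otimes K'$ along $\pi$ (with $K,K'$ genuine Markov kernels, which exist because the spaces are Polish), and introduce the hybrid measure $\tilde\nu'=\nu^\pi\otimes K'$, which carries the $\pi$-marginal of $\nu$ but the disintegration kernel of $\nu'$. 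Since $\nu$ and $\tilde\nu'$ share the $\pi$-marginal $\nu^\pi$, Lemma~\ref{lem:margtv} with $\pi$ in the role of the conditioning coordinate gives $\|\nu-\tilde\nu'\|=\mathbf{E}_\nu[\|K-K'\|]$, which is exactly the left-hand side above. On the other hand $\|\nu-\tilde\nu'\|\le\|\nu-\nu'\|+\|\nu'-\tilde\nu'\|$, and $\nu'$ and $\tilde\nu'$ differ only through their $\pi$-marginal while sharing the kernel $K'$, so a direct computation gives $\|\nu'-\tilde\nu'\|\le\|{\nu'}^\pi-\nu^\pi\|\le\|\nu'-\nu\|$. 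Hence $\|\nu-\tilde\nu'\|\le 2\|\nu-\nu'\|$, which is the auxiliary lemma. Applying it with $H=H_3$, $H'=H_1$, $\nu=\mathbf{R}_{x_2}[(X_1,X_3)\in\cdot\,]$ and $\nu'=\mathbf{R}[(X_1,X_3)\in\cdot\,]$ yields the fiberwise estimate, completing the proof.

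The argument invokes no Markov or countable-generation structure beyond what the Polish setting already provides. The main (and rather mild) obstacle is the measure-theoretic bookkeeping across the two disintegrations: each conditional probability is pinned down only outside a null set, so one must arrange that the identity from Lemma~\ref{lem:weiz} and the fiberwise estimate hold simultaneously for $\mathbf{R}^2$-a.e.\ $x_2$; and one must keep track of which coordinate plays the conditioning role when applying Lemma~\ref{lem:margtv}, whose hypothesis requires equality of that conditioning marginal --- precisely the reason for passing through the hybrid measure $\tilde\nu'$ rather than comparing $\nu$ with $\nu'$ directly.
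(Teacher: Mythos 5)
Your proof is correct and follows essentially the same route as the paper's: after conditioning on $X_2$, your hybrid measure $\tilde\nu'=\nu^\pi\otimes K'$ is exactly the paper's intermediate measure $\mathbf{\tilde R}^{x_2}_0(dx_1,dx_3)=\mathbf{R}_{x_3}(dx_1)\,\mathbf{R}^{x_2}(dx_3)$, and your chain (Lemma~\ref{lem:margtv} on the shared marginal, triangle inequality through the unconditional joint law, and the bound of the remaining term by the marginal total variation) is the same argument, with your appeal to Lemma~\ref{lem:weiz} playing the role of the paper's ``by disintegration'' identification $\mathbf{\tilde R}^{X_2}=\mathbf{R}[X_1,X_3\in\cdot\,|\mathcal{G}_2]$. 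The only difference is presentational: you isolate the fiberwise statement as an auxiliary two-measure lemma, which the paper proves inline.
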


\begin{pf}
Fix versions of the regular conditional probabilities
$\mathbf{R}_{X_2,X_3}=\mathbf{R}[X_1\in\cdot |\mathcal{G}_2\vee\mathcal{G}_3]$,
$\mathbf{R}_{X_3}=\mathbf{R}[X_1\in\cdot |\mathcal{G}_3]$, and
$\mathbf{R}^{X_2}=\mathbf{R}[X_3\in\cdot |\mathcal{G}_2]$. Then
\[
\mathbf{E}_{\mathbf{R}} \bigl[ \bigl\|\mathbf{R}[X_1\in\cdot |
\mathcal{G}_2\vee\mathcal{G}_3] -\mathbf{R}[X_1
\in\cdot |\mathcal{G}_3]\bigr\| |\mathcal{G}_2 \bigr] = \int
\| \mathbf{R}_{X_2,x_3}- \mathbf{R}_{x_3}\| \mathbf{R}^{X_2}(dx_3).
\]
Define the kernels
\begin{eqnarray*}
\tilde{\mathbf R}^{x_2}(dx_1,dx_3) &=&
\mathbf{R}_{x_2,x_3}(dx_1) \mathbf{R}^{x_2}(dx_3),
\\
\tilde{\mathbf R}^{x_2}_0(dx_1,dx_3)
&=& \mathbf{R}_{x_3}(dx_1) \mathbf{R}^{x_2}(dx_3),
\\
\tilde{\mathbf R}(dx_1,dx_3) &=& \mathbf{R}_{x_3}(dx_1)
\mathbf{R}(X_3\in dx_3).
\end{eqnarray*}
By Lemma~\ref{lem:margtv}, we have
\[
\int\|\mathbf{R}_{x_2,x_3}- \mathbf{R}_{x_3}\|
\mathbf{R}^{x_2}(dx_3) =\bigl \|\tilde{\mathbf R}^{x_2}-
\tilde{\mathbf R}^{x_2}_0\bigr\| \le \bigl\|\tilde{\mathbf R}^{x_2}-\tilde{\mathbf R}\bigr\| + \bigl\|\tilde{\mathbf R}^{x_2}_0-
\tilde{\mathbf R}\bigr\|. %
\]
But $\tilde{\mathbf R}=
\mathbf{R}[X_1,X_3\in\cdot ]$ and $\tilde{\mathbf R}^{X_2}=
\mathbf{R}[X_1,X_3\in\cdot |\mathcal{G}_2]$ by disintegration, so
\begin{eqnarray*}
& &\mathbf{E}_{\mathbf{R}} \bigl[\bigl \|\mathbf{R}[X_1\in\cdot |
\mathcal{G}_2\vee\mathcal{G}_3] -\mathbf{R}[X_1
\in\cdot |\mathcal{G}_3]\bigr\| |\mathcal{G}_2 \bigr]
\\
&&\qquad\le \bigl\|\mathbf{R}[X_1,X_3\in\cdot |
\mathcal{G}_2]- \mathbf{R}[X_1,X_3\in\cdot ]
\bigr\| + \bigl\|\mathbf{R}[X_3\in\cdot |\mathcal{G}_2]-
\mathbf{R}[X_3\in\cdot ]\bigr\|
\\
&&\qquad\le2 \bigl\|\mathbf{R}[X_1,X_3\in\cdot |
\mathcal{G}_2]- \mathbf{R}[X_1,X_3\in\cdot ]
\bigr\|,
\end{eqnarray*}
and the proof is completed by taking the expectation.
\end{pf}

We can now complete the proof of Theorem~\ref{thmm:inherit}.

\begin{pf*}{Proof of Theorem~\ref{thmm:inherit}}
The nondegeneracy assumption states
\[
\mathbf{P}[Y_{1,n-1}\in\cdot | \mathcal{F}_{-\infty,0}\vee
\mathcal{F}_{n,\infty}]\sim \mathbf{P} \bigl[Y_{1,n-1}\in\cdot |
\mathcal{F}_{-\infty,0}^Y\vee\mathcal{F}_{n,\infty}^Y
\bigr],\qquad \mathbf{P}\mbox{-a.s.} %
\]
Therefore, by Lemma~\ref{lem:tower}, we obtain
\[
\mathbf{P} \bigl[Y_{1,n-1}\in\cdot | \mathcal{F}_{-\infty,0}\vee
\mathcal{F}_{n,\infty}^Y \bigr]\sim \mathbf{P}
\bigl[Y_{1,n-1}\in\cdot | \mathcal{F}_{-\infty,0}^Y\vee
\mathcal{F}_{n,\infty}^Y \bigr],\qquad \mathbf{P}\mbox{-a.s.}
\]
It follows that
\[
\mathbf{P}[Y_{1,n-1}\in\cdot | \mathcal{F}_{-\infty,0}\vee
\mathcal{F}_{n,\infty}]\sim \mathbf{P} \bigl[Y_{1,n-1}\in\cdot |
\mathcal{F}_{-\infty,0}\vee\mathcal{F}_{n,\infty}^Y \bigr],\qquad
\mathbf{P}\mbox{-a.s.}, %
\]
which yields using Lemma~\ref{lem:3.6}
\[
\mathbf{P} \bigl[Z_{n,\infty}\in\cdot |\mathcal{F}^Y\vee
\mathcal{F}_{-\infty,0}^Z \bigr]\sim \mathbf{P}
\bigl[Z_{n,\infty}\in\cdot | \mathcal{F}_{-\infty,0}\vee
\mathcal{F}_{n,\infty}^Y \bigr],\qquad \mathbf{P}\mbox{-a.s.}
\]
Therefore, if we choose any version of $\mathbf{P}_{Y,n}^{Z_-}=
\mathbf{P}[ \cdot |\mathcal{F}_{-\infty,0}\vee
\mathcal{F}^Y_{n,\infty}]$, then
\[
\mathbf{P}_y^{z_-}|_{\mathcal{F}^Z_{n,\infty}} \sim
\mathbf{P}_{y,n}^{z_-}|_{\mathcal{F}^Z_{n,\infty}} \qquad\forall n\ge1 \mbox{ for
}\mathbf{P}\mbox{-a.e. }(z,y) %
\]
(note that we define $\mathbf{P}_{y,n}^{z-}$ as a function of the
entire path $y$ for simplicity of notation; by construction,
$\mathbf{P}_{y,n}^{z-}$ depends on $y_{-\infty,0},y_{n,\infty},z_-$ only).
By condition 3 of Theorem~\ref{thmm:cond02}, to complete
the proof it suffices to show that
\[
\inf_n\bigl\|\mathbf{P}_{y,n}^{z_-}-
\mathbf{P}_{y,n}^{z_-'}\bigr\|_{
\mathcal{F}^Z_{n,\infty}}<2 \qquad\mbox{for }
\mathbf{Q}\mbox{-a.e. } \bigl(z,z',y \bigr). %
\]
We now proceed to establish this property.

By the triangle inequality and as $(Z,Y)$ and $(Z',Y)$
have the same law,
\[
\mathbf{E}_{\mathbf{Q}} \bigl[ \bigl\|\mathbf{P}_{Y,n}^{Z_-}-
\mathbf{P}_{Y,n}^{Z_-'}\bigr \|_{
\mathcal{F}^Z_{n,\infty}} \bigr] \le 2
\mathbf{E}_{\mathbf{Q}} \bigl[ \bigl\| \mathbf{P} \bigl[ \cdot |
\mathcal{F}_{-\infty,0} \vee \mathcal{F}^Y_{n,\infty} \bigr]-
\mathbf{P} \bigl[ \cdot | \mathcal{F}^Y_{n,\infty} \bigr]
\bigr\|_{\mathcal{F}^Z_{n,\infty}} \bigr]. %
\]
Therefore, by Lemma~\ref{lem:cptv}, we obtain
\[
\mathbf{E}_{\mathbf{Q}} \bigl[\bigl \|\mathbf{P}_{Y,n}^{Z_-}-
\mathbf{P}_{Y,n}^{Z_-'}\bigr \|_{
\mathcal{F}^Z_{n,\infty}} \bigr] \le 4
\mathbf{E}_{\mathbf{Q}} \bigl[ \bigl\| \mathbf{P}[ \cdot |\mathcal{F}_{-\infty,0}]-
\mathbf{P} \bigr\|_{\mathcal{F}_{n,\infty}} \bigr]. %
\]
As $(Z,Y)$ is absolutely regular, Corollary~\ref{cor:absreg} gives
\[
\mathbf{E}_{\mathbf{Q}} \Bigl[\inf_{n} \bigl\|
\mathbf{P}_{Y,n}^{Z_-}-\mathbf{P}_{Y,n}^{Z_-'}
\bigr\|_{
\mathcal{F}^Z_{n,\infty}} \Bigr] \le 4\inf_n \mathbf{E} \bigl[ \bigl\|
\mathbf{P}[ \cdot |\mathcal{F}_{-\infty,0}] -\mathbf{P}\bigr\|_{\mathcal{F}_{n,\infty}}
\bigr] = 0. %
\]
Thus, the requisite property is established.
\end{pf*}

\subsection{One-sided observations}
\label{sec:weiz}

Theorem~\ref{thmm:cond02} establishes conditional ergodicity of $Z$ given
the entire observation $\sigma$-field $\mathcal{F}^Y$. This allows us to
control the behavior of the conditional distributions
$\mathbf{P}[Z_{n,\infty}\in\cdot |\mathcal{F}^Y]$ as $n\to\infty$. In
contrast, the ergodic theory of nonlinear filters (Section~\ref{sec:filter}) is concerned with the ``causal'' setting where one
considers the conditional distributions
$\mathbf{P}[X_{n,\infty}\in\cdot |\mathcal{F}^Y_{0,n}]$ as
$n\to\infty$. The latter requires a one-sided version of our results
where we only condition on $\mathcal{F}^Y_+=\mathcal{F}^Y_{0,\infty}$.
Unfortunately, two-sided conditioning was essential to obtain a
conditional zero--two law: if we had replaced $\mathcal{F}^Y$ by
$\mathcal{F}^Y_+$ in Section~\ref{sec:cond02}, for example, then the
coupled measure $\mathbf{Q}$ would be nonstationary and the key Lemmas
\ref{lem:c02-i} and \ref{lem:c02-ii} would fail.

We must therefore develop an additional technique to deduce one-sided
results from their two-sided counterparts. To this end, we prove the
following result, which will suffice for our purposes (recall that
absolute regularity and nondegeneracy were defined in Definitions
\ref{def:absreg} and \ref{defn:nondeg}, resp.).

\begin{prop}[(One-sided conditioning)]
\label{prop:oneside}
Suppose that the stationary process $(Z_k,Y_k)_{k\in\mathbb{Z}}$
is absolutely regular and nondegenerate. Then
\[
\mathbf{P} \bigl[Z_{0,\infty}\in\cdot |\mathcal{F}^Y_+ \bigr]
\sim \mathbf{P} \bigl[Z_{0,\infty}\in\cdot |\mathcal{F}^Y \bigr],\qquad
\mathbf{P}\mbox{-a.s.} %
\]
\end{prop}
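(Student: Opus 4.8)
The plan is to establish the two absolute continuities separately. One of them is automatic and requires no hypotheses: since $\mathcal{F}^Y_+\subseteq\mathcal{F}^Y$, the conditional distribution given the larger $\sigma$-field is $\mathbf{P}$-a.s.\ absolutely continuous with respect to the one given the smaller $\sigma$-field, so $\mathbf{P}[Z_{0,\infty}\in\cdot\,|\mathcal{F}^Y]\ll\mathbf{P}[Z_{0,\infty}\in\cdot\,|\mathcal{F}^Y_+]$ $\mathbf{P}$-a.s. The entire content of the proposition is thus the reverse absolute continuity $\mathbf{P}[Z_{0,\infty}\in\cdot\,|\mathcal{F}^Y_+]\ll\mathbf{P}[Z_{0,\infty}\in\cdot\,|\mathcal{F}^Y]$ $\mathbf{P}$-a.s., and this is where nondegeneracy and absolute regularity are used, in that order.

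First I would use \emph{only} nondegeneracy to show that one may discard any finite block of past observations up to equivalence: for every $n\ge 1$,
$$
	\mathbf{P}[Z_{0,\infty}\in\cdot\,|\mathcal{F}^Y]\sim
	\mathbf{P}[Z_{0,\infty}\in\cdot\,|\mathcal{F}^Y_{-\infty,-n-1}\vee\mathcal{F}^Y_+]
	\qquad\mathbf{P}\mbox{-a.s.}
$$
To prove this, apply Definition \ref{defn:nondeg} to the window $[-n,-1]$ to get $\mathbf{P}[Y_{-n,-1}\in\cdot\,|\mathcal{F}_{-\infty,-n-1}\vee\mathcal{F}_{0,\infty}]\sim\mathbf{P}[Y_{-n,-1}\in\cdot\,|\mathcal{F}^Y_{-\infty,-n-1}\vee\mathcal{F}^Y_{0,\infty}]$; since the right-hand kernel depends only on the observations, Lemma \ref{lem:tower} lets us drop the unobserved components $Z_{-\infty,-n-1}$ from the conditioning, and Lemma \ref{lem:3.6} then exchanges the roles of $Y_{-n,-1}$ and $Z_{0,\infty}$, which yields precisely the displayed equivalence after noting $\mathcal{F}^Y_{-\infty,-n-1}\vee\sigma(Y_{-n,-1})\vee\mathcal{F}^Y_+=\mathcal{F}^Y$.

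Next I would let $n\to\infty$. Writing $\mathcal{G}_n=\mathcal{F}^Y_{-\infty,-n-1}\vee\mathcal{F}^Y_+$ and $\mathcal{G}_\infty=\bigcap_n\mathcal{G}_n\supseteq\mathcal{F}^Y_+$, the reverse martingale convergence theorem gives $\mathbf{P}[Z_{0,\infty}\in\cdot\,|\mathcal{G}_n]\to\mathbf{P}[Z_{0,\infty}\in\cdot\,|\mathcal{G}_\infty]$ $\mathbf{P}$-a.s.; since each term is equivalent to the fixed measure $\mathbf{P}[Z_{0,\infty}\in\cdot\,|\mathcal{F}^Y]$ by the previous step (the exceptional null sets being collected over the countably many $n$), any set null for $\mathbf{P}[Z_{0,\infty}\in\cdot\,|\mathcal{F}^Y]$ is null for every $\mathcal{G}_n$-conditional law and hence for the limit, so $\mathbf{P}[Z_{0,\infty}\in\cdot\,|\mathcal{G}_\infty]\ll\mathbf{P}[Z_{0,\infty}\in\cdot\,|\mathcal{F}^Y]$ $\mathbf{P}$-a.s.\ (with the usual measurable-density bookkeeping of \cite[V.58]{DM82} to pass from indicators to the measures). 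It then suffices to prove $\mathbf{P}[Z_{0,\infty}\in\cdot\,|\mathcal{G}_\infty]=\mathbf{P}[Z_{0,\infty}\in\cdot\,|\mathcal{F}^Y_+]$ $\mathbf{P}$-a.s. Because $\sigma(Z_{0,\infty})\vee\mathcal{F}^Y_+=\mathcal{F}_{0,\infty}$, this is conditional independence of $\mathcal{F}_{0,\infty}$ and $\mathcal{G}_\infty$ given $\mathcal{F}^Y_+$; and since $\mathcal{G}_\infty\subseteq\mathcal{F}^Y_{-\infty,-n-1}\vee\mathcal{F}^Y_+$ while $\mathcal{F}^Y_{-\infty,-n-1}\subseteq\mathcal{F}_{-\infty,-n-1}$ is $\beta(n)$-almost-independent of $\mathcal{F}_{0,\infty}$ with $\beta(n)\to0$ by absolute regularity, one shows $\mathbf{E}[\varphi\,|\,\mathcal{G}_n]\to\mathbf{E}[\varphi\,|\,\mathcal{F}^Y_+]$ in $L^1$ for every bounded $\mathcal{F}_{0,\infty}$-measurable $\varphi$; combined with the (reverse) martingale convergence of the left-hand side, this forces $\mathbf{E}[\varphi\,|\,\mathcal{G}_\infty]=\mathbf{E}[\varphi\,|\,\mathcal{F}^Y_+]$, and applying this to $\varphi=\mathbf{1}_{Z_{0,\infty}\in A}$ over a countable generating class yields the claim.

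The hard part is the last step. The naive hope that $\bigcap_n(\mathcal{F}^Y_{-\infty,-n-1}\vee\mathcal{F}^Y_+)$ coincides modulo $\mathbf{P}$-null sets with $(\bigcap_n\mathcal{F}^Y_{-\infty,-n-1})\vee\mathcal{F}^Y_+=\mathcal{F}^Y_+$ is false in general (von Weizs\"acker), so one must exploit absolute regularity \emph{quantitatively} rather than through a soft limiting argument: the key estimate bounds the difference between conditioning on $\mathcal{G}_n$ and on $\mathcal{F}^Y_+$ in terms of the $\beta$-mixing coefficient between $\mathcal{F}_{-\infty,-n-1}$ and $\mathcal{F}_{0,\infty}$, using that $\varphi$ and every generator of $\mathcal{F}^Y_+$ are $\mathcal{F}_{0,\infty}$-measurable while the extra information carried by $\mathcal{G}_n$ sits in the deep past. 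The remaining difficulty is purely bookkeeping---tracking $\mathbf{P}$-null sets and passing from equivalence of countably many conditional laws to equivalence of the measures via measurable Radon--Nikodym densities---handled exactly as in the preceding proofs of this section.
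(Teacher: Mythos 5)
Your first claim---that $\mathbf{P}[Z_{0,\infty}\in\cdot\,|\mathcal{F}^Y]\ll\mathbf{P}[Z_{0,\infty}\in\cdot\,|\mathcal{F}^Y_+]$ is ``automatic'' because conditioning on a larger $\sigma$-field yields a law absolutely continuous with respect to conditioning on a smaller one---is false as a general principle. If $X$ is uniform on $[0,1]$, then $\mathbf{P}[X\in\cdot\,|\sigma(X)]=\delta_X$ is a.s.\ \emph{singular} with respect to $\mathbf{P}[X\in\cdot\,]$; finer conditioning typically produces a more singular law, not a dominated one. Here the extra information is $Y_{-\infty,-1}$, which could pin down features of $Z_{0,\infty}$ that are diffuse under $\mathbf{P}[\,\cdot\,|\mathcal{F}^Y_+]$ (e.g., if $Z_0$ were a function of $Y_{-1}$); ruling this out is precisely what nondegeneracy is for, so this direction needs the hypotheses just as much as the other and cannot be waved through.

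Second, your limiting step for $\mathbf{P}[Z_{0,\infty}\in\cdot\,|\mathcal{G}_\infty]\ll\mathbf{P}[Z_{0,\infty}\in\cdot\,|\mathcal{F}^Y]$ has a genuine hole: reverse martingale convergence gives $\mathbf{P}[Z_{0,\infty}\in A|\mathcal{G}_n]\to\mathbf{P}[Z_{0,\infty}\in A|\mathcal{G}_\infty]$ a.s.\ only for each \emph{fixed} $A$, with an exceptional null set depending on $A$, whereas absolute continuity of the limit kernel requires the implication ``$\mathbf{P}[Z_{0,\infty}\in A|\mathcal{F}^Y](\omega)=0\Rightarrow\mathbf{P}[Z_{0,\infty}\in A|\mathcal{G}_\infty](\omega)=0$'' simultaneously for all $A$ at a.e.\ $\omega$. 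Null sets of a measure are not determined by a countable generating algebra, so ``collecting the null sets over $n$'' and measurable-density bookkeeping do not close this gap; note also that $\mu\ll\nu_n$ for all $n$ together with $\nu_n\to\nu$ does not imply $\mu\ll\nu$, so the same obstruction blocks any attempt to salvage the ``automatic'' direction from your finite-$n$ equivalences. This interchange is exactly the difficulty that Lemma \ref{lem:abscont} is designed to handle (via the $\mu_n\wedge\nu_n$ decomposition combined with total variation convergence on the tail $\sigma$-fields), and the paper makes that structure available by first flipping the statement with Lemma \ref{lem:3.6} into a comparison of $\mathbf{P}[Y_{-\infty,-1}\in\cdot\,|\mathcal{F}^Y_+]$ with $\mathbf{P}[Y_{-\infty,-1}\in\cdot\,|\mathcal{F}_+]$, where the blockwise conditional equivalence follows from nondegeneracy and the tail total variation convergence from (time-reversed) absolute regularity; since those hypotheses are symmetric, both directions of the equivalence come out at once. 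To your credit, the finite-$n$ equivalence $\mathbf{P}[Z_{0,\infty}\in\cdot\,|\mathcal{F}^Y]\sim\mathbf{P}[Z_{0,\infty}\in\cdot\,|\mathcal{G}_n]$ is correct and proved by the same manipulations the paper uses, and your identification $\mathbf{P}[Z_{0,\infty}\in\cdot\,|\mathcal{G}_\infty]=\mathbf{P}[Z_{0,\infty}\in\cdot\,|\mathcal{F}^Y_+]$ can be made rigorous with a quantitative mixing bound (Lemma \ref{lem:cptv} supplies exactly the estimate you invoke); but as written the argument establishes neither direction of the claimed equivalence.
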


Before we prove this result, let us use it to establish the key
$\sigma$-field identity that will be needed in the ergodic theory of
nonlinear filters (Section~\ref{sec:filter}).

\begin{cor}
\label{cor:exchg}
Suppose that the stationary process $(Z_k,Y_k)_{k\in\mathbb{Z}}$
is absolutely regular and nondegenerate. Then the following holds:
\[
\bigcap_{n\ge0} \mathcal{F}^Y_+\vee
\mathcal{F}^Z_{n,\infty} = \mathcal{F}^Y_+ \qquad\mathop{
\mathrm{mod}}\mathbf{P}. %
\]
\end{cor}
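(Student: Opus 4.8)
Since $\mathcal{F}^Y_+\subseteq\mathcal{F}^Y_+\vee\mathcal{F}^Z_{n,\infty}$ for every $n$, the inclusion $\supseteq$ is immediate, so the content is the reverse inclusion: I must show that every $A\in\bigcap_{n\ge0}(\mathcal{F}^Y_+\vee\mathcal{F}^Z_{n,\infty})$ satisfies $\mathbf{1}_A=\mathbf{E}[\mathbf{1}_A\mid\mathcal{F}^Y_+]$ $\mathbf{P}$-a.s. My plan is to establish first the two-sided statement $\bigcap_{n\ge0}(\mathcal{F}^Y\vee\mathcal{F}^Z_{n,\infty})=\mathcal{F}^Y \bmod\mathbf{P}$, and then to upgrade it to the one-sided statement using Proposition~\ref{prop:oneside}.

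For the two-sided statement I would argue as follows. By Theorem~\ref{thm:inherit} condition~1 of Theorem~\ref{thm:cond02} holds, i.e.\ $\beta_n(Z_-,Z_-',Y)\to0$ $\mathbf{Q}$-a.s., so $\mathbf{E_Q}[\beta_n(Z_-,Z_-',Y)]\to0$ since $\beta_n\le2$. Because under $\mathbf{Q}$ the copies $Z_-,Z_-'$ are conditionally i.i.d.\ given $Y$, writing $\mathbf{P}_Y=\int\mathbf{P}_Y^{z_-}\,\mathbf{P}_Y(Z_-\in dz_-)$ and applying Jensen's inequality gives, for each $n$,
$$
\mathbf{E}\big[\|\mathbf{P}[Z_{n,\infty}\in\cdot\mid\mathcal{F}^Y\vee\mathcal{F}^Z_{-\infty,0}]-\mathbf{P}[Z_{n,\infty}\in\cdot\mid\mathcal{F}^Y]\|\big]\le\mathbf{E_Q}[\beta_n(Z_-,Z_-',Y)]\xrightarrow{n\to\infty}0.
$$
By stationarity of $\mathbf{P}$ and $\Theta$-invariance of $\mathcal{F}^Y$, the same bound shifted by $k$ yields, for every fixed $k\ge0$,
$$
\mathbf{E}\big[\|\mathbf{P}[Z_{n,\infty}\in\cdot\mid\mathcal{F}^Y\vee\mathcal{F}^Z_{-\infty,k}]-\mathbf{P}[Z_{n,\infty}\in\cdot\mid\mathcal{F}^Y]\|\big]\xrightarrow{n\to\infty}0.
$$
Now fix $A\in\bigcap_{n\ge0}(\mathcal{F}^Y\vee\mathcal{F}^Z_{n,\infty})$; for each $n$ write $\mathbf{1}_A=\Phi_n(Y,Z_{n,\infty})$ with $\|\Phi_n\|_\infty\le1$. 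Since $Y$ is measurable with respect to both $\mathcal{F}^Y$ and $\mathcal{F}^Y\vee\mathcal{F}^Z_{-\infty,k}$, integrating $\Phi_n(Y,\cdot)$ against the two regular conditional laws of $Z_{n,\infty}$ shows that $|\mathbf{E}[\mathbf{1}_A\mid\mathcal{F}^Y\vee\mathcal{F}^Z_{-\infty,k}]-\mathbf{E}[\mathbf{1}_A\mid\mathcal{F}^Y]|$ is bounded $\mathbf{P}$-a.s.\ by $\|\mathbf{P}[Z_{n,\infty}\in\cdot\mid\mathcal{F}^Y\vee\mathcal{F}^Z_{-\infty,k}]-\mathbf{P}[Z_{n,\infty}\in\cdot\mid\mathcal{F}^Y]\|$. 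The left side does not depend on $n$, so letting $n\to\infty$ in the last display forces $\mathbf{E}[\mathbf{1}_A\mid\mathcal{F}^Y\vee\mathcal{F}^Z_{-\infty,k}]=\mathbf{E}[\mathbf{1}_A\mid\mathcal{F}^Y]$ for every $k\ge0$. As $\mathcal{F}^Y\vee\mathcal{F}^Z_{-\infty,k}\uparrow\mathcal{F}$, the martingale convergence theorem then gives $\mathbf{1}_A=\mathbf{E}[\mathbf{1}_A\mid\mathcal{F}]=\mathbf{E}[\mathbf{1}_A\mid\mathcal{F}^Y]$, which is the two-sided claim.

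To pass to the one-sided statement I would take $A\in\bigcap_{n\ge0}(\mathcal{F}^Y_+\vee\mathcal{F}^Z_{n,\infty})$. Since $A$ also lies in $\bigcap_{n\ge0}(\mathcal{F}^Y\vee\mathcal{F}^Z_{n,\infty})$, the previous step gives that $\mathbf{1}_A$ is $\mathcal{F}^Y$-measurable, hence $\{0,1\}$-valued. Taking $n=0$, write $\mathbf{1}_A=\Psi(Y_{0,\infty},Z_{0,\infty})$ with $\Psi$ valued in $[0,1]$. Then $\mathbf{1}_A=\mathbf{E}[\mathbf{1}_A\mid\mathcal{F}^Y]=\int\Psi(Y_{0,\infty},z)\,\mathbf{P}[Z_{0,\infty}\in dz\mid\mathcal{F}^Y]$ is $\{0,1\}$-valued, so for $\mathbf{P}$-a.e.\ $\omega$ the map $z\mapsto\Psi(Y_{0,\infty}(\omega),z)$ equals the constant $\mathbf{1}_A(\omega)$ for $\mathbf{P}[Z_{0,\infty}\in\cdot\mid\mathcal{F}^Y](\omega)$-a.e.\ $z$. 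By Proposition~\ref{prop:oneside} the measures $\mathbf{P}[Z_{0,\infty}\in\cdot\mid\mathcal{F}^Y_+]$ and $\mathbf{P}[Z_{0,\infty}\in\cdot\mid\mathcal{F}^Y]$ are $\mathbf{P}$-a.s.\ equivalent, so $z\mapsto\Psi(Y_{0,\infty},z)$ also equals $\mathbf{1}_A$ for $\mathbf{P}[Z_{0,\infty}\in\cdot\mid\mathcal{F}^Y_+]$-a.e.\ $z$, $\mathbf{P}$-a.s.; integrating against this measure gives $\mathbf{E}[\mathbf{1}_A\mid\mathcal{F}^Y_+]=\mathbf{1}_A$ $\mathbf{P}$-a.s., so $A\in\mathcal{F}^Y_+\bmod\mathbf{P}$, which completes the proof.

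The main obstacle is the two-sided step. The difficulty is not the martingale argument itself but the careful bookkeeping: one must treat the various regular conditional probabilities on the appropriate countably generated $\sigma$-fields, and — crucially — use stationarity to deduce the shift-by-$k$ form of condition~1 of Theorem~\ref{thm:cond02}, since it is precisely the freedom to condition on $\mathcal{F}^Z_{-\infty,k}$ rather than only on $\mathcal{F}^Z_{-\infty,0}$ that lets the growing conditioning $\sigma$-field ``catch up'' with the tail as $k\to\infty$ (merely knowing that $\mathcal{A}^Z$ is $\mathbf{P}[\,\cdot\,|\mathcal{F}^Y]$-trivial is not enough). Once the two-sided statement is in hand, the one-sided upgrade is a routine argument on equivalence of conditional laws, for which Proposition~\ref{prop:oneside} is the only extra ingredient.
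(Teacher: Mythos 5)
Your proof is correct, and it shares the paper's main ingredients (Theorem \ref{thm:inherit} together with Theorem \ref{thm:cond02} to obtain conditional absolute regularity, and Proposition \ref{prop:oneside} for the one-sided part), but the second half proceeds differently from the paper's argument. The paper never passes through the two-sided identity: it uses the disintegration $\mathbf{P}_y=\int\mathbf{P}_y^{z_-}\,\mathbf{P}_y(dz)$ and Jensen (essentially your first display) to show that the tail $\sigma$-field $\mathcal{A}^Z$ is trivial under $\mathbf{P}_y$ for $\mathbf{Y}$-a.e.\ $y$, transfers this triviality to $\mathbf{P}_{y_+}$ via the equivalence in Proposition \ref{prop:oneside}, and then combines reverse martingale convergence under $\mathbf{P}_{y_+}$ with the repeated-conditioning Lemma \ref{lem:weiz} to conclude that $\mathbf{P}[A|\mathcal{F}^Y_+\vee\mathcal{F}^Z_{n,\infty}]\to\mathbf{P}[A|\mathcal{F}^Y_+]$ for every $A\in\mathcal{F}$, which yields the corollary at once. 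Your route instead establishes the two-sided exchange identity first, via a forward martingale along $\mathcal{F}^Y\vee\mathcal{F}^Z_{-\infty,k}\uparrow\mathcal{F}$, which is why you need the stationarity shift from $\mathcal{F}^Z_{-\infty,0}$ to $\mathcal{F}^Z_{-\infty,k}$; you then upgrade to the one-sided statement through the $\{0,1\}$-valuedness of $\mathbf{E}[\mathbf{1}_A|\mathcal{F}^Y]$ and the a.s.\ equivalence of the conditional laws of $Z_{0,\infty}$ given $\mathcal{F}^Y$ and $\mathcal{F}^Y_+$. Both arguments are sound: the paper's version avoids the two-sided detour and the stationarity shift (the shift-invariance having already been absorbed into the conditional zero-two law), while yours avoids Lemma \ref{lem:weiz} at the cost of the extra bookkeeping you acknowledge. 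One correction to your closing remark: the claim that triviality of $\mathcal{A}^Z$ under the conditional measure ``is not enough'' is inaccurate as a general statement --- it is exactly enough once combined with Lemma \ref{lem:weiz} and reverse martingale convergence, and that is precisely how the paper argues; it is only insufficient within your unconditional $L^1$ framework, where the shift-by-$k$ freedom plays the role that the repeated-conditioning lemma plays in the paper.
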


\begin{pf}
We begin by noting that for $\mathbf{Y}$-a.e. $y$
\[
\int\mathbf{P}_{y}^{z_-}(A) \mathbf{P}_{y}(dz) =
\mathbf{P}_{y}(A) \qquad\mbox{for all }A\in\mathcal{F}^Z.
\]
Indeed, as
$\mathbf{E}[\mathbf{P}[A|\mathcal{F}^Y\vee\mathcal{F}^Z_{-\infty,0}]|
\mathcal{F}^Y]=\mathbf{P}[A|\mathcal{F}^Y]$,
the equality holds $\mathbf{Y}$-a.e. for all $A$ in a countable
generating algebra for $\mathcal{F}^Z$, and the claim follows by the
monotone class theorem. Using Jensen's inequality,
Theorems \ref{thmm:inherit} and \ref{thmm:cond02},
\[
\bigl\|\mathbf{P}_y^{z_-}-\mathbf{P}_y
\bigr\|_{\mathcal{F}^Z_{n,\infty}} \le\int \bigl\|\mathbf{P}_y^{z_-}-
\mathbf{P}_y^{z_-'}\bigr\|_{\mathcal{F}^Z_{n,\infty}} \mathbf{P}_y
\bigl(dz' \bigr) \mathop{\longrightarrow}^{n\to\infty}0 %
\]
for $\mathbf{P}$-a.e. $(z,y)$. As $\|\mathbf{P}_y^{z_-}-\mathbf{P}_y\|
_{\mathcal{F}^Z_{n,\infty}}
\to\|\mathbf{P}_y^{z_-}-\mathbf{P}_y\|_{\mathcal{A}^Z}$ as
$n\to\infty$, applying again Theorems \ref{thmm:inherit} and \ref{thmm:cond02}
shows that $\mathcal{A}^Z$ is $\mathbf{P}_y$-trivial for
$\mathbf{Y}$-a.e. $y$.

Fix a version of the regular conditional probability
$\mathbf{P}_{Y_+}=\mathbf{P}[ \cdot |\mathcal{F}^Y_+]$, where
$Y_+=(Y_k)_{k\ge0}$. By Proposition~\ref{prop:oneside},
$\mathbf{P}_y$ and $\mathbf{P}_{y_+}$ are equivalent on
$\mathcal{F}^Z_{0,\infty}$ for $\mathbf{Y}$-a.e. $y$.
It follows that $\mathcal{A}^Z$ is also $\mathbf{P}_{y_+}$-trivial for
$\mathbf{Y}$-a.e. $y$. In particular,
\[
\mathbf{P}_{y_+} \bigl[A|\mathcal{F}^Z_{n,\infty}
\bigr] \mathop{\longrightarrow}^{n\to\infty}\mathbf{P}_{y_+}[A],
\qquad\mathbf{P}_{y_+}
\mbox{-a.s.} \mbox{ for }\mathbf{Y}\mbox{-a.e. }y %
\]
holds for any $A\in\mathcal{F}$ by martingale convergence.
By Lemma~\ref{lem:weiz}, this implies
\[
\mathbf{P} \bigl[A|\mathcal{F}^Y_+\vee\mathcal{F}^Z_{n,\infty}
\bigr] \mathop{\longrightarrow}^{n\to\infty}\mathbf{P} \bigl[A|\mathcal{F}^Y_+
\bigr],\qquad \mathbf{P}\mbox{-a.s.} \mbox{ for every }A\in\mathcal{F}. %
\]
This evidently establishes the claim.
\end{pf}

We now turn to the proof of Proposition~\ref{prop:oneside}.
The essential difficulty is that we must show equivalence of
two measures on an infinite time interval. The following lemma
provides a simple tool for this purpose. Recall that given two
probability measures $\mu$ and $\nu$, the measure $\mu\wedge\nu$ is
defined as
\[
d(\mu\wedge\nu) = \biggl(\frac{d\mu}{d(\mu+\nu)}\wedge \frac{d\nu}{d(\mu+\nu)} \biggr) d(
\mu+\nu), %
\]
so that $\|\mu-\nu\|=2(1-\|\mu\wedge\nu\|)$ \cite{Lin02}, page 99. By a
slight abuse of notation, we write $\mathbf{E}_{\mu\wedge\nu}[f] =
\int f\, d(\mu\wedge\nu)$ despite that $\mu\wedge\nu$ is only a
subprobability.

\begin{lem}
\label{lem:abscont}
Let $H$ be a Polish space, and let $\mu,\nu$ be probability measures
on $H^\mathbb{N}$. Denote by $X_i\dvtx H^\mathbb{N}\to H$ the coordinate
projections $X_i(x)=x_i$, and define the $\sigma$-fields
$\mathcal{G}_{m,n}=\sigma\{X_{m,n}\}$
for $m\le n$. If we have
\[
\mu[X_{1,n-1}\in\cdot |\mathcal{G}_{n,\infty}] \ll
\nu[X_{1,n-1}\in\cdot |\mathcal{G}_{n,\infty}],\qquad \mu|_{\mathcal{G}_{n,\infty}}
\wedge \nu|_{\mathcal{G}_{n,\infty}}\mbox{-a.s.} %
\]
for all $n<\infty$, and if in addition
\[
\|\mu-\nu\|_{\mathcal{G}_{n,\infty}}\mathop{\longrightarrow}^{n\to\infty}0, %
\]
then $\mu\ll\nu$ on $\mathcal{G}_{1,\infty}$.
\end{lem}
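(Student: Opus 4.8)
\emph{Proof plan.} The plan is to reduce the statement to a ``local'' approximation: I will show that for every $\varepsilon>0$ there exists a set $B_\varepsilon$ lying in one of the tail $\sigma$-fields $\mathcal{G}_{n,\infty}$ such that $\mu(B_\varepsilon)\ge 1-\varepsilon$ and the finite measure $A\mapsto\mu(A\cap B_\varepsilon)$ is absolutely continuous with respect to $\nu$ on all of $\mathcal{G}_{1,\infty}$. Granting this, the lemma follows immediately: if $A\in\mathcal{G}_{1,\infty}$ satisfies $\nu(A)=0$, then $\mu(A\cap B_\varepsilon)=0$, so $\mu(A)\le\mu(B_\varepsilon^c)\le\varepsilon$, and hence $\mu(A)=0$ since $\varepsilon>0$ is arbitrary.

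To construct $B_\varepsilon$, first use the hypothesis $\|\mu-\nu\|_{\mathcal{G}_{n,\infty}}\to 0$ to fix $n$ with $\|\mu-\nu\|_{\mathcal{G}_{n,\infty}}\le 2\varepsilon$. Consider the Lebesgue decomposition $\mu|_{\mathcal{G}_{n,\infty}}=\mu_a+\mu_s$ of $\mu$ relative to $\nu$ on the $\sigma$-field $\mathcal{G}_{n,\infty}$, and pick $S_n\in\mathcal{G}_{n,\infty}$ carrying $\mu_s$ with $\nu(S_n)=0$. Testing the total variation distance on the set $S_n$ gives $\mu_s(H^\mathbb{N})=\mu(S_n)-\nu(S_n)\le\tfrac12\|\mu-\nu\|_{\mathcal{G}_{n,\infty}}\le\varepsilon$, so $B_\varepsilon:=S_n^c$ has $\mu(B_\varepsilon)\ge 1-\varepsilon$; and since $\mu(D\cap B_\varepsilon)=\mu_a(D\cap B_\varepsilon)\le\mu_a(D)$ for $D\in\mathcal{G}_{n,\infty}$, the measure $D\mapsto\mu(D\cap B_\varepsilon)$ is absolutely continuous with respect to $\nu$ \emph{on} $\mathcal{G}_{n,\infty}$.

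The substantive step is upgrading this absolute continuity from $\mathcal{G}_{n,\infty}$ to $\mathcal{G}_{1,\infty}$ with the help of the conditional absolute continuity hypothesis. Identify $H^\mathbb{N}$ with $H^{n-1}\times H^\mathbb{N}$ (the first $n-1$ coordinates and the remaining ones) and disintegrate $\mu,\nu$ over $\mathcal{G}_{n,\infty}$: writing $K_\mu(\omega,\cdot)$, $K_\nu(\omega,\cdot)$ for the kernels $\mu[X_{1,n-1}\in\cdot\,|\mathcal{G}_{n,\infty}]$, $\nu[X_{1,n-1}\in\cdot\,|\mathcal{G}_{n,\infty}]$ and $\bar\mu_n$, $\bar\nu_n$ for the laws of the tail coordinates under $\mu,\nu$ (which coincide with $\mu|_{\mathcal{G}_{n,\infty}}$, $\nu|_{\mathcal{G}_{n,\infty}}$ under the identification), one has $\mu(A)=\int K_\mu(\omega,A_\omega)\,\bar\mu_n(d\omega)$ and likewise for $\nu$, where $A_\omega$ is the section of $A\in\mathcal{G}_{1,\infty}$. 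Let $A\in\mathcal{G}_{1,\infty}$ with $\nu(A)=0$. Then $K_\nu(\omega,A_\omega)=0$ for $\bar\nu_n$-a.e.\ $\omega$, hence for $\bar\mu_n$-a.e.\ $\omega\in B_\varepsilon$ by the absolute continuity established in the previous paragraph. On the other hand, the hypothesis provides $K_\mu(\omega,\cdot)\ll K_\nu(\omega,\cdot)$ for $(\bar\mu_n\wedge\bar\nu_n)$-a.e.\ $\omega$; combined with the elementary fact that $\bar\mu_n|_{S_n^c}\ll\bar\mu_n\wedge\bar\nu_n$ --- read off from the densities, since on $S_n^c=\{d\bar\nu_n/d(\bar\mu_n+\bar\nu_n)>0\}$ the density of $\bar\mu_n\wedge\bar\nu_n$ is $\bar\mu_n$-a.s.\ strictly positive --- this yields $K_\mu(\omega,\cdot)\ll K_\nu(\omega,\cdot)$ for $\bar\mu_n$-a.e.\ $\omega\in B_\varepsilon$. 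Consequently $K_\mu(\omega,A_\omega)=0$ for $\bar\mu_n$-a.e.\ $\omega\in B_\varepsilon$, so $\mu(A\cap B_\varepsilon)=\int_{B_\varepsilon}K_\mu(\omega,A_\omega)\,\bar\mu_n(d\omega)=0$, which is exactly what was needed.

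The only genuinely delicate point is the reconciliation of almost-sure qualifiers in this last step: the hypothesis controls the conditional kernels only $(\bar\mu_n\wedge\bar\nu_n)$-a.s., whereas the argument needs control $\bar\mu_n$-a.e.\ on $B_\varepsilon$; this is precisely what the inclusion $\bar\mu_n|_{S_n^c}\ll\bar\mu_n\wedge\bar\nu_n$ supplies. Everything else is routine bookkeeping with regular conditional probabilities, sections of product-measurable sets, and the Lebesgue decomposition, all legitimate because $H^\mathbb{N}$ is Polish.
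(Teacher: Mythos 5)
Your proof is correct; the only point needing care (choosing the singular set as $S_n=\{d\bar\nu_n/d(\bar\mu_n+\bar\nu_n)=0\}$ so that the bridging claim $\bar\mu_n|_{S_n^c}\ll\bar\mu_n\wedge\bar\nu_n$ really reads off the densities) is exactly the one you flag, and it goes through. Your route differs from the paper's in the decomposition used: you split the \emph{space} via the Lebesgue decomposition of $\mu|_{\mathcal{G}_{n,\infty}}$ with respect to $\nu|_{\mathcal{G}_{n,\infty}}$, work on the good set $B_\varepsilon=S_n^c$, and then need the auxiliary fact $\bar\mu_n|_{S_n^c}\ll\bar\mu_n\wedge\bar\nu_n$ to reconcile the $(\bar\mu_n\wedge\bar\nu_n)$-a.s.\ qualifier in the hypothesis with the $\bar\mu_n$-a.e.\ statement you need on $B_\varepsilon$. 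The paper instead splits the \emph{integrating measure}: for $A$ with $\nu(A)=0$ it notes $\nu[A|\mathcal{G}_{n,\infty}]=0$ $\nu_n$-a.s., hence $\mu[A|\mathcal{G}_{n,\infty}]=0$ $(\mu_n\wedge\nu_n)$-a.s.\ directly by the hypothesis, and then writes $\mu(A)=\mathbf{E}_{\mu_n\wedge\nu_n}[\mu[A|\mathcal{G}_{n,\infty}]]+\mathbf{E}_{\mu_n-\mu_n\wedge\nu_n}[\mu[A|\mathcal{G}_{n,\infty}]]\le 1-\|\mu_n\wedge\nu_n\|=\tfrac12\|\mu-\nu\|_{\mathcal{G}_{n,\infty}}$, which tends to zero. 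The two arguments are quantitatively identical (your $\mu(S_n)$ and the paper's mass of $\mu_n-\mu_n\wedge\nu_n$ are both controlled by $\tfrac12\|\mu-\nu\|_{\mathcal{G}_{n,\infty}}$), but integrating against $\mu_n\wedge\nu_n$ lets the paper skip both the choice of a carrying set and the a.s.-reconciliation step, at the price of being slightly less explicit about the disintegration/section bookkeeping that you spell out.
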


\begin{pf}
Let $\mu_n = \mu|_{\mathcal{G}_{n,\infty}}$ and
$\nu_n=\nu|_{\mathcal{G}_{n,\infty}}$. Choose any
$A\in\mathcal{G}_{1,\infty}$ such that $\nu(A)=0$. Then
$\nu[A|\mathcal{G}_{n,\infty}]=0$, $\nu_n$-a.s. and, therefore,
$\mu[A|\mathcal{G}_{n,\infty}]=0$, $\mu_n\wedge\nu_n$-a.s. by the first
assumption. But using the second assumption
\[
\mu(A) = \mathbf{E}_{\mu_n\wedge\nu_n} \bigl[\mu[A|\mathcal{G}_{n,\infty}]
\bigr]+ \mathbf{E}_{\mu_n-\mu_n\wedge\nu_n} \bigl[\mu[A|\mathcal{G}_{n,\infty}]
\bigr]
\le1-\|\mu_n\wedge\nu_n\| \mathop{\longrightarrow}^{n\to\infty}0,
\]
where we used $\|\mu-\nu\|_{\mathcal{G}_{n,\infty}}=
2(1-\|\mu_n\wedge\nu_n\|)$.
Thus, $\mu\ll\nu$ on $\mathcal{G}_{1,\infty}$.
\end{pf}

We can now complete the proof of Proposition~\ref{prop:oneside}.

\begin{pf*}{Proof of Proposition~\ref{prop:oneside}}
By Lemma~\ref{lem:3.6}, it suffices to show that
\[
\mathbf{P} \bigl[Y_{-\infty,-1}\in\cdot |\mathcal{F}^Y_+ \bigr]
\sim \mathbf{P}[Y_{-\infty,-1}\in\cdot |\mathcal{F}_+],\qquad \mathbf{P}\mbox{-a.s.}
\]
Fix versions of the regular conditional probabilities
\begin{eqnarray*}
\mathbf{P}_{Y_{0,\infty}}&=&\mathbf{P} \bigl[ \cdot |\mathcal{F}^Y_+
\bigr], \qquad\mathbf{P}_{Y_{0,\infty},Y_{-\infty,-n}}=\mathbf{P} \bigl[ \cdot |\mathcal
{F}^Y_+\vee\mathcal{F}^Y_{-\infty,-n} \bigr],
\\
\mathbf{P}_{X_{0,\infty}}&=&\mathbf{P}[ \cdot |\mathcal{F}_+],\qquad
\mathbf{P}_{X_{0,\infty},Y_{-\infty,-n}}=\mathbf{P} \bigl[ \cdot |\mathcal {F}_+\vee
\mathcal{F}^Y_{-\infty,-n} \bigr].
\end{eqnarray*}
To complete the proof, we show that
$\mathbf{P}_{y_{0,\infty}}[Y_{-\infty,-1}\in\cdot ]\sim
\mathbf{P}_{x_{0,\infty}}[Y_{-\infty,-1}\in\cdot ]$
for $\mathbf{P}$-a.e. $x=(z,y)$.
To this end, we verify the conditions of Lemma~\ref{lem:abscont}.

First, we claim that
\[
\|\mathbf{P}_{y_{0,\infty}}- \mathbf{P}_{x_{0,\infty}}\|_{\mathcal{F}^Y_{-\infty,-m}}
\mathop{\longrightarrow}^{m\to\infty}0 \qquad\mbox{for }\mathbf{P}\mbox{-a.e. } x=(z,y). %
\]
Indeed, note that by the triangle inequality and Jensen's inequality
\begin{eqnarray*}
&&\bigl\|\mathbf{P} \bigl[ \cdot |\mathcal{F}^Y_+ \bigr] - \mathbf{P}[
\cdot |\mathcal{F}_+]\bigr\|_{\mathcal{F}^Y_{-\infty,-m}}
\\
&&\qquad\le \bigl\|\mathbf{P}[ \cdot
|\mathcal{F}_+] -\mathbf{P}\bigr\|_{\mathcal{F}_{-\infty,-m}^Y} + \bigl\|
\mathbf{P} \bigl[ \cdot |\mathcal{F}_+^Y \bigr] -\mathbf{P}
\bigr\|_{\mathcal{F}_{-\infty,-m}^Y}
\\
&&\qquad\le\bigl \|\mathbf{P}[ \cdot |\mathcal{F}_+]
-\mathbf{P}\bigr\|_{\mathcal{F}_{-\infty,-m}} +
\mathbf{E} \bigl[\bigl\|\mathbf{P}[ \cdot |\mathcal{F}_+] -\mathbf{P}
\bigr\|_{\mathcal{F}_{-\infty,-m}}| \mathcal{F}^Y_+ \bigr].
\end{eqnarray*}
By Corollary~\ref{cor:absreg}, it suffices to show that the time-reversed
process $(X_{-k})_{k\in\mathbb{Z}}$ is absolutely regular. But it is
clear from Definition~\ref{def:absreg} that the absolute regularity
property of a stationary sequence is invariant under time reversal.
As we assumed absolute regularity of
$(X_k)_{k\in\mathbb{Z}}$, the claim follows.

Next, we claim that for $\mathbf{P}$-a.e. $x=(z,y)$
\begin{eqnarray}
\mathbf{P}_{y_{0,\infty}} \bigl[Y_{-m+1,-1}\in\cdot |
\mathcal{F}^Y_{-\infty,-m} \bigr] \sim \mathbf{P}_{x_{0,\infty}}
\bigl[Y_{-m+1,-1}\in\cdot | \mathcal{F}^Y_{-\infty,-m} \bigr],
\nonumber\\
\eqntext{\mathbf{P}_{y_{0,\infty}}|_{\mathcal{F}^Y_{-\infty,-m}} \wedge
\mathbf{P}_{x_{0,\infty}}|_{\mathcal{F}^Y_{-\infty,-m}}
\mbox{-a.s.}}
\end{eqnarray}
Indeed, by Lemma~\ref{lem:weiz}, it suffices to show that
\begin{eqnarray}
\mathbf{P}_{y_{0,\infty},Y_{-\infty,-m}}[Y_{-m+1,-1}\in\cdot ] \sim
\mathbf{P}_{x_{0,\infty},Y_{-\infty,-m}}[Y_{-m+1,-1}\in\cdot ],
\nonumber\\
\eqntext{\mathbf{P}_{y_{0,\infty}}|_{\mathcal{F}^Y_{-\infty,-m}} \wedge \mathbf{P}_{x_{0,\infty}}|_{\mathcal{F}^Y_{-\infty,-m}}
\mbox{-a.s.}}
\end{eqnarray}
To see that this is the case, note that
the nondegeneracy assumption yields
\[
\mathbf{P} \bigl[Y_{-m+1,-1}\in\cdot |\mathcal{F}^Y_{-\infty,-m}
\vee \mathcal{F}^Y_+ \bigr] \sim \mathbf{P} \bigl[Y_{-m+1,-1}
\in \cdot |\mathcal{F}^Y_{-\infty,-m}\vee \mathcal{F}_+ \bigr],\qquad
\mathbf{P}\mbox{-a.s.} %
\]
as in the proof of Theorem~\ref{thmm:inherit}.
Thus there is a measurable set $H\subseteq(G\times F)^\mathbb{N}\times
F^\mathbb{N}$ with $\mathbf{P}[(X_{0,\infty},Y_{-\infty,-m})\in H]=1$ such
that
\[
\mathbf{P}_{y_{0,\infty},y_{-\infty,-m}}[Y_{-m+1,-1}\in\cdot ] \sim
\mathbf{P}_{x_{0,\infty},y_{-\infty,-m}}[Y_{-m+1,-1}\in\cdot ] %
\]
for all
$(x_{0,\infty},y_{-\infty,-m})\in H$. But as
$\mathbf{P}[(X_{0,\infty},Y_{-\infty,-m})\in H]=1$ implies\break
$\mathbf{P}_{x_{0,\infty}}[(x_{0,\infty},Y_{-\infty,-m})\in H]=1$ for
$\mathbf{P}$-a.e. $x$ by disintegration, we obtain
\[
\mathbf{P}_{y_{0,\infty},Y_{-\infty,-m}}[Y_{-m+1,-1}\in\cdot ] \sim
\mathbf{P}_{x_{0,\infty},Y_{-\infty,-m}}[Y_{-m+1,-1}\in\cdot ],\qquad\mathbf{P}_{x_{0,\infty}}
\mbox{-a.s.}, %
\]
and thus {a fortiori}
$\mathbf{P}_{y_{0,\infty}}|_{\mathcal{F}^Y_{-\infty,-m}}\wedge
\mathbf{P}_{x_{0,\infty}}|_{\mathcal{F}^Y_{-\infty,-m}}$-a.s.,
for $\mathbf{P}$-a.e. $x$.

The two claims above verify the assumptions of Lemma~\ref{lem:abscont}, which yields
$\mathbf{P}_{y_{0,\infty}}[Y_{-\infty,-1}\in\cdot ]\sim
\mathbf{P}_{x_{0,\infty}}[Y_{-\infty,-1}\in\cdot ]$
for $\mathbf{P}$-a.e. $x$ as was to be shown.
\end{pf*}

\section{Ergodicity of the filter}
\label{sec:filter}

Let $(X_n,Y_n)_{n\ge0}$ be a Markov chain. We interpret
$(X_n)_{n\ge0}$ as the unobserved component of the model, while
$(Y_n)_{n\ge0}$ is the observable process. In this setting, there are
two distinct levels on which the conditional ergodic theory of the model
can be investigated.

In the previous section, we investigated directly the ergodic properties
of the unobserved process $(X_n)_{n\ge0}$ conditionally on the
observations $(Y_n)_{n\ge0}$. This setting is of interest if the
entire observation sequence $(Y_n)_{n\ge0}$ is available {a
priori}. In contrast, it is often of interest to consider the setting of
\emph{causal} conditioning, where we wish to infer the current state
$X_n$ of the unobserved process given only the history of observations
to date $\mathcal{F}^Y_{0,n}$. An object of central importance in this
setting is the \emph{nonlinear filter}
\[
\pi_n = \mathbf{P} \bigl[X_n\in\cdot |
\mathcal{F}^Y_{0,n} \bigr]. %
\]
Evidently, the filtering process $(\pi_n)_{n\ge0}$ is a measure-valued
process that is adapted to the observation filtration $\mathcal{F}^Y_{0,n}$.
The goal of this section is to investigate the stability and ergodic properties
of the filter $(\pi_n)_{n\ge0}$.

In Section~\ref{sec:fsetting}, we will develop the basic setting and
notation to be used throughout this section. In Section~\ref{sec:flocal}, we develop a local stability result for the nonlinear
filter, which is in essence the filtering counterpart of the local
zero--two laws of Section~\ref{sec:loc02}. In Section~\ref{sec:fstab},
we apply the local stability result to
develop a number of general stability and ergodicity
results for the nonlinear filter that are applicable to
infinite-dimensional models. Finally, in Section~\ref{sec:conttime}
we will extend our results to the continuous time setting.

The filter stability and ergodicity results developed in this section
provided the main motivation for the theory developed in this paper;
their broad applicability will be illustrated in Section~\ref{sec:examples} below.

\subsection{Setting and notation}
\label{sec:fsetting}

\subsubsection{The canonical setup}

Throughout this section, we will consider the bivariate stochastic process
$(X_n,Y_n)_{n\in\mathbb{Z}}$, where $X_n$ takes values in the Polish space
$E$ and $Y_n$ takes values in the Polish space $F$.
We realize this process on the canonical path space
$\Omega=\Omega^X\times\Omega^Y$ with
$\Omega^X= E^{\mathbb{Z}}$ and $\Omega^Y=F^{\mathbb{Z}}$, such that
$X_n(x,y)=x(n)$ and $Y_n(x,y)=y(n)$.
Denote by $\mathcal{F}$ the Borel $\sigma$-field of $\Omega$, and define
$X_{m,n}=(X_k)_{m\le k\le n}$, $Y_{m,n}=(Y_k)_{m\le k\le n}$, and
\[
\mathcal{F}^X_{m,n}=\sigma\{X_{m,n}\},\qquad
\mathcal{F}^Y_{m,n}=\sigma\{Y_{m,n}\},\qquad
\mathcal{F}_{m,n}=\mathcal{F}^X_{m,n}\vee
\mathcal{F}^Y_{m,n} %
\]
for $m\le n$.
For simplicity of notation, we define the $\sigma$-fields
\begin{eqnarray*}
\mathcal{F}^X&=&\mathcal{F}^X_{-\infty,\infty},\qquad
\mathcal{F}^Y=\mathcal{F}^Y_{-\infty,\infty},\qquad
\mathcal{F}^X_+=\mathcal{F}^X_{0,\infty},\qquad
\mathcal{F}^Y_+=\mathcal{F}^Y_{0,\infty},\\
\mathcal{F}_+&=&\mathcal{F}_{0,\infty}. %
\end{eqnarray*}
Finally, we denote by $Y$ the $F^{\mathbb{Z}}$-valued random variable
$(Y_k)_{k\in\mathbb{Z}}$, and the canonical shift $\Theta\dvtx\Omega\to
\Omega$
is defined as $\Theta(x,y)(m) = (x(m+1),y(m+1))$.

For any Polish space $Z$, we denote by $\mathcal{B}(Z)$ its Borel
$\sigma$-field, and by $\mathcal{P}(Z)$ the space of all probability
measures on $Z$ endowed with the weak convergence topology [thus
$\mathcal{P}(Z)$ is again Polish]. Let us recall that any probability
kernel $\rho\dvtx Z\times\mathcal{B}(Z')\to[0,1]$ may be equivalently viewed
as a $\mathcal{P}(Z')$-valued random variable
$z\mapsto\rho(z, \cdot )$ on $(Z,\mathcal{B}(Z))$. For notational
convenience, we will implicitly identify probability kernels and random
probability measures in the sequel.
The notation for total variation distance is as in Section~\ref{sec:zerotwo}.

\subsubsection{The Markov model}

The basic model of this section is defined by a Markov transition kernel
$P\dvtx E\times F\times\mathcal{B}(E\times F)\to[0,1]$ on $E\times F$.
Denote by $\mathbf{P}^\mu$ the probability measure on $\mathcal{F}_+$
such that $(X_n,Y_n)_{n\ge0}$ is a Markov chain with transition kernel
$P$ and initial law $(X_0,Y_0)\sim\mu\in\mathcal{P}(E\times F)$.
For any $(x,y)\in E\times F$, we will denote for simplicity the law of the
Markov chain started at the point mass $(X_0,Y_0)=(x,y)$ as
$\mathbf{P}^{x,y}=\mathbf{P}^{\delta_x\otimes\delta_y}$.

We now impose the following standing assumption.

\begin{saspt*}
The Markov transition kernel $P$ admits an invariant probability
measure $\lambda\in\mathcal{P}(E\times F)$, that is, $\lambda P=\lambda$.
\end{saspt*}

Let us emphasize that we do not rule out
at this point the existence of more than one invariant probability;
we simply fix one invariant probability $\lambda$ in what follows.
Our results will be stated in terms of $\lambda$.

Note that by construction, $(X_n,Y_n)_{n\ge0}$ is a stationary Markov
chain under~$\mathbf{P}^\lambda$. We can therefore naturally extend
$\mathbf{P}^\lambda$ to $\mathcal{F}$ such that the two-sided process
$(X_n,Y_n)_{n\in\mathbb{Z}}$ is the stationary Markov chain with
invariant probability $\lambda$ under $\mathbf{P}^\lambda$.
For simplicity, we will frequently write $\mathbf{P}=\mathbf{P}^\lambda$.

\subsubsection{The nonlinear filter}

The Markov chain $(X_n,Y_n)_{n\ge0}$ consists of two components:
$(X_n)_{n\ge0}$ represents the unobservable component of the model,
while $(Y_n)_{n\ge0}$ represents the observable component. As
$(X_n)_{n\ge0}$ is presumed to be unobservable, we are interested at time
$n$ in the conditional distribution given the observation history to
date $Y_0,\ldots,Y_n$.
To this end, we will introduce for every $\mu\in\mathcal{P}(E\times F)$
the following random measures:
\[
\Pi_n^\mu= \mathbf{P}^\mu \bigl[ \cdot |
\mathcal{F}^Y_{0,n} \bigr], \qquad\pi_n^\mu=
\mathbf{P}^\mu \bigl[X_n\in\cdot |\mathcal{F}^Y_{0,n}
\bigr]. %
\]
The $\mathcal{P}(E)$-valued process $(\pi_n^\mu)_{n\ge0}$ is called the
\emph{nonlinear filter} started at $\mu$. This is ultimately our main
object of interest. However, we will find it convenient to investigate
the full conditional distributions $\Pi_n^\mu$. When $\mu=\lambda$ is
the invariant measure, we will write $\Pi_n=\Pi_n^\lambda$ and
$\pi_n=\pi_n^\lambda$.

\begin{rem}
Note that $\Pi_n^\mu,\pi_n^\mu$ are
$\mathcal{F}^Y_{0,n}$-measurable kernels. That is,
$\pi_n^\mu\dvtx \Omega\times\mathcal{B}(E)\to[0,1]$ can be written
as $\pi_n^\mu(A)=\pi_n^\mu[Y_{0,n};A]$ for $A\in\mathcal{B}(E)$.
We will mostly suppress the
dependence on $Y_{0,n}$ for notational convenience.
\end{rem}

\subsection{A local stability result}
\label{sec:flocal}

The main tool that we will develop to investigate the ergodic theory of
nonlinear filters is a \emph{local} stability result for the conditional
distributions $\Pi_n^\mu$. To this end, we fix in this
subsection a countably generated
local $\sigma$-field $\mathcal{E}^0\subseteq\mathcal{B}(E)$
as in Section~\ref{sec:loc02}, and define
\[
\mathcal{F}^0_{m,n}= \mathcal{F}^Y_{m,n}
\vee\bigvee_{m\le k\le n} X_k^{-1}
\bigl(\mathcal{E}^0 \bigr),\qquad m<n. %
\]
Let us emphasize that the localization pertains only to the unobserved
component~$X_k$: it is essential for our results that the full observation
variable $Y_k$ is included in the local filtration $\mathcal{F}^0_{m,n}$.
In practice, this requirement and the nondegeneracy assumption below are
easily satisfied when the observations are finite-dimensional, but place
restrictions on the applicability of our theory when both unobserved
and observed processes are infinite-dimensional; cf. Section~\ref{sec:examples} for examples and Remark~\ref{rem:idimobs} for further
discussion.

As in Section~\ref{sec:conderg}, we require two basic assumptions.
The first assumption states that the model $(X_n,Y_n)_{n\ge0}$ is locally
ergodic.

\begin{aspt}[(Local ergodicity)]
\label{aspt:locerg}
The following holds:
\[
\bigl\|\mathbf{P}^{x,y}-\mathbf{P}\bigr\|_{\mathcal{F}^0_{n,\infty}}
 \mathop{\longrightarrow}^{n\to
\infty}0 \qquad\mbox{for }\lambda\mbox{-a.e. }(x,y)\in E\times F. %
\]
\end{aspt}

The second assumption provides a notion of nondegeneracy that is adapted
to the present setting (cf. \cite{TvH12}): it states that the dynamics of
the unobserved and observed processes can be made independent on a finite
time interval by an equivalent change of measure. Roughly speaking, this
requirement ensures that we cannot infer with certainty the outcome of any
unobserved event given a finite number of observations; indeed, by the
Bayes formula, Assumption~\ref{aspt:nondeg} implies that the conditional
distribution given a finite number of observations is equivalent to the
unconditional distribution. Our results can certainly fail in
the absence of such a property, cf. \cite{vH12} for examples.

\begin{aspt}[(Nondegeneracy)]
\label{aspt:nondeg}
There exist Markov transition kernels $P_0\dvtx E\times\mathcal{B}(E)\to[0,1]$
and $Q\dvtx F\times\mathcal{B}(F)\to[0,1]$ such that
\[
P \bigl(x,y,dx',dy' \bigr) = g \bigl(x,y,x',y'
\bigr) P_0 \bigl(x,dx' \bigr) Q \bigl(y,dy'
\bigr) %
\]
for some strictly positive measurable function $g\dvtx E\times F\times
E\times F\to\mbox{}]0,\infty[$.
\end{aspt}

Note that Assumption~\ref{aspt:locerg} is characterized by Theorem~\ref{thmm:sloc02}, which yields a general tool to verify this assumption.
Assumption~\ref{aspt:nondeg} is easily verified in practice as it is
stated directly in terms of the underlying model.

The main result of this section is as follows.

\begin{thmm}[(Local filter stability)]
\label{thmm:lfstab}
Suppose that Assumptions \ref{aspt:locerg} and \ref{aspt:nondeg} hold.
Then for any initial probability $\mu\in\mathcal{P}(E\times F)$ such that
\[
\mu(E\times\cdot)\ll\lambda(E\times\cdot) \quad\mbox{and}\quad\bigl \|\Pi^\mu_0-
\Pi_0\bigr\|_{\mathcal{F}^0_{n,\infty}} \mathop{\longrightarrow}^{n\to\infty}0,\qquad
\mathbf{P}^\mu\mbox{-a.s.}, %
\]
we have
\[
\bigl\|\Pi_n^\mu-\Pi_n\bigr\|_{\mathcal{F}^0_{n-r,\infty}}
\mathop{\longrightarrow}^{n\to\infty}0,\qquad \mathbf{P}^\mu\mbox{-a.s.} \mbox{ for any }r
\in\mathbb{N}. %
\]
If $\mu(E\times\cdot)\sim\lambda(E\times\cdot)$, the convergence
holds also $\mathbf{P}$-a.s.
\end{thmm}

\begin{rem}
When interpreting this result, we must take care to ensure that the
relevant quantities are well defined. Recall that $\Pi_n^\mu$, as a
regular conditional probability, is defined uniquely up
to a $\mathbf{P}^\mu|_{\mathcal{F}^Y_{0,n}}$-null set only.
Thus in order that $\Pi_n$ is $\mathbf{P}^\mu$-a.s. is uniquely defined
we must at least have
$\mathbf{P}^\mu|_{\mathcal{F}^Y_{0,n}}\ll\mathbf{P}|_{\mathcal{F}^Y_{0,n}}$.
The assumption $\mu(E\times\cdot)\ll\lambda(E\times\cdot)$
is therefore necessary even for the statement of Theorem~\ref{thmm:lfstab}
to make sense.
As part of
the proof, we will in fact show that under the stated assumptions
$\mathbf{P}^\mu|_{\mathcal{F}^Y_+}\ll\mathbf{P}|_{\mathcal{F}^Y_+}$
[resp., $\mathbf{P}^\mu|_{\mathcal{F}^Y_+}\sim
\mathbf{P}|_{\mathcal{F}^Y_+}$ when
$\mu(E\times\cdot)\sim\lambda(E\times\cdot)$].
This ensures that $\Pi_n$ is $\mathbf{P}^\mu$-a.s. uniquely defined
(resp. $\Pi_n^\mu$ is $\mathbf{P}$-a.s. uniquely defined) for every
$n\ge0$.
\end{rem}

If we strengthen Assumption~\ref{aspt:locerg} as in Theorem~\ref{thmm:loc02}, we obtain the following.

\begin{cor}
\label{cor:lfverystab}
Suppose that Assumption~\ref{aspt:nondeg} holds and that
\[
\bigl\|\mathbf{P}^{x,y}-\mathbf{P}\bigr\|_{\mathcal{F}^0_{n,\infty}} \mathop{\longrightarrow}^{n\to
\infty}0 \qquad\mbox{for all }(x,y)\in E\times F. %
\]
Then for any $\mu\in\mathcal{P}(E\times F)$ such that
$\mu(E\times\cdot)\ll\lambda(E\times\cdot)$, we have
\[
\bigl\|\Pi_n^\mu-\Pi_n\bigr\|_{\mathcal{F}^0_{n-r,\infty}}
\mathop{\longrightarrow}^{n\to\infty}0,\qquad \mathbf{P}^\mu\mbox{-a.s. } \mbox{for any }r
\in\mathbb{N}. %
\]
If $\mu(E\times\cdot)\sim\lambda(E\times\cdot)$, the convergence
holds also $\mathbf{P}$-a.s.
\end{cor}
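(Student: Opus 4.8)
The strategy is to deduce Corollary~\ref{cor:lfverystab} directly from Theorem~\ref{thm:lfstab}, by showing that the stronger hypothesis $\|\mathbf{P}^{x,y}-\mathbf{P}\|_{\mathcal{F}^0_{n,\infty}}\to 0$ for \emph{all} $(x,y)$ makes both requirements that Theorem~\ref{thm:lfstab} imposes on the initial law $\mu$ automatic. First, this hypothesis trivially implies the $\lambda$-a.e.\ statement of Assumption~\ref{aspt:locerg}, so together with Assumption~\ref{aspt:nondeg} the theorem is applicable once the remaining requirement is checked. Second, the requirement $\mu(E\times\cdot)\ll\lambda(E\times\cdot)$ is assumed outright, and (as in the remark following Theorem~\ref{thm:lfstab}) it guarantees $\mathbf{P}^\mu|_{\mathcal{F}^Y_{0,0}}\ll\mathbf{P}|_{\mathcal{F}^Y_{0,0}}$, so that $\Pi_0$ is $\mathbf{P}^\mu$-a.s.\ well defined. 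It therefore only remains to verify the initial-condition assumption $\|\Pi_0^\mu-\Pi_0\|_{\mathcal{F}^0_{n,\infty}}\to 0$ $\mathbf{P}^\mu$-a.s.

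To this end I would use the Markov property to express the initial conditional distributions as mixtures of the fixed-initial-condition laws. Conditioning $\mathbf{P}^\mu$ on $\mathcal{F}^X_{0,0}\vee\mathcal{F}^Y_{0,0}$ yields $\mathbf{P}^{X_0,Y_0}$ by the Markov property, so the tower property gives, for $\mu(E\times\cdot)$-a.e.\ $y$,
\[
	\Pi_0^\mu[y;\cdot]=\int\mathbf{P}^{x,y}(\cdot)\,\mu_y(dx),\qquad
	\Pi_0[y;\cdot]=\int\mathbf{P}^{x,y}(\cdot)\,\lambda_y(dx)
\]
as measures on $\mathcal{F}_+$, where $\mu_y=\mu(X_0\in\cdot\,|\,Y_0=y)$ and $\lambda_y=\lambda(X_0\in\cdot\,|\,Y_0=y)$ (the latter identity holds for $\lambda(E\times\cdot)$-a.e.\ $y$, hence for $\mu(E\times\cdot)$-a.e.\ $y$ by absolute continuity). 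The triangle and Jensen inequalities then give
\[
	\|\Pi_0^\mu[y;\cdot]-\Pi_0[y;\cdot]\|_{\mathcal{F}^0_{n,\infty}}
	\le
	\int\!\|\mathbf{P}^{x,y}-\mathbf{P}\|_{\mathcal{F}^0_{n,\infty}}\,(\mu_y+\lambda_y)(dx),
\]
and since by hypothesis the integrand tends to $0$ for every $x$ and is bounded by $2$, dominated convergence shows that the right-hand side vanishes as $n\to\infty$ for \emph{every} such $y$. In particular $\|\Pi_0^\mu-\Pi_0\|_{\mathcal{F}^0_{n,\infty}}\to 0$ holds $\mathbf{P}^\mu$-a.s. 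Theorem~\ref{thm:lfstab} now applies and delivers $\|\Pi_n^\mu-\Pi_n\|_{\mathcal{F}^0_{n-r,\infty}}\to 0$ $\mathbf{P}^\mu$-a.s.\ for every $r\in\mathbb{N}$, as well as the $\mathbf{P}$-a.s.\ statement when $\mu(E\times\cdot)\sim\lambda(E\times\cdot)$.

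There is no genuine obstacle here: the corollary amounts to the observation that an ``everywhere'' local ergodicity hypothesis trivializes the initial-condition assumptions of Theorem~\ref{thm:lfstab}. The only point requiring a little care is the mixture representation of $\Pi_0^\mu$ and $\Pi_0$ together with the attendant measurability and null-set bookkeeping in the observation variable $y$, which is routine under the Polish standing assumptions.
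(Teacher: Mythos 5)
Your proof is correct and follows essentially the same route as the paper's: the paper likewise observes that the everywhere hypothesis implies Assumption \ref{aspt:locerg}, bounds $\|\Pi^\mu_0-\Pi_0\|_{\mathcal{F}^0_{n,\infty}}$ by the triangle inequality through $\mathbf{P}$ together with the Markov property and conditional Jensen (your disintegration into mixtures over $\mu_y$ and $\lambda_y$ is the same computation written out explicitly), and then invokes Theorem \ref{thm:lfstab}. The dominated-convergence and null-set bookkeeping you flag is exactly the routine step implicit in the paper's argument, so there is nothing missing.
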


\begin{pf}
The assumption clearly implies Assumption~\ref{aspt:locerg}.
Moreover,
\begin{eqnarray*}
\bigl\|\Pi^\mu_0-\Pi_0\bigr\|_{\mathcal{F}^0_{n,\infty}} &\le& \bigl\|
\mathbf{P}^\mu[ \cdot |Y_0]-\mathbf{P}\bigr\|_{\mathcal{F}^0_{n,\infty
}} +
\bigl\|\mathbf{P}[ \cdot |Y_0]-\mathbf{P}\bigr\|_{\mathcal{F}^0_{n,\infty}}
\\
&\le& \mathbf{E}^\mu \bigl[ \bigl\|\mathbf{P}^{X_0,Y_0}-\mathbf{P}
\bigr\|_{\mathcal{F}^0_{n,\infty}} |Y_0 \bigr] + \mathbf{E} \bigl[ \bigl\|
\mathbf{P}^{X_0,Y_0}-\mathbf{P}\bigr\|_{\mathcal{F}^0_{n,\infty}} |Y_0 \bigr].
\end{eqnarray*}
Thus, $\|\Pi^\mu_0-\Pi_0\|_{\mathcal{F}^0_{n,\infty}}\to0$,
$\mathbf{P}^\mu$-a.s. It remains to apply Theorem~\ref{thmm:lfstab}.
\end{pf}

We now turn to the proof of Theorem~\ref{thmm:lfstab}. We begin with
a trivial consequence of the Bayes formula that we formulate for
completeness.

\begin{lem}
\label{lem:bayes}
Let $\mu,\nu$ be probability measures on a Polish space $H$, and let
$\mathcal{G}\subseteq\mathcal{B}(H)$ be a $\sigma$-field. If
$\mu\sim\nu$, then $\mu[ \cdot |\mathcal{G}]\sim
\nu[ \cdot |\mathcal{G}]$, $\mu$-a.s. and $\nu$-a.s.
\end{lem}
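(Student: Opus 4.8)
The plan is to reduce the claim to the Bayes formula for conditional expectations with respect to a sub-$\sigma$-field. Set $\varphi=\frac{d\nu}{d\mu}$; since $\mu\sim\nu$ we have $0<\varphi<\infty$ $\mu$-a.s.\ (hence $\nu$-a.s.), and $\frac{d\mu}{d\nu}=\varphi^{-1}$. As $H$ is Polish, fix regular conditional probabilities $\mu_{\mathcal{G}}(\omega,\cdot\,)$ and $\nu_{\mathcal{G}}(\omega,\cdot\,)$ of $\mu$ and $\nu$ given $\mathcal{G}$. I would introduce the kernel
$$
	K(\omega,A)=
	\frac{\int_A\varphi\,d\mu_{\mathcal{G}}(\omega,\cdot\,)}
	{\int\varphi\,d\mu_{\mathcal{G}}(\omega,\cdot\,)},
	\qquad A\in\mathcal{B}(H),
$$
and prove two things: (i) for $\mu$-a.e.\ (equivalently $\nu$-a.e.) $\omega$, $K(\omega,\cdot\,)$ is a probability measure equivalent to $\mu_{\mathcal{G}}(\omega,\cdot\,)$; and (ii) $K$ is a version of $\nu[\,\cdot\,|\mathcal{G}]$. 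Granting these, $\nu_{\mathcal{G}}(\omega,\cdot\,)=K(\omega,\cdot\,)\sim\mu_{\mathcal{G}}(\omega,\cdot\,)$ for $\nu$-a.e.\ $\omega$, hence also for $\mu$-a.e.\ $\omega$; since any two versions of a regular conditional probability agree a.s., this is exactly the assertion.

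For (i), note that $\int\varphi\,d\mu_{\mathcal{G}}(\omega,\cdot\,)=\mathbf{E}_\mu[\varphi\,|\,\mathcal{G}](\omega)$ for $\mu$-a.e.\ $\omega$ (the defining property of a regular conditional probability applied to the $\mu$-integrable function $\varphi$), and this quantity is strictly positive and finite $\mu$-a.s.\ because $\varphi>0$ $\mu$-a.s.\ and $\mathbf{E}_\mu[\varphi]=1$. Moreover, disintegrating $0=\mu(\{\varphi=0\})=\int\mu_{\mathcal{G}}(\omega,\{\varphi=0\})\,\mu(d\omega)$ shows $\mu_{\mathcal{G}}(\omega,\{\varphi=0\})=0$ for $\mu$-a.e.\ $\omega$. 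Hence, for $\mu$-a.e.\ $\omega$, $K(\omega,\cdot\,)$ is a probability measure with strictly positive density $\varphi/\mathbf{E}_\mu[\varphi|\mathcal{G}](\omega)$ relative to $\mu_{\mathcal{G}}(\omega,\cdot\,)$, so $K(\omega,\cdot\,)\sim\mu_{\mathcal{G}}(\omega,\cdot\,)$.

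For (ii), I would verify the defining identity of the conditional probability: fixing $A\in\mathcal{B}(H)$ and $B\in\mathcal{G}$, the disintegration $\int h\,d\mu=\int\!\big(\!\int h\,d\mu_{\mathcal{G}}(\omega,\cdot\,)\big)\,\mu(d\omega)$ applied with $h=\mathbf{1}_{A\cap B}\varphi$, together with $\mu_{\mathcal{G}}(\omega,B)=\mathbf{1}_B(\omega)$ for $\mu$-a.e.\ $\omega$, gives $\int_B\!\big(\!\int_A\varphi\,d\mu_{\mathcal{G}}(\omega,\cdot\,)\big)\mu(d\omega)=\nu(A\cap B)$, and similarly $\int_B\mathbf{E}_\mu[\varphi|\mathcal{G}]\,d\mu=\nu(B)$; dividing, $K(\cdot\,,A)=\nu[A\,|\,\mathcal{G}]$ $\nu$-a.s.\ (and $\mu$-a.s.). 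The remaining — and only genuinely delicate — point is to upgrade this per-set identity to the identity of \emph{measures} $K(\omega,\cdot\,)=\nu_{\mathcal{G}}(\omega,\cdot\,)$ for $\nu$-a.e.\ $\omega$. This is done exactly as in the proof of Lemma~\ref{lem:weiz}: choose a countable generating algebra for $\mathcal{B}(H)$ on which the identity holds simultaneously for $\nu$-a.e.\ $\omega$, and invoke the monotone class theorem, using that both $K(\omega,\cdot\,)$ and $\nu_{\mathcal{G}}(\omega,\cdot\,)$ are bona fide measures in $A$. I expect this monotone class bookkeeping to be the main (routine) obstacle; everything else is a direct application of the disintegration theorem and the Bayes formula.
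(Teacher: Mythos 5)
Your argument is correct and is essentially the paper's proof: both express one conditional law as a reweighting of the other by the a.s.\ strictly positive ratio $\Lambda/\mathbf{E}[\Lambda|\mathcal{G}]$ (you merely re-derive this conditional Bayes formula via regular conditional probabilities and disintegration, where the paper cites it directly, with the roles of $\mu$ and $\nu$ swapped), and both then remove the dependence of the exceptional set on the event via a countable generating class and a monotone class argument. No gaps.
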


\begin{pf}
Let $\Lambda=d\mu/d\nu$. Then for any $A\in\mathcal{B}(H)$, we have
\[
\mu[A|\mathcal{G}] = \mathbf{E}_\nu \biggl[\mathbf{1}_A
\frac{\Lambda}{\mathbf{E}_\nu[\Lambda|\mathcal{G}]}\Big |\mathcal{G} \biggr]
\quad\mbox{and} \quad\frac{\Lambda}{\mathbf{E}_\nu[\Lambda|\mathcal{G}]}>0,\qquad \mu
\mbox{-a.s.} %
\]
by the Bayes formula and using $\mu\sim\nu$.
As $\mathcal{B}(H)$ is countably generated,
the $\mu$-exceptional set can be chosen independent of $A$ by a monotone
class argument. Thus, $\mu[ \cdot |\mathcal{G}]\sim
\nu[ \cdot |\mathcal{G}]$, $\mu$-a.s., and also $\nu$-a.s. as
$\mu\sim\nu$.
\end{pf}

To proceed, we first prove a key measure-theoretic identity that arises
from the conditional ergodic theory developed in Section~\ref{sec:conderg} above.

\begin{lem}
\label{lem:exchglocal}
Suppose that Assumptions \ref{aspt:locerg} and \ref{aspt:nondeg} hold.
Then
\[
\bigcap_{n\ge0}\mathcal{F}^Y_+\vee
\mathcal{F}^0_{n,\infty}=\mathcal{F}^Y_+\qquad \mathop{
\mathrm{mod}}\mathbf{P}. %
\]
\end{lem}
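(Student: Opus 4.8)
The plan is to deduce Lemma~\ref{lem:exchglocal} from Corollary~\ref{cor:exchg} by applying that corollary to a coarsened version of the chain, after checking the two hypotheses of Corollary~\ref{cor:exchg} (absolute regularity and nondegeneracy) from Assumptions~\ref{aspt:locerg} and~\ref{aspt:nondeg}. Since $\mathcal{E}^0$ is countably generated, I would fix a countable generator $\{A_i\}_{i\ge1}$ and set $\iota(x)=(\mathbf{1}_{A_i}(x))_{i\ge1}\in G':=\{0,1\}^{\mathbb{N}}$, so that $G'$ is Polish and $\mathcal{E}^0=\sigma(\iota)$. Put $Z_k:=\iota(X_k)$; then $(Z_k,Y_k)_{k\in\mathbb{Z}}$ is a stationary process in the Polish space $G'\times F$ under $\mathbf{P}=\mathbf{P}^\lambda$, and for $n\ge0$ one has $\mathcal{F}^Y_+\vee\mathcal{F}^0_{n,\infty}=\mathcal{F}^Y_+\vee\sigma\{Z_k:k\ge n\}$ (using $\mathcal{F}^Y_{n,\infty}\subseteq\mathcal{F}^Y_+$ and $X_k^{-1}(\mathcal{E}^0)=\sigma(Z_k)$). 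Hence the assertion of the lemma is exactly the conclusion of Corollary~\ref{cor:exchg} for $(Z_k,Y_k)$, and it remains to verify that this process is absolutely regular and nondegenerate in the sense of Definitions~\ref{def:absreg} and~\ref{defn:nondeg}.

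For absolute regularity I would verify condition~2 of Corollary~\ref{cor:absreg} for $(Z_k,Y_k)$, namely that $\|\mathbf{P}[\,\cdot\,|\mathcal{G}_-]-\mathbf{P}\|_{\mathcal{F}^0_{k,\infty}}\to0$ $\mathbf{P}$-a.s.\ as $k\to\infty$, where $\mathcal{G}_-=\sigma\{(Z_j,Y_j):j\le0\}$. As $\mathcal{G}_-\subseteq\mathcal{F}_{-\infty,0}$ and, by the Markov property of the stationary chain, $\mathbf{P}[\,\cdot\,|\mathcal{F}_{-\infty,0}]=\mathbf{P}^{X_0,Y_0}$ on $\mathcal{F}_+\supseteq\mathcal{F}^0_{k,\infty}$, the tower property and Jensen's inequality yield $\|\mathbf{P}[\,\cdot\,|\mathcal{G}_-]-\mathbf{P}\|_{\mathcal{F}^0_{k,\infty}}\le\mathbf{E}[\|\mathbf{P}^{X_0,Y_0}-\mathbf{P}\|_{\mathcal{F}^0_{k,\infty}}\,|\,\mathcal{G}_-]$. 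The right-hand side is nonincreasing in $k$ (nested $\sigma$-fields) and converges a.s.\ to $\mathbf{E}[\|\mathbf{P}^{X_0,Y_0}-\mathbf{P}\|_{\mathcal{A}^0}\,|\,\mathcal{G}_-]$ with $\mathcal{A}^0=\bigcap_{k\ge0}\mathcal{F}^0_{k,\infty}$, and this vanishes because $(X_0,Y_0)\sim\lambda$ and Assumption~\ref{aspt:locerg} forces $\|\mathbf{P}^{x,y}-\mathbf{P}\|_{\mathcal{A}^0}=0$ for $\lambda$-a.e.\ $(x,y)$. Corollary~\ref{cor:absreg} then gives absolute regularity of $(Z_k,Y_k)$.

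Nondegeneracy is where the Markov structure enters, and I expect it to be the main obstacle. Fix $-\infty<m\le n<\infty$, and let $\tilde{\mathbf{P}}$ be the law under which $(X_k,Y_k)_{m-1\le k\le n+1}$ evolves from the $\mathbf{P}$-law of $(X_{m-1},Y_{m-1})$ with transition kernel $P_0\otimes Q$, so that under $\tilde{\mathbf{P}}$ the blocks $X_{m-1,n+1}$ and $Y_{m-1,n+1}$ are conditionally independent given $(X_{m-1},Y_{m-1})$. By Assumption~\ref{aspt:nondeg}, $\mathbf{P}$ and $\tilde{\mathbf{P}}$ are equivalent on $\mathcal{F}_{m-1,n+1}$, with density $\prod_{k=m}^{n+1}g(X_{k-1},Y_{k-1},X_k,Y_k)\in(0,\infty)$ (both this density and its reciprocal have expectation one, since $P=g\cdot P_0\otimes Q$ is a probability kernel). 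By Lemma~\ref{lem:bayes}, the conditional laws of $(X_{m,n},Y_{m,n})$ given the boundary states $(X_{m-1},Y_{m-1},X_{n+1},Y_{n+1})$ are equivalent under $\mathbf{P}$ and $\tilde{\mathbf{P}}$, $\mathbf{P}$-a.s.; under $\tilde{\mathbf{P}}$ this conditional law factors into its $X$- and $Y$-marginals. Passing to the $Y_{m,n}$-marginal, and using the Markov property of $(X_k,Y_k)$ to identify $\mathbf{P}[Y_{m,n}\in\cdot\,|\mathcal{F}_{-\infty,m-1}\vee\mathcal{F}_{n+1,\infty}]$ with the $\mathbf{P}$-conditional law of $Y_{m,n}$ given the boundary states, one obtains $\mathbf{P}[Y_{m,n}\in\cdot\,|\mathcal{F}_{-\infty,m-1}\vee\mathcal{F}_{n+1,\infty}]\sim\mu_Q$ $\mathbf{P}$-a.s., where $\mu_Q$, the $\tilde{\mathbf{P}}$-conditional law of $Y_{m,n}$ given $(Y_{m-1},Y_{n+1})$, is a probability kernel measurable with respect to $\mathcal{F}^Y_{-\infty,m-1}\vee\mathcal{F}^Y_{n+1,\infty}$. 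Since this last $\sigma$-field is contained in $\mathcal{H}:=\sigma\{Z_j,Y_j:j\le m-1\}\vee\sigma\{Z_j,Y_j:j\ge n+1\}$, which is in turn contained in $\mathcal{F}_{-\infty,m-1}\vee\mathcal{F}_{n+1,\infty}$, two applications of Lemma~\ref{lem:tower} give $\mathbf{P}[Y_{m,n}\in\cdot\,|\mathcal{H}]\sim\mu_Q\sim\mathbf{P}[Y_{m,n}\in\cdot\,|\mathcal{F}^Y_{-\infty,m-1}\vee\mathcal{F}^Y_{n+1,\infty}]$, which is precisely nondegeneracy of $(Z_k,Y_k)$.

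With both hypotheses in hand, Corollary~\ref{cor:exchg} applied to $(Z_k,Y_k)$ (realized on its canonical path space) yields $\bigcap_{n\ge0}\mathcal{F}^Y_+\vee\sigma\{Z_k:k\ge n\}=\mathcal{F}^Y_+\ \mathop{\mathrm{mod}}\mathbf{P}$, i.e.\ the claim. The delicate point throughout the nondegeneracy step is the bookkeeping of $\mathbf{P}$-null exceptional sets under disintegration and the measurable choice of Radon--Nikodym densities between kernels, handled exactly as in section~\ref{sec:conderg}; the remaining steps are routine manipulations with regular conditional probabilities.
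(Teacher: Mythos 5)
Your proposal is correct and follows essentially the same route as the paper's proof: coarsen $X_k$ to $Z_k=\iota(X_k)$ via indicators of a countable generating class, verify absolute regularity of $(Z_k,Y_k)$ through the Markov property, Assumption \ref{aspt:locerg} and Corollary \ref{cor:absreg}, verify nondegeneracy via the auxiliary product-kernel law together with Lemmas \ref{lem:bayes} and \ref{lem:tower}, and conclude by Corollary \ref{cor:exchg}. The only (immaterial) difference is that you treat general $m\le n$ directly, whereas the paper fixes $m=1$ and invokes stationarity.
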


\begin{pf}
Let $(A_n)_{n\in\mathbb{N}}$ be a countable generating class for
$\mathcal{E}^0$, and define $Z_n=\iota(X_n)$ where
$\iota\dvtx E\to\{0,1\}^\mathbb{N}$ is given by
$\iota(x)=(\mathbf{1}_{A_n}(x))_{n\in\mathbb{N}}$.
Then $\mathcal{F}^0_{m,n}=
\mathcal{F}^Y_{m,n}\vee\mathcal{F}^Z_{m,n}$ by construction.
It now suffices to show that the
stationary process $(Z_n,Y_n)_{n\in\mathbb{Z}}$ satisfies the assumptions
of Corollary~\ref{cor:exchg}.

First, note that
by Assumption~\ref{aspt:nondeg} and the Markov property
\[
\bigl\|\mathbf{P} \bigl[ \cdot |\mathcal{F}^0_{-\infty,0} \bigr]-
\mathbf{P}\bigr\|_{
\mathcal{F}^0_{n,\infty}} \le \mathbf{E} \bigl[\bigl \|\mathbf{P}^{X_0,Y_0} -
\mathbf{P}\bigr\|_{
\mathcal{F}^0_{n,\infty}} |\mathcal{F}^0_{-\infty,0} \bigr]
\mathop{\longrightarrow}^{n\to\infty}0,\qquad\mathbf{P}\mbox{-a.s.} %
\]
Thus, Corollary~\ref{cor:absreg} yields absolute regularity of
$(Z_n,Y_n)_{n\in\mathbb{Z}}$.

It remains to prove nondegeneracy (in the sense of Definition~\ref{defn:nondeg}). To this end, we begin by noting that
by the Markov property of $(X_n,Y_n)_{n\in\mathbb{Z}}$
\[
\mathbf{P}[Y_{1,n}\in\cdot |\mathcal{F}_{-\infty,0}\vee
\mathcal{F}_{n+1,\infty}] = \mathbf{P}[Y_{1,n}\in\cdot
|X_0,Y_0, X_{n+1},Y_{n+1}]. %
\]
Let $\mathbf{R}$ be the probability on $\mathcal{F}_+$ under
which $(X_k,Y_k)_{k\ge0}$ is a Markov chain with transition kernel
$P_0\otimes Q$ and initial law $\lambda$. Then
$\mathbf{P}|_{\mathcal{F}_{0,n+1}}\sim
\mathbf{R}|_{\mathcal{F}_{0,n+1}}$ by Assumption~\ref{aspt:nondeg}. Therefore, by Lemma~\ref{lem:bayes},
\[
\mathbf{P}[Y_{1,n}\in\cdot |\mathcal{F}_{-\infty,0}\vee
\mathcal{F}_{n+1,\infty}] \sim \mathbf{R}[Y_{1,n}\in\cdot
|X_0,Y_0, X_{n+1},Y_{n+1}],\qquad \mathbf{P}
\mbox{-a.s.} %
\]
But $X_{0,n+1}$ and $Y_{1,n+1}$ are conditionally independent under
$\mathbf{R}$ given $Y_0$, so
\[
\mathbf{R}[Y_{1,n}\in\cdot |X_0,Y_0,X_{n+1},Y_{n+1}]
= \mathbf{R}[Y_{1,n}\in\cdot |Y_0,Y_{n+1}],\qquad
\mathbf{P}\mbox{-a.s.} %
\]
Therefore, by Lemma~\ref{lem:tower},
\[
\mathbf{P} \bigl[Y_{1,n}\in\cdot |\mathcal{F}_{-\infty,0}^0
\vee \mathcal{F}_{n+1,\infty}^0 \bigr] \sim \mathbf{P}
\bigl[Y_{1,n}\in\cdot |\mathcal{F}_{-\infty,0}^Y\vee
\mathcal{F}_{n+1,\infty}^Y \bigr],\qquad \mathbf{P}\mbox{-a.s.}
\]
As this holds for any $n\in\mathbb{N}$, and using stationarity of
$\mathbf{P}$, it follows readily that the process
$(Z_k,Y_k)_{k\in\mathbb{Z}}$ is nondegenerate (Definition~\ref{defn:nondeg}).
\end{pf}

Armed with this result, we prove first a dominated stability lemma.

\begin{lem}
\label{lem:fsdom}
Suppose that Assumptions \ref{aspt:locerg} and \ref{aspt:nondeg} hold.
Let $\tilde{\mathbf P}$ be a probability measure on $\mathcal{F}_+$,
and define $\Sigma_n=\tilde{\mathbf P}[ \cdot |\mathcal{F}^Y_{0,n}]$.
Suppose that
\[
\tilde{\mathbf P}|_{\mathcal{F}^Y_+\vee\mathcal{F}^0_{m,\infty}} \ll
 \mathbf{P}|_{\mathcal{F}^Y_+\vee\mathcal{F}^0_{m,\infty}} \qquad\mbox{for
some }m\in\mathbb{N}. %
\]
Then $\|\Sigma_n-\Pi_n\|_{\mathcal{F}^0_{n-r,\infty}}\to0$,
$\tilde{\mathbf P}$-a.s. as $n\to\infty$ for any $r\in\mathbb{N}$.
\end{lem}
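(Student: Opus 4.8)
The plan is to reduce everything to the exchange identity of Lemma~\ref{lem:exchglocal}, via the Bayes formula and martingale convergence. Write $\mathcal{G}=\mathcal{F}^Y_+\vee\mathcal{F}^0_{m,\infty}$ and let $\Lambda=d\mathbf{\tilde P}/d\mathbf{P}$ on $\mathcal{G}$, which exists by hypothesis; put $\Lambda_\infty=\mathbf{E}_\mathbf{P}[\Lambda\mid\mathcal{F}^Y_+]$. Since the claimed convergence is asymptotic, I may restrict attention to $n\ge m+r$, so that $\mathcal{F}^Y_{0,n}$, $\mathcal{F}^0_{n-r,\infty}$ and $\mathcal{H}_n:=\mathcal{F}^Y_+\vee\mathcal{F}^0_{n-r,\infty}$ are all sub-$\sigma$-fields of $\mathcal{G}$; in particular $\mathbf{\tilde P}\ll\mathbf{P}$ on each of them (so $\Pi_n$ is $\mathbf{\tilde P}$-a.s.\ well defined), and $\Lambda_\infty$ is the Radon--Nikodym density of $\mathbf{\tilde P}|_{\mathcal{F}^Y_+}$ with respect to $\mathbf{P}|_{\mathcal{F}^Y_+}$, hence $\mathbf{\tilde P}$-a.s.\ strictly positive.

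First I would record two consequences of Lemma~\ref{lem:exchglocal}, which says that $\mathcal{F}^Y_+\vee\mathcal{F}^0_{k,\infty}\downarrow\mathcal{F}^Y_+$ modulo $\mathbf{P}$. By reverse martingale convergence, $\Lambda_n:=\mathbf{E}_\mathbf{P}[\Lambda\mid\mathcal{H}_n]\to\Lambda_\infty$ $\mathbf{P}$-a.s.; and since $\mathcal{F}^Y_{0,n}\uparrow\mathcal{F}^Y_+$, forward martingale convergence gives $\Lambda_n^Y:=\mathbf{E}_\mathbf{P}[\Lambda\mid\mathcal{F}^Y_{0,n}]=\mathbf{E}_\mathbf{P}[\Lambda_\infty\mid\mathcal{F}^Y_{0,n}]\to\Lambda_\infty$ $\mathbf{P}$-a.s. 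Next, for $A\in\mathcal{F}^0_{n-r,\infty}$, the tower property (using that $\mathbf{1}_A$ is $\mathcal{H}_n$-measurable and $\mathcal{F}^Y_{0,n}\subseteq\mathcal{F}^Y_+\subseteq\mathcal{H}_n$) and the Bayes formula for $d\mathbf{\tilde P}=\Lambda\,d\mathbf{P}$ on $\mathcal{G}$ give
$$
	\Pi_n(A)=\mathbf{E}_\mathbf{P}[p_A\mid\mathcal{F}^Y_{0,n}],
	\qquad
	\Sigma_n(A)=\frac{\mathbf{E}_\mathbf{P}[\mathbf{1}_A\Lambda_n\mid\mathcal{F}^Y_{0,n}]}{\Lambda_n^Y},
	\qquad p_A:=\mathbf{P}[A\mid\mathcal{F}^Y_+]\in[0,1].
$$
Substituting $\Lambda_n=\Lambda_\infty+(\Lambda_n-\Lambda_\infty)$, using $\mathbf{E}_\mathbf{P}[\mathbf{1}_A\Lambda_\infty\mid\mathcal{F}^Y_{0,n}]=\mathbf{E}_\mathbf{P}[\Lambda_\infty p_A\mid\mathcal{F}^Y_{0,n}]$, and pulling the $\mathcal{F}^Y_{0,n}$-measurable factor $\Lambda_n^Y$ inside the relevant conditional expectation, one obtains (using $|\mathbf{1}_A|\le 1$ and $0\le p_A\le 1$) the uniform bound $|\Sigma_n(A)-\Pi_n(A)|\le(\varepsilon_n+\delta_n)/\Lambda_n^Y$, where $\varepsilon_n=\mathbf{E}_\mathbf{P}[|\Lambda_n-\Lambda_\infty|\mid\mathcal{F}^Y_{0,n}]$ and $\delta_n=\mathbf{E}_\mathbf{P}[|\Lambda_\infty-\Lambda_n^Y|\mid\mathcal{F}^Y_{0,n}]$. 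Therefore
$$
	\|\Sigma_n-\Pi_n\|_{\mathcal{F}^0_{n-r,\infty}}\le\frac{2(\varepsilon_n+\delta_n)}{\Lambda_n^Y}.
$$
As $\Lambda_n^Y\to\Lambda_\infty>0$ $\mathbf{\tilde P}$-a.s., and as $\varepsilon_n,\delta_n,\Lambda_n^Y$ are $\mathcal{F}^Y_+$-measurable with $\mathbf{\tilde P}\ll\mathbf{P}$ on $\mathcal{F}^Y_+$, it suffices to prove $\varepsilon_n\to0$ and $\delta_n\to0$ $\mathbf{P}$-a.s.

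Both are instances of the following fact: if $\zeta_k\to\zeta$ $\mathbf{P}$-a.s., then $W_j:=\sup_{k\ge j}|\zeta_k-\zeta|\downarrow0$ $\mathbf{P}$-a.s., and for $j\le n$ one has $\mathbf{E}_\mathbf{P}[|\zeta_n-\zeta|\mid\mathcal{F}^Y_{0,n}]\le\mathbf{E}_\mathbf{P}[W_n\mid\mathcal{F}^Y_{0,n}]\le\mathbf{E}_\mathbf{P}[W_j\mid\mathcal{F}^Y_{0,n}]$; letting $n\to\infty$ with $j$ fixed (forward martingale convergence along $\mathcal{F}^Y_{0,n}\uparrow\mathcal{F}^Y_+$) and then $j\to\infty$ (conditional dominated convergence) gives $\limsup_n\mathbf{E}_\mathbf{P}[|\zeta_n-\zeta|\mid\mathcal{F}^Y_{0,n}]\le\inf_j\mathbf{E}_\mathbf{P}[W_j\mid\mathcal{F}^Y_+]=0$ $\mathbf{P}$-a.s. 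I would apply this with $(\zeta_k,\zeta)=(\Lambda_k,\Lambda_\infty)$ and with $(\zeta_k,\zeta)=(\Lambda_k^Y,\Lambda_\infty)$. The only subtlety is that this requires $W_0\in L^1(\mathbf{P})$, whereas the maximal functions $\sup_k\Lambda_k$ and $\sup_k\Lambda_k^Y$ of these (reverse and forward) martingales need not be integrable; I would deal with this in the standard way, by first running the argument with the truncated density $\Lambda\wedge K$ and then letting $K\uparrow\infty$, bounding the truncation error by $\mathbf{E}_\mathbf{P}[(\Lambda-K)_+\mid\mathcal{F}^Y_{0,n}]\to\mathbf{E}_\mathbf{P}[(\Lambda-K)_+\mid\mathcal{F}^Y_+]\downarrow0$. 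This truncation step, together with the bookkeeping needed to bring every total-variation restriction into the common dominating $\sigma$-field $\mathcal{G}$, is the main technical burden of the proof; the conceptual input is entirely the exchange identity of Lemma~\ref{lem:exchglocal}.
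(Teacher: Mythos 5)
Your proposal is correct and follows essentially the same route as the paper's proof: the Bayes-formula reduction to comparing $\mathbf{E}[\Lambda\mid\mathcal{F}^Y_+\vee\mathcal{F}^0_{n-r,\infty}]$ with $\mathbf{E}[\Lambda\mid\mathcal{F}^Y_{0,n}]$, with Lemma \ref{lem:exchglocal} as the key input and a truncation of $\Lambda$ to handle integrability. The only cosmetic differences are that you split through $\Lambda_\infty=\mathbf{E}[\Lambda\mid\mathcal{F}^Y_+]$ by the triangle inequality and reprove Hunt's lemma via the supremum trick, where the paper cites it directly; also note that passing from the per-set bound to the total variation norm uses that $\mathcal{F}^0_{n-r,\infty}$ is countably generated (the paper's monotone class step), which your ``uniform bound'' phrasing leaves implicit.
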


\begin{pf}
Fix any $r$ and $m$ as in the statement of the lemma,
and let $n\ge m+r$. By the Bayes formula, we obtain for any
set $A\in\mathcal{F}^0_{n-r,\infty}$
\[
\tilde{\mathbf P} \bigl[A|\mathcal{F}^Y_{0,n} \bigr] =
\frac{\mathbf{E}[\mathbf{1}_A
\mathbf{E}[\Lambda|\mathcal{F}^Y_+\vee\mathcal{F}^0_{n-r,\infty}]
|\mathcal{F}^Y_{0,n}]}{
\mathbf{E}[\Lambda|\mathcal{F}^Y_{0,n}]} \qquad\tilde{\mathbf P}\mbox{-a.s.}, \Lambda= \frac{d\tilde{\mathbf P}|_{\mathcal{F}^Y_+\vee\mathcal
{F}^0_{m,\infty}}}{
d\mathbf{P}|_{\mathcal{F}^Y_+\vee\mathcal{F}^0_{m,\infty}}}
\]
(we write $\mathbf{E}[X]=\mathbf{E}_P[X]$ for simplicity).
We therefore have for $A\in\mathcal{F}^0_{n-r,\infty}$
\[
\Sigma_n(A)-\Pi_n(A) = \int\mathbf{1}_A
\biggl\{ \frac{\mathbf{E}[\Lambda|\mathcal{F}^Y_+\vee\mathcal{F}^0_{n-r,\infty}]
}{\mathbf{E}[\Lambda|\mathcal{F}^Y_{0,n}]}-1 \biggr\} \,d\Pi_n,\qquad\tilde{\mathbf P}
\mbox{-a.s.} %
\]
As $\mathcal{F}^0_{n-r,\infty}$ is countably generated, the
$\tilde{\mathbf P}$-exceptional set can be chosen independent of $A$
using a monotone class argument. It follows that
\[
\|\Sigma_n-\Pi_n\|_{\mathcal{F}^0_{n-r,\infty}} \le \int \biggl|
\frac{\mathbf{E}[\Lambda|\mathcal{F}^Y_+\vee\mathcal{F}^0_{n-r,\infty}]
}{\mathbf{E}[\Lambda|\mathcal{F}^Y_{0,n}]}-1 \biggr| \,d\Pi_n = \frac{\mathbf{E}[\Delta_n|\mathcal{F}^Y_{0,n}]
}{\mathbf{E}[\Lambda|\mathcal{F}^Y_{0,n}]},\qquad \tilde{\mathbf P}
\mbox{-a.s.}, %
\]
where we have defined
\[
\Delta_n = \bigl|\mathbf{E} \bigl[\Lambda|\mathcal{F}^Y_+\vee
\mathcal {F}^0_{n-r,\infty} \bigr] - \mathbf{E} \bigl[\Lambda|
\mathcal{F}^Y_{0,n} \bigr]\bigr|. %
\]
We now estimate
\[
\mathbf{E} \bigl[\Delta_n|\mathcal{F}^Y_{0,n}
\bigr] \le \mathbf{E} \bigl[\Delta_n^u|
\mathcal{F}^Y_{0,n} \bigr] + 2 \mathbf{E} \bigl[\Lambda
\mathbf{1}_{\Lambda> u}|\mathcal{F}^Y_{0,n} \bigr],
\]
where
\[
\Delta_n^u=\bigl|\mathbf{E} \bigl[\Lambda
\mathbf{1}_{\Lambda\le u}|\mathcal {F}^Y_+\vee\mathcal{F}^0_{n-r,\infty}
\bigr] - \mathbf{E} \bigl[\Lambda\mathbf{1}_{\Lambda\le u}|\mathcal{F}^Y_{0,n}
\bigr]\bigr|. %
\]
By Lemma~\ref{lem:exchglocal} and Hunt's lemma \cite{DM82}, Theorem~V.45,
we obtain
$\mathbf{E}[\Delta_n^u|\mathcal{F}^Y_{0,n}]\to0$, $\mathbf{P}$-a.s. as $n\to\infty$.
Moreover, as
$\mathbf{E}[\Lambda|\mathcal{F}^Y_+]>0$, $\tilde{\mathbf P}$-a.s., it
follows that
\[
\limsup_{n\to\infty} \|\Sigma_n-\Pi_n
\|_{\mathcal{F}^0_{n-r,\infty}} \le \frac{
2 \mathbf{E}[\Lambda\mathbf{1}_{\Lambda> u}|\mathcal{F}^Y_+]} {
\mathbf{E}[\Lambda|\mathcal{F}^Y_+]},\qquad\tilde{\mathbf P}\mbox{-a.s.}
\]
Letting $u\to\infty$ completes the proof.
\end{pf}

To use the previous lemma, we will decompose $\mathbf{P}^\mu$ on
$\mathcal{F}^Y_+\vee\mathcal{F}^0_{m,\infty}$ into an absolutely
continuous component with respect to $\mathbf{P}$ (to which Lemma~\ref{lem:fsdom} can be applied) and a remainder that is negligible
as $m\to\infty$. The following lemma ensures that this can be done
under the assumptions of Theorem~\ref{thmm:lfstab}.

\begin{lem}
\label{lem:lebesguedec}
Suppose Assumption~\ref{aspt:nondeg} holds. If
$\mu\in\mathcal{P}(E\times F)$ satisfies
\[
\mu(E\times\cdot)\ll\lambda(E\times\cdot) \quad\mbox{and} \quad\bigl\|\Pi^\mu_0-
\Pi_0\bigr\|_{\mathcal{F}^0_{n,\infty}} \mathop{\longrightarrow}^{n\to\infty}0,\qquad
\mathbf{P}^\mu\mbox{-a.s.}, %
\]
then for every $m\in\mathbb{N}$, we can choose a set
$C_m\in\mathcal{F}^Y_+\vee\mathcal{F}^0_{m,\infty}$
such that $\mathbf{P}^\mu[ \cdot \cap C_m]\ll\mathbf{P}$ on
$\mathcal{F}^Y_+\vee\mathcal{F}^0_{m,\infty}$ and
$\mathbf{P}^\mu(C_m^c)\to0$ as $m\to\infty$.
\end{lem}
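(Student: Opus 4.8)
The plan is to take $C_m$ to be the carrier of the absolutely continuous part in a Lebesgue decomposition, and to reduce the whole statement to a single total-variation estimate. Write $\mathcal G_m:=\mathcal F^Y_+\vee\mathcal F^0_{m,\infty}=\mathcal F^Y_{0,m-1}\vee\mathcal F^0_{m,\infty}$, a family of $\sigma$-fields decreasing in $m$. Decompose $\mathbf P^\mu|_{\mathcal G_m}$ into its $\mathbf P$-absolutely continuous part plus a part carried by a $\mathbf P$-null set $N_m\in\mathcal G_m$, and set $C_m:=N_m^c$. Then $\mathbf P^\mu[\,\cdot\,\cap C_m]\ll\mathbf P$ on $\mathcal G_m$ automatically, and $\mathbf P^\mu(C_m^c)$ equals the mass of the singular part; so it suffices to prove this mass tends to $0$. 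Since $\sigma\{Y_0\}\subseteq\mathcal G_m$, the decomposition can be carried out fibrewise over $Y_0$: with $\mu_Y:=\mu(E\times\cdot)\ll\lambda_Y:=\lambda(E\times\cdot)$ (density $\psi$), $\mu^y:=\mathbf P^\mu[X_0\in\cdot\,|Y_0=y]$, $\lambda^y:=\mathbf P[X_0\in\cdot\,|Y_0=y]$, and $\mathbf P_1^y:=\mathbf P^{\mu^y\otimes\delta_y}=\Pi_0^\mu(y,\cdot)$, $\mathbf P_2^y:=\mathbf P^{\lambda^y\otimes\delta_y}=\Pi_0(y,\cdot)$, one obtains that the singular mass equals $\int\psi(y)\,s_m(y)\,\lambda_Y(dy)$, where $s_m(y)$ is the mass of the singular part of $\mathbf P_1^y|_{\mathcal G_m}$ relative to $\mathbf P_2^y|_{\mathcal G_m}$. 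As $s_m(y)\le1$ and $\psi\in L^1(\lambda_Y)$, by dominated convergence it is enough to show $s_m(y)\to0$ for $\mu_Y$-a.e.\ $y$; and for such $y$ the hypothesis $\|\Pi_0^\mu-\Pi_0\|_{\mathcal F^0_{n,\infty}}\to0$ reads $\delta_n(y):=\|\mathbf P_1^y-\mathbf P_2^y\|_{\mathcal F^0_{n,\infty}}\to0$.

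Fix such a $y$ and drop it from the notation, so $\mathbf P_1,\mathbf P_2$ are laws of Markov chains with transition kernel $P$ and product initial laws sharing the marginal $\delta_y$ on $Y_0$, and $\delta_n\to0$. From $\|\mathbf P_1-\mathbf P_2\|_{\mathcal F^0_{m,\infty}}\le\delta_m$ one gets the subprobability $\mathbf P_1\wedge\mathbf P_2$ on $\mathcal F^0_{m,\infty}$, which has the form $\Gamma_m\,\mathbf P_1|_{\mathcal F^0_{m,\infty}}$ with $\Gamma_m$ being $\mathcal F^0_{m,\infty}$-measurable, $0\le\Gamma_m\le1$, $\mathbf E_{\mathbf P_1}[\Gamma_m]\ge1-\delta_m/2$, and $\Gamma_m\,\mathbf P_1|_{\mathcal F^0_{m,\infty}}\ll\mathbf P_2|_{\mathcal F^0_{m,\infty}}$. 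Extend it to $\sigma_m:=\Gamma_m\,\mathbf P_1$ on $\mathcal F_+$; then $\sigma_m\le\mathbf P_1$ and $\|\sigma_m\|\ge1-\delta_m/2$. If one shows $\sigma_m\ll\mathbf P_2$ \emph{on all of} $\mathcal G_m$, then the $\mathbf P_2$-absolutely continuous part of $\mathbf P_1|_{\mathcal G_m}$ has mass at least $1-\delta_m/2$, i.e.\ $s_m(y)\le\delta_m/2\to0$, as required.

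The upgrade of $\sigma_m\ll\mathbf P_2$ from $\mathcal F^0_{m,\infty}$ to $\mathcal G_m=\mathcal F^Y_{0,m-1}\vee\mathcal F^0_{m,\infty}$ is the heart of the matter, and this is where nondegeneracy (Assumption \ref{aspt:nondeg}) enters, in the spirit of the nondegeneracy argument in the proof of Lemma \ref{lem:exchglocal}. Because the initial laws are products, under the decoupled reference laws $\mathbf R_i$ (transition $P_0\otimes Q$, same initial laws), which satisfy $\mathbf P_i|_{\mathcal F_{0,m}}\sim\mathbf R_i|_{\mathcal F_{0,m}}$, the chains $(X_k)$ and $(Y_k)$ are independent, so $\mathbf R_i[Y_{0,m-1}\in\cdot\,|X_m,Y_m]$ equals the $Q$-bridge $\beta_m$ from $y$ to $Y_m$ — independent of $X_m$ and of $i$. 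Hence, by the Bayes formula (Lemma \ref{lem:bayes}) and the Markov property, $\mathbf P_i[Y_{0,m-1}\in\cdot\,|X_m,Y_m]\sim\beta_m$; and conditioning down through $\mathcal F^0_{m,\infty}\subseteq\mathcal F_{m,\infty}$ exhibits $\mathbf P_i[Y_{0,m-1}\in\cdot\,|\mathcal F^0_{m,\infty}]$ as a mixture, over the conditional law of $X_m$ given $\mathcal F^0_{m,\infty}$, of kernels each $\sim\beta_m$, so it is again $\sim\beta_m$, $\mathbf P_i$-a.s. Thus, on the set $\{\Gamma_m>0\}$ — where $\mathbf P_1$ and $\mathbf P_2$ are mutually absolutely continuous on $\mathcal F^0_{m,\infty}$ — the conditional laws of $Y_{0,m-1}$ given $\mathcal F^0_{m,\infty}$ under $\mathbf P_1$ and $\mathbf P_2$ have the same null sets. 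Now for $A\in\mathcal G_m$ with $\mathbf P_2(A)=0$, disintegrating over $\mathcal F^0_{m,\infty}$ shows $A$ is null for $\mathbf P_2[Y_{0,m-1}\in\cdot\,|\mathcal F^0_{m,\infty}]$ almost surely, hence — by the equivalence just established — also for $\mathbf P_1[Y_{0,m-1}\in\cdot\,|\mathcal F^0_{m,\infty}]$ wherever $\Gamma_m>0$; therefore $\sigma_m(A)=\mathbf E_{\mathbf P_1}[\Gamma_m\,\mathbf P_1[A\mid\mathcal F^0_{m,\infty}]]=0$. This gives $\sigma_m\ll\mathbf P_2$ on $\mathcal G_m$ and completes the argument.

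The step I expect to be the main obstacle is precisely this last upgrade. Total-variation closeness, or even absolute continuity, of $\mathbf P^\mu$ and $\mathbf P$ \emph{separately} on $\mathcal F^Y_{0,m-1}$ and on $\mathcal F^0_{m,\infty}$ says nothing about the join $\mathcal G_m$ (the usual product-versus-diagonal obstruction), so the Markov property and nondegeneracy must be used to decouple the past observations $Y_{0,m-1}$ from the future data $\mathcal F^0_{m,\infty}$: the content of nondegeneracy is that, conditionally on $\mathcal F^0_{m,\infty}$, the law of $Y_{0,m-1}$ is an \emph{observable} $Q$-bridge not depending on the unobserved present state $X_m$ — which is where the singular information carried by $\mu$ resides — and therefore coincides, up to equivalence, for the two chains. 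The delicate bookkeeping is in reconciling the almost-sure qualifiers attached to the regular conditional probabilities $\mathbf P_i[Y_{0,m-1}\in\cdot\,|\mathcal F^0_{m,\infty}]$, which are a priori defined only relative to the distinct measures $\mathbf P_1$ and $\mathbf P_2$; this is handled throughout by restricting attention to $\{\Gamma_m>0\}$.
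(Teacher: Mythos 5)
Your proof is correct, and its crucial ingredient is the same as in the paper's argument: nondegeneracy (Assumption \ref{aspt:nondeg}) is used to identify, for both laws simultaneously, the conditional distribution of the intermediate observations given the local future $\mathcal{F}^0_{m,\infty}$ with a measure equivalent to a $Q$-bridge that depends neither on the unobserved state nor on the initial law. You reach that claim with the homogeneous reference kernel $P_0\otimes Q$ (Bayes applied only on $\mathcal{F}_{0,m}$, then the Markov property under $\mathbf{P}_i$ itself to pass from conditioning on $(X_m,Y_m)$ to conditioning on $\mathcal{F}^0_{m,\infty}$), whereas the paper uses an inhomogeneous reference chain ($P_0\otimes Q$ up to time $m$, $P$ afterwards), which is equivalent to $\mathbf{P}^\mu$ on all of $\mathcal{F}_+$ and allows conditioning directly on $\sigma\{Y_0\}\vee\mathcal{F}^0_{m,\infty}$; both routes are sound. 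Where you genuinely diverge is the organization of the Lebesgue decomposition. The paper decomposes on the \emph{smaller} $\sigma$-field $\sigma\{Y_0\}\vee\mathcal{F}^0_{m,\infty}$, choosing $C_m$ with $\mathbf{P}(C_m)=1$; then $\Pi_0(C_m^c)=0$ a.s., so $\Pi_0^\mu(C_m^c)\le\|\Pi_0^\mu-\Pi_0\|_{\mathcal{F}^0_{m,\infty}}$ gives $\mathbf{P}^\mu(C_m^c)\to 0$ in one line, and all the work goes into extending $\mathbf{P}^\mu[\,\cdot\,\cap C_m]\ll\mathbf{P}$ from the smaller to the larger $\sigma$-field via the bridge claim. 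You instead decompose on $\mathcal{G}_m$ itself, so absolute continuity is automatic, and the bridge claim is used to upgrade the minorization $\sigma_m=\Gamma_m\mathbf{P}_1\ll\mathbf{P}_2$ from $\mathcal{F}^0_{m,\infty}$ to $\mathcal{G}_m$, yielding the fibrewise bound $s_m(y)\le\tfrac12\|\Pi_0^\mu(y,\cdot)-\Pi_0(y,\cdot)\|_{\mathcal{F}^0_{m,\infty}}$ followed by dominated convergence over $Y_0$. The quantitative content is comparable, but your route carries extra bookkeeping that the paper's choice avoids: the measurable fibrewise Lebesgue decomposition over $Y_0$ and the reconciliation of the almost-sure qualifiers on $\{\Gamma_m>0\}$ (both standard via measurable Radon--Nikodym densities between kernels, \cite[V.58]{DM82}, and you correctly flag and handle the latter by working with $\sigma_m\le\mathbf{P}_1$ and $\sigma_m|_{\mathcal{F}^0_{m,\infty}}\ll\mathbf{P}_2|_{\mathcal{F}^0_{m,\infty}}$), while the paper confines all the delicacy to the single absolute-continuity extension step.
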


\begin{pf}
By the Lebesgue decomposition theorem, we can choose for every
$m\in\mathbb{N}$ a set
$C_m\in\sigma\{Y_0\}\vee\mathcal{F}^0_{m,\infty}$ such that the following
holds:
\[
\mathbf{P}^\mu[ \cdot \cap C_m]\ll\mathbf{P}\quad \mbox{on }
\sigma\{Y_0\}\vee\mathcal{F}^0_{m,\infty} \mbox{ and }
\mathbf{P}(C_m)=1. %
\]
Now note that $\Pi_0(C_m^c)=0$, $\mathbf{P}$-a.s. and, therefore, also
$\mathbf{P}^\mu$-a.s. as we have assumed that
$\mu(E\times\cdot)\ll\lambda(E\times\cdot)$.
Thus, we obtain $\mathbf{P}^\mu$-a.s.
\[
\Pi^\mu_0 \bigl(C_m^c \bigr) =
\Pi^\mu_0 \bigl(C_m^c \bigr) -
\Pi_0 \bigl(C_m^c \bigr) \le \bigl\|
\Pi_0^\mu-\Pi_0\bigr\|_{\sigma\{Y_0\}\vee\mathcal{F}^0_{m,\infty}} = \bigl\|
\Pi_0^\mu-\Pi_0\bigr\|_{\mathcal{F}^0_{m,\infty}}. %
\]
Taking the expectation with respect to $\mathbf{P}^\mu$ and letting
$m\to\infty$, it follows using dominated convergence that
$\mathbf{P}^\mu(C_m^c)\to0$ as $m\to\infty$.

It remains to show that
$\mathbf{P}^\mu[ \cdot \cap C_m]\ll\mathbf{P}$ on the larger
$\sigma$-field $\mathcal{F}^Y_+\vee\mathcal{F}^0_{m,\infty}$.
To this end, we will establish below the following claim:
$\mathbf{P}^\mu[ \cdot \cap C_m]$-a.s.
\[
\mathbf{P}^\mu \bigl[Y_{1,m-1}\in\cdot |\sigma
\{Y_0\}\vee\mathcal {F}^0_{m,\infty} \bigr] \sim
\mathbf{P} \bigl[Y_{1,m-1}\in\cdot |\sigma\{Y_0\}\vee
\mathcal{F}^0_{m,\infty} \bigr]. %
\]
Let us first complete the proof assuming the claim.
Let $A\in\mathcal{F}^Y_+\vee\mathcal{F}^0_{m,\infty}$ such that
$\mathbf{P}(A)=0$. Then
$\mathbf{P}[A|\sigma\{Y_0\}\vee\mathcal{F}^0_{m,\infty}]=0$,
$\mathbf{P}$-a.s. and, therefore, also $\mathbf{P}^\mu[ \cdot \cap
C_m]$-a.s. But then the claim implies that
$\mathbf{P}^\mu[A|\sigma\{Y_0\}\vee\mathcal{F}^0_{m,\infty}]=0$,
$\mathbf{P}^\mu[ \cdot \cap C_m]$-a.s. by disintegration, which yields
$\mathbf{P}^\mu(A\cap C_m) = 0$ as required.

We now proceed to prove the claim. Let $\mathbf{R}^\mu_m$ be the
probability on $\mathcal{F}_+$ under which $(X_k,Y_k)_{k\ge0}$ is an
inhomogeneous Markov chain with initial law $(X_0,Y_0)\sim\mu$,
whose transition kernel is given by $P_0\otimes Q$ up to time $m$
and by $P$ after time $m$. Assumption~\ref{aspt:nondeg} evidently
implies that $\mathbf{R}^\mu_m\sim\mathbf{P}^\mu$, so
\[
\mathbf{P}^\mu \bigl[Y_{1,m-1}\in\cdot |\sigma
\{Y_0\}\vee \mathcal{F}^0_{m,\infty} \bigr]\sim
\mathbf{R}^\mu_m \bigl[Y_{1,m-1}\in\cdot |\sigma
\{Y_0\}\vee \mathcal{F}^0_{m,\infty} \bigr],\qquad
\mathbf{P}^\mu\mbox{-a.s.} %
\]
by Lemma~\ref{lem:bayes}. Now note that by the Markov property
\[
\mathbf{R}^\mu_m \bigl[Y_{1,m-1}\in\cdot |
\sigma \{Y_0\}\vee\mathcal {F}_{m,\infty} \bigr]=
\mathbf{R}^\mu_m[Y_{1,m-1} \in\cdot
|Y_0,X_m,Y_m], %
\]
while
\[
\mathbf{R}^\mu_m[Y_{1,m-1}\in\cdot
|Y_0,X_m,Y_m]= \mathbf{R}^\mu_m[Y_{1,m-1}
\in\cdot |Y_0,Y_m] %
\]
as $Y_{1,m}$ and $X_{m}$ are conditionally independent given $Y_0$
under $\mathbf{R}^\mu_m$. Therefore, we obtain using the
tower property
\[
\mathbf{R}^\mu_m \bigl[Y_{1,m-1}\in\cdot |
\sigma \{Y_0\}\vee \mathcal{F}^0_{m,\infty} \bigr]=
\mathbf{R}^\mu_m[Y_{1,m-1}\in\cdot
|Y_0,Y_m],\qquad \mathbf{P}^\mu\mbox{-a.s.}
\]
Proceeding in exactly the same manner for $\mathbf{P}$, we obtain
\begin{eqnarray*}
\mathbf{P}^\mu \bigl[Y_{1,m-1}\in\cdot |\sigma
\{Y_0\}\vee \mathcal{F}^0_{m,\infty} \bigr]&\sim&
\mathbf{R}^\mu_m[Y_{1,m-1}\in\cdot
|Y_0,Y_m],\qquad \mathbf{P}^\mu[ \cdot \cap
C_m]\mbox{-a.s.},
\\
\mathbf{P} \bigl[Y_{1,m-1}\in\cdot |\sigma\{Y_0\}\vee
\mathcal{F}^0_{m,\infty} \bigr]&\sim& \mathbf{R}^\lambda_m[Y_{1,m-1}
\in\cdot |Y_0,Y_m],\qquad \mathbf{P}^\mu[ \cdot
\cap C_m]\mbox{-a.s.}
\end{eqnarray*}
It remains to note that $\mathbf{R}^\mu_m[Y_{1,m-1}\in\cdot |Y_0,Y_m]
=\mathbf{R}^\lambda_m[Y_{1,m-1}\in\cdot |Y_0,Y_m]$,
$\mathbf{P}^\mu[ \cdot \cap C_m]$-a.s., as $Y_{0,m}$ is (the initial
segment of) a Markov chain
with transition kernel $Q$ under both $\mathbf{R}^\mu_m$ and
$\mathbf{R}^\lambda_m$. Thus, the proof is complete.
\end{pf}

The following corollary will be used a number of times.

\begin{cor}
\label{cor:oequiv}
Suppose Assumption~\ref{aspt:nondeg} holds. If
$\mu$ satisfies
\[
\mu(E\times\cdot)\ll\lambda(E\times\cdot) \quad\mbox{and\quad} \bigl\|\Pi^\mu_0-
\Pi_0\bigr\|_{\mathcal{F}^0_{n,\infty}} \mathop{\longrightarrow}^{n\to\infty}0,\qquad
\mathbf{P}^\mu\mbox{-a.s.}, %
\]
then
$\mathbf{P}^\mu|_{\mathcal{F}^Y_+}\ll\mathbf{P}|_{\mathcal{F}^Y_+}$.
If also $\lambda(E\times\cdot)\sim\mu(E\times\cdot)$, then
$\mathbf{P}^\mu|_{\mathcal{F}^Y_+}\sim\mathbf{P}|_{\mathcal{F}^Y_+}$.
\end{cor}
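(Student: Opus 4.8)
The plan is to read off both assertions from Lemma~\ref{lem:lebesguedec}, which already does most of the work. For the first claim I would fix, for each $m\in\mathbb{N}$, the set $C_m\in\mathcal{F}^Y_+\vee\mathcal{F}^0_{m,\infty}$ furnished by Lemma~\ref{lem:lebesguedec}, so that $\mathbf{P}^\mu[\,\cdot\,\cap C_m]\ll\mathbf{P}$ on $\mathcal{F}^Y_+\vee\mathcal{F}^0_{m,\infty}$ and $\mathbf{P}^\mu(C_m^c)\to 0$ as $m\to\infty$. Given $A\in\mathcal{F}^Y_+$ with $\mathbf{P}(A)=0$, the inclusion $A\in\mathcal{F}^Y_+\vee\mathcal{F}^0_{m,\infty}$ together with the absolute continuity of $\mathbf{P}^\mu[\,\cdot\,\cap C_m]$ forces $\mathbf{P}^\mu(A\cap C_m)=0$, whence
$$
	\mathbf{P}^\mu(A)=\mathbf{P}^\mu(A\cap C_m)+\mathbf{P}^\mu(A\cap C_m^c)
	\le\mathbf{P}^\mu(C_m^c)\xrightarrow{m\to\infty}0 .
$$
Thus $\mathbf{P}^\mu(A)=0$, which is exactly $\mathbf{P}^\mu|_{\mathcal{F}^Y_+}\ll\mathbf{P}|_{\mathcal{F}^Y_+}$.

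For the equivalence statement I would argue that when $\mu(E\times\cdot)\sim\lambda(E\times\cdot)$ the hypotheses are symmetric under interchanging the two initial laws, so the very same argument run in the opposite direction yields $\mathbf{P}|_{\mathcal{F}^Y_+}\ll\mathbf{P}^\mu|_{\mathcal{F}^Y_+}$. The two ingredients needed for the reversed direction are $\lambda(E\times\cdot)\ll\mu(E\times\cdot)$, which is immediate from the assumed equivalence of marginals, and $\|\Pi_0-\Pi^\mu_0\|_{\mathcal{F}^0_{n,\infty}}\to 0$ $\mathbf{P}$-a.s. The latter follows because $\Pi_0$ and $\Pi^\mu_0$ are $\sigma\{Y_0\}$-measurable random measures and $\mathbf{P}^\mu|_{\sigma\{Y_0\}}\sim\mathbf{P}|_{\sigma\{Y_0\}}$ (these restrictions are just the laws of $Y_0$ under $\mu$ and $\lambda$, respectively), so the $\sigma\{Y_0\}$-measurable event $\{\|\Pi^\mu_0-\Pi_0\|_{\mathcal{F}^0_{n,\infty}}\to 0\}$ has $\mathbf{P}$-probability one as soon as it has $\mathbf{P}^\mu$-probability one. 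Granting this, Lemma~\ref{lem:lebesguedec} applied with $\mathbf{P}^\mu$ and $\mathbf{P}$ interchanged produces sets $C_m'\in\mathcal{F}^Y_+\vee\mathcal{F}^0_{m,\infty}$ with $\mathbf{P}[\,\cdot\,\cap C_m']\ll\mathbf{P}^\mu$ on that $\sigma$-field and $\mathbf{P}(C_m'^c)\to 0$, and repeating the displayed estimate with the roles of the two measures swapped gives $\mathbf{P}|_{\mathcal{F}^Y_+}\ll\mathbf{P}^\mu|_{\mathcal{F}^Y_+}$. Combined with the first part this is the asserted equivalence $\mathbf{P}^\mu|_{\mathcal{F}^Y_+}\sim\mathbf{P}|_{\mathcal{F}^Y_+}$.

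The only point that needs care — and the real obstacle — is the legitimacy of ``interchanging $\mathbf{P}^\mu$ and $\mathbf{P}$'' in Lemma~\ref{lem:lebesguedec}: that lemma is stated with the \emph{stationary} reference measure $\mathbf{P}=\mathbf{P}^\lambda$, whereas $\mathbf{P}^\mu$ need not be stationary. I would dispose of this by inspecting the proof of Lemma~\ref{lem:lebesguedec} and observing that it uses only Assumption~\ref{aspt:nondeg} (through the equivalences $\mathbf{R}^\mu_m\sim\mathbf{P}^\mu$ and $\mathbf{R}^\lambda_m\sim\mathbf{P}$), the Lebesgue decomposition theorem, the stated marginal absolute continuity, the stability of $\Pi_0$, and the fact that $Y_{0,m}$ is a Markov chain with kernel $Q$ under the reference dynamics — and in particular does not use invariance of $\lambda$ or stationarity of $\mathbf{P}$. (Equivalently, one could state and prove the routine generalization of Lemma~\ref{lem:lebesguedec} with an arbitrary initial law $\nu$ in place of $\lambda$ and invoke it with $\nu=\lambda$ once and $\nu=\mu$ once.) Beyond this bookkeeping no new estimate is required, since the martingale/Bayes arguments have all been carried out in Lemmas~\ref{lem:fsdom} and~\ref{lem:lebesguedec}.
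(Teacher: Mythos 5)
Your proposal is correct and follows essentially the same route as the paper: the first claim via the sets $C_m$ from Lemma~\ref{lem:lebesguedec} and the estimate $\mathbf{P}^\mu(A)\le\mathbf{P}^\mu(C_m^c)\to 0$, and the second claim by observing that the proof of Lemma~\ref{lem:lebesguedec} does not use invariance of $\lambda$, so the roles of $\mu$ and $\lambda$ may be exchanged. Your explicit verification that the swapped hypotheses hold (in particular transferring the $\mathbf{P}^\mu$-a.s.\ convergence of $\|\Pi^\mu_0-\Pi_0\|_{\mathcal{F}^0_{n,\infty}}$ to a $\mathbf{P}$-a.s.\ statement via the equivalence of the $Y_0$-marginals) is a detail the paper leaves implicit, but it is the same argument.
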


\begin{pf}
Define the sets $C_m$ as in Lemma~\ref{lem:lebesguedec}, and choose any
$A\in\mathcal{F}^Y_+$ with $\mathbf{P}(A)=0$. Then
$\mathbf{P}^\mu(A)=\mathbf{P}^\mu(A\cap C_m^c)\le
\mathbf{P}^\mu(C_m^c)\to0$ as $m\to\infty$. This yields the
first claim.
On the other hand, note that the proof of Lemma~\ref{lem:lebesguedec} does
not use the invariance of $\lambda$.
Thus, we may exchange the roles of $\mu$ and $\lambda$
in Lemma~\ref{lem:lebesguedec} to obtain also
$\mathbf{P}|_{\mathcal{F}^Y_+}\ll\mathbf{P}^\mu|_{\mathcal{F}^Y_+}$
when $\lambda(E\times\cdot)\sim\mu(E\times\cdot)$.
\end{pf}

We can now complete the proof of Theorem~\ref{thmm:lfstab}.

\begin{pf*}{Proof of Theorem~\ref{thmm:lfstab}}
Fix $\mu$ as in the statement of the theorem, and define the
corresponding sets $C_m$ as in Lemma~\ref{lem:lebesguedec}.
Let $\mathbf{P}^\mu_m=\mathbf{P}^\mu[ \cdot |C_m]$ and
$\mathbf{P}^{\mu\perp}_m=\mathbf{P}^\mu[ \cdot |C_m^c]$.
By the Bayes formula, we can write for any $A\in\mathcal{F}^+$
\[
\mathbf{P}^\mu \bigl[A|\mathcal{F}^Y_{0,n}
\bigr] = \mathbf{P}^\mu_m \bigl[A|\mathcal{F}^Y_{0,n}
\bigr] \mathbf{P}^\mu \bigl[C_m|\mathcal{F}^Y_{0,n}
\bigr] + \mathbf{P}^{\mu\perp}_m \bigl[A|\mathcal{F}^Y_{0,n}
\bigr] \mathbf{P}^\mu \bigl[C_m^c|
\mathcal{F}^Y_{0,n} \bigr],\qquad\mathbf{P}^\mu
\mbox{-a.s.} %
\]
In particular, if we define
$\Sigma_n^m=\mathbf{P}^\mu_m[ \cdot |\mathcal{F}^Y_{0,n}]$, we can write
\[
\bigl|\Pi^\mu_n(A)-\Pi_n(A)\bigr|\le \bigl|
\Sigma^m_n(A)-\Pi_n(A)\bigr| \Pi^\mu_n(C_m)
+ \Pi^\mu_n \bigl(C_m^c \bigr),\qquad
\mathbf{P}^\mu\mbox{-a.s.} %
\]
As $\mathcal{F}_+$ is countably generated,
the $\mathbf{P}^\mu$-exceptional set can be chosen independent of
$A$ using a monotone class argument. We therefore obtain
\[
\mathbf{E}^\mu \Bigl[ \limsup_{n\to\infty} \bigl\|
\Pi^\mu_n-\Pi_n\bigr\|_{\mathcal{F}^0_{n-r,\infty}} \Bigr] \le
\mathbf{E}^\mu_m \Bigl[ \limsup_{n\to\infty}
\bigl\|\Sigma^m_n-\Pi_n\bigr\|_{\mathcal{F}^0_{n-r,\infty}} \Bigr] +
\mathbf{P}^\mu \bigl(C_m^c \bigr).
\]
But the first term on the right vanishes by Lemma~\ref{lem:fsdom}.
Therefore, using that $\mathbf{P}^\mu(C_m^c)\to0$ as $m\to\infty$,
we find that $\|\Pi^\mu_n-\Pi_n\|_{\mathcal{F}^0_{n-r,\infty}}\to0$,
$\mathbf{P}^\mu$-a.s.

This completes the proof when
$\lambda(E\times\cdot)\ll\mu(E\times\cdot)$. To conclude, note that
$\mathbf{P}$-a.s. convergence follows by Corollary~\ref{cor:oequiv}
when $\lambda(E\times\cdot)\sim\mu(E\times\cdot)$.
\end{pf*}

\subsection{Filter stability and ergodicity}
\label{sec:fstab}

Using the local stability Theorem~\ref{thmm:lfstab}, we can now proceed to
obtain filter stability results that are applicable to
infinite-dimensional or weak-* ergodic models, in analogy with the
ergodic results obtained in Section~\ref{sec:zerotwo}. While many
variations on these results are possible, we give two representative
results that suffice in all the examples that will be given in
Section~\ref{sec:examples} below. Beside stability, we will also consider
following Kunita \cite{Kun71} the ergodic properties of the filtering
process $(\pi_n)_{n\ge0}$ when it is considered as a measure-valued
Markov process.

\subsubsection{Filter stability and local mixing}
\label{sec:fslm}

In this short subsection, we assume that the state space $E$ of the unobserved
process is contained in a countable product $E\subseteq\prod_{i\in I}E^i$,
where each $E^i$ is Polish. We are therefore in the local mixing setting
of Section~\ref{sec:locmix}. In the present section, we define
\[
\mathcal{F}^J_{m,n} = \sigma \bigl\{X^J_{m,n},Y_{m,n}
\bigr\} \qquad\mbox{for }J\subseteq I, m\le n, %
\]
that is, we include the observations in the local filtration.
We also denote by $\mathcal{E}^J\subseteq\mathcal{B}(E)$ the cylinder
$\sigma$-field generated by the coordinates in $J\subseteq I$.

The bivariate Markov chain $(X_n,Y_n)_{n\ge0}$ is said to be
\emph{locally mixing} if
\[
\bigl\|\mathbf{P}^{x,y}-\mathbf{P}\bigr\|_{\mathcal{F}^J_{n,\infty}} \mathop{\longrightarrow}^{n\to
\infty}0\qquad \mbox{for all }(x,y)\in E\times F\mbox{ and } J\subseteq I, |J|<\infty.
\]
It is easily seen that this coincides with the notion introduced
in Section~\ref{sec:locmix}.
The following filter stability result follows trivially from
Corollary~\ref{cor:lfverystab}.

\begin{cor}
\label{cor:flocmix}
Suppose that the Markov chain $(X_n,Y_n)_{n\ge0}$ is locally mixing
and that
Assumption~\ref{aspt:nondeg} holds.
Then for any $\mu\in\mathcal{P}(E\times F)$ such that
$\mu(E\times\cdot)\ll\lambda(E\times\cdot)$, and
for any $r\in\mathbb{N}$, we have
\[
\bigl\|\Pi_n^\mu-\Pi_n\bigr\|_{\mathcal{F}^J_{n-r,\infty}}
\mathop{\longrightarrow}^{n\to\infty}0,\qquad \mathbf{P}^\mu\mbox{-a.s.} \mbox{ for all }J
\subseteq I, |J|<\infty. %
\]
In particular, the filter is stable in the sense
\[
\bigl\|\pi_n^\mu-\pi_n\bigr\|_{\mathcal{E}^J}
\mathop{\longrightarrow}^{n\to\infty}0,\qquad\mathbf{P}^\mu\mbox{-a.s.} \mbox{ for all }J
\subseteq I, |J|<\infty. %
\]
If $\mu(E\times\cdot)\sim\lambda(E\times\cdot)$, the convergence
holds also $\mathbf{P}$-a.s.
\end{cor}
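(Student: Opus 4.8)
The plan is to reduce the claim to Corollary~\ref{cor:lfverystab}, applied with the local $\sigma$-field $\mathcal{E}^0=\mathcal{E}^J$ for each fixed finite $J\subseteq I$. First I would check that this is a legitimate choice: since each $E^i$ is Polish, each $\mathcal{B}(E^i)$ is countably generated, and hence the cylinder $\sigma$-field $\mathcal{E}^J$ generated by the finitely many coordinates in $J$ is countably generated, as required in section~\ref{sec:flocal}. With $\mathcal{E}^0=\mathcal{E}^J$ one has, directly from the definitions, $\mathcal{F}^0_{m,n}=\mathcal{F}^Y_{m,n}\vee\bigvee_{m\le k\le n}X_k^{-1}(\mathcal{E}^J)=\mathcal{F}^J_{m,n}$, so the local filtration of section~\ref{sec:flocal} is precisely the one appearing in the statement of the corollary.

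Next I would verify the hypotheses of Corollary~\ref{cor:lfverystab}. Assumption~\ref{aspt:nondeg} is assumed outright. The strengthened local ergodicity hypothesis of Corollary~\ref{cor:lfverystab}, namely $\|\mathbf{P}^{x,y}-\mathbf{P}\|_{\mathcal{F}^0_{n,\infty}}\to 0$ for every $(x,y)\in E\times F$, becomes --- upon substituting $\mathcal{F}^0_{n,\infty}=\mathcal{F}^J_{n,\infty}$ --- exactly the local mixing property of the bivariate chain $(X_n,Y_n)_{n\ge0}$ as defined at the beginning of this subsection, which we have assumed; equivalently, one may observe that local mixing of $(X_n,Y_n)$ viewed as a chain in $\prod_{i\in I}E^i\times F$ is characterized by Corollary~\ref{cor:locmix}. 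Thus Corollary~\ref{cor:lfverystab} applies and yields, for any $\mu$ with $\mu(E\times\cdot)\ll\lambda(E\times\cdot)$ and any $r\in\mathbb{N}$, the convergence $\|\Pi_n^\mu-\Pi_n\|_{\mathcal{F}^J_{n-r,\infty}}\to0$ $\mathbf{P}^\mu$-a.s.\ (and $\mathbf{P}$-a.s.\ when $\mu(E\times\cdot)\sim\lambda(E\times\cdot)$). Since $I$ is countable, there are only countably many finite $J\subseteq I$, so these almost-sure statements combine into a single one holding simultaneously over all such $J$.

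Finally, for the ``in particular'' assertion I would use that $\pi_n^\mu(A)=\Pi_n^\mu(X_n\in A)$ and $\pi_n(A)=\Pi_n(X_n\in A)$, so that $\|\pi_n^\mu-\pi_n\|_{\mathcal{E}^J}$ equals the total variation distance between $\Pi_n^\mu$ and $\Pi_n$ on the sub-$\sigma$-field $X_n^{-1}(\mathcal{E}^J)$; as $n\ge n-r$, this $\sigma$-field is contained in $\mathcal{F}^J_{n-r,\infty}$, whence $\|\pi_n^\mu-\pi_n\|_{\mathcal{E}^J}\le\|\Pi_n^\mu-\Pi_n\|_{\mathcal{F}^J_{n-r,\infty}}\to0$. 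There is no genuine obstacle in this argument; the only points that need attention are the bookkeeping identification $\mathcal{F}^0_{m,n}=\mathcal{F}^J_{m,n}$, the verification that $\mathcal{E}^J$ is countably generated, and the recognition that local mixing of $(X_n,Y_n)$ is literally the hypothesis of Corollary~\ref{cor:lfverystab} --- once these are noted, the result is immediate.
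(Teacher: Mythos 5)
Your argument is correct and is precisely the route the paper takes: Corollary \ref{cor:flocmix} is obtained by applying Corollary \ref{cor:lfverystab} with the local $\sigma$-field $\mathcal{E}^0=\mathcal{E}^J$ for each finite $J\subseteq I$, so that $\mathcal{F}^0_{m,n}=\mathcal{F}^J_{m,n}$ and the local mixing hypothesis is literally the strengthened ergodicity condition of that corollary. The points you spell out (countable generation of $\mathcal{E}^J$, countably many finite $J$, and the marginalization $\|\pi_n^\mu-\pi_n\|_{\mathcal{E}^J}\le\|\Pi_n^\mu-\Pi_n\|_{\mathcal{F}^J_{n-r,\infty}}$) are exactly the bookkeeping the paper regards as immediate.
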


\subsubsection{Filter stability and asymptotic coupling}
\label{sec:fswk}

The goal of this section is to develop a filter stability counterpart
of the weak-* ergodic theorem in Section~\ref{sec:ergwk}. For sake of
transparency, we will restrict attention to a special class of bivariate
Markov chains, known as hidden Markov models, that arise in many
settings (cf. Section~\ref{sec:examples}). While our method is
certainly also applicable in more general situations, the hidden Markov
assumption will allow us to state concrete and easily verifiable
conditions for weak-* filter stability.

A \emph{hidden Markov model} is a bivariate Markov chain
$(X_k,Y_k)_{k\ge0}$ (in the Polish state space $E\times F$)
whose transition kernel $P$ factorizes as
\[
P \bigl(x,y,dx',dy' \bigr) = P_0
\bigl(x,dx' \bigr) \Phi \bigl(x',dy'
\bigr) %
\]
for transition kernels $P_0\dvtx E\times\mathcal{B}(E)\to[0,1]$ and
$\Phi\dvtx E\times\mathcal{B}(F)\to[0,1]$. The special feature of such
models is that the unobserved process $(X_k)_{k\ge0}$ is a Markov chain
in its own right, and the observations $(Y_k)_{k\ge0}$ are
conditionally independent given $(X_k)_{k\ge0}$. This is a common
scenario when $(Y_k)_{k\ge0}$ represent noisy observations of an
underlying Markov chain $(X_k)_{k\ge0}$. In this setting, it is
natural to consider just initial conditions for $X_0$, rather than for the
pair $(X_0,Y_0)$. We therefore define $\mathbf{P}^x=
\mathbf{P}^{\delta_x\otimes\Phi(x,\cdot)}$ for $x\in E$ and
$\mathbf{P}^\mu=\int\mathbf{P}^x\mu(dx)$ for $\mu\in\mathcal{P}(E)$,
as well as the corresponding filters $\pi_n^\mu,\Pi_n^\mu$.
We will assume that $P_0$ admits an
invariant probability $\hat\lambda\in\mathcal{P}(E)$, so that
$\lambda=\hat\lambda\otimes\Phi$ is invariant for $(X_n,Y_n)_{n\ge0}$
[this entails no loss of generality if we assume, as we do, that
$(X_n,Y_n)_{n\ge0}$ admits an invariant probability].

A hidden Markov model is called nondegenerate if the observation kernel
$\Phi$ admits a positive density with respect to some reference measure
$\varphi$:
%
\begin{aspt}
\label{aspt:hmmnondeg}
A hidden Markov model is \emph{nondegenerate} if
\[
\Phi(x,dy) = g(x,y) \varphi(dy),\qquad g(x,y)>0\mbox{ for all }(x,y)\in E\times F
\]
for a $\sigma$-finite reference measure $\varphi$ on $F$.
\end{aspt}
As any $\sigma$-finite measure is equivalent to a probability measure,
nondegeneracy of the hidden Markov model evidently corresponds to the
validity of Assumption~\ref{aspt:nondeg} for the bivariate Markov
chain $(X_k,Y_k)_{k\ge0}$.

We can now state our weak-* stability result for the filter in the
hidden Markov model setting; compare with the weak-* ergodic Theorem~\ref{thmm:wk02}. In the following, we fix a complete metric $d$ for the
Polish space $E$. To allow for the case that the observation law is
discontinuous with respect to $d$ (see Section~\ref{sec:examples} for
examples), we introduce an auxiliary quantity $\tilde d$ that dominates
the metric $d$. Let us note that it is not necessary for $\tilde d$ to
be a metric.

\begin{thmm}[(Weak-* filter stability)]
\label{thmm:wfstab}
Let $(X_k,Y_k)_{k\ge0}$ be a hidden Markov model that admits an invariant
probability $\hat\lambda$ and satisfies Assumption~\ref{aspt:hmmnondeg}.
Let $\tilde d(x,y)\ge d(x,y)$ for all $x,y\in E$. Suppose the following hold:
\begin{longlist}[(a)]
\item[(a)]\textup{(Asymptotic coupling.)} There exists $\alpha>0$ such that
\[
\forall x,x'\in E, \exists \mathbf{Q}\in\mathcal{C} \bigl(
\mathbf{P}^x,\mathbf{P}^{x'} \bigr) \mbox{ s.t.} \qquad\mathbf{Q}
\Biggl[\sum_{n=1}^\infty\tilde d
\bigl(X_n,X_n' \bigr)^2<\infty
\Biggr]\ge\alpha. %
\]
\item[(b)]\textup{(Hellinger--Lipschitz observations.)} There exists $C<\infty$ such that
\[
\int \bigl\{{ \sqrt{g(x,y)}-\sqrt{g \bigl(x',y \bigr)}} \bigr
\}^2 \varphi(dy) \le C \tilde d \bigl(x,x'
\bigr)^2 \qquad\mbox{for all }x,x'\in E. %
\]
\end{longlist}
Then the filter is stable in the sense that
\[
\bigl|\pi_n^\mu(f)-\pi_n^\nu(f)\bigr|
\mathop{\longrightarrow}^{n\to\infty}0,\qquad \mathbf{P}^\gamma\mbox{-a.s.}
\mbox{ for all }f
\in\mathrm{Lip}(E), \mu,\nu,\gamma\in\mathcal{P}(E). %
\]
In particular, we obtain
\[
\bigl\|\pi_n^\mu-\pi_n^\nu
\bigr\|_{\mathrm{BL}} \mathop{\longrightarrow}^{n\to\infty}0 \qquad\mbox{in }\mathbf{P}^\gamma
\mbox{-probability} \mbox{ for all } \mu,\nu,\gamma\in\mathcal{P}(E). %
\]
\end{thmm}

\begin{pf}
The proof is similar to that of Theorem~\ref{thmm:wk02}.
Let $(\xi_n)_{n\ge0}$ be an i.i.d. sequence of standard
Gaussian random variables independent of $(X_n,Y_n)_{n\ge0}$,
so that we may consider in the following the extended Markov chain
$(X_n,\xi_n,Y_n)_{n\ge0}$. Fix $f\in\mathrm{Lip}(E)$, and
define $F_n=f(X_n)+\xi_n$. Conditionally on $\mathcal{F}^X_+$,
the process $(F_n,Y_n)_{n\ge0}$ is an independent sequence
with
\begin{eqnarray}
&&\mathbf{P}^\mu \bigl[(F_n,Y_n)_{n\ge k}
\in A|\mathcal{F}^X_+ \bigr]\nonumber\\
&&\qquad= \mathbf{R}^{X_{k,\infty}}(A):=
\int\mathbf{1}_A(r,y) \prod_{n=k}^\infty
\frac{e^{-(r_n-f(X_n))^2/2}}{\sqrt{2\pi}} \,dr_n g(X_n,y_n)
\varphi(dy_n), \nonumber\\
\eqntext{A\in\mathcal{B}(\mathbb{R}\times F)^{\otimes\mathbb{N}}}
\end{eqnarray}
for all $\mu\in\mathcal{P}(E)$.
We can now estimate as in
the proof of Lemma~\ref{lem:hell}
\begin{eqnarray*}
\bigl\|\mathbf{R}^{x_{0,\infty}}-\mathbf{R}^{x'_{0,\infty}}\bigr\|^2 &\le& \sum
_{n=0}^\infty \bigl[2 \bigl
\{f(x_n)-f \bigl(x_n' \bigr) \bigr
\}^2 + 8C \tilde d \bigl(x_n,x_n'
\bigr)^2 \bigr]
\\
&\le& (8C+2) \sum_{n=0}^\infty\tilde d
\bigl(x_n,x_n' \bigr)^2,
\end{eqnarray*}
where we have used that $1-\prod_n(1-p_n)\le\sum_np_n$
when $0\le p_n\le1$ for all $n$.
Therefore, proceeding exactly as in the
proof of Theorem~\ref{thmm:wk02}, we find that for every $x,x'\in E$,
there exists $n\ge1$ such that we have
\[
\bigl\| \mathbf{P}^x[F_{n,\infty},Y_{n,\infty}\in\cdot ]-
\mathbf{P}^{x'}[F_{n,\infty},Y_{n,\infty}\in\cdot ]\bigr \|\le2-
\alpha. %
\]
Now note that the law
$\mathbf{P}^x[F_{n,\infty},Y_{n,\infty}\in\cdot ]$ does not
change if we condition additionally on $\xi_0,Y_0$ (as
$\xi_0,Y_0$ are independent of $X_{1,\infty},\xi_{1,\infty},Y_{1,\infty}$
under $\mathbf{P}^x$).
We can therefore apply Theorem~\ref{thmm:loc02} to conclude that
\[
\bigl\| \mathbf{P}^\mu[F_{n,\infty},Y_{n,\infty}\in\cdot ]-
\mathbf{P}^\nu[F_{n,\infty},Y_{n,\infty}\in\cdot ]\bigr \|
\mathop{\longrightarrow}^{n\to\infty}0 \qquad\mbox{for all }\mu,\nu \in\mathcal{P}(E). %
\]
Moreover, note that $\mathbf{P}^\mu[Y_0\in\cdot ]\sim\varphi$ for all
$\mu\in\mathcal{P}(E)$. Thus, we can apply Corollary~\ref{cor:lfverystab}
to conclude [here $\xi\in\mathcal{P}(\mathbb{R})$
is the standard Gaussian measure]
\[
\bigl\|\pi_n^\mu f^{-1} * \xi-\pi_n
f^{-1} * \xi\bigr\| \mathop{\longrightarrow}^{n\to\infty}0,\qquad\mathbf{P}^\mu
\mbox{-a.s. and }\mathbf{P}\mbox{-a.s.} %
\]
Applying the argument in the proof of Theorem~\ref{thmm:wk02} pathwise,
we obtain
\[
\bigl|\pi_n^\mu(f)-\pi_n(f)\bigr| \mathop{\longrightarrow}^{n\to
\infty}0,\qquad\mathbf{P}^\mu\mbox{-a.s. and }\mathbf{P}\mbox{-a.s.}
\]
Thus, by the triangle inequality, we have
\[
\bigl|\pi_n^\mu(f)-\pi_n^\nu(f)\bigr| \le
\bigl|\pi_n^\mu(f)-\pi_n(f)\bigr| + \bigl|
\pi_n^\nu(f)-\pi_n(f)\bigr| \mathop{\longrightarrow}^{n\to
\infty}0,\qquad\mathbf{P}\mbox{-a.s.} %
\]
Finally, note that the assumptions of Corollary~\ref{cor:oequiv}
are satisfied for any initial measure, so that
$\mathbf{P}^\gamma|_{\mathcal{F}^Y_+}\ll
\mathbf{P}|_{\mathcal{F}^Y_+}$ for any $\gamma\in\mathcal{P}(E)$.
Thus, the above $\mathbf{P}$-a.s. convergence also holds $\mathbf
{P}^\gamma$-a.s.,
which yields the first conclusion.

To obtain the second conclusion, we argue as in the proof of
Theorem~\ref{thmm:swk02}.
Fix $\varepsilon>0$. Let $K\subseteq E$ be a compact set such that
$\lambda(K)\ge1-\varepsilon$, and define
$\chi(x)=(1-\varepsilon^{-1}d(x,K))_+$.
Following the argument used in the proof of Theorem~\ref{thmm:swk02},
we can find functions $f_1,\ldots,f_k\in\mathrm{Lip}(E)$ such that
\[
\bigl\|\pi_n^\mu-\pi_n\bigr\|_{\mathrm{BL}} \le \max
_{i=1,\ldots,k} \bigl|\pi_n^\mu(f_i\chi)-
\pi_n(f_i\chi)\bigr| + \bigl|\pi_n^\mu(
\chi)-\pi_n(\chi)\bigr| + 2\pi_n \bigl(K^c \bigr) +
6 \varepsilon %
\]
for all $n\ge0$. Taking the expectation and letting $n\to\infty$, we obtain
\[
\limsup_{n\to\infty} \mathbf{E} \bigl[\bigl\|\pi_n^\mu-
\pi_n\bigr\|_{\mathrm{BL}} \bigr] \le8\varepsilon. %
\]
As $\varepsilon>0$ is arbitrary, this implies that
\[
\bigl\|\pi_n^\mu-\pi_n\bigr\|_{\mathrm{BL}}
\mathop{\longrightarrow}^{n\to\infty}0 \qquad\mbox{in }\mathbf{P}\mbox{-probability}. %
\]
But $\mathbf{P}^\gamma|_{\mathcal{F}^Y_+}\ll
\mathbf{P}|_{\mathcal{F}^Y_+}$, so the convergence is also in
$\mathbf{P}^\gamma$-probability. Applying the triangle
inequality and dominated convergence completes the proof.
\end{pf}

\begin{rem}
In Theorem~\ref{thmm:wfstab}, we obtain a.s. stability of
the filter for individual Lipschitz functions, but only stability in
probability for the $\|\cdot\|_{\mathrm{BL}}$ norm. It is not clear whether
the latter could be improved to a.s. convergence (except when $E$ is
compact, in which case the compactness argument used in the proof above
directly yields a.s. convergence). The problem is that we do not know
whether the null set in the a.s. stability result can be made
independent of the choice of Lipschitz function; if this were the case,
the method used in Theorem~\ref{thmm:wk02} could be used to obtain
a.s. convergence.
\end{rem}

\subsubsection{Ergodicity of the filter}
\label{sec:kunita}

We developed above a number of filter stability results that ensure
convergence of conditional expectations of the form
$|\mathbf{E}^\mu[f(X_n)|\mathcal{F}^Y_{0,n}]-
\mathbf{E}[f(X_n)|\mathcal{F}^Y_{0,n}]|\to0$. This can evidently be
viewed as a natural conditional counterpart to the classical ergodic
theory of Markov chains, which ensures that $|\mathbf{E}^\mu[f(X_n)]-
\mathbf{E}[f(X_n)]|\to0$. In this section, following Kunita
\cite{Kun71}, we develop a different ergodic property of the filter.

It is well known---and a simple exercise using the Bayes formula---that
the filter $\pi_n^\mu$ can be computed in a recursive fashion.
In particular, under the nondegeneracy Assumption~\ref{aspt:nondeg}, we
have $\pi_{n+1}^\mu=U(\pi_n^\mu,Y_n,Y_{n+1})$ with
\[
U \bigl(\nu,y,y' \bigr) (A) = \frac{\int\mathbf{1}_A(x')
g(x,y,x',y') P_0(x,dx') \nu(dx)}{
\int g(x,y,x',y') P_0(x,dx') \nu(dx)}. %
\]
Let $H\dvtx \mathcal{P}(E)\times F\to\mathbb{R}$ be a bounded measurable function.
Then
\[
\mathbf{E}^\mu \bigl[H \bigl(\pi_{n+1}^\mu,Y_{n+1}
\bigr)|\mathcal{F}^Y_{0,n} \bigr] = \mathbf{E}^\mu
\bigl[H \bigl(U \bigl(\pi_n^\mu,Y_n,Y_{n+1}
\bigr),Y_{n+1} \bigr)|\mathcal {F}^Y_{0,n} \bigr]
= \mathsf{\Gamma}H \bigl(\pi_n^\mu,Y_n
\bigr), %
\]
where the kernel
$\mathsf{\Gamma}\dvtx \mathcal{P}(E)\times F\times\mathcal{B}(\mathcal
{P}(E)\times F)\to[0,1]$
is given by
\[
\mathsf{\Gamma}(\nu,y,A) = \int \mathbf{1}_A \bigl(U \bigl(
\nu,y,y' \bigr),y' \bigr) P \bigl(x,y,dx',dy'
\bigr) \nu(dx). %
\]
Thus, we see that the process $(\pi_n^\mu,Y_n)_{n\ge0}$ is itself a
$(\mathcal{P}(E)\times F)$-valued Markov chain under $\mathbf{P}^\mu$
with transition kernel $\mathsf{\Gamma}$.
In the hidden Markov model setting of Section~\ref{sec:fswk},
$\mathsf{\Gamma}(\nu,y,A)$ does not depend on $y$, so that in this special
case even the filter $(\pi_n^\mu)_{n\ge0}$ itself is a $\mathcal{P}(E)$-valued
Markov chain.

In view of this Markov property of the filter, it is now natural to
ask about the ergodic properties of the filtering process itself.
Generally speaking, we would like to know whether the ergodic properties
of the underlying Markov chain $(X_n,Y_n)_{n\ge0}$ with transition\vspace*{1pt} kernel
$P$ are ``lifted'' to the measure-valued Markov chain
$(\pi_n^\mu,Y_n)_{n\ge0}$ with transition kernel $\mathsf{\Gamma}$.
Following the classical work of Kunita \cite{Kun71}, such questions
have been considered by a number of authors
\cite{Ste89,Bud03,CvH10,vH12,TvH12}. We will focus here on the question
of unique ergodicity, where the following result
(essentially due to Kunita) is known.

\begin{thmm}
\label{thmm:kunita}
Suppose that Assumption~\ref{aspt:nondeg} holds, and that the transition
kernel $P$ admits a unique invariant measure $\lambda\in\mathcal
{P}(E\times F)$.
Then the transition kernel $\mathsf{\Gamma}$ admits a unique invariant
measure $\Lambda\in\mathcal{P}(\mathcal{P}(E)\times F)$ iff
\[
\bigcap_{n\ge0} \mathcal{F}^Y_-\vee
\mathcal{F}^X_{-\infty,-n} = \mathcal{F}^Y_- \qquad\mathop{
\mathrm{mod}}\mathbf{P}. %
\]
\end{thmm}

We refer to \cite{vH12} for a full proof in the hidden Markov model
setting, which is easily adapted to the more general setting considered
here (sufficiency is also shown in our setting in the proof of
\cite{TvH12}, Theorem~2.12).

To prove unique ergodicity of the filter, we must therefore establish
the measure-theoretic identity in Theorem~\ref{thmm:kunita}. The goal
of this section is to accomplish this task under the same assumptions we
have used for filter stability: local mixing or asymptotic couplings.
In fact, slightly weaker forms of the assumptions of Corollary~\ref{cor:flocmix} or Theorem~\ref{thmm:wfstab} will suffice. We refer
to Sections \ref{sec:fslm} and \ref{sec:fswk} for the notation
used in the following results.

\begin{thmm}
\label{thmm:ferglmix}
Suppose $(X_n,Y_n)_{n\ge0}$ with $E\subseteq\prod_{i\in I}E^i$
satisfies Assumption~\ref{aspt:nondeg}, admits a unique invariant measure
and is a.e. locally mixing:
\[
\bigl\|\mathbf{P}^{x,y}-\mathbf{P}\bigr\|_{\mathcal{F}^J_{n,\infty}} \mathop{\longrightarrow}^{n\to
\infty}0 \qquad\mbox{for }\lambda\mbox{-a.e. }(x,y) \mbox{ and all } J\subseteq I, |J|<
\infty. %
\]
Then the filter transition kernel $\mathsf{\Gamma}$ admits a unique
invariant measure.
\end{thmm}

\begin{thmm}
\label{thmm:fergwk}
Let $(X_k,Y_k)_{k\ge0}$ be a hidden Markov model that admits a unique
invariant probability $\hat\lambda$ and that satisfies Assumption~\ref{aspt:hmmnondeg}. Moreover, let $\tilde d(x,y)\ge d(x,y)$
for all $x,y\in E$, and suppose that the following hold:
\begin{longlist}[(a)]
\item[(a)]\textup{(Asymptotic coupling.)} For $\hat\lambda\otimes\hat\lambda$-a.e. $(x,x')\in E\times E$,
\[
\exists \mathbf{Q}\in\mathcal{C} \bigl(\mathbf{P}^x,
\mathbf{P}^{x'} \bigr) \mbox{ such that}\qquad \mathbf{Q} \Biggl[\sum
_{n=1}^\infty\tilde d \bigl(X_n,X_n'
\bigr)^2<\infty \Biggr]>0. %
\]
\item[(b)]\textup{(Hellinger--Lipschitz observations.)} There exists $C<\infty$ such that
\[
\int \bigl\{{ \sqrt{g(x,y)}-\sqrt{g \bigl(x',y \bigr)}} \bigr
\}^2 \varphi(dy) \le C \tilde d \bigl(x,x'
\bigr)^2 \qquad\mbox{for all }x,x'\in E. %
\]
\end{longlist}
Then the filter transition kernel $\mathsf{\Gamma}$ admits a unique
invariant measure.
\end{thmm}

The remainder of this section is devoted to the proof of these results.
We must begin by obtaining a local result in the setting of Section~\ref{sec:flocal}. To this end, we will need the following structural
result for the invariant measure $\lambda$.

\begin{lem}
\label{lem:tvh}
Suppose that Assumptions \ref{aspt:locerg} and \ref{aspt:nondeg} hold.
Then the invariant measure $\lambda$ satisfies
$\lambda(dx,dy)\sim\lambda(dx\times F)\otimes
\lambda(E\times dy)$.
\end{lem}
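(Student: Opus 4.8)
The plan is to reduce the claim to a statement about conditional laws that the machinery of Section~\ref{sec:conderg} can handle. First I would observe that it suffices to prove
$$
	\mathbf{P}[X_0\in\cdot\,|\mathcal{F}^Y]\sim\mathbf{P}[X_0\in\cdot\,]
	\quad\mathbf{P}\mbox{-a.s.}
$$
Indeed, fixing a measurable version $h(y,x)$ of the density $d\mathbf{P}[X_0\in\cdot\,|\mathcal{F}^Y=y]/d\lambda(\cdot\times F)$, a disintegration gives $\lambda(dx,dy)=\bar h(x,y)\,\lambda(dx\times F)\otimes\lambda(E\times dy)$ with $\bar h(x,y)=\mathbf{E}[h(Y,x)\,|\,Y_0=y]$, and positivity of $\bar h$ off a product-null set follows from positivity of $h$ by Fubini. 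By Lemma~\ref{lem:3.6} (with trivial $\mathcal{G}_3$), this conditional equivalence is in turn equivalent to $\mathbf{P}[Y\in\cdot\,|X_0]\sim\mathbf{P}[Y\in\cdot\,]$, which is the form I would actually verify.

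\textbf{Verifying $\mathbf{P}[Y\in\cdot\,|X_0]\sim\mathbf{P}[Y\in\cdot\,]$.} I would apply Lemma~\ref{lem:abscont} to the coordinates of $Y=(Y_k)_{k\in\mathbb{Z}}$ enumerated by increasing distance from the origin, so that the tail $\sigma$-fields there become $\mathcal{F}^Y_{-\infty,-m}\vee\mathcal{F}^Y_{m,\infty}$. Two inputs are needed. For the conditional absolute continuity of the near coordinates given the far ones: Assumption~\ref{aspt:nondeg} together with the Markov property shows that $(X_k,Y_k)_{k\in\mathbb{Z}}$ is nondegenerate in the sense of Definition~\ref{defn:nondeg} (exactly as in the proof of Lemma~\ref{lem:exchglocal}), and since $\sigma\{X_0\}$ lies in any two-sided $\sigma$-field straddling time $0$, Lemma~\ref{lem:tower} then gives $\mathbf{P}[Y_{-m+1,m-1}\in\cdot\,|\mathcal{F}^Y_{-\infty,-m}\vee\mathcal{F}^Y_{m,\infty}\vee\sigma\{X_0\}]\sim\mathbf{P}[Y_{-m+1,m-1}\in\cdot\,|\mathcal{F}^Y_{-\infty,-m}\vee\mathcal{F}^Y_{m,\infty}]$. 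For the decay $\|\mathbf{P}[Y\in\cdot\,|X_0]-\mathbf{P}[Y\in\cdot\,]\|_{\mathcal{F}^Y_{-\infty,-m}\vee\mathcal{F}^Y_{m,\infty}}\to 0$: bounding by the sum of the $\mathcal{F}^Y_{m,\infty}$- and $\mathcal{F}^Y_{-\infty,-m}$-distances, the forward part is controlled by Assumption~\ref{aspt:locerg}, which (via the Markov property and Corollary~\ref{cor:absreg}, as in Lemma~\ref{lem:exchglocal}) yields $\|\mathbf{P}[\,\cdot\,|\mathcal{F}_{-\infty,0}]-\mathbf{P}\|_{\mathcal{F}^0_{m,\infty}}\to 0$ $\mathbf{P}$-a.s., after conditioning down to $\sigma\{X_0\}$ by Jensen and using $\mathcal{F}^Y_{m,\infty}\subseteq\mathcal{F}^0_{m,\infty}$; the backward part is identical once one notes that absolute regularity of a stationary sequence is invariant under time reversal, so the time-reversed localized process is absolutely regular as well. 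Running the same argument with the roles symmetric (equivalently, applying Lemma~\ref{lem:abscont} in both directions) upgrades $\ll$ to $\sim$, which completes the proof.

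\textbf{Main obstacle.} The delicate point, and the step I expect to require the most care, is the bookkeeping at time $0$: Assumption~\ref{aspt:locerg} controls only the localized field $\mathcal{F}^0$, whereas the conclusion concerns the full variable $X_0$. One must therefore apply the decay estimate to $\mathcal{F}^Y_{m,\infty}$ (which is genuinely inside $\mathcal{F}^0_{m,\infty}$, the observations being unlocalized) rather than to $\mathcal{F}^0_{m,\infty}$ itself, and use Assumption~\ref{aspt:nondeg}---not Assumption~\ref{aspt:locerg}---at the single time $0$. A secondary nuisance is that the past coordinates $Y_{-\infty,-m}$ are conditioned on $X_0$, which lies in their future; this is precisely where invariance of absolute regularity under time reversal, together with stationarity of $\mathbf{P}$, must be invoked.
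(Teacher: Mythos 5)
Your route is genuinely different from the paper's: the paper's proof is a two-line reduction, noting that Assumption~\ref{aspt:locerg} and Jensen give $\mathbf{E}[\|\mathbf{P}[Y_n\in\cdot\,|Y_0]-\lambda(E\times\cdot)\|]\to0$ and then citing \cite[Prop.~3.3]{TvH12} applied to $(Y_n,X_n)_{n\ge 0}$, so that the full unobserved variable is never conditioned against distant observations. Your plan (reduce via Lemma~\ref{lem:3.6} to $\mathbf{P}[Y\in\cdot\,|X_0]\sim\mathbf{P}[Y\in\cdot\,]$ and run Lemma~\ref{lem:abscont} over a two-sided enumeration) instead forces you to control conditional laws involving the \emph{full}, unlocalized $X_0$, and this is where the argument breaks. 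The decay input you need is $\|\mathbf{P}[Y\in\cdot\,|X_0]-\mathbf{P}[Y\in\cdot\,]\|_{\mathcal{F}^Y_{-\infty,-m}\vee\mathcal{F}^Y_{m,\infty}}\to0$. First, bounding this by the sum of the $\mathcal{F}^Y_{m,\infty}$- and $\mathcal{F}^Y_{-\infty,-m}$-distances is false: total variation on a join of $\sigma$-fields is not dominated by the sum of the restrictions (two laws can agree on each half and be mutually singular on the join); a correct bound must interpolate through product measures and pay an additional decoupling term controlled by absolute regularity. Second, and more seriously, the backward half $\|\mathbf{P}[\,\cdot\,|X_0]-\mathbf{P}\|_{\mathcal{F}^Y_{-\infty,-m}}\to0$ does not follow from your time-reversal argument: reversed absolute regularity of the \emph{localized} process controls conditioning on $\mathcal{F}^0_{0,\infty}$, but $\sigma\{X_0\}\not\subseteq\mathcal{F}^0_{0,\infty}$, so Jensen cannot bring you down to conditioning on $X_0$. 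By Lemma~\ref{lem:margtv} and stationarity this backward half equals $\mathbf{E}[\|\mathbf{P}[X_m\in\cdot\,|\mathcal{F}^Y_{-\infty,0}]-\lambda(\cdot\times F)\|]$, a total-variation statement about the full unobserved state which Assumptions~\ref{aspt:locerg} and \ref{aspt:nondeg} do not supply by the means you invoke --- it is exactly the kind of global statement the local theory is built to avoid. (The forward half, via Jensen and $\mathcal{F}^Y_{m,\infty}\subseteq\mathcal{F}^0_{m,\infty}$, is fine.)

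A secondary, repairable, issue is the conditional absolute continuity input: Definition~\ref{defn:nondeg} applied to the window $[-m+1,m-1]$ conditions on $\mathcal{F}_{-\infty,-m}\vee\mathcal{F}_{m,\infty}$, which does \emph{not} contain $X_0$, and Lemma~\ref{lem:tower} only permits passing to smaller conditioning $\sigma$-fields, so it cannot introduce $\sigma\{X_0\}$ as you claim. The statement you want is nonetheless true: rerun the change-of-measure computation of Lemmas~\ref{lem:exchglocal} and \ref{lem:lebesguedec} on the window $[-m,m]$ carrying $X_0$ along --- by the Markov property the conditional law of $Y_{-m+1,m-1}$ given $\mathcal{F}_{-\infty,-m}\vee\sigma\{X_0\}\vee\mathcal{F}_{m,\infty}$ is a function of $(X_{-m},Y_{-m},X_0,X_m,Y_m)$, under the reference chain with kernel $P_0\otimes Q$ the whole $X$-path is conditionally independent of the central observation block given $(X_{-m},Y_{-m})$, so after the equivalent change of measure the conditional law is equivalent to a kernel in $(Y_{-m},Y_m)$ alone, and Lemma~\ref{lem:tower} then applies. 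Fixing this, however, does not rescue the decay step above; for a proof within the paper's hypotheses you should, as the paper does, work only with the observation component (whose marginal convergence \emph{is} given by Assumption~\ref{aspt:locerg} since $Y$ is included in the local filtration) and handle the full $X$-variable through the one-step nondegeneracy structure, as in \cite[Prop.~3.3]{TvH12}.
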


\begin{pf}
Assumption~\ref{aspt:locerg} and Jensen's inequality yield
$\mathbf{P}$-a.s.
\[
\mathbf{E} \bigl[\bigl\|\mathbf{P}[Y_n\in\cdot |Y_0]-
\lambda(E \times\cdot)\bigr\| \bigr] \le \mathbf{E} \bigl[ \bigl\|\mathbf{P}^{X_0,Y_0}[Y_n
\in\cdot ]-\lambda(E\times\cdot)\bigr\| \bigr] \mathop{\longrightarrow}^{n\to\infty}0. %
\]
The result follows by applying \cite{TvH12}, Proposition 3.3, to the
process $(Y_n,X_n)_{n\ge0}$.
\end{pf}

We can now prove the following local result.

\begin{cor}
\label{cor:lockun}
Suppose that Assumptions \ref{aspt:locerg} and \ref{aspt:nondeg} hold.
Then
\[
\mathbf{P} \biggl[A \Big| \bigcap_{n\ge0}
\mathcal{F}^Y_-\vee \mathcal{F}^X_{-\infty,-n} \biggr]
= \mathbf{P} \bigl[A|\mathcal{F}^Y_- \bigr],\qquad\mathbf{P}\mbox{-a.s.}
\mbox{ for every }A\in\mathcal{F}^0_{-\infty,\infty}. %
\]
\end{cor}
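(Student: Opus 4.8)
The plan is to reduce the identity, by a monotone class argument together with reverse martingale convergence, to a statement about the one‑step filter, which is then handled by the local filter stability theorem (Theorem~\ref{thm:lfstab}) applied to the stationary chain after a time shift. Write $\mathcal{T}=\bigcap_{n\ge 0}(\mathcal{F}^Y_-\vee\mathcal{F}^X_{-\infty,-n})$. Since these $\sigma$-fields decrease to $\mathcal{T}$, reverse martingale convergence gives $\mathbf{E}[\mathbf{1}_A\mid\mathcal{F}^Y_-\vee\mathcal{F}^X_{-\infty,-n}]\to\mathbf{E}[\mathbf{1}_A\mid\mathcal{T}]$ $\mathbf{P}$-a.s.; as the collection of $A\in\mathcal{F}^0$ for which the asserted identity holds is a $\lambda$-system containing the algebra $\bigcup_N\mathcal{F}^0_{-N,N}$ (which generates $\mathcal{F}^0$), it suffices to treat $A\in\mathcal{F}^0_{-N,N}$. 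Fixing such an $A$ and $n\ge N$, one has $\mathcal{F}^Y_-\vee\mathcal{F}^X_{-\infty,-n}=\mathcal{F}_{-\infty,-n}\vee\mathcal{F}^Y_{-n,0}$, and since $A\in\mathcal{F}_{-n,\infty}$ the Markov property of $(X_k,Y_k)_{k\in\mathbb{Z}}$ (conditional independence of $\mathcal{F}_{-\infty,-n}$ and $\mathcal{F}_{-n,\infty}$ given $(X_{-n},Y_{-n})$, enlarged by $\mathcal{F}^Y_{-n,0}\subseteq\mathcal{F}_{-n,\infty}$) yields
$$
\mathbf{E}[\mathbf{1}_A\mid\mathcal{F}^Y_-\vee\mathcal{F}^X_{-\infty,-n}]=\mathbf{E}[\mathbf{1}_A\mid X_{-n},Y_{-n,0}]\quad\mathbf{P}\text{-a.s.}
$$

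Next I would apply the shift $\Theta^n$. Let $\kappa$ be a version of $\mathbf{P}[Y_0\in\cdot\,|X_0]$ and set $\mu_x=\delta_x\otimes\kappa(x,\cdot)$; then the $\mathbf{P}$-conditional law of $(X_k,Y_k)_{k\ge0}$ given $X_0=x$ is $\mathbf{P}^{\mu_x}$, so by stationarity
$$
\mathbf{E}[\mathbf{1}_A\mid X_{-n},Y_{-n,0}]\circ\Theta^n=\Pi_n^{\mu_{X_0}}(\Theta^{-n}A),\qquad \mathbf{E}[\mathbf{1}_A\mid Y_{-n,0}]\circ\Theta^n=\Pi_n(\Theta^{-n}A),
$$
with $\Theta^{-n}A\in\mathcal{F}^0_{n-N,n+N}\subseteq\mathcal{F}^0_{n-N,\infty}$. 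Here Lemma~\ref{lem:tvh} is essential: it gives $\kappa(x,\cdot)\sim\lambda(E\times\cdot)$, hence $\mu_x(E\times\cdot)\sim\lambda(E\times\cdot)$, for $\lambda(\cdot\times F)$-a.e.\ $x$; moreover $\Pi_0^{\mu_x}=\mathbf{P}^{x,Y_0}$, so Assumption~\ref{aspt:locerg} gives $\|\Pi_0^{\mu_x}-\Pi_0\|_{\mathcal{F}^0_{m,\infty}}\le\|\mathbf{P}^{x,Y_0}-\mathbf{P}\|_{\mathcal{F}^0_{m,\infty}}+\mathbf{E}[\|\mathbf{P}^{X_0,Y_0}-\mathbf{P}\|_{\mathcal{F}^0_{m,\infty}}\mid Y_0]\to0$ $\mathbf{P}^{\mu_x}$-a.s.\ for $\lambda(\cdot\times F)$-a.e.\ $x$. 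Thus Theorem~\ref{thm:lfstab} (with $r=N$) applies to each such $\mu_x$, giving $\|\Pi_n^{\mu_x}-\Pi_n\|_{\mathcal{F}^0_{n-N,\infty}}\to0$ $\mathbf{P}$-a.s.; a disintegration argument---using that $\Pi_n^{\mu_x}$ depends on the path only through $Y_{0,n}$, whose law under $\mathbf{P}(\cdot\,|X_0=x)$ equals its law under $\mathbf{P}^{\mu_x}$---upgrades this to $\|\Pi_n^{\mu_{X_0}}-\Pi_n\|_{\mathcal{F}^0_{n-N,\infty}}\to0$ $\mathbf{P}$-a.s.

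Combining the displays, $\big|\mathbf{E}[\mathbf{1}_A\mid X_{-n},Y_{-n,0}]-\mathbf{E}[\mathbf{1}_A\mid Y_{-n,0}]\big|\circ\Theta^n\le\tfrac12\|\Pi_n^{\mu_{X_0}}-\Pi_n\|_{\mathcal{F}^0_{n-N,\infty}}\to0$ $\mathbf{P}$-a.s.; since $\Theta$ is measure-preserving, $\mathbf{E}[\mathbf{1}_A\mid X_{-n},Y_{-n,0}]-\mathbf{E}[\mathbf{1}_A\mid Y_{-n,0}]\to0$ in $\mathbf{P}$-probability. On the other hand $\mathbf{E}[\mathbf{1}_A\mid X_{-n},Y_{-n,0}]\to\mathbf{E}[\mathbf{1}_A\mid\mathcal{T}]$ and $\mathbf{E}[\mathbf{1}_A\mid Y_{-n,0}]\to\mathbf{E}[\mathbf{1}_A\mid\mathcal{F}^Y_-]$ both $\mathbf{P}$-a.s.\ (reverse, resp.\ forward, martingale convergence, using $\mathcal{F}^Y_{-n,0}\uparrow\mathcal{F}^Y_-$), so uniqueness of limits forces $\mathbf{E}[\mathbf{1}_A\mid\mathcal{T}]=\mathbf{E}[\mathbf{1}_A\mid\mathcal{F}^Y_-]$ $\mathbf{P}$-a.s., which is the claim. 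The main obstacle is that the conditioning retains the \emph{full} past state $X_{-n}$, whereas the conditional ergodic theory of Section~\ref{sec:conderg} only directly controls its local projection $\iota(X_{-n})$; the device that gets around this is to recast the backward tail-conditioning, after the shift, as a forward filter-stability statement, whose hypotheses for the \emph{random} initial law $\mu_{X_0}=\delta_{X_0}\otimes\mathbf{P}[Y_0\in\cdot\,|X_0]$ are verified precisely via Lemma~\ref{lem:tvh}. A secondary subtlety is that $\Theta^n$ destroys almost-sure control, so the almost-sure identity must be recovered by matching the resulting in-probability limit against the separately available almost-sure martingale limits.
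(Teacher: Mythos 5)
Your proof is correct and follows essentially the same route as the paper's: verify the hypotheses of Theorem \ref{thm:lfstab} for the random initial law $\delta_{X_0}\otimes\mathbf{P}[Y_0\in\cdot\,|X_0]$ via Lemma \ref{lem:tvh} and Assumption \ref{aspt:locerg}, identify $\Pi_n^{\mu_{X_0}}$ with $\mathbf{P}[\,\cdot\,|\sigma\{X_0\}\vee\mathcal{F}^Y_{0,n}]$, shift by stationarity to negative times, and conclude by (reverse and forward) martingale convergence plus a monotone class argument. The only differences are cosmetic (localizing to $\mathcal{F}^0_{-N,N}$ instead of $\mathcal{F}^0_{-r,\infty}$, and matching an in-probability limit rather than an $L^1$ limit against the a.s.\ martingale limits).
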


\begin{pf}
By Lemma~\ref{lem:tvh}, we have $\lambda(dx,dy) = h(x,y) \lambda
(dx\times F)
\lambda(E\times dy)$ for a strictly positive measurable function $h$.
Define the kernel
\[
\lambda_x(A) = \frac{\int\mathbf{1}_A(y) h(x,y) \lambda(E\times dy)}{
\int h(x,y) \lambda(E\times dy)}. %
\]
By the Bayes formula, $\lambda_{X_0}=\mathbf{P}[Y_0\in\cdot |X_0]$.
Assumption~\ref{aspt:locerg}, disintegration and Jensen's inequality
yield a set $H\subseteq E$ with $\lambda(H\times F)=1$ such that
\[
\mathbf{E}^{\delta_x\otimes\lambda_x} \bigl[ \bigl\|\mathbf{P}^{x,Y_0}-\mathbf{P}
\bigr\|_{\mathcal{F}^0_{n,\infty}} \bigr] \mathop{\longrightarrow}^{n\to\infty}0
\quad\mbox{and}\quad
\mathbf{E}^{\delta_x\otimes\lambda_x}\bigl[ \|\Pi_0-\mathbf{P}\|_{\mathcal{F}^0_{n,\infty}}\bigr]
\mathop{\longrightarrow}^{n\to\infty}0 %
\]
for all $x\in H$.
But note that $\Pi_0^{\delta_x\otimes\lambda_x}=
\mathbf{P}^{x,Y_0}$ holds $\mathbf{P}^{\delta_x\otimes\lambda_x}$-a.s. Thus,
\[
\lambda_x\sim\lambda(E\times\cdot) \quad\mbox{and}\quad\bigl \|
\Pi_0^{\delta_x\otimes\lambda_x}-\Pi_0\bigr\|_{\mathcal{F}^0_{n,\infty}}
\mathop{\longrightarrow}^{n\to\infty}0,\qquad \mathbf{P}^{\delta_x\otimes\lambda_x}\mbox{-a.s.} %
\]
for every $x\in H$ by the definition of $\lambda_x$ and the triangle
inequality (we have used that $\|\Pi_0^{\delta_x\otimes\lambda_x}-
\Pi_0\|_{\mathcal{F}^0_{n,\infty}}$
is pointwise decreasing to establish a.s. convergence).
Therefore, by Theorem~\ref{thmm:lfstab}, we obtain for every $x\in H$
\[
\bigl\|\Pi_n^{\delta_x\otimes\lambda_x}-\Pi_n\bigr\|_{
\mathcal{F}^0_{n-r,\infty}}
\mathop{\longrightarrow}^{n\to\infty}0,\qquad\mathbf{P}^{\delta_x\otimes\lambda_x}\mbox{-a.s.} \mbox{ for any }r
\in\mathbb{N}. %
\]
Now note that $\Pi_n^{\delta_{X_0}\otimes\lambda_{X_0}}(A)=
\mathbf{P}[A|\sigma{X_0}\vee\mathcal{F}^Y_{0,n}]$ for all $A\in\mathcal{F}_+$,
for example, by Lemma~\ref{lem:weiz}. It follows that for any
$A\in\mathcal{F}^0_{0,\infty}$, we have the convergence
\[
\mathbf{E} \bigl[ \bigl|\mathbf{E} \bigl[\mathbf{1}_A\circ
\Theta^{n-r}|\sigma{X_0}\vee\mathcal{F}^Y_{0,n}
\bigr] - \mathbf{E} \bigl[\mathbf{1}_A\circ\Theta^{n-r}|
\mathcal{F}^Y_{0,n} \bigr]\bigr| \bigr]\mathop{\longrightarrow}^{n\to\infty}0.
\]
But the Markov property of $(X_n,Y_n)_{n\in\mathbb{Z}}$ yields
$\mathbf{E}[\mathbf{1}_A\circ\Theta^{n-r}|\sigma{X_0}\vee\mathcal{F}^Y_{0,n}]=
\mathbf{E}[\mathbf{1}_A\circ\Theta^{n-r}|\mathcal{F}^X_{-\infty,0}\vee
\mathcal{F}^Y_{-\infty,n}]$. Therefore, using stationarity, we obtain
\[
\mathbf{E} \bigl[\bigl |\mathbf{E} \bigl[\mathbf{1}_A\circ
\Theta^{-r}|\mathcal{F}^Y_-\vee \mathcal{F}^X_{-\infty,-n}
\bigr] - \mathbf{E} \bigl[\mathbf{1}_A\circ\Theta^{-r}|
\mathcal{F}^Y_{-n,0} \bigr]\bigr| \bigr]\mathop{\longrightarrow}^{n\to\infty}0.
\]
The lemma now follows by the martingale convergence
theorem for $A\in\mathcal{F}^0_{-r,\infty}$. As
$r$ is arbitrary, a monotone
class argument concludes the proof.
\end{pf}

The proof of Theorem~\ref{thmm:ferglmix} is now essentially trivial.

\begin{pf*}{Proof of Theorem~\ref{thmm:ferglmix}}
By Corollary~\ref{cor:lockun}, we have
\[
\mathbf{P} \biggl[A \Big| \bigcap_{n\ge0}
\mathcal{F}^Y_-\vee \mathcal{F}^X_{-\infty,-n} \biggr]
= \mathbf{P} \bigl[A|\mathcal{F}^Y_- \bigr],\qquad \mathbf{P}\mbox{-a.s.}
\mbox{ for every }A\in\mathcal{F}^J_{-\infty,\infty} %
\]
whenever $J\subseteq I$, $|J|<\infty$. A monotone class argument yields
the conclusion for all $A\in\mathcal{F}$. Thus the $\sigma$-field
identity of Theorem~\ref{thmm:kunita} holds.
\end{pf*}

We now turn to the proof of Theorem~\ref{thmm:fergwk}.

\begin{pf*}{Proof of Theorem~\ref{thmm:fergwk}}
Proceeding precisely as in the proof of Theorem~\ref{thmm:wfstab}
(and adopting the same notation as is used there), we
find that for $\hat\lambda\otimes\hat\lambda$-a.e. $(x,x')\in E\times E$, there exists $n\ge1$ such that
\[
\bigl\| \mathbf{P}^x[F_{n,\infty},Y_{n,\infty}\in\cdot ]-
\mathbf{P}^{x'}[F_{n,\infty},Y_{n,\infty}\in\cdot ]\bigr \|<2.
\]
From Theorem~\ref{thmm:sloc02}, it follows that
\[
\bigl\| \mathbf{P}^{x,y}[F_{n,\infty},Y_{n,\infty}\in\cdot ]-
\mathbf{P}[F_{n,\infty},Y_{n,\infty}\in\cdot ] \bigr\|\mathop{\longrightarrow}^{n\to
\infty}0,\qquad\lambda\mbox{-a.e. }(x,y)\in E\times F. %
\]
Applying Corollary~\ref{cor:lockun}, we find that
\[
\mathbf{P} \biggl[A \Big| \bigcap_{n\ge0}
\mathcal{F}^Y_-\vee \mathcal{F}^{X,\xi}_{-\infty,-n} \biggr]
= \mathbf{P} \bigl[A|\mathcal{F}^Y_- \bigr],\qquad\mathbf{P}\mbox{-a.s.}
\mbox{ for every }A\in\mathcal{F}^{F,Y}_{-\infty,\infty}, %
\]
where $\mathcal{F}^{X,\xi}_{m,n}=\sigma\{X_{m,n},\xi_{m,n}\}$ and
$\mathcal{F}^{F,Y}_{m,n}=\sigma\{F_{m,n},Y_{m,n}\}$. In particular,
if
\[
G=g \bigl(f(X_{-m})+\xi_{-m},\ldots,f(X_m)+
\xi_m,Y_{-m},\ldots,Y_m \bigr) %
\]
for some bounded continuous function $g\dvtx \mathbb{R}^{2m+1}\times
F^{2m+1}\to\mathbb{R}$, then
\[
\mathbf{E} \biggl[G\Big | \bigcap_{n\ge0}
\mathcal{F}^Y_-\vee \mathcal{F}^{X}_{-\infty,-n} \biggr]
= \mathbf{E} \bigl[G|\mathcal{F}^Y_- \bigr],\qquad\mathbf{P}\mbox{-a.s.}
\]
as $\sigma\{\xi_{-\infty,-n}\}$ is independent of
$\mathcal{F}^X\vee\mathcal{F}^Y\vee\sigma\{\xi_{-m,m}\}$ for
$n\ge m$. Now note that nothing in the proof relied on the fact
that $\xi_k$ are Gaussian with unit variance; we can replace
$\xi_k$ by $\varepsilon\xi_k$ for any $\varepsilon>0$ and attain
the same conclusion. Letting $\varepsilon\to0$, we find that
for any $f\in\mathrm{Lip}(E)$, $m\ge0$, and bounded continuous
$g\dvtx \mathbb{R}^{2m+1}\times F^{2m+1}\to\mathbb{R}$,
the above identity holds for
\[
G = g \bigl(f(X_{-m}),\ldots,f(X_m),Y_{-m},
\ldots,Y_m \bigr). %
\]
The remainder of the proof is a routine approximation argument.
As $E$ is Polish, we can choose a countable dense subset $E'\subset E$.
Then the countable family of open balls
$\{B(x,\delta)\dvtx x\in E',\delta\in\mathbb{Q}_+\}$ [where
$B(x,\delta)$ is the open ball with center $x$ and radius $\delta$]
generate the Borel $\sigma$-field $\mathcal{B}(E)$. Arrange these
open balls arbitrarily as a sequence $(B_k)_{k\ge1}$, and define the
functions
\[
\iota(x) = \sum_{k=1}^\infty
\frac{\mathbf{1}_{B_k}(x)}{3^k}, \qquad\iota_r^\delta(x) = \sum
_{k=1}^r \frac{\delta^{-1}d(x,B_k^c)\wedge1}{3^k}\qquad (x\in E). %
\]
Then $\iota_r^\delta$ is bounded and Lipschitz for every
$r,\delta$, and $\iota_r^\delta\to\iota$ as $\delta\downarrow0$,
$r\uparrow\infty$. Choosing $f=\iota_r^\delta$
and taking limits, we obtain the above identity for
\[
G = g \bigl(\iota(X_{-m}),\ldots,\iota(X_m),Y_{-m},
\ldots,Y_m \bigr) %
\]
for any $m\ge0$ and bounded continuous
$g\dvtx \mathbb{R}^{2m+1}\times F^{2m+1}\to\mathbb{R}$. A monotone
class argument shows that we may choose $G$ to be any bounded
$\sigma\{\iota(X_k),\break Y_k\dvtx k\in\mathbb{Z}\}$-measurable function.
But $\mathcal{B}(E)=\sigma\{\iota\}$, so the proof
is complete.
\end{pf*}

\subsection{Continuous time}
\label{sec:conttime}

Up to this point, we have considered only discrete-time processes and
Markov chains. However, continuous time processes are of equal interest
in many applications: indeed, most of the examples that we will
consider in
Section~\ref{sec:examples} will be in continuous time.
The goal of this section is to extend our main filter stability
results to the continuous time setting.

In principle, we can view continuous time processes as a special case
of the discrete time setting.
If $(x_t,y_t)_{t\ge0}$ is a continuous time
Markov process with c\`adl\`ag paths, then we can define
the associated discrete-time Markov chain $(X_n,Y_n)_{n\ge0}$
with values in the Skorokhod space $D([0,1]; E\times F)$ by
setting $X_n=(x_t)_{t\in[n,n+1]}$ and
$Y_n=(y_t)_{t\in[n,n+1]}$. When we are dealing with the
(unconditional) ergodic theory of $(x_t,y_t)_{t\ge0}$, we
can obtain continuous-time ergodic results directly from the
corresponding results for the discrete-time chain
$(X_n,Y_n)_{n\ge0}$. However, in the conditional setting, two
issues arise.

First, note that in the unconditional setting, the marginal law
$\mathbf{P}[x_t\in\cdot ]$ for $t\in[n,n+1]$ is a
coordinate projection of $\mathbf{P}[X_n\in\cdot ]$. However, this is
not true for the filter: the projection of
$\mathbf{P}[X_n\in\cdot |\mathcal{F}^Y_{0,n}]$ gives
$\mathbf{P}[x_t\in\cdot |(y_s)_{s\in[0,n+1]}]$,
not the continuous time filter
$\pi_t=\mathbf{P}[x_t\in\cdot |(y_s)_{s\in[0,t]}]$.
We must therefore get rid of
the additional observation segment $(y_s)_{s\in[t, n+1]}$
that appears in the projection. This is precisely what will be done in
this section.

Second, in continuous time, numerous subtleties arise in defining the
filtering process $(\pi_t)_{t\ge0}$ as a stochastic process with
sufficiently regular sample paths. Such issues would have to be dealt
with carefully if we wanted to obtain, for example, almost sure filter
stability results in the continuous time setting. The structure of
nonlinear filters in continuous time is a classical topic in stochastic
analysis (see, e.g., \cite{LS01,Yor77,Kur98}) that provides the
necessary tools to address such problems. However, in the present
setting, such regularity issues are purely technical in nature and do not
introduce any new ideas in the ergodic theory of nonlinear filters.
We therefore choose to circumvent these issues by considering only
stability in probability in the continuous time setting, in which case
regularity issues can be avoided.

A final issue that arises in the continuous time setting is that, unlike
in discrete time, one must make a distinction between general bivariate
Markov processes and hidden Markov processes, as we will
presently explain.

Recall that a discrete-time
hidden Markov model is defined by the fact that $(X_n)_{n\ge0}$
is itself Markov and $(Y_n)_{n\ge0}$ are conditionally independent given
$(X_n)_{n\ge0}$. In continuous time, we cannot assign a
(conditionally) independent random variable to every time
$t\in\mathbb{R}_+$. Instead, we consider an integrated form of the
observations where $(x_t)_{t\ge0}$ is a Markov process
and $(y_t)_{t\ge0}$ has conditionally independent increments
given $(x_t)_{t\ge0}$. This is known as a
Markov additive process~\cite{Cin72}, and constitutes the natural
continuous-time counterpart to a hidden Markov model~\cite{Yor77}. For example, the most common observation model in
continuous time is the ``white noise'' model \cite{LS01}
\[
y_t = \int_0^t
h(x_s) \,ds + W_t, %
\]
where $(W_t)_{t\ge0}$ is a Brownian motion independent of
$(x_t)_{t\ge0}$. Formally, $dy_t/dt$ represents the
observation of $h(x_t)$ corrupted by white noise, but the
integrated form is used to define a mathematically sensible model.
In this example, the pair $(x_t,y_t)_{t\ge0}$ is evidently
a Markov additive process.

In principle, a continuous-time hidden Markov process is a special case
of a bivariate Markov process as in the discrete time setting.
Unfortunately, as $y_t$ is an additive process, it cannot be
positive recurrent except in trivial cases, so the pair
$(x_t,y_t)_{t\ge0}$
does not admit an invariant probability. We
must therefore take care to utilize explicitly the fact that it is the
increments of $y_t$, and not $y_t$ itself, that will be
stationary under the invariant distribution. This does not introduce
any complications into our theory: both the bivariate Markov setting and
the Markov additive setting can be treated in exactly the same manner.
However, two distinct sets of notation are required for these two
settings. In order to avoid notational confusion, we will develop our
continuous time results below in the hidden Markov process setting only
(all our examples in Section~\ref{sec:examples} will be of this form).
The same approach can however be adapted to the bivariate Markov setting
with minimal effort.

\subsubsection{The continuous time setting}

In the remainder of this section, we consider a continuous-time
process $(x_t,y_t)_{t\ge0}$ with
c\`adl\`ag paths, where $x_t$ takes values in a Polish space
$E$ and $y_t$ takes values in a Polish topological
vector space $F$.
We realize this process on the canonical path space
$\Omega=D(\mathbb{R}_+;E\times F)$ endowed with its
Borel $\sigma$-field $\mathcal{F}$, such that
$x_t(\xi,\eta)=\xi(t)$ and $y_t(\xi,\eta)=\eta(t)$.
We define for $s\le t$ the
$D([0,t-s];E)$-valued random variable $x_{s,t}=(x_r)_{r\in[s,t]}$ and the
$\sigma$-field $\mathcal{F}^{x}_{s,t}
=\sigma\{x_{s,t}\}$.
Moreover, we define the $D([0,t-s];F)$-valued
random variable $y_{s,t}$ and corresponding $\sigma$-fields
\[
y_{s,t}=(y_r-y_s)_{r\in[s,t]},\qquad
\mathcal{F}^{y}_{s,t}=\sigma\{y_{s,t}\},\qquad
\mathcal{F}_{s,t}= \mathcal{F}^{x}_{s,t}\vee
\mathcal{F}^{y}_{s,t}. %
\]
The shift $\Theta^t\dvtx \Omega\to\Omega$ is defined as
$\Theta^t(\xi,\eta)(s)=(\xi(s+t),\eta(s+t)-\eta(t))$. Let us emphasize that
the observation segment $y_{s,t}$ and the shift $\Theta^t$
are defined differently than in the discrete time setting: the present
choice accounts for the additivity of the observations, which we introduce
next.

In the continuous time setting, we will assume that the canonical
process is a hidden Markov process or
\emph{Markov additive process}: that is, $(x_t,y_t)_{t\ge0}$ is
a time-homogeneous Markov process such that
$\mathbf{E}[f(x_t,y_t-y_0)|x_0,y_0]$
does not depend on $y_0$ for any bounded measurable function $f$.
It is not difficult to verify that this assumption corresponds to the
following two properties: the process $(x_t)_{t\ge0}$ is Markov in
its own right, and the process $(y_t)_{t\ge0}$ has conditionally
independent increments given $(x_t)_{t\ge0}$ (see, e.g.,
\cite{Cin72}).

In the following, we define the probability $\mathbf{P}^{x}$
on $\mathcal{F}_{0,\infty}$ as the law of the Markov additive
process $(x_t,y_t-y_0)_{t\ge0}$ started at $x_0=x\in E$, and let
$\mathbf{P}^\mu=\int\mathbf{P}^{x}
\mu(dx)$ for $\mu\in\mathcal{P}(E)$.
We will assume the existence of an invariant probability $\lambda\in
\mathcal{P}(E)$ so that $\mathbf{P}^{\lambda}$ is invariant
under the shift $\Theta^t$ for all $t\ge0$. We define
$\mathbf{P}=\mathbf{P}^{\lambda}$, and introduce
the continuous-time nonlinear filters
\[
\pi_t^\mu=\mathbf{P}^\mu
\bigl[x_t\in\cdot | \mathcal{F}^{y}_{0,t}
\bigr],\qquad \pi_t=\mathbf{P} \bigl[x_t\in\cdot |
\mathcal{F}^{y}_{0,t} \bigr]. %
\]
As we will consider convergence in probability only, we will not worry about
the regularity of $\pi_t^\mu$ as a function of $t$ (i.e., for each
$t\ge0$, we may choose any version of the above regular conditional
probabilities).

The Markov additive process $(x_t,y_t)_{t\ge0}$ is said
to be \emph{nondegenerate} if for every $\delta\in\mbox{}]0,\infty[$, there
exists a $\sigma$-finite reference measure $\varphi_\delta$ on
$D([0,\delta];F)$ and a strictly positive function
$g_\delta:D([0,\delta];E\times F)\to\mbox{}]0,\infty[$ such that
\[
\mathbf{P}^{z} \bigl[y_{t,t+\delta}\in A| \mathcal{F}^{x}_{0,\infty}
\bigr] = \int\mathbf{1}_A(\eta) g_\delta(x_{t,t+\delta},
\eta) \varphi_\delta(d\eta),\qquad\mathbf{P}^{z}\mbox{-a.s.}
\]
for all $t\ge0$, $A\in\mathcal{B}(D([0,\delta];F))$ and $z\in E$.
This assumption is the direct counterpart of nondegeneracy
for discrete time hidden Markov models.

\subsubsection{Local mixing in continuous time}

The aim of this section is to obtain a continuous-time version of
Corollary~\ref{cor:flocmix} (in the setting of hidden Markov processes).
To this end, we assume that the state space $E$ of the unobserved
process $x_t$ is contained in a countable product
$E\subseteq\prod_{i\in I}E^i$, where each $E^i$ is Polish.
Let $x_t^J$ be the projection of $x_t$ on
$\prod_{i\in J}E^i$ and
\[
\mathcal{F}^J_{s,t}=\sigma \bigl\{x^J_{s,t},
y_{s,t} \bigr\} \qquad\mbox{for }J\subseteq I, s\le t. %
\]
Let $\mathcal{E}^J\subseteq\mathcal{B}(E)$ be the
cylinder $\sigma$-field generated by the coordinates $J\subseteq I$.

\begin{thmm}[(Continuous local mixing filter stability)]
\label{thmm:clmfs}
If the Markov additive process $(x_t,y_t)_{t\ge0}$
is nondegenerate and locally mixing in the sense
\[
\bigl\|\mathbf{P}^{x}-\mathbf{P}\bigr\|_{\mathcal{F}^J_{t,\infty}} \mathop{\longrightarrow}^{t\to
\infty}0 \qquad\mbox{for all }x\in E \mbox{ and }J\subseteq I, |J|<\infty, %
\]
then the filter is stable in the sense that
\[
\bigl\|\pi_t^\mu-\pi_t^\nu
\bigr\|_{\mathcal{E}^J} \mathop{\longrightarrow}^{t\to\infty}0\qquad \mbox{in }\mathbf{P}^\gamma
\mbox{-probability} \mbox{ for all }J\subseteq I, |J|<\infty %
\]
for every $\mu,\nu,\gamma\in\mathcal{P}(E)$.
\end{thmm}

We will reduce the proof to the discrete time case. The key to this
reduction is the following lemma, essentially due to Blackwell and
Dubins \cite{BD62}.

\begin{lem}
\label{lem:blackwell}
Let $\mathbf{R}$ and $\mathbf{R}'$ be probabilities on
$D(\mathbb{R}_+,F)$. Let $r_t$ the coordinate process
of $D(\mathbb{R}_+,F)$ and $\mathcal{G}_t=\sigma\{r_s\dvtx s\in[0,t]\}$. If $\mathbf{R}\ll\mathbf{R}'$, then
\[
\bigl\|\mathbf{R}[ \cdot |\mathcal{G}_t]- \mathbf{R}'[
\cdot |\mathcal{G}_t]\bigr\| \mathop{\longrightarrow}^{t\to\infty}0 \qquad\mbox{in }
\mathbf{R}\mbox{-probability}. %
\]
\end{lem}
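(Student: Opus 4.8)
The plan is to reduce the statement to an $L^1$ martingale convergence via the Bayes rule; in fact this route yields the stronger conclusion that $\|\mathbf{R}[\,\cdot\,|\mathcal{G}_t]-\mathbf{R}'[\,\cdot\,|\mathcal{G}_t]\|\to 0$ in $L^1(\mathbf{R})$, which a fortiori gives convergence in $\mathbf{R}$-probability.

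First I would introduce the Radon--Nikodym density $\Lambda=d\mathbf{R}/d\mathbf{R}'$ on the Borel $\sigma$-field $\mathcal{G}_\infty=\bigvee_{t\ge 0}\mathcal{G}_t$ of $D(\mathbb{R}_+,F)$ (this is legitimate because $\mathbf{R}\ll\mathbf{R}'$ and $\mathcal{G}_\infty$ is countably generated), and set $\Lambda_t=\mathbf{E}_{\mathbf{R}'}[\Lambda\,|\,\mathcal{G}_t]$, a uniformly integrable $\mathbf{R}'$-martingale with $\mathbf{R}|_{\mathcal{G}_t}=\Lambda_t\,\mathbf{R}'|_{\mathcal{G}_t}$; in particular $\Lambda_t>0$ $\mathbf{R}$-a.s.\ for every $t$. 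By the Bayes formula, for each $A\in\mathcal{G}_\infty$ one has $\mathbf{R}$-a.s.
$$
	\mathbf{R}[A\,|\,\mathcal{G}_t]-\mathbf{R}'[A\,|\,\mathcal{G}_t]
	=\Lambda_t^{-1}\,\mathbf{E}_{\mathbf{R}'}\big[(\Lambda-\Lambda_t)\mathbf{1}_A\,\big|\,\mathcal{G}_t\big],
$$
so, intersecting the exceptional sets over a countable algebra generating $\mathcal{G}_\infty$ and taking the supremum over $A$,
$$
	\big\|\mathbf{R}[\,\cdot\,|\,\mathcal{G}_t]-\mathbf{R}'[\,\cdot\,|\,\mathcal{G}_t]\big\|
	\le 2\,\Lambda_t^{-1}\,\mathbf{E}_{\mathbf{R}'}\big[|\Lambda-\Lambda_t|\,\big|\,\mathcal{G}_t\big]
	\qquad\mathbf{R}\mbox{-a.s.}
$$
(The left-hand side is a $\mathcal{G}_t$-measurable random variable bounded by $2$.)

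Next I would integrate this bound against $\mathbf{R}$. Since the right-hand side is $\mathcal{G}_t$-measurable and $\mathbf{E}_{\mathbf{R}'}[\Lambda\,|\,\mathcal{G}_t]=\Lambda_t$, a short computation — in which the $\mathbf{R}$-null set $\{\Lambda_t=0\}$ causes no trouble, as there $\mathbf{E}_{\mathbf{R}'}[|\Lambda-\Lambda_t|\,|\,\mathcal{G}_t]=\mathbf{E}_{\mathbf{R}'}[\Lambda\,|\,\mathcal{G}_t]=0$ — gives
$$
	\mathbf{E}_{\mathbf{R}}\big[\big\|\mathbf{R}[\,\cdot\,|\,\mathcal{G}_t]-\mathbf{R}'[\,\cdot\,|\,\mathcal{G}_t]\big\|\big]
	\le 2\,\mathbf{E}_{\mathbf{R}'}\big[|\Lambda-\Lambda_t|\big].
$$
It then remains to observe that $\Lambda_t\to\Lambda$ in $L^1(\mathbf{R}')$ as $t\to\infty$: along any integer sequence this is the martingale convergence theorem for the increasing family $\mathcal{G}_n\uparrow\mathcal{G}_\infty$, and for real $t\ge s$ the contraction property of conditional expectation yields $\|\Lambda-\Lambda_t\|_{L^1(\mathbf{R}')}\le\|\Lambda-\Lambda_s\|_{L^1(\mathbf{R}')}+\|\mathbf{E}_{\mathbf{R}'}[\Lambda-\Lambda_s\,|\,\mathcal{G}_t]\|_{L^1(\mathbf{R}')}\le 2\,\|\Lambda-\Lambda_s\|_{L^1(\mathbf{R}')}$, which upgrades the convergence along integers to convergence as $t\to\infty$ over the reals. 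Combining the two displayed inequalities completes the proof.

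No step here is genuinely hard — the lemma is essentially the Blackwell--Dubins merging argument. The only mild subtlety, and the point I would spell out explicitly, is that the filtration $(\mathcal{G}_t)$ is not assumed right-continuous, so one should not invoke the continuous-time martingale convergence theorem directly; the elementary factor-$2$ estimate above reduces the matter to the countable-index case and sidesteps this.
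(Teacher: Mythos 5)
Your proof is correct, and it diverges from the paper's argument in how it concludes. Both proofs start from the identical Bayes-formula identity and the resulting $\mathbf{R}$-a.s.\ bound of $\|\mathbf{R}[\,\cdot\,|\mathcal{G}_t]-\mathbf{R}'[\,\cdot\,|\mathcal{G}_t]\|$ by $\Lambda_t^{-1}\mathbf{E}_{\mathbf{R}'}[|\Lambda-\Lambda_t|\,|\,\mathcal{G}_t]$ (your extra factor $2$ is harmless; in the paper's convention the conditional bound actually holds without it, since conditionally on $\mathcal{G}_t$ the total variation equals the conditional $L^1$ distance of the densities). From there the paper stays pathwise: it notes $\Lambda_t\to\Lambda$ in $\mathbf{R}'$-probability (no right-continuous modification needed), uses $\Lambda>0$ $\mathbf{R}$-a.s., and then disposes of the conditional expectation in the numerator by the truncation-at-level-$u$ argument borrowed from Lemma \ref{lem:fsdom}. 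You instead integrate the bound under $\mathbf{R}$, change measure back to $\mathbf{R}'$ using the $\mathcal{G}_t$-measurability of the bound (with the $\{\Lambda_t=0\}$ set correctly dismissed), and reduce everything to $\mathbf{E}_{\mathbf{R}'}[|\Lambda-\Lambda_t|]\to 0$, which you get from $L^1$ martingale convergence along integers upgraded to real $t$ by the factor-$2$ contraction estimate. This buys a shorter argument that avoids the truncation and Hunt-type step entirely and yields the stronger conclusion of convergence in $L^1(\mathbf{R})$; the paper's pathwise route is the one that carries over to Lemma \ref{lem:fsdom}, where almost sure (not just in-probability) convergence is required, which is presumably why the authors reuse it here. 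Both treatments of the missing right-continuity of the filtration are sound.
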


\begin{pf}
Let $\Lambda=d\mathbf{R}/d\mathbf{R}'$. Then the Bayes formula yields
\[
\mathbf{R}[A|\mathcal{G}_t] - \mathbf{R'}[A|
\mathcal{G}_t] = \mathbf{E}_{\mathbf{R'}} \biggl[
\mathbf{1}_A \frac{\Lambda-\mathbf{E}_{\mathbf{R'}}[\Lambda|\mathcal{G}_t]
}{\mathbf{E}_{\mathbf{R'}}[\Lambda|\mathcal{G}_t]} \Big|\mathcal{G}_t \biggr],\qquad
\mathbf{R}\mbox{-a.s.} %
\]
As the Borel $\sigma$-field of $D(\mathbb{R}_+,F)$ is countably generated,
it follows that
\[
\bigl\|\mathbf{R}[ \cdot |\mathcal{G}_t] - \mathbf{R'}[
\cdot |\mathcal{G}_t]\bigr\| \le \frac{\mathbf{E}_{\mathbf{R'}}[\Delta_t|\mathcal{G}_t]}{
\mathbf{E}_{\mathbf{R'}}[\Lambda|\mathcal{G}_t]},\qquad \mathbf{R}
\mbox{-a.s.}, \Delta_t= \bigl|\Lambda-\mathbf{E}_{\mathbf{R'}}[\Lambda|
\mathcal{G}_t]\bigr|. %
\]
But note that $\mathbf{E}_{\mathbf{R'}}[\Lambda|\mathcal{G}_t]\to\Lambda$ and, therefore,
$\Delta_t\to0$ in $\mathbf{R}'$-probability by the
martingale convergence theorem (a right-continuous modification of the
martingale is not needed for convergence in probability), while
$\Lambda>0$, $\mathbf{R}$-a.s. The remaining steps of the proof follow
the proof of Lemma~\ref{lem:fsdom}.
\end{pf}

We now turn to the proof of Theorem~\ref{thmm:clmfs}.

\begin{pf*}{Proof of Theorem~\ref{thmm:clmfs}}
Let $\bar E=D([0,1];E)$, $\bar F=D([0,1];F)$,
$X_n=x_{n,n+1}$, and $Y_n = y_{n,n+1}$.
Then $(X_n,Y_n)_{n\ge0}$ is a nondegenerate hidden
Markov model in $\bar E\times\bar F$ under $\mathbf{P}^{\mu}$
for every $\mu\in\mathcal{P}(E)$: in particular,
$(X_n)_{n\ge0}$ is a Markov chain with initial measure $\bar\mu$
and transition kernel $P_0$ given by
\[
\bar\mu(d\xi)= \mathbf{P}^{\mu}[x_{0,1}\in d\xi],\qquad
P_0 \bigl(\xi,d\xi' \bigr) = \mathbf{P}^{\xi(1)}
\bigl[x_{0,1}\in d\xi' \bigr], %
\]
while, by the nondegeneracy assumption,
the observation kernel $\Phi$ is
\[
\Phi(\xi,d\eta) = g_1(\xi,\eta) \varphi_1(d\eta)
\]
[so that, as in Section~\ref{sec:fswk},
$(X_n,Y_n)_{n\ge0}$ is the Markov chain with transition kernel
$P(\xi,\eta,d\xi',d\eta')=P_0(\xi,d\xi')\Phi(\xi',d\eta')$]. Moreover,
$\bar\lambda=\mathbf{P}[x_{0,1}\in\cdot ]$ is an invariant
probability for the discrete-time model
$(X_n,Y_n)_{n\ge0}$.

We can now apply Corollary~\ref{cor:flocmix} to the discrete-time
model $(X_n,Y_n)_{n\ge0}$. Indeed, we can decompose
$\bar E\subseteq\prod_{i\in I}\bar E^i$ with
$\bar E^i=D([0,1];E^i)$, and our local
mixing assumption directly implies that the discrete model
$(X_n,Y_n)_{n\ge0}$ is locally mixing with respect to this
decomposition. It follows that
\[
\bigl\|\mathbf{P}^{\mu} \bigl[x_{n,n+1}^J\in\cdot |
\mathcal{F}^{y}_{0,n+1} \bigr]- \mathbf{P}
\bigl[x_{n,n+1}^J \in\cdot | \mathcal{F}^{y}_{0,n+1}
\bigr]\bigr\| \mathop{\longrightarrow}^{n\to \infty}0,\qquad\mathbf{P}\mbox{-a.s.} %
\]
for all $J\subseteq I$, $|J|<\infty$ and $\mu\in\mathcal{P}(E)$
by Corollary~\ref{cor:flocmix}, while Corollary~\ref{cor:oequiv} yields the
equivalence
$\mathbf{P}^{\mu}|_{\mathcal{F}^{y}_{0,\infty}}
\sim\mathbf{P}|_{\mathcal{F}^{y}_{0,\infty}}$.
The latter implies that
\[
\bigl\| \mathbf{P}^{\mu} \bigl[ y_{0,\infty}\in\cdot |
\mathcal{F}^{y}_{0,t} \bigr]- \mathbf{P} \bigl[
y_{0,\infty}\in\cdot | \mathcal{F}^{y}_{0,t} \bigr]\bigr \|
\mathop{\longrightarrow}^{t\to\infty}0 \qquad\mbox{in }\mathbf{P}\mbox{-probability} %
\]
by Lemma~\ref{lem:blackwell}, which we now proceed to exploit.

Let $t\in[n,n+1]$ for some $n\in\mathbb{N}$. Then we have
\[
\pi_t^{\mu} = \mathbf{E}^{\mu} \bigl[
\mathbf{P}^{\mu} \bigl[ x_t\in\cdot |\mathcal{F}^{y}_{0,n+1}
\bigr] |\mathcal{F}^{y}_{0,t} \bigr], \qquad\pi_t =
\mathbf{E} \bigl[ \mathbf{P} \bigl[ x_t\in\cdot |
\mathcal{F}^{y}_{0,n+1} \bigr] |\mathcal{F}^{y}_{0,t}
\bigr]. %
\]
We can therefore estimate
\begin{eqnarray*}
\bigl\|\pi_t^{\mu} - \pi_t\bigr\|_{\mathcal{E}^J}& \le&
\bigl\| \mathbf{P}^{\mu} \bigl[ y_{0,\infty}\in\cdot |
\mathcal{F}^{y}_{0,t} \bigr]- \mathbf{P} \bigl[
y_{0,\infty}\in\cdot | \mathcal{F}^{y}_{0,t} \bigr]\bigr\|
\\
&&{} + \mathbf{E} \bigl[\bigl\| \mathbf{P}^{\mu} \bigl[ x_t
\in \cdot |\mathcal{F}^{y}_{0,n+1} \bigr] - \mathbf{P} \bigl[
x_t\in\cdot |\mathcal{F}^{y}_{0,n+1} \bigr]
\bigr\|_{\mathcal{E}^J}| \mathcal{F}^{y}_{0,t} \bigr].
\end{eqnarray*}
It follows that $\|\pi_t^{\mu} -\pi_t\|_{\mathcal{E}^J}
\to0$ as $t\to\infty$ in $\mathbf{P}$-probability.
As $\mu$ was arbitrary, the proof is easily completed using the
triangle inequality and the equivalence of all observation laws
to $\mathbf{P}|_{\mathcal{F}^{y}_{0,\infty}}$
as established above.
\end{pf*}

\begin{rem}
As we have seen above, deducing filter stability in continuous time
from our discrete time results requires some additional arguments
(a slightly longer argument will be used below in the setting of
asymptotic coupling). Let us therefore note, for sake of completeness,
that the corresponding results on the ergodicity of the filtering
process $(\pi_t)_{t\ge0}$ as in Section~\ref{sec:kunita}
follow immediately from their discrete-time counterparts:
in fact, uniqueness of the invariant measure of any continuous-time
Markov process $(\pi_t)_{t\ge0}$ is evidently implied by
uniqueness for the discretely sampled process $(\pi_n)_{n\in\mathbb{N}}$.
There is therefore no need to consider this question separately.
\end{rem}

\subsubsection{Asymptotic coupling in continuous time}

We now turn to the problem of obtaining a continuous-time counterpart to
our asymptotic coupling filter stability Theorem~\ref{thmm:wfstab}. To
this end we will assume, as we have done throughout this section, that
$(x_t,y_t)_{t\ge0}$ is a Markov additive process in the
Polish state space $E\times F$. In addition, we will assume in
this subsection that the unobserved process $(x_t)_{t\ge0}$ has
continuous sample paths. While this is not absolutely essential, the
restriction to continuous processes facilitates the treatment of
asymptotic couplings in continuous time.

The following is the main result of this section. As in Theorem~\ref{thmm:wfstab}, we will fix in the following
a complete metric $d$ for the Polish space $E$.

\begin{thmm}[(Continuous weak-* filter stability)]
\label{thmm:contwfstab}
Let $(x_t,y_t)_{t\ge0}$ be a nondegenerate Markov additive
process that admits an invariant probability $\lambda$, and assume
that the unobserved process $(x_t)_{t\ge0}$ has continuous
sample paths. Moreover, let $\tilde d(x,y)\ge d(x,y)$ for all
$x,y\in E$, fix $\Delta>0$, and define the intervals $I_n=[n\Delta
,(n+1)\Delta]$.
Suppose that the following hold:
\begin{longlist}[(a)]
\item[(a)] There exists $\alpha>0$ such that
\[
\forall x,x'\in E, \exists \mathbf{Q}\in\mathcal{C} \bigl(
\mathbf{P}^{x}, \mathbf{P}^{x'} \bigr) \mbox{ s.t.}\qquad \mathbf{Q}
\Biggl[\sum_{n=1}^\infty \sup
_{t\in I_n} \tilde d \bigl(x_t,x_t'
\bigr)^2<\infty \Biggr]\ge\alpha. %
\]
\item[(b)] There exists $C<\infty$ such that for all
$\delta\le\Delta$ and
$\xi,\xi'\in C([0,\delta];E)$
\[
\int \bigl\{{ \sqrt{g_\delta(\xi,\eta)} -\sqrt{g_\delta \bigl(
\xi',\eta \bigr)}} \bigr\}^2 \varphi_\delta(d
\eta) \le C \sup_{t\in[0,\delta]}\tilde d \bigl(\xi(t),
\xi'(t) \bigr)^2. %
\]
\end{longlist}
Then the filter is stable in the sense that
\[
\bigl\|\pi_t^\mu-\pi_t^\nu
\bigr\|_{\mathrm{BL}} \mathop{\longrightarrow}^{t\to\infty}0\qquad \mbox{in }\mathbf{P}^\gamma
\mbox{-probability } \mbox{for all } \mu,\nu,\gamma\in\mathcal{P}(E). %
\]
\end{thmm}

\begin{pf}
We begin by noting that if assumption \emph{a}
holds for $\Delta$, then this assumption also holds if $\Delta$ is replaced
by $\Delta/r$ for some $r\in\mathbb{N}$. Indeed, as
\[
\sum_{n=1}^\infty \sup_{t\in I_n}
\tilde d \bigl(x_t,x_t'
\bigr)^2 \ge \frac{1}{r}\sum_{k=1}^r
\sum_{n=1}^\infty \sup_{t\in[(n+(k-1)/r)\Delta,(n+k/r)\Delta]}
\tilde d \bigl(x_t,x_t'
\bigr)^2, %
\]
the claim follows. Fix $r\in\mathbb{N}$ for the time being.
Define $\bar E=C([0,\Delta/r];E)$, $\bar F=D([0,\Delta/r];F)$,
$X_n=x_{t_n,t_{n+1}}$, and
$Y_n=y_{t_n,t_{n+1}}$, where $t_n=n\Delta/r$.
Then it follows as in the proof of Theorem~\ref{thmm:clmfs} that
$(X_n,Y_n)_{n\ge0}$ is a nondegenerate hidden Markov model
in $\bar E\times\bar F$ that admits an invariant probability
[note that the definition of $\bar E$ takes into account
that $(x_t)_{t\ge0}$ has continuous sample paths].
Moreover, if we endow $\bar E$ with the metric
$\bar d(\xi,\xi')=\sup_{t\in[0,\Delta/r]}d(\xi(t),\xi'(t))$, then evidently
the assumptions of Theorem~\ref{thmm:wfstab} are satisfied.
It follows that for any $f\in\mathrm{Lip}(\bar E)$ and
$\mu\in\mathcal{P}(E)$
\[
\bigl|\mathbf{E}^\mu \bigl[f(x_{t_{n-1},t_n})| \mathcal{F}^{y}_{0,t_n}
\bigr] - \mathbf{E} \bigl[f(x_{t_{n-1},t_n}) |\mathcal{F}^{y}_{0,t_n}
\bigr]\bigr| \mathop{\longrightarrow}^{n\to\infty}0 %
\]
$\mathbf{P}$-a.s. To proceed, let us fix
$g\in\mathrm{Lip}(E)$, and define
$G(\xi_{0,\Delta/r})=g(\xi(0))$ and
$\bar G(\xi_{0,\Delta/r})=\sup_{s\in[0,\Delta/r]}|g(\xi(0))-g(\xi(s))|$.
We can easily estimate
\begin{eqnarray*}
\hspace*{-3pt}&&\bigl| \mathbf{E}^\mu \bigl[g(x_t)| \mathcal{F}^{y}_{0,t_n}
\bigr] - \mathbf{E} \bigl[g(x_t) |\mathcal{F}^{y}_{0,t_n}
\bigr]\bigr|\\
\hspace*{-3pt}&&\qquad\le\bigl | \mathbf{E}^\mu \bigl[g(x_{t_{n-1}})|
\mathcal{F}^{y}_{0,t_n} \bigr] - \mathbf{E}
\bigl[g(x_{t_{n-1}}) |\mathcal{F}^{y}_{0,t_n} \bigr]\bigr|
\\
\hspace*{-3pt}&&\qquad\quad{} +\bigl|\mathbf{E}^\mu \bigl[ \bar G(x_{t_{n-1},t_n}) |
\mathcal{F}^{y}_{0,t_n} \bigr] - \mathbf{E} \bigl[ \bar
G(x_{t_{n-1},t_n}) |\mathcal{F}^{y}_{0,t_n} \bigr]\bigr| +2
\mathbf{E} \bigl[ \bar G(x_{t_{n-1},t_n}) |\mathcal{F}^{y}_{0,t_n}
\bigr]
\end{eqnarray*}
for any $t\in[t_{n-1},t_n]$. As $G$ and $\bar G$
are $\bar d$-Lipschitz, we obtain
\begin{eqnarray*}
&&\limsup_{n\to\infty} \sup_{t\in[t_{n-1},t_n]} \mathbf{E}
\bigl[\bigl| \mathbf{E}^\mu \bigl[g(x_t)|
\mathcal{F}^{y}_{0,t_n} \bigr] - \mathbf{E}
\bigl[g(x_t) |\mathcal{F}^{y}_{0,t_n} \bigr]\bigr|
\bigr]
\\
&&\qquad\le 2 \mathbf{E} \Bigl[\sup_{s\in[0,\Delta/r]}\bigl|g(x_0)-
g(x_s)\bigr| \Bigr].
\end{eqnarray*}
On the other hand, we can estimate as in the proof of
Theorem~\ref{thmm:clmfs}
\begin{eqnarray*}
\mathbf{E} \bigl[\bigl|\pi_t^{\mu}(g) - \pi_t(g)\bigr|
\bigr] &\le& \mathbf{E} \bigl[\bigl\| \mathbf{P}^{\mu} \bigl[ y_{0,\infty}
\in \cdot | \mathcal{F}^{y}_{0,t} \bigr]- \mathbf{P} \bigl[
y_{0,\infty}\in\cdot | \mathcal{F}^{y}_{0,t} \bigr]\bigr\|
\bigr]
\\
&&{} + \mathbf{E} \bigl[\bigl| \mathbf{E}^{\mu} \bigl[
g(x_t)| \mathcal{F}^{y}_{0,t_n} \bigr] -
\mathbf{E} \bigl[g(x_t) |\mathcal{F}^{y}_{0,t_n}
\bigr]\bigr| \bigr]
\end{eqnarray*}
for $t\in[t_{n-1},t_n]$. Applying
Lemma~\ref{lem:blackwell} as in Theorem~\ref{thmm:clmfs} yields
\[
\limsup_{t\to\infty}\mathbf{E} \bigl[\bigl|\pi_t^{\mu}(g)
- \pi_t(g)\bigr| \bigr] \le 2 \mathbf{E} \Bigl[\sup_{s\in[0,\Delta/r]}\bigl|g(x_0)-
g(x_s)\bigr| \Bigr]. %
\]
But note that this holds for any $r\in\mathbb{N}$.
Letting $r\to\infty$, we obtain
\[
\bigl|\pi_t^{\mu}(g) - \pi_t(g)\bigr| \mathop{\longrightarrow}^{t\to
\infty}0 \qquad\mbox{in }\mathbf{P}\mbox{-probability} %
\]
using the continuity of paths. Finally, note that $g\in\mathrm{Lip}(E)$
is arbitrary. We can therefore strengthen the convergence
for individual $g$ to $\|\cdot\|_{\mathrm{BL}}$-convergence as in the proof of
Theorem~\ref{thmm:wfstab}. The proof is now easily completed using
the triangle inequality and the equivalence of all observation laws
to $\mathbf{P}|_{\mathcal{F}^{y}_{0,\infty}}$
as established in the proof of Theorem~\ref{thmm:clmfs}.
\end{pf}

\section{Examples}
\label{sec:examples}

Infinite-dimensional Markov processes and filtering problems arise in a
diverse range of applications; see, for example, \cite{DZ96,Stu10}. The
aim of this section is to demonstrate that the abstract theory that we
have developed in the previous sections is directly applicable in several
different settings. In Section~\ref{sec:exheat}, we consider the simplest
possible example of an infinite-dimensional system: a~stochastic heat
equation with smooth forcing and point observations. While this example
is nearly trivial, it allows us to easily illustrate our results in the
simplest possible setting. In Section~\ref{sec:exsns}, we consider a
highly degenerate stochastic Navier--Stokes equation with Eulerian
observations. In Section~\ref{sec:exspin}, we consider stochastic spin
systems. Finally, in Section~\ref{sec:exdelay} we consider filtering
problems for stochastic delay equations.

\subsection{Stochastic heat equation}
\label{sec:exheat}

We investigate the following example from \cite{Mat07}. Consider the
stochastic heat equation on the unit interval $x\in[0,1]$:
\[
du(t,z) = \Delta u(t,z) \,dt + dw(t,z),\qquad u(t,0)=u(t,1)=0. %
\]
Here, $dw(t,z)$ is the white in time, smooth in space random forcing
\[
w(t,z) = \sum_{k=1}^\infty
\sigma_k\sqrt{2} \sin(\pi kz) W_t^k,
\]
$(W_t^k)_{t\ge0}$, $k\in\mathbb{N}$, are independent Brownian motions,
$\sum_{k=1}^\infty\sigma_k^2<\infty$,
and $\sigma_k>0$ for all $k\in\mathbb{N}$. We will assume that $u(t,z)$
is observed at the points $z_1,\ldots,z_n\in[0,1]$ and that the observations
are corrupted by independent white noise: that is, we introduce the
$\mathbb{R}^n$-valued observation model
\[
dy_t^i = u(t,z_i) \,dt +
dB_t^i, \qquad i=1,\ldots,n, %
\]
where $(B_t^i)_{t\ge0}$, $i=1,\ldots,n$, are independent Brownian motions
that are independent of $(W_t^k)_{t\ge0}$, $k\in\mathbb{N}$.
As we are working with Dirichlet boundary conditions, we view
$z\mapsto u(t,z)$ as taking values in the Hilbert subspace $H\subset L^2[0,1]$
spanned by the eigenfunctions $(e_k)_{k\in\mathbb{N}}$,
$e_k(z)=\sqrt{2}\sin(\pi kz)$.

\begin{lem}
\label{lem:exheat}
Let $x_t = u(t,\cdot)$. Then the pair $(x_t,y_t)_{t\ge0}$ defines a
nondegenerate Markov additive process in $H\times\mathbb{R}^n$ with
continuous paths. Moreover, the unobserved process $(x_t)_{t\ge0}$
admits a unique invariant probability~$\lambda$.
\end{lem}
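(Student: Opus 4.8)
The plan is to read off all four assertions --- the Markov additive structure, continuity of paths, nondegeneracy, and unique ergodicity of the unobserved process --- directly from the explicit Gaussian structure of this linear equation. First I would record the spectral representation: writing $x_t=\sum_{k\ge 1}u^k_t e_k$, the coefficients $(u^k_t)_{t\ge 0}$ solve the decoupled one-dimensional Ornstein--Uhlenbeck equations $du^k_t=-\pi^2k^2u^k_t\,dt+\sigma_k\,dW^k_t$ driven by independent Brownian motions, so that $x_t=e^{t\Delta}x_0+\int_0^t e^{(t-s)\Delta}\,dw(s)$ is a Gaussian Markov process in $H$. Since the driving noise is trace class ($\sum_k\sigma_k^2<\infty$) and $-\Delta$ has eigenvalues $\pi^2k^2$, a direct variance computation shows the stochastic convolution lies in the Sobolev space $H^1_0\subseteq C[0,1]$ almost surely, so that the point evaluations $u(t,z_i)$ are well defined; continuity of $t\mapsto x_t$ in $H$ is the standard path regularity for linear SPDEs driven by trace-class noise with an analytic semigroup \cite{DZ96}, and continuity of $y_t$ is immediate from $y^i_t=\int_0^t u(s,z_i)\,ds+B^i_t$.

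For the Markov additive property I would verify the two equivalent conditions recorded before Theorem \ref{thm:clmfs}: $(x_t)_{t\ge 0}$ is a Markov process in its own right (clear from the mild solution formula), and, conditionally on the whole path of $x$, the increments of $y$ are independent. The latter holds because, given $\mathcal{F}^{x}_{0,\infty}$, the process $t\mapsto y_t-y_0=\big(\int_0^t u(s,z_i)\,ds\big)_i+(B_t-B_0)$ is a Brownian motion with a deterministic, $x$-path-measurable drift, and $(B^i)$ is independent of the driving noise $(W^k)$; hence $\mathbf{E}[f(x_t,y_t-y_0)\mid x_0,y_0]$ does not depend on $y_0$.

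The one step that requires a little care is nondegeneracy. Fixing $\delta>0$, I would take $\varphi_\delta$ to be $n$-dimensional Wiener measure on $C([0,\delta];\mathbb{R}^n)$. Conditionally on $\mathcal{F}^{x}_{0,\infty}$, the segment $y_{t,t+\delta}=(y_{t+r}-y_t)_{r\in[0,\delta]}$ is the sum of the deterministic drift path $r\mapsto\big(\int_t^{t+r}u(s,z_i)\,ds\big)_i$ and an independent Brownian increment; since $r\mapsto u(t+r,z_i)$ is continuous, hence bounded on $[0,\delta]$, this drift has a square-integrable derivative, so the Cameron--Martin theorem identifies the conditional law of $y_{t,t+\delta}$ as equivalent to $\varphi_\delta$ with the strictly positive density
$$
	g_\delta\big(x_{t,t+\delta},\eta\big)=\exp\!\Bigg(\sum_{i=1}^n\int_0^\delta u(t+r,z_i)\,d\eta^i(r)-\tfrac12\sum_{i=1}^n\int_0^\delta u(t+r,z_i)^2\,dr\Bigg),
$$
which depends on the path of $x$ only through the segment $x_{t,t+\delta}$ and is jointly measurable in $(x_{t,t+\delta},\eta)$ --- exactly the form demanded by the definition of nondegeneracy.

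Finally, for unique ergodicity of $(x_t)_{t\ge 0}$: the Gaussian product measure $\lambda=\bigotimes_{k\ge 1}\mathcal{N}\big(0,\sigma_k^2/(2\pi^2k^2)\big)$ is a genuine probability on $H$ because $\sum_k\sigma_k^2/(2\pi^2k^2)<\infty$, and it is invariant by the mode-by-mode computation. Uniqueness follows since $\|e^{t\Delta}\|_{H\to H}\le e^{-\pi^2t}\to0$: the deterministic part $e^{t\Delta}x_0$ vanishes as $t\to\infty$ while the stochastic convolution converges in law to $\lambda$, so $\mathbf{P}^\nu[x_t\in\cdot\,]\to\lambda$ weakly for every initial law $\nu$, which forces $\lambda$ to be the only invariant probability \cite{DZ96}. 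I expect the Cameron--Martin and measurability bookkeeping in the nondegeneracy step to be the only genuinely delicate point; everything else is classical linear SPDE theory.
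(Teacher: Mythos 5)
Your proposal is correct and follows essentially the same route as the paper: spectral decomposition into independent Ornstein--Uhlenbeck modes, a conditional Girsanov/Cameron--Martin argument (with Wiener measure as reference) for nondegeneracy, and explicit mode-by-mode computation of the Gaussian product invariant measure with weak convergence giving uniqueness. The only point to tighten is your justification of the Cameron--Martin step via ``$r\mapsto u(t+r,z_i)$ is continuous, hence bounded'': for $x_0\in H$ the drift may blow up as $s\downarrow 0$, so what you should invoke is square-integrability in time, $\int_0^\delta u(s,z_i)^2\,ds<\infty$ a.s., which follows from $|u(s,z)|\le 3^{-1/2}\|x_s\|_{H^1}$ together with the time-integrated $H^1$ bound (the paper gets this from the It\^o energy identity $\mathbf{E}[\|x_t\|_H^2]+2\,\mathbf{E}[\int_0^t\|x_s\|_{H^1}^2ds]=\|x_0\|_H^2+\|\sigma\|_H^2t$; your fixed-time variance computation yields the same after integrating in $t$).
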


\begin{pf}
It is easily seen that for any $u(0,\cdot)\in H$,
the equation for $u(t,\cdot)$ has a unique mild
solution in $H$ that has continuous paths and
satisfies the Markov property (cf. \cite{DZ96}).
If we expand $x_t = \sum_{k=1}^\infty x_t^k e_k$, then evidently
\[
dx_t^k = -\pi^2k^2x_t^k
\,dt + \sigma_k \,dW_t^k. %
\]
By It\^o's formula, we obtain
\[
\mathbf{E} \bigl[\|x_t\|_H^2 \bigr] +
\mathbf{E} \biggl[ \int_0^t 2\|x_s
\|_{H^1}^2 \,ds \biggr] = \|x_0
\|_H^2 + \|\sigma\|_H^2 t,
\]
where we defined the Sobolev norm
$\|x_t\|_{H^s}^2 = \sum_{k=1}^\infty(\pi k)^{2s}(x_t^k)^2$.
Note that $|u(t,z)|\le\sqrt{2}\sum_{k=1}^\infty|x_t^k|\le
3^{-1/2}\|x_t\|_{H^1}$ by Cauchy--Schwarz. Thus, $z\mapsto u(t,z)$ is
continuous for a.e. $t$, so the observation process $y_t$ is
well defined and the pair $(x_t,y_t)_{t\ge0}$ defines a
Markov additive process. Moreover,
\[
\mathbf{E} \biggl[\int_0^t
\bigl|u(s,z_i)\bigr|^2 \,ds \biggr] \le \frac{\|x_0\|_H^2 + \|\sigma\|_H^2  t}{6}<\infty.
\]
Therefore, by Girsanov's theorem,
the conditional law of $y_{0,t}$ given $x_{0,t}$ is equivalent to the
Wiener measure a.s. for any $t<\infty$ and $x_0\in H$ [as
$(u(t,z_i))_{t\ge0}$ and $(B_t^i)_{t\ge0}$ are independent,
Novikov's criterion can be applied conditionally]. This establishes
the nondegeneracy assumption. Finally, as each Fourier mode $x_t^k$ is
an independent Ornstein--Uhlenbeck process, it is easily seen by
explicit computation that the law of $x_t$ converges weakly as $t\to
\infty$
to a unique Gaussian product measure $\lambda$ for any $x_0\in H$.
\end{pf}

It is evident from Lemma~\ref{lem:exheat} that the ergodic theory of
$u(t,z)$ is quite trivial: each of the Fourier modes is an
independent ergodic one-dimensional Ornstein--Uhlenbeck process (recall
Example~\ref{ex:trivial}). Nonetheless, the reader may easily verify
using the Kakutani theorem \cite{Shi96}, page 531, that $(x_t)_{t\ge0}$
is not Harris when the forcing is sufficiently smooth (e.g.,
$\sigma_k = e^{-k^3}$). Moreover, the finite-dimensional projections
$(x_t^1,\ldots,x_t^k,y_t)$ are not Markovian. Thus, stability of the
corresponding nonlinear filter
does not follow from earlier results. While this example remains
essentially trivial, it is nonetheless instructive to illustrate our
results in this simplest possible setting.

\subsubsection{Local mixing}

To every $x=\sum_{k=1}^\infty x_ke_k\in H$, we identify a vector of
Fourier coefficients $(x_k)_{k\in\mathbb{N}}\in\mathbb{R}^\mathbb{N}$.
In order to apply our local mixing results, we can therefore
view $H$ as a subset of the product space $\mathbb{R}^\mathbb{N}$. Note
that $H$ is certainly not a topological subspace of~$\mathbb{R}^\mathbb{N}$ (pointwise convergence of the Fourier
coefficients does not imply convergence in $H$); however, $H$ is a
measurable subspace of~$\mathbb{R}^\mathbb{N}$, which is all that is
needed in the present setting.

For every $k\in\mathbb{N}$, define the local $\sigma$-fields
\[
\mathcal{F}_{s,t}^k = \sigma \bigl\{x_{s,t}^1,
\ldots,x_{s,t}^k,y_{s,t} \bigr\},\qquad s\le t.
\]
To apply Theorem~\ref{thmm:clmfs}, it suffices to establish the
local mixing property.

\begin{lem}
The Markov additive process $(x_t,y_t)_{t\ge0}$ is locally mixing:
\[
\bigl\|\mathbf{P}^x-\mathbf{P}\bigr\|_{\mathcal{F}_{t,\infty}^k} \mathop{\longrightarrow}^{t\to
\infty}0\qquad \mbox{for every }x\in H, k\in\mathbb{N}. %
\]
\end{lem}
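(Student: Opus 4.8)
The plan is to use the explicit Fourier structure of the heat equation to \emph{factor} the local $\sigma$-field $\mathcal{F}^k_{t,\infty}$ into a finite-dimensional part, handled by classical (Harris) arguments, and a tail part whose influence on the $n$ point observations decays. Write $x_s=\sum_j x_s^j e_j$ and split into the low modes $v_s=(x_s^1,\dots,x_s^k)$ and the tail $w_s=(x_s^j)_{j>k}$; these solve decoupled Ornstein--Uhlenbeck systems driven by disjoint families of Brownian motions, so $v$ and $w$ are independent under $\mathbf{P}^x$ and under $\mathbf{P}$ for every $x\in H$. The observation increments satisfy $y_s^i-y_t^i=\int_t^s\langle v_r,\phi^i\rangle\,dr+\tilde y_s^i$ with $\phi^i=(e_j(z_i))_{j\le k}$ and $\tilde y_s^i=\int_t^s\psi^i(w_r)\,dr+(B_s^i-B_t^i)$, where $\psi^i(w)=\sum_{j>k}w^j e_j(z_i)$; since the first term is a measurable function of $(v_r)_{r\in[t,s]}$, one gets $\mathcal{F}^k_{t,\infty}=\sigma\{v_s:s\ge t\}\vee\sigma\{\tilde y_s:s\ge t\}$, and the two $\sigma$-fields on the right are independent under both measures (the first is a function of $x_0^{1:k}$ and $W^1,\dots,W^k$, the second of $x_0^{>k}$, $(W^j)_{j>k}$ and $(B^i)_i$). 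Hence $\mathbf{P}^x|_{\mathcal{F}^k_{t,\infty}}$ and $\mathbf{P}|_{\mathcal{F}^k_{t,\infty}}$ are product measures, and by subadditivity of total variation over product factors it suffices to show $\|\mathrm{Law}^x((v_s)_{s\ge t})-\mathrm{Law}^\lambda((v_s)_{s\ge t})\|\to0$ and $\|\mathrm{Law}^x((\tilde y_s)_{s\ge t})-\mathrm{Law}^\lambda((\tilde y_s)_{s\ge t})\|\to0$ as $t\to\infty$.

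The first convergence is easy: $(v_s)_s$ is a non-degenerate $k$-dimensional Ornstein--Uhlenbeck process, hence Harris, and by the Markov property the quantity is bounded by the total variation distance between $\mathrm{Law}^x(v_t)=N(e^{-t\mathcal{A}_k}v_0,Q_t^{(k)})$ and the low-mode marginal $N(0,Q_\infty^{(k)})$ of $\lambda$; these are non-degenerate Gaussians on $\mathbb{R}^k$ whose parameters converge, so the distance tends to $0$.

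The second convergence is the crux. One cannot couple the tail in total variation --- for smooth forcing the laws $\mathrm{Law}^x((w_s)_s)$ from different initial conditions are typically mutually singular --- so instead I would couple $\mathbf{P}^x$ and $\mathbf{P}$ by driving $w$ and $w'$ (from $w_0=x^{>k}$, resp.\ $w_0'$ drawn from the tail marginal of $\lambda$) with the \emph{same} Brownian motions, so that $w_s-w_s'=e^{-s\mathcal{A}}(w_0-w_0')$ decays exponentially in $H$, and then cancel the resulting observation-drift difference $D_u^i=\int_0^u\psi^i(e^{-(t+r)\mathcal{A}}(w_0-w_0'))\,dr$ by maximally coupling the observation noise on $[t,\infty)$ with its Cameron--Martin translate by $D^i$ (this uses only the marginal law of $\tilde y'$, which depends on $(w',\hat B')$ alone, so the constructed copy has the right law). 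Girsanov's theorem bounds the failure probability by $\tfrac12\sum_{i=1}^n\big(\int_t^\infty|\psi^i(e^{-s\mathcal{A}}(w_0-w_0'))|^2\,ds\big)^{1/2}$, and the spectral-gap estimate $\int_t^\infty|\psi^i(e^{-s\mathcal{A}}h)|^2\,ds\le C_k\,e^{-2\pi^2(k+1)^2 t}\|h\|_H^2$ together with $\mathbf{E}\|x^{>k}-w_0'\|_H<\infty$ (finite since the tail of $\lambda$ has finite trace) shows this tends to $0$ as $t\to\infty$. Taking expectations over $w_0'$ gives $\|\mathrm{Law}^x((\tilde y_s)_{s\ge t})-\mathrm{Law}^\lambda((\tilde y_s)_{s\ge t})\|\le 2\,\mathbf{P}[\text{coupling fails}]\to0$, completing the proof.

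The main obstacle is this last step: the interplay between the infinite-dimensional, singularly forced tail dynamics and the finitely many point observations is precisely what forces the argument to use a Cameron--Martin/Girsanov coupling of the observation noise rather than a coupling of the signal. Everything else --- well-posedness, the Markov additive structure, nondegeneracy, and the explicit Ornstein--Uhlenbeck computations --- is supplied by Lemma~\ref{lem:exheat} and routine Gaussian estimates. (Alternatively, the same two estimates verify condition~3 of Corollary~\ref{cor:locmix} with $\alpha=1$ for the time-one sampled chain, from which the claim also follows, using $\mathcal{F}^k_{t,\infty}\subseteq\mathcal{F}^k_{\lfloor t\rfloor,\infty}$; the direct product-factorization argument above is shorter.)
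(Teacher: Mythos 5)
Your proof is correct and is essentially the paper's argument: the paper also reduces the low modes to an explicit finite-dimensional Ornstein--Uhlenbeck computation and handles the tail by a same-noise coupling (so the discrepancy is deterministic and decaying in $H^1$) paid for in the observation noise via Girsanov. The only differences are cosmetic bookkeeping --- the paper routes through an intermediate initial condition $v$ (matching $x$ on the first $k$ modes and $x'$ on the tail) and uses a Girsanov density with Scheff\'e's lemma, where you use a product factorization of $\mathcal{F}^k_{t,\infty}$ and a maximal coupling of the observation noise with its Cameron--Martin translate bounded by Pinsker.
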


\begin{pf}
Let $x,x'\in H$, and define $v\in H$ such that $\langle e_\ell,v\rangle=
\langle e_\ell,x\rangle$ for $1\le\ell\le k$, and
$\langle e_\ell,v\rangle=\langle e_\ell,x'\rangle$ for $\ell>k$.
It is easily seen that
\[
\bigl\|\mathbf{P}^x-\mathbf{P}^{x'}\bigr\|_{\mathcal{F}_{t,\infty}^k} \le\bigl \|
\mathbf{P}^x-\mathbf{P}^{v}\bigr\|_{\mathcal{F}_{t,\infty}^k} + \bigl\|
\mathbf{P}^{x'}-\mathbf{P}^{v}\bigr\|_{\sigma\{x_t\}} %
\]
by the Markov additive property.
As the Fourier modes are independent, we evidently have
$\|\mathbf{P}^{x'}-\mathbf{P}^{v}\|_{\sigma\{x_t\}}=
\|\mathbf{P}^{x'}-\mathbf{P}^{v}\|_{\sigma\{x_t^1,\ldots,x_t^k\}}\to0$
as $t\to\infty$ (e.g., by explicit computation of the law of
the $k$-dimensional Ornstein--Uhlenbeck process). It therefore remains
to consider the first term.

Construct on a larger probability space
$(\Omega',\mathcal{F}',\mathbf{Q})$ the triple
$(x_t,v_t,y_t)_{t\ge0}$ as follows. The processes $x_t$ and $v_t$ are
solutions to the stochastic heat equation driven by the same Brownian
motion realization, but with different initial conditions $x_0=x$ and
$v_0=v$, while $dy_t^i = x_t(z_i) \,dt+dB_t^i$ as above.
Now note that can show precisely as in the proof of
Lemma~\ref{lem:exheat} that
\[
\mathbf{E} \biggl[ \int_0^\infty\bigl|v_s(z_i)-x_s(z_i)\bigr|^2
\,ds \biggr] \le \mathbf{E} \biggl[ \frac{1}{3}\int_0^\infty
\|v_s-x_s\|_{H^1}^2 \,ds \biggr]
\le \frac{\|x-v\|_H^2}{6}<\infty. %
\]
As $(x_t,v_t)_{t\ge0}$ is independent of $(B_t)_{t\ge0}$, we can apply
Novikov's criterion conditionally to establish that
$\mathbf{E}[\Lambda_t]=1$ for any $t\ge0$, where we define
\[
\Lambda_{t} = \prod_{i=1}^n
\exp \biggl[ \int_t^\infty \bigl
\{v_s(z_i)-x_s(z_i) \bigr\}
\,dB_s^i -\frac{1}{2}\int_t^\infty
\bigl|v_s(z_i)-x_s(z_i)\bigr|^2
\,ds \biggr]. %
\]
Using Girsanov's theorem, we obtain for any $A\in\mathcal{F}^k_{t,\infty}$
\[
\mathbf{P}^z(A) = \mathbf{E}_{\mathbf{Q}} \bigl[
\mathbf{1}_A \bigl(v_{t,\infty}^1,\ldots
v_{t,\infty}^k,y_{t,\infty} \bigr) \Lambda_{t}
\bigr] = \mathbf{E}_{\mathbf{Q}} \bigl[\mathbf{1}_A
\bigl(x_{t,\infty}^1,\ldots, x_{t,\infty}^k,y_{t,\infty}
\bigr) \Lambda_{t} \bigr], %
\]
where we have used that $x_t^\ell=v_t^\ell$ for all $t\ge0$ when
$\ell\le k$. Moreover,
the law of $(x_t,y_t)_{t\ge0}$ under $\mathbf{Q}$ obviously coincides
with $\mathbf{P}^x$. We therefore conclude that
$\|\mathbf{P}^x-\mathbf{P}^z\|_{\mathcal{F}^k_{t,\infty}}\le
\mathbf{E}_{\mathbf{Q}}[|\Lambda_{t}-1|]\to0$ as $t\to\infty$ by Scheff\'e's
lemma.
\end{pf}

Let $\mathcal{E}^k=\sigma\{P_k\}$, where $P_k\dvtx H\to H$ is the projection
onto the first $k$ Fourier modes. Theorem~\ref{thmm:clmfs} immediately
yields the filter stability result
\[
\bigl\|\pi_t^\mu-\pi_t^\nu
\bigr\|_{\mathcal{E}^k}\mathop{\longrightarrow}^{t\to\infty}0 \qquad\mbox{in }\mathbf{P}^\gamma
\mbox{-probability} \mbox{ for all }k\in\mathbb{N}, \mu,\nu,\gamma\in
\mathcal{P}(H). %
\]
A simple tightness argument can be used to deduce also filter stability in
the bounded-Lipschitz norm from this statement. However,
let us demonstrate instead how the latter can be obtained
directly from Theorem~\ref{thmm:contwfstab}.

\subsubsection{Asymptotic coupling}

It was shown in Lemma~\ref{lem:exheat} that
$(x_t,y_t)_{t\ge0}$ is nondegenerate. It follows from the
proof that we may choose the reference measure $\varphi_\delta$
to be the Wiener measure on $D([0,\delta];\mathbb{R}^n)$ and that
\[
g_\delta(\xi,\eta) = \prod_{i=1}^n
\exp \biggl[ \int_0^\delta\xi(s,z_i)
\,d \eta^i(s) -\frac{1}{2}\int_0^\delta
\bigl|\xi(s,z_i)\bigr|^2 \,ds \biggr] %
\]
for $\xi\in C([0,\delta];H)\cap L^2([0,\delta],H^1)$
[for simplicity, let $g_\delta(\xi,\eta)=1$ otherwise].
We begin by establishing the Lipschitz property of the observations.

\begin{lem}
\label{lem:h1obs}
For all $\delta\le1$ and $\xi,\xi'\in C([0,\delta];H)$
\[
\int \bigl\{{ \sqrt{g_\delta(\xi,\eta)} -\sqrt{g_\delta \bigl(
\xi',\eta \bigr)}} \bigr\}^2 \varphi_\delta(d
\eta) \le\frac{n}{12} \sup_{t\in[0,\delta]}\bigl\|\xi(t)-
\xi'(t)\bigr\|_{H^1}^2. %
\]
\end{lem}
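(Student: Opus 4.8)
The plan is to recognize $g_\delta(\xi,\cdot)$ as a Girsanov (Cameron--Martin) density and to compute the relevant Hellinger affinity explicitly, exactly as in Lemma~\ref{lem:hell} but now for time-shifted Wiener measures rather than shifted Gaussians. First I would dispose of the degenerate cases. If either $\xi$ or $\xi'$ fails to lie in $L^2([0,\delta];H^1)$, then either both of $g_\delta(\xi,\cdot)$, $g_\delta(\xi',\cdot)$ equal $1$ by convention, so the left-hand side vanishes; or exactly one does, in which case $\sup_{t\in[0,\delta]}\|\xi(t)-\xi'(t)\|_{H^1}=\infty$ (since $\xi-\xi'$ bounded in $H^1$ together with $\xi'\in L^2([0,\delta];H^1)$ would force $\xi\in L^2([0,\delta];H^1)$, and symmetrically), so the asserted bound is trivial. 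Hence it suffices to treat $\xi,\xi'\in C([0,\delta];H)\cap L^2([0,\delta];H^1)$, where $g_\delta(\xi,\cdot)$ is the density with respect to $\varphi_\delta$ (Wiener measure on $D([0,\delta];\mathbb{R}^n)$) of the law of the process with $i$-th coordinate $\int_0^{\cdot}\xi(s,z_i)\,ds+W^i$, $W$ a standard $n$-dimensional Wiener process.

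In this case I would compute the affinity $\int\sqrt{g_\delta(\xi,\eta)g_\delta(\xi',\eta)}\,\varphi_\delta(d\eta)$ directly. Factoring $g_\delta(\tfrac{\xi+\xi'}{2},\eta)$ out of $\sqrt{g_\delta(\xi,\eta)g_\delta(\xi',\eta)}$, one is left with a \emph{deterministic} exponent, equal to $-\tfrac18\sum_{i=1}^n\int_0^\delta|\xi(s,z_i)-\xi'(s,z_i)|^2\,ds$, by the elementary identity $\tfrac12\big|\tfrac{a+b}{2}\big|^2-\tfrac14(a^2+b^2)=-\tfrac18(a-b)^2$. Since $\tfrac{\xi+\xi'}{2}\in L^2([0,\delta];H^1)$, the pointwise bound $|v(z)|\le 3^{-1/2}\|v\|_{H^1}$ from the proof of Lemma~\ref{lem:exheat} shows that $s\mapsto(\tfrac{\xi+\xi'}{2}(s,z_i))_{i\le n}$ lies in $L^2([0,\delta];\mathbb{R}^n)$, so the shift is a deterministic Cameron--Martin direction and $\int g_\delta(\tfrac{\xi+\xi'}{2},\eta)\,\varphi_\delta(d\eta)=1$. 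Therefore the affinity equals $\exp\big(-\tfrac18\sum_i\int_0^\delta|\xi(s,z_i)-\xi'(s,z_i)|^2\,ds\big)$, and since $\int g_\delta(\xi,\eta)\,\varphi_\delta(d\eta)=\int g_\delta(\xi',\eta)\,\varphi_\delta(d\eta)=1$,
\begin{align*}
	\int\{\sqrt{g_\delta(\xi,\eta)}-\sqrt{g_\delta(\xi',\eta)}\}^2\,\varphi_\delta(d\eta)
	&= 2\Big(1-\exp\Big(-\tfrac18{\textstyle\sum_{i=1}^n}\int_0^\delta|\xi(s,z_i)-\xi'(s,z_i)|^2\,ds\Big)\Big)\\
	&\le \tfrac14\sum_{i=1}^n\int_0^\delta|\xi(s,z_i)-\xi'(s,z_i)|^2\,ds,
\end{align*}
using $1-e^{-x}\le x$.

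To finish, I would apply the bound $|\xi(s,z_i)-\xi'(s,z_i)|^2\le\tfrac13\|\xi(s)-\xi'(s)\|_{H^1}^2$ (again the Cauchy--Schwarz estimate from the proof of Lemma~\ref{lem:exheat}, applied to $v=\xi(s)-\xi'(s)\in H$) to each term, so that the right-hand side above is at most $\tfrac{n}{12}\int_0^\delta\|\xi(s)-\xi'(s)\|_{H^1}^2\,ds\le\tfrac{n}{12}\,\delta\,\sup_{t\in[0,\delta]}\|\xi(t)-\xi'(t)\|_{H^1}^2\le\tfrac{n}{12}\sup_{t\in[0,\delta]}\|\xi(t)-\xi'(t)\|_{H^1}^2$, where the last step uses $\delta\le1$. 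This is precisely the claimed estimate. The only step that requires genuine care is the explicit Hellinger computation together with the verification that the ``midpoint'' density integrates to one; everything else is bookkeeping with the two inequalities already recorded in the proof of Lemma~\ref{lem:exheat}. (Should one prefer, the affinity computation can instead be reduced to the finite-dimensional case of Lemma~\ref{lem:hell} by discretizing $[0,\delta]$ and passing to the limit, but the direct Girsanov computation is cleaner.)
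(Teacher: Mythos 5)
Your proposal is correct and takes essentially the same route as the paper: the paper's proof simply records the Hellinger identity $\int\{\sqrt{g_\delta(\xi,\eta)}-\sqrt{g_\delta(\xi',\eta)}\}^2\,\varphi_\delta(d\eta)=2-2\,e^{-\frac18\int_0^\delta\sum_{i=1}^n|\xi(s,z_i)-\xi'(s,z_i)|^2ds}$ (which your midpoint factoring and Cameron--Martin normalization derive explicitly) and then applies $1-e^{-x}\le x$ together with $|v(z_i)|\le 3^{-1/2}\|v\|_{H^1}$, exactly as you do. The non-$L^2([0,\delta];H^1)$ cases are dismissed as trivial in the paper, and your more detailed handling of them is consistent with that.
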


\begin{pf}
The result is trivial unless $\xi,\xi'\in L^2([0,\delta];H^1)$,
in which case
\[
\int \bigl\{{ \sqrt{g_\delta(\xi,\eta)} -\sqrt{g_\delta \bigl(
\xi',\eta \bigr)}} \bigr\}^2 \varphi_\delta(d
\eta) = 2-2 e^{
-({1}/{8})\int_0^\delta
\sum_{i=1}^n|\xi(s,z_i)-\xi'(s,z_i)|^2 \,ds}. %
\]
Now use $1-e^{-x}\le x$ and
$|\xi(s,z_i)-\xi'(s,z_i)|\le3^{-1/2}\|\xi(s)-\xi'(s)\|_{H^1}$.
\end{pf}

Thus, the second assumption of Theorem~\ref{thmm:contwfstab} is satisfied
for $\tilde d(x,y) = \|x-y\|_{H^1}$ and $\Delta=1$. It is clear
that the observations cannot be continuous with respect to
$\|\cdot\|_H$, which is the reason that we have introduced the
pseudodistance $\tilde d$ in Theorem~\ref{thmm:contwfstab}.
To establish filter stability, it remains to produce an asymptotic
coupling in $H^1$, which is trivial in this example.

\begin{lem}
For all $x,x'\in H$, there exists
$\mathbf{Q}\in\mathcal{C}(\mathbf{P}^x,\mathbf{P}^{x'})$ such that
\[
\sum_{n=1}^\infty \sup_{t\in[n,n+1]}
\bigl\|x_t-x_t'\bigr\|_{H^1}^2<
\infty,\qquad\mathbf{Q}\mbox{-a.s.} %
\]
\end{lem}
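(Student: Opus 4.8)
The plan is to use the synchronous coupling, in which the two copies of the stochastic heat equation are driven by the \emph{same} realization of the Brownian motions $(W_t^k)_{t\ge 0}$. Concretely, on a common probability space let $(x_t)_{t\ge 0}$ and $(x_t')_{t\ge 0}$ be the mild solutions of the stochastic heat equation driven by the same noise but started at $x_0=x$ and $x_0'=x'$, respectively, and let $\mathbf{Q}$ denote the joint law of $(x_{0,\infty},x_{0,\infty}')$. By uniqueness of mild solutions (Lemma \ref{lem:exheat}) each marginal of $\mathbf{Q}$ is the law of the heat equation started at the corresponding point, so $\mathbf{Q}\in\mathcal{C}(\mathbf{P}^x,\mathbf{P}^{x'})$.

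The key observation is that the difference $v_t:=x_t-x_t'$ evolves deterministically: writing $x_t=\sum_k x_t^ke_k$ and using $dx_t^k=-\pi^2k^2x_t^k\,dt+\sigma_k\,dW_t^k$ for both copies with the same $W^k$, the noise cancels and $v_t^k=e^{-\pi^2k^2t}v_0^k$ with $v_0=x-x'$. Hence, $\mathbf{Q}$-a.s.,
$$
	\|v_t\|_{H^1}^2=\sum_{k=1}^\infty(\pi k)^2e^{-2\pi^2k^2t}(v_0^k)^2,
$$
and since every summand is nonincreasing in $t$ we have $\sup_{t\in[n,n+1]}\|v_t\|_{H^1}^2=\|v_n\|_{H^1}^2$. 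Summing over $n$ and interchanging the nonnegative sums by Tonelli's theorem,
$$
	\sum_{n=1}^\infty\sup_{t\in[n,n+1]}\|x_t-x_t'\|_{H^1}^2
	=\sum_{k=1}^\infty(\pi k)^2(v_0^k)^2\,\frac{e^{-2\pi^2k^2}}{1-e^{-2\pi^2k^2}}.
$$

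To finish I would note that the factor $(\pi k)^2e^{-2\pi^2k^2}/(1-e^{-2\pi^2k^2})$ is bounded uniformly in $k\ge 1$ by a finite constant $C$ (it decays superexponentially in $k$), so the right-hand side is at most $C\sum_k(v_0^k)^2=C\|x-x'\|_H^2<\infty$, using only that $x,x'\in H$. This yields the claim; in fact the bound holds deterministically under $\mathbf{Q}$, since $v_0$ is nonrandom. There is no real obstacle here: the only points worth a word are that the synchronous coupling is a legitimate element of $\mathcal{C}(\mathbf{P}^x,\mathbf{P}^{x'})$ and that the $k$-dependent constant is uniformly bounded, both of which are routine.
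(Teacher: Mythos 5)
Your proposal is correct and follows essentially the same route as the paper: the synchronous coupling driven by a common noise realization, under which the difference $x_t-x_t'$ evolves deterministically mode by mode, after which the $H^1$ norm is computed explicitly. Your explicit summation and the uniform bound on $(\pi k)^2e^{-2\pi^2k^2}/(1-e^{-2\pi^2k^2})$ simply fill in the computation the paper leaves as "follows readily."
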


\begin{pf}
Choose $\mathbf{Q}$ such that the processes
$x_t$ and $x_t'$ are
solutions to the stochastic heat equation driven by the same Brownian
motion realization, but with different initial conditions $x_0=x$ and
$x_0'=x'$. Then
\[
\rho_t = x_t-x_t',\qquad d
\rho_t^k = -\pi^2k^2
\rho_t^k \,dt %
\]
as in the proof of Lemma~\ref{lem:exheat}. As the difference
$\rho_t$ is deterministic, the result follows readily (e.g.,
$\|\rho_t\|_{H^1}$ can be computed explicitly).
\end{pf}

As we have verified all the assumptions of
Theorem~\ref{thmm:contwfstab}, it follows that
\[
\bigl\|\pi_t^\mu-\pi_t^\nu
\bigr\|_{\mathrm{BL}} \mathop{\longrightarrow}^{t\to\infty}0\qquad \mbox{in }\mathbf{P}^\gamma
\mbox{-probability} \mbox{ for all } \mu,\nu,\gamma\in\mathcal{P}(E), %
\]
that is, we have established filter stability in the bounded-Lipschitz
norm.

\begin{rem}
Beside that it admits a trivial ergodic theory, the example considered
this section is special in that it is a linear Gaussian model.
In finite dimension, such filtering problems are amenable to explicit
analysis as the filter reduces to the well-known Kalman filter, which
is a rather simple linear equation~\cite{LS01}. Some results in this
direction for linear stochastic evolution equations were considered by
Vinter \cite{Vin77}. However, the present example does not fit in the
setting of \cite{Vin77} as the observation operator $C\dvtx H\to\mathbb{R}^n$,
$Cu=(u(z_1),\ldots,u(z_n))$ is unbounded, which significantly complicates
even the definition of the Kalman filtering equations in infinite
dimension. It is therefore interesting to note the ease with which
we have obtained stability results from our general nonlinear theory
even in this trivial linear example.
\end{rem}

\subsection{Stochastic Navier--Stokes equation}
\label{sec:exsns}

We now turn to a much less trivial example inspired by
\cite{Stu10}, Section~3.6: we will consider discrete time
Eulerian (point) observations of the velocity of a fluid
that is modeled by a Navier--Stokes equation with white in
time, smooth in space random forcing.

We consider a velocity field $u(t,z)\in\mathbb{R}^2$ on the
two-dimensional torus $z\in\mathbb{T}^2=[-\pi,\pi]^2$
such that $\int u(t,z) \,dz=0$ and $\nabla\cdot u(t,z)=0$
for all $t\ge0$.
The dynamics of $u(t,z)$ are given by the stochastic Navier--Stokes equation
\[
du(t,z) = \bigl\{\nu\Delta u(t,z) - \bigl(u(t,z)\cdot\nabla \bigr)u(t,z) -\nabla
p(t,z) \bigr\} \,dt + d\tilde w(t,z) %
\]
with periodic boundary conditions, where $\nu>0$ is the fluid
viscosity, $\tilde w$ is a spatial mean zero stochastic forcing
to be specified later, and the pressure $p$ is chosen to enforce
the divergence-free constraint $\nabla\cdot u(t,z)=0$.

To define the observations, let us fix points
$z_1,\ldots,z_r\in\mathbb{T}^2$ at which the fluid velocity
is measured. We assume that measurements are taken at the
discrete time instants $t_n=n\delta$, $n\ge0$, where we fix
the sampling interval $\delta>0$ throughout this section.
The observations are then given by\setcounter{footnote}{1}\footnote{%
The observation equation makes sense when $u(t,\cdot)\in H^2$,
as this implies that $z\mapsto u(t,z)$ is continuous
by the Sobolev embedding theorem. For concreteness, we can
define $Y_n^i = u(t_n,z_i)\mathbf{1}_{u(t_n,\cdot)\in H^2} + \xi_n^i$
which makes sense for any velocity field. As we will
always work under
assumptions that ensure sufficient smoothness of the solutions
of the stochastic Navier--Stokes equations for all $t>0$, this
minor point will not affect our results.}
\[
Y_n^i = u(t_n,z_i) +
\xi_n^i,\qquad i=1,\ldots,r, n\ge0, %
\]
where $(\xi_n)_{n\ge0}$ are i.i.d. $\mathbb{R}^{2r}$-dimensional
Gaussian random variables with nondegenerate covariance that are
independent of $(u(t,\cdot))_{t\ge0}$.

Following \cite{HM06}, it will be convenient to eliminate the
divergence-free constraint from the stochastic Navier--Stokes equation by
passing to an equivalent formulation. Define the vorticity $v(t,z) =
\nabla\times u(t,z) = \partial u^1(t,z)/\partial z^2- \partial
u^2(t,z)/\partial z^1$, which is a scalar field on $\mathbb{T}^2$. As
$u$ is divergence-free and has spatial mean zero, we can reconstruct the
velocity field from the vorticity as $u=\mathcal{K}v$, where the
integral operator $\mathcal{K}$ is defined in the Fourier domain as
$\langle e_k,\mathcal{K}v\rangle= -i(k^\perp/|k|^2)\langle e_k,v\rangle$
with $e_k(z)=(2\pi)^{-1}e^{ik\cdot z}$, $k\in\mathbb{Z}^2\setminus\{
(0,0)\}$,
and $k^\perp=(k^2,-k^1)$. In terms of vorticity,
the Navier--Stokes equation reads
\[
dv(t,z) = \bigl\{\nu\Delta v(t,z) - \mathcal{K}v(t,z)\cdot\nabla v(t,z) \bigr\}
\,dt + dw(t,z), %
\]
where $w(t,z)=\nabla\times\tilde w(t,z)$, and the observation equation
becomes
\[
Y_n^i = \mathcal{K}v(t_n,z_i) +
\xi_n^i,\qquad i=1,\ldots,r, n\ge0. %
\]
From now on, we will work with the vorticity equation, which we consider
as an evolution equation in the Hilbert space $H=\{v\in L^2(\mathbb{T}^2)\dvtx \int v(z) \,dz=0\}$. This formulation is equivalent to
considering the original stochastic Navier--Stokes equation in
$\{u\in H^1\dvtx \nabla\cdot u=0, \int u(z) \,dz=0\}$.
We also define the Sobolev norm
$\|v\|_{H^s}^2=\sum_k |k|^{2s}|\langle e_k,v\rangle|^2$ and
$H^s=\{v\in H\dvtx \|v\|_{H^s}<\infty\}$.

It remains to specify the structure of the forcing $w(t,z)$. As in
\cite{HM06}, we let $\mathbb{Z}^2_0=\mathbb{Z}^2\setminus
\{(0,0)\}=\mathbb{Z}^2_+\cup\mathbb{Z}^2_-$ with
$\mathbb{Z}^2_+=\{k\in\mathbb{Z}^2\dvtx k^2>0\mbox{ or }
k^2=0, k^1>0\}$ and $\mathbb{Z}^2_-=-\mathbb{Z}^2_+$, and we define the
trigonometric basis $f_k(z)=\sin(k\cdot z)$ for $k\in\mathbb{Z}^2_+$
and $f_k(z)=\cos(k\cdot z)$ for $k\in\mathbb{Z}^2_-$. The forcing
is now given by
\[
w(t,z) = \sum_{k\in\mathbb{Z}^2_0} \sigma_k
f_k(z) W_t^k, %
\]
where $(W_t^k)_{t\ge0}$,
$k\in\mathbb{Z}^2_0$, are independent standard Brownian motions, and we
will assume that $\sum_k |k|^2\sigma_k^2<\infty$ (so that the
forcing is in $H^1$).

\begin{lem}
Let $X_n=v(t_n,\cdot)$. Then $(X_n,Y_n)_{n\ge0}$ defines a nondegenerate
hidden Markov model in $H\times\mathbb{R}^{2r}$ that admits an invariant
probability.
\end{lem}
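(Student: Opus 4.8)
The plan is to follow the template of Lemma~\ref{lem:exheat}: well-posedness and the Markov property are standard, nondegeneracy is immediate from the Gaussian observation noise, and the only substantive point is the existence of an invariant probability for the vorticity dynamics.

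First I would invoke the classical theory of the two-dimensional stochastic Navier--Stokes equation in vorticity form (see, e.g., \cite{HM06,DZ96}): for every $v(0,\cdot)\in H$ the vorticity equation has a unique solution with $\mathbf{P}$-a.s.\ continuous $H$-valued paths, and this solution is a time-homogeneous Markov process on $H$. Parabolic regularization, together with the assumption $\sum_k|k|^2\sigma_k^2<\infty$ on the forcing, ensures that $v(t,\cdot)$ lies in a Sobolev space on which $u(t,\cdot)=\mathcal{K}v(t,\cdot)$ is continuous on $\mathbb{T}^2$ for every $t>0$, $\mathbf{P}$-a.s.; in any case, as indicated in the footnote, the truncation device makes the observation functional $x\mapsto(\mathcal{K}x(z_i))_{i=1}^r$ well defined and measurable on all of $H$. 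Hence $X_n=v(t_n,\cdot)$ is a Markov chain on $H$ with transition kernel $P_0(x,\cdot)$ equal to the law of $v(\delta,\cdot)$ started at $x$.

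Next I would read off the hidden Markov structure. Since $Y_n^i=\mathcal{K}v(t_n,z_i)+\xi_n^i$ with $(\xi_n)_{n\ge 0}$ i.i.d.\ nondegenerate Gaussian and independent of $(v(t,\cdot))_{t\ge 0}$, the random variables $Y_n$ are, conditionally on $(X_n)_{n\ge 0}$, independent with $Y_n$ depending only on $X_n$. Thus $(X_n,Y_n)_{n\ge 0}$ is a hidden Markov model in $H\times\mathbb{R}^{2r}$ in the sense of section~\ref{sec:fswk}, with observation kernel $\Phi(x,\cdot)$ the law of $(\mathcal{K}x(z_i))_{i=1}^r+\xi_0$. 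This is a Gaussian measure on $\mathbb{R}^{2r}$ with fixed nondegenerate covariance and mean $(\mathcal{K}x(z_i))_{i=1}^r$, so it has a strictly positive density $g(x,y)$ with respect to Lebesgue measure $\varphi$ on $\mathbb{R}^{2r}$, with $x\mapsto g(x,y)$ measurable. This is precisely Assumption~\ref{aspt:hmmnondeg}, so the model is nondegenerate.

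Finally, for the invariant probability I would run the Krylov--Bogolyubov argument on the continuous-time vorticity process. Applying It\^o's formula to $\|v(t,\cdot)\|_H^2$ and using that the nonlinear term $\langle\mathcal{K}v\cdot\nabla v,v\rangle$ vanishes (as $\mathcal{K}v$ is divergence free), one obtains
$$
	\mathbf{E}\|v(t,\cdot)\|_H^2 + 2\nu\,\mathbf{E}\!\int_0^t\|v(s,\cdot)\|_{H^1}^2\,ds
	= \|v(0,\cdot)\|_H^2 + c_w\,t,\qquad c_w=\sum_k\sigma_k^2\|f_k\|_H^2<\infty,
$$
so that $\frac1T\int_0^T\mathbf{E}\|v(s,\cdot)\|_{H^1}^2\,ds$ is bounded uniformly in $T$. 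Since $H^1$ embeds compactly into $H$, the time-averaged occupation measures are tight on $H$, and any weak limit point is an invariant probability $\hat\lambda$ for the vorticity dynamics (the classical existence result, cf.\ \cite{HM06}). Then $\hat\lambda$ is invariant for the time-$\delta$ kernel $P_0$, and $\lambda=\hat\lambda\otimes\Phi$ is an invariant probability for $(X_n,Y_n)_{n\ge 0}$ as in section~\ref{sec:fswk}. The only input that is not entirely routine is the existence of $\hat\lambda$, which is nonetheless completely standard for the two-dimensional model; note that, unlike in Lemma~\ref{lem:exheat}, uniqueness of the invariant measure is neither asserted nor needed at this stage.
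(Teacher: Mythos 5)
Your proposal is correct and follows essentially the same route as the paper's proof: Markov property from the standard well-posedness theory, the hidden Markov structure and nondegeneracy from the nondegenerate Gaussian observation density, regularization ($v(t,\cdot)\in H^1$, hence $u(t,\cdot)=\mathcal{K}v(t,\cdot)\in H^2$) to make the point observations well defined, and existence of an invariant probability. The only difference is that where the paper simply cites \cite{DZ96,KS12} for the invariant measure, you spell out the standard energy-estimate/Krylov--Bogolyubov argument behind that citation.
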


\begin{pf}
It is well known that the stochastic Navier--Stokes equation defines
a stochastic flow; see \cite{HM06,KS12} and the references therein.
Under our assumptions, this implies that the vorticity equation defines
a Markov process in $H$. Thus $(X_n,Y_n)_{n\ge0}$ is evidently
a hidden Markov model, and nondegeneracy follows as the observation
kernel has a nondegenerate Gaussian density. Moreover, as we assumed
that $\sum_k|k|^2\sigma_k^2<\infty$, standard Sobolev
estimates (e.g., \cite{KS12}, Proposition~2.4.12) show that
$v(t,\cdot)\in H^1$ for all $t>0$ a.s. for any initial condition $v(0,\cdot)\in H$. Thus, $u(t,\cdot)=
\mathcal{K}v(t,\cdot)\in H^2$ for all $t>0$ a.s., and the observation
model is defined as intended. The existence of an invariant probability
is standard (e.g., \cite{DZ96,KS12}).
\end{pf}

Our aim is now to establish stability of the nonlinear filter for the
hidden Markov model $(X_n,Y_n)_{n\ge0}$. This is much more difficult
than for the heat equation in the previous section. First, in the
present case the Fourier modes are coupled by the nonlinear term in the
equation, so that energy can move across scales. Second, unlike in the
heat equation example, only sufficiently fine scales are contracting.
Nonetheless, in the case that all Fourier modes are forced (i.e.,
$\sigma_k>0$ for all $k\in\mathbb{Z}^2_0$), it is possible to establish
local mixing using the Girsanov method developed in \cite{EMS01,Mat02}.
In fact, the approach taken in these papers is well suited to our
local zero--two laws (e.g., Lemmas 3.1 and 3.2 in \cite{EMS01} can
be used directly in conjunction with Corollary~\ref{cor:absreg} to
establish absolute regularity of a finite number of Fourier modes, and
some additional effort yields the assumptions of Corollary~\ref{cor:locmix}).
However, these methods do not extend to the
degenerate setting.

We intend to illustrate that our results are applicable even in
highly degenerate situations. To this end, we adopt the following
assumptions \cite{HM06}.

\begin{aspt}
Let $\mathcal{Z}=\{k\in\mathbb{Z}^2_0\dvtx \sigma_k\ne0\}$ be the set of
forced modes. We assume that (a) $\mathcal{Z}$ is a finite set;
(b) $\mathcal{Z}=-\mathcal{Z}$; (c) there exist $k,k'\in\mathcal{Z}$
with $|k|\ne|k'|$; (d) integer linear combinations of elements of
$\mathcal{Z}$ generate $\mathbb{Z}^2$.
\end{aspt}

It was shown by Hairer and Mattingly \cite{HM06} that under these
(essentially minimal) assumptions the stochastic Navier--Stokes equation
is uniquely ergodic. In the remainder of this section, we will show
that this assumption also ensures stability of the filter in the
bounded-Lipschitz norm. Let us emphasize that no new ergodic theory is
needed: we will simply verify the assumptions of Theorem~\ref{thmm:wfstab} by a direct application of the machinery developed in
\cite{HM06,HMS11}, together with a standard interpolation argument.

We will use the following tool to construct asymptotic couplings.

\begin{thmm}
\label{thmm:hms}
Let $Q$ be a transition kernel on $H$, and consider a continuous
function $W\dvtx H\to[1,\infty[$.
Suppose that for every $\varphi\in C^1(H)$
\[
\bigl\|\nabla Q\varphi(x)\bigr\|_H \le W(x) \bigl( C_1 \bigl\{Q
\|\nabla\varphi\|_H^2(x) \bigr\}^{1/2} +
C_2\|\varphi\|_\infty \bigr) %
\]
($\nabla$ denotes the Fr\'echet derivative).
Assume moreover that for some $p>1$
\[
QW^{2p}\le C_3^2W^{2p-2},\qquad
4C_1C_3<1. %
\]
Let $\mathbf{Q}^x$ be the law of the Markov chain $(X_n)_{n\ge0}$
with transition kernel $Q$ and $X_0=x$.
Then there exists a coupling $\mathbf{Q}^{x,x'}\in
\mathcal{C}(\mathbf{Q}^x,\mathbf{Q}^{x'})$ such that
\[
\mathbf{Q}^{x,x'} \bigl[ \bigl\|X_n-X_n'
\bigr\|_H\le C_2^{-1}2^{-(n+1)} \mbox{ for
all }n\ge1 \bigr]\ge\tfrac{1}{2} %
\]
whenever $\|x-x'\|_H\le(4C_2R)^{-1}$,
$W^p(x)\le R$, $W^p(x')\le R$ for some $R>1$.
Moreover, the map $(x,x')\mapsto\mathbf{Q}^{x,x'}$ can be chosen to be
measurable.
\end{thmm}

\begin{pf}
We have simply rephrased the proofs of
Propositions 5.5 and 4.12 in~\cite{HMS11},
making explicit choices for the constants involved.
\end{pf}

Denote by $P_0(x,\cdot)=\mathbf{P}^x[X_1\in\cdot]$ the transition kernel
of $(X_n)_{n\ge0}$. To verify the assumptions of Theorem~\ref{thmm:hms},
we require the following deep result.
This is the combined statement of Proposition~4.15 and Lemma A.1 in
\cite{HM06}.

\begin{thmm}
\label{thmm:hm}
For every $\eta>0$ and $C_1>0$, there exists $C_2>0$ so that
\[
\bigl\|\nabla P_0\varphi(x)\bigr\|_H \le \exp \bigl(\eta\|x
\|_H^2 \bigr) \bigl( C_1 \bigl
\{P_0\|\nabla\varphi\|_H^2(x) \bigr
\}^{1/2} + C_2\|\varphi\|_\infty \bigr) %
\]
for all $\varphi\in C^1(H)$ and $x\in H$. Moreover, there exist
constants $\eta_0>0$ and $C_3>0$ such that for every $0<\eta'\le\eta_0$,
$x\in H$, and $n\ge1$ we have
\[
\mathbf{E}^x \bigl[\exp \bigl(\eta'\|X_n
\|_H^2 \bigr) \bigr] \le C_3^2
\exp \bigl(\eta'e^{-\nu n\delta}\|x\|_H^2
\bigr). %
\]
\end{thmm}

Finally, we require the following reachability lemma \cite{EM01}, Lemma~3.1.

\begin{lem}
\label{lem:em}
For any $R_1,R_2>0$, there exist $n\ge1$ and $q>0$ such that
\[
\inf_{\|x\|_H\le R_1}\mathbf{P}^x\bigl[\|X_n
\|_H\le R_2\bigr] \ge q>0. %
\]
\end{lem}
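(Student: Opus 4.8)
The plan is to combine the dissipative structure of the deterministic Navier--Stokes dynamics with the support theorem for the driving noise, exploiting that only the \emph{finite} mode set $\mathcal{Z}$ is forced. Write $v^{\mathrm{det}}$ for the solution of the vorticity equation with the same initial condition but zero forcing. Testing the vorticity equation against $v$ and using that the nonlinear term satisfies $\langle\mathcal{K}v\cdot\nabla v,v\rangle = -\tfrac12\int(\nabla\cdot\mathcal{K}v)\,v^2\,dz = 0$, together with the Poincar\'e inequality $\|v\|_{H^1}^2\ge\lambda_1\|v\|_H^2$ on mean-zero fields ($\lambda_1>0$ the smallest eigenvalue of $-\Delta$), gives $\|v^{\mathrm{det}}_t\|_H\le e^{-\nu\lambda_1 t}\|v^{\mathrm{det}}_0\|_H$. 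First I would fix $T=N\delta$, a multiple of the sampling interval, large enough that $e^{-\nu\lambda_1 T}R_1\le R_2/2$; then the deterministic flow maps $\{\|x\|_H\le R_1\}$ into $\{\|x\|_H\le R_2/2\}$ at time $T$, so it suffices to control the deviation of the stochastic solution from $v^{\mathrm{det}}$.

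For that deviation I would pass to the random-PDE formulation: let $z_t=\int_0^t e^{\nu\Delta(t-s)}\,dw(s)$ be the Stokes--Ornstein--Uhlenbeck process (which is smooth in space, since $w(t)=\sum_{k\in\mathcal{Z}}\sigma_k f_k W_t^k$ is a finite trigonometric sum) and set $\tilde v=v-z$, which solves a PDE driven by $z$ with no noise term and $\tilde v(0)=v(0)$. Standard 2D Navier--Stokes energy estimates plus Gronwall show that the solution map $(v_0,z)\mapsto v$ is continuous from $\{\|v_0\|_H\le R_1\}\times C([0,T];H^1)$ into $C([0,T];H)$, \emph{uniformly} over $\|v_0\|_H\le R_1$. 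Hence there is $\varepsilon>0$ such that $\sup_{t\le T}\|z_t\|_{H^1}\le\varepsilon$ implies $\sup_{t\le T}\|v_t-v^{\mathrm{det}}_t\|_H\le R_2/2$, and therefore $\|v_T\|_H\le R_2$, for every initial condition with $\|v_0\|_H\le R_1$.

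It remains to observe that $q:=\mathbf{P}[\sup_{t\le T}\|z_t\|_{H^1}\le\varepsilon]>0$: the process $z$ is a finite sum of independent scalar Ornstein--Uhlenbeck processes started at the origin (one per mode in $\mathcal{Z}$), so by the support theorem its law on $C([0,T];H^1)$ charges every neighbourhood of the zero path. Taking $n=N$ then yields $\inf_{\|x\|_H\le R_1}\mathbf{P}^x[\|X_n\|_H\le R_2]\ge q>0$. (If one prefers to avoid requiring uniformity over $\|v_0\|_H\le R_1$ on the whole interval $[0,T]$, one may instead first use the moment bound of Theorem~\ref{thm:hm} and Chebyshev's inequality to reach a fixed ball $\{\|x\|_H\le R_3\}$ in a fixed number $n_1$ of steps with probability bounded below uniformly in $\|x\|_H\le R_1$, then run the above controllability argument from $\{\|x\|_H\le R_3\}$, and conclude by the Markov property with $n=n_1+N$.)

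The main obstacle is the uniform continuity assertion in the second paragraph: one must show that the dependence of the 2D Navier--Stokes solution on the forcing is governed by a priori estimates that are uniform over bounded sets of initial data on the time interval $[0,T]$. These estimates are classical in the 2D setting (see \cite{HM06,KS12,DZ96} and the references therein) and constitute the substance of \cite[Lemma 3.1]{EM01}; the remaining ingredients---the deterministic exponential dissipativity bound and the positivity of small-ball probabilities for a finite-dimensional Gaussian path---are elementary.
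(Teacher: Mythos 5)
Your argument is correct in outline, and it is essentially the standard proof of this reachability statement: the paper itself gives no proof but simply quotes \cite[Lemma 3.1]{EM01}, whose proof is exactly your combination of a positive-probability small-noise event for the finitely many forced Ornstein--Uhlenbeck modes with the exponential decay $\|v^{\mathrm{det}}_t\|_H\le e^{-\nu\lambda_1 t}\|v_0\|_H$ of the unforced vorticity equation, made quantitative by Gronwall estimates that are uniform over $\{\|v_0\|_H\le R_1\}$ and over forcing paths in a fixed ball of $C([0,T];H^1)$. The only blemish is your closing citation: you should not invoke \cite[Lemma 3.1]{EM01} for the uniform continuity of the solution map in the forcing, since that lemma is precisely the statement being proved; instead justify that step by the classical 2D enstrophy/Gronwall a priori bounds (as in \cite{KS12}), which is all that is needed. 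Your parenthetical two-step variant via Theorem \ref{thm:hm} and the Markov property is also fine, though not necessary.
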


Using these results, we can now obtain the following asymptotic coupling.

\begin{cor}
\label{cor:snscoup}
There exists $\alpha>0$ such that
\[
\forall x,x'\in H, \exists \mathbf{Q}\in\mathcal{C} \bigl(
\mathbf{P}^x,\mathbf{P}^{x'} \bigr) \mbox{ s.t.} \qquad\mathbf{Q}
\Biggl[ \sum_{n=1}^\infty
\bigl\|X_n-X_n'\bigr\|_{H^1}^2<
\infty \Biggr]\ge\alpha. %
\]
\end{cor}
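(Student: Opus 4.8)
The plan is to combine three ingredients: the asymptotic coupling of Theorem \ref{thm:hms} (which contracts in $H$ once the two copies are \emph{close} in $H$), a dissipativity argument based on Theorem \ref{thm:hm} and Lemma \ref{lem:em} (which forces the two copies to become close with a probability bounded below uniformly in $x,x'$), and a parabolic smoothing estimate (which converts $H$-convergence into the desired $H^1$-summability). Since $(Y_n)_{n\ge 0}$ is conditionally independent of $(X_n)_{n\ge 0}$ in a hidden Markov model, it suffices to couple the $H$-valued chains $(X_n)_{n\ge 0}$ with kernel $P_0$ and attach the observations conditionally independently; accordingly we work with $P_0$ and the laws $\mathbf{Q}^x=\mathbf{P}^x[X_{0,\infty}\in\cdot\,]$. \textbf{Step 1 (verifying the hypotheses of Theorem \ref{thm:hms}).} Set $W(x)=\exp(\eta\|x\|_H^2)$, fix $p>1$ large enough that $p(1-e^{-\nu\delta})\ge 1$, and fix $\eta\le\eta_0/(2p)$ small. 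Theorem \ref{thm:hm} (with exponent $\eta$ and any prescribed $C_1$) yields $C_2<\infty$ making the gradient bound of Theorem \ref{thm:hms} hold, and its Lyapunov estimate with $\eta'=2p\eta$ gives $P_0W^{2p}\le C_3^2W^{2pe^{-\nu\delta}}\le C_3^2W^{2p-2}$; choosing $C_1<(4C_3)^{-1}$ secures $4C_1C_3<1$. Fix all these constants henceforth.

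\textbf{Step 2 (dissipativity: forcing closeness uniformly in $x,x'$).} By the Lyapunov bound of Theorem \ref{thm:hm}, for each $x$ there is $n_1(x)$ with $e^{-\nu n\delta}\|x\|_H^2\le 1$ for $n\ge n_1(x)$, whence $\mathbf{E}^x[\exp(\eta'\|X_n\|_H^2)]\le C_3^2e^{\eta'}$; Markov's inequality then gives $R_0$ (depending only on $C_3,\eta'$) with $\mathbf{P}^x[\|X_n\|_H\le R_0]\ge\tfrac34$ for all $n\ge n_1(x)$. Choose $\varepsilon>0$ small enough that $2\varepsilon\le(4C_2R)^{-1}$ with $R:=\exp(p\eta\varepsilon^2)$, so that $W^p\le R$ on $\{\|\cdot\|_H\le\varepsilon\}$; this is possible since the right-hand side tends to $(4C_2)^{-1}>0$ as $\varepsilon\downarrow 0$. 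Apply Lemma \ref{lem:em} with $R_1=R_0$, $R_2=\varepsilon$ to obtain $n_2\ge 1$ and $q>0$ (depending only on $R_0,\varepsilon$) with $\inf_{\|z\|_H\le R_0}\mathbf{P}^z[\|X_{n_2}\|_H\le\varepsilon]\ge q$. Put $n_*=\max(n_1(x),n_1(x'))$ and $N=n_*+n_2$. Running the two chains independently, the Markov property and independence give
$$
\mathbf{Q}^x\otimes\mathbf{Q}^{x'}\bigl[\,\|X_N\|_H\le\varepsilon,\ \|X_N'\|_H\le\varepsilon\,\bigr]\ \ge\ \tfrac{9}{16}\,q^2\ =:\ \alpha_0\ >\ 0,
$$
and $\alpha_0$ does not depend on $x,x'$ (only $N$ does).

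\textbf{Step 3 (coupling, splicing, and $H$-summability).} On the event above one has $\|X_N-X_N'\|_H\le 2\varepsilon\le(4C_2R)^{-1}$ and $W^p(X_N),W^p(X_N')\le R$, so Theorem \ref{thm:hms}, applied from time $N$ via the measurable family $(w,w')\mapsto\mathbf{Q}^{w,w'}$, produces a coupling under which, conditionally on $(X_N,X_N')$ lying in this good set, $\|X_{N+n}-X_{N+n}'\|_H\le C_2^{-1}2^{-(n+1)}$ for all $n\ge 1$ with conditional probability at least $\tfrac12$. Define $\mathbf{Q}$ by running the two $X$-chains independently up to time $N$ and then, on the good event, evolving according to $\mathbf{Q}^{X_N,X_N'}$ (and independently otherwise), finally attaching conditionally independent observations; measurability of the family makes this a bona fide element of $\mathcal{C}(\mathbf{P}^x,\mathbf{P}^{x'})$. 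On the intersection of the two events above, of probability at least $\alpha:=\tfrac12\alpha_0>0$ independent of $x,x'$, we have $\sum_{n\ge N}\|X_n-X_n'\|_H^2<\infty$ (geometric tail), while the finitely many earlier terms are a.s.\ finite because $X_n=v(n\delta)\in H^1$ a.s.

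\textbf{Step 4 (upgrading $H$ to $H^1$) and the main obstacle.} It remains to replace $\|\cdot\|_H$ by $\|\cdot\|_{H^1}$ on the coupling event. There the two marginals solve the vorticity equation on each interval $[n\delta,(n+1)\delta]$ driven by the same Brownian realization up to the finite-variation control introduced in Theorem \ref{thm:hms}, so their difference $\rho$ obeys a noise-free parabolic equation; a standard 2D parabolic energy estimate (using the instantaneous smoothing of $e^{t\nu\Delta}$ together with $H$-bounds on both solutions and on the control over the interval, and the interpolation inequality $\|\cdot\|_{H^1}^2\le\|\cdot\|_H\|\cdot\|_{H^2}$) yields $\|\rho_{n+1}\|_{H^1}\le\kappa_n\|\rho_n\|_H$, where $\kappa_n$ is a random constant built from $\sup_{t\in[n\delta,(n+1)\delta]}(\|v_1(t)\|_H+\|v_2(t)\|_H)$ and hence has uniformly bounded exponential moments by Theorem \ref{thm:hm}. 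Borel--Cantelli gives $\kappa_n\le 2^{n/2}$ for all large $n$ a.s., so $\sum_n\|\rho_n\|_{H^1}^2\le\sum_n\kappa_n^2\|\rho_n\|_H^2<\infty$ on the coupling event, which proves the corollary with the uniform constant $\alpha$. The delicate point is precisely this last step: one must check that the control process produced by the construction of Theorem \ref{thm:hms} does not spoil the one-step parabolic estimate, which is why the estimate is taken over a fixed interval length $\delta$ and why $H$-smallness of $\rho_n$ (rather than only higher-regularity information) is the right input; alternatively, Step 4 can be bypassed by invoking directly the $d$-contracting/strong-Feller smoothing machinery of \cite{HMS11} in the norm $\tilde d=\|\cdot\|_{H^1}$. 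A secondary point is the uniformity of $\alpha$ in Step 2, which relies on dissipativity to absorb all $(x,x')$-dependence into the finite time $N$.
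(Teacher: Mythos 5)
Your Steps 1--3 are essentially the paper's argument: verify the hypotheses of Theorem \ref{thm:hms} for $Q=P_0$ with $W(x)=\exp(\eta\|x\|_H^2)$ (the paper takes $\eta=(1-e^{-\nu\delta})\eta_0/2$, $p=(1-e^{-\nu\delta})^{-1}$, $C_1=1/8C_3$), use Theorem \ref{thm:hm} plus Chebyshev to bring both chains into a fixed $H$-ball after a time depending on $x,x'$, use Lemma \ref{lem:em} to reach the small ball where Theorem \ref{thm:hms} applies, and splice an independent coupling up to that time with the kernel $(u,u')\mapsto\mathbf{Q}^{u,u'}$ afterwards; this gives a uniform $\alpha$ and geometric $H$-decay $\|X_n-X_n'\|_H\le C_2^{-1}2^{-(n+1)}$ on an event of probability $\ge\alpha$. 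The minor differences in constants ($\alpha=9q^2/32$ versus the paper's $q^2/8$, the choice of the small radius) are immaterial.

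The genuine gap is your Step 4, and you half-admit it yourself. Theorem \ref{thm:hms} is a black box: it delivers only a measurable family of coupling \emph{measures} with the stated decay event, not a ``same Brownian realization plus a finite-variation control'' representation of the joint dynamics. So the claim that on the coupling event the difference $\rho$ solves a noise-free (or control-driven) parabolic equation, and the ensuing one-step estimate $\|\rho_{n+1}\|_{H^1}\le\kappa_n\|\rho_n\|_H$, are not licensed by anything stated in the paper; proving them would require reopening the construction in \cite{HMS11} and redoing a smoothing estimate for the Navier--Stokes difference equation with the binding term, which is exactly the kind of work the abstract formulation is designed to avoid. The alternative you suggest (rerunning the $d$-contraction machinery in $\tilde d=\|\cdot\|_{H^1}$) is also not available off the shelf, since the gradient estimate of Theorem \ref{thm:hm} is an $H$-estimate. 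The paper closes this step much more cheaply, using only information about the \emph{marginals}: by the interpolation inequality $\|u\|_{H^1}\le\|u\|_H^{1/2}\|u\|_{H^2}^{1/2}$, on the coupling event
$$
	\|X_n-X_n'\|_{H^1}^2\le\|X_n-X_n'\|_H\,\|X_n-X_n'\|_{H^2}
	\lesssim 2^{-n}\bigl(\|X_n\|_{H^2}+\|X_n'\|_{H^2}\bigr),
$$
so it suffices that $\sum_n 2^{-n}\|X_n\|_{H^2}<\infty$ $\mathbf{P}^u$-a.s.\ for every $u\in H$ --- a statement about each marginal law separately, hence inherited by any coupling. This follows from the standard Sobolev estimate $\mathbf{E}^u[\|X_n\|_{H^2}^2]\le C(1+\mathbf{E}^u[\|X_{n-1}\|_H^m])$ (valid because only finitely many Fourier modes are forced) together with $\sup_n\mathbf{E}^u[\|X_n\|_H^m]<\infty$ from Theorem \ref{thm:hm}. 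Note that this argument uses the geometric rate of the $H$-coupling event in an essential way; retaining only $\sum_n\|X_n-X_n'\|_H^2<\infty$, as in your Step 3, would not suffice for the interpolation step.
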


\begin{pf}
Let $W(x)=\exp(\eta\|x\|_H^2)$ with $\eta= (1-e^{-\nu\delta})\eta_0/2$,
and define $p = (1-e^{-\nu\delta})^{-1}$ and $C_1 = 1/8C_3$ (here
$\eta_0$ and $C_3$ are as in Theorem~\ref{thmm:hm}). Defining $C_2$
as in Theorem~\ref{thmm:hm}, it is easily verified that the assumptions
of Theorem~\ref{thmm:hms} are satisfied for $Q=P_0$. Therefore,
for any $u,u'\in H$ such that $\|u\|_H\le R_2$ and $\|u'\|_H\le R_2$,
there exists $\mathbf{Q}^{u,u'}\in\mathcal{C}(\mathbf{P}^u,\mathbf{P}^{u'})$
such that
\[
\mathbf{Q}^{u,u'} \Bigl[ \sup_{n\ge1}2^n
\bigl\|X_n-X_n'\bigr\|_{H}<\infty \Bigr]
\ge\frac{1}{2}, %
\]
where we defined the constant
$R_2 = \sqrt{(2\log2)/\eta_0}\wedge(16 C_2)^{-1}$.
On the other hand, define the constant
$R_1=\sqrt{1+(\log2+2\log C_3)/\eta_0}$. Then by Lem\-ma~\ref{lem:em}, there exist $q>0$ and $n_2\ge1$ (depending on $R_1$ and
$R_2$ only) such that
\[
\inf_{\|x\|_H\le R_1}\mathbf{P}^x\bigl[\|X_{n_2}
\|_H\le R_2\bigr] \ge q>0. %
\]
From now on, let us fix $x,x'\in H$. Define
$n_1=2\log(\|x\|_H\vee\|x'\|_H)/\nu\delta$.
Then $\mathbf{E}^u[\exp(\eta_0\|X_{n_1}\|_H^2)]\le
C_3^2 \exp(\eta_0)$ for $u=x,x'$ by Theorem~\ref{thmm:hm}.
Using Chebyshev's inequality, we obtain
$\mathbf{P}^u[\|X_{n_1}\|_H\le R_1]\ge1/2$ for $u=x,x'$.

We now construct the coupling $\mathbf{Q}\in\mathcal{C}(\mathbf{P}^x,
\mathbf{P}^{x'})$ such that
\begin{eqnarray*}
\mathbf{Q} \bigl[X_{0,n_1+n_2},X_{0,n_1+n_2}'\in\cdot
\bigr]& =& \mathbf{P}^x|_{\mathcal{F}_{0,n_1+n_2}}\otimes \mathbf{P}^{x'}|_{\mathcal{F}_{0,n_1+n_2}},
\\
\mathbf{Q} \bigl[X_{n_1+n_2,\infty},X_{n_1+n_2,\infty}'\in\cdot |
\mathcal{F}_{0,n_1+n_2} \bigr] &=& \mathbf{Q}^{X_{n_1+n_2},X_{n_1+n_2}'}.
\end{eqnarray*}
Setting $\alpha=q^2/8$ (which does not depend on
$x,x'$), it is now easily seen that
\[
\mathbf{Q} \Bigl[ \sup_{n\ge1} 2^n
\bigl\|X_n-X_n'\bigr\|_{H}<\infty \Bigr]
\ge\alpha>0. %
\]
It remains to strengthen the $\|\cdot\|_H$-norm to
$\|\cdot\|_{H^1}$ in this expression.
To this end, we employ an interpolation argument. Recall the
interpolation inequality $\|u\|_{H^1}\le\|u\|_{H}^{1/2}
\|u\|_{H^{2}}^{1/2}$ (e.g., \cite{KS12}, Property~1.1.4).
Therefore, in\vadjust{\goodbreak} order to complete the proof, it evidently suffices to
show that
\[
\mathbf{P}^u \Biggl[ \sum_{n=1}^\infty2^{-n}
\|X_n\|_{H^2}<\infty \Biggr]=1 \qquad\mbox{for all }u\in H.
\]
But as we assume that only finitely many Fourier modes are forced,
we have
$\mathbf{E}^u[\|X_n\|_{H^2}^2]\le C(1+\mathbf{E}^u[\|X_{n-1}\|_H^m])$
for some constants $m\ge1$ and $C>0$ independent of $n$
by a standard Sobolev estimate \cite{KS12}, Proposition~2.4.12.
As $\sup_n \mathbf{E}^u[\|X_{n}\|_H^m]<\infty$
by Theorem~\ref{thmm:hm}, the result follows readily.
\end{pf}

We can now verify the assumptions of Theorem~\ref{thmm:wfstab}.
Note that for any $u\in H^2$, we have $\|u\|_\infty\lesssim\|u\|
_{H^2}$ by
the Sobolev embedding theorem. In particular,
$\|\mathcal{K}v\|_\infty\lesssim\|v\|_{H^1}$ for any $v\in H^1$.
We can therefore easily compute
\[
\int \bigl\{{ \sqrt{g(x,y)}-\sqrt{g \bigl(x',y \bigr)}} \bigr
\}^2 \varphi(dy) \le C\bigl\|x-x'\bigr\|_{H^1}^2
\qquad\mbox{for all }x,x'\in H %
\]
as in Lemma~\ref{lem:h1obs}. In view of Corollary~\ref{cor:snscoup},
we have verified the assumptions of Theorem~\ref{thmm:wfstab}
for $\tilde d(x,y)=\|x-y\|_{H^1}$. We therefore conclude that
\[
\bigl\|\pi_n^\mu-\pi_n^\nu
\bigr\|_{\mathrm{BL}} \mathop{\longrightarrow}^{n\to\infty}0 \qquad\mbox{in }\mathbf{P}^\gamma
\mbox{-probability} \mbox{ for all } \mu,\nu,\gamma\in\mathcal{P}(E), %
\]
that is, we have established filter stability in the bounded-Lipschitz
norm.

\subsection{Stochastic spin systems}
\label{sec:exspin}

We now turn to an example of an essentially different nature: we
consider a stochastic spin system with counting observations (this could
serve a stylized model, e.g., of photocount data from optical
observations of a chain of ions in a linear trap). In this setting, the
unobserved process $(x_t)_{t\ge0}$ describes the configuration of spins
in one dimension; that is, $x_t$ takes values in the space
$E=\{0,1\}^\mathbb{Z}$, where $x_t^i\in\{0,1\}$ denotes the state of
spin $i\in\mathbb{Z}$ at time $t\ge0$. The observations $(y_t)_{t\ge
0}$ are modeled by a counting process, so that $y_t$ takes values in
$F=\mathbb{Z}_+$.

To define the dynamics of $(x_t)_{t\ge0}$, we introduce a function
$c_i\dvtx E\to\mbox{}]0,\infty[\mbox{}$ for every spin $i\in\mathbb{Z}$. We
interpret $c_i(\sigma)$ as the rate at which spin $i$ flips when the
system is in the configuration $\sigma$. We will make the following
assumptions.

\begin{aspt}
We assume the flip rates are (a) uniformly bounded:
$\sup_{i,\sigma}c_i(\sigma)<\infty$; (b) finite range:
$c_i(\sigma)$ depends only on $\sigma_j$, $|i-j|\le R<\infty$;
(c) translation invariant:
$c_i(\sigma) = c_{i+1}(\sigma')$ if $\sigma_j=\sigma_{j+1}'$ for all $i,j$.
\end{aspt}

The interpretation of $c_i(\sigma)$ is made precise by defining
the pregenerator
\[
\mathscr{L}f(\sigma) = \sum_{i\in\mathbb{Z}} c_i(
\sigma) \bigl\{f \bigl(\sigma^i \bigr)-f(\sigma) \bigr\} \qquad\mbox{for }
\sigma\in E, f\in\mathscr{C}, %
\]
where $\sigma^i_j=\sigma_j$ for $j\ne i$ and $\sigma^i_i=1-\sigma_i$ and
$\mathscr{C}$ is the space of cylinder functions on $E$. Then the
closure of $\mathscr{L}$ in $C(E)$ is the generator of a Markov
semigroup~\cite{Lig05}, Chapter III, and we let $(x_t)_{t\ge0}$ be the
associated Markov process.
To ensure good ergodic properties of $(x_t)_{t\ge0}$, we will assume
the following.

\begin{aspt}
The spin system $(x_t)_{t\ge0}$ is reversible with respect to
some probability $\lambda$. Moreover, the flip rates are attractive:
if $\sigma\le\sigma'$, then we have
$c_i(\sigma)\le c_i(\sigma')$ if $\sigma_i=\sigma_i'=0$ and
$c_i(\sigma)\ge c_i(\sigma')$ if $\sigma_i=\sigma_i'=1$.
\end{aspt}

It is known that under our assumptions, $\lambda$ is necessarily a Gibbs
measure \cite{Lig05}, Theorem IV.2.13 (so this is a stochastic Ising model).
The attractive dynamics will tend to make neighboring spins agree; in
this setting, $(x_t)_{t\ge0}$ admits $\lambda$ as its unique
invariant measure \cite{Lig05}, Theorem IV.3.13.

To define the observations, we will fix a strictly positive continuous
function $h\dvtx E\to\mbox{}]0,\infty[\mbox{}$. The conditional law of
$(y_t)_{t\ge0}$ given $(x_t)_{t\ge0}$ is modeled as an inhomogeneous
Poisson process with rate $\lambda_t=h(x_t)$.

\begin{lem}
The pair $(x_t,y_t)_{t\ge0}$ defines
a nondegenerate Markov additive process in
$\{0,1\}^\mathbb{Z}\times\mathbb{Z}_+$ that admits a unique
invariant probability $\lambda$.
\end{lem}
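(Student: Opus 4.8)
The plan is to check the three assertions separately, dispatching the easy one first. The unique invariant probability is essentially in hand: under the standing assumptions of attractiveness and reversibility, \cite[Theorem IV.3.13]{Lig05} gives that $(x_t)_{t\ge 0}$ has $\lambda$ as its \emph{unique} invariant measure, and in the Markov additive setting of section~\ref{sec:conttime} an invariant probability is by definition an invariant probability for the unobserved marginal $(x_t)_{t\ge 0}$; so nothing further is needed there.

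For the Markov additive structure, I would first recall that $(x_t)_{t\ge 0}$ is, by construction, the Markov process associated to the closure of $\mathscr{L}$ in $C(E)$, hence Markov in its own right. Next I would realize the observation process concretely as a Cox (doubly stochastic Poisson) process: taking $N$ a standard rate-one Poisson process independent of $(x_s)_{s\ge 0}$ and setting $y_t=N\big(\int_0^t h(x_s)\,ds\big)$ produces a version of the model, the time change $t\mapsto\int_0^t h(x_s)\,ds$ being finite, strictly increasing and continuous since $0<\inf_E h\le\sup_E h<\infty$ by continuity of $h$ on the compact space $E$. From this construction it is routine that, conditionally on the path $(x_s)_{s\ge0}$, the process $(y_t)_{t\ge0}$ is an inhomogeneous Poisson process with intensity $t\mapsto h(x_t)$; in particular its increments are conditionally independent given $(x_t)_{t\ge0}$. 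Since the pair $(x_t,y_t)_{t\ge0}$ inherits the (time-homogeneous) Markov property from the construction — the future of $x$ depends only on $x_t$, and the future increments of $y$ only on the future of $x$ and the independent Poisson clock, never on $y_t$ itself — and since $\mathbf{E}[f(x_t,y_t-y_0)\mid x_0,y_0]$ plainly does not depend on $y_0$, the pair is a Markov additive process in $\{0,1\}^\mathbb{Z}\times\mathbb{Z}_+$ in the sense of \cite{Cin72}.

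For nondegeneracy, fix $\delta>0$ and let $\varphi_\delta$ be the law of a standard rate-one Poisson process on $[0,\delta]$, regarded as a probability on $D([0,\delta];\mathbb{Z}_+)$; this does not depend on $t$ or on the initial condition. Conditionally on $\mathcal{F}^x_{0,\infty}$, the increment path $y_{t,t+\delta}$ is an inhomogeneous Poisson process whose intensity $s\mapsto h(x_{t+s})$ depends on the conditioning only through the segment $x_{t,t+\delta}$; the intensity measure $h(x_{t+\cdot})\,ds$ is mutually absolutely continuous with Lebesgue measure on $[0,\delta]$ with bounded density, and the classical change-of-measure formula for Poisson processes yields
$$
	g_\delta(x_{t,t+\delta},\eta)
	=
	\exp\!\bigg(\int_0^\delta\big(1-h(x_{t+s})\big)\,ds\bigg)
	\prod_{s\,:\,\Delta\eta(s)\ne 0} h(x_{t+s})
$$
as the conditional density. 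This $g_\delta$ is measurable, depends on $x$ only through $x_{t,t+\delta}$, and is strictly positive: the integral is finite because $h$ is bounded, and the product runs over the finitely many jumps of $\eta$ with $h>0$ at each. Together $(\varphi_\delta,g_\delta)$ verify the nondegeneracy identity of section~\ref{sec:conttime}.

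The one place where care is required, and which I regard as the principal (if ultimately routine) obstacle, is making rigorous the identification of the conditional law of the observation increments given the full $x$-path with an inhomogeneous Poisson law, together with the explicit Radon--Nikodym formula above; this is handled by the Cox-process construction and standard point-process theory, with the compactness of $E$ conveniently eliminating all integrability issues.
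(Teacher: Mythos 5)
Your proof is correct and follows essentially the same route as the paper: uniqueness of $\lambda$ from Liggett's theorem for attractive reversible systems, the Markov additive property from the doubly stochastic Poisson structure of the observations, and nondegeneracy via equivalence of the conditional law of $y_{t,t+\delta}$ given the $x$-path to the unit-rate Poisson law $\varphi_\delta$, using strict positivity (and, on the compact state space, boundedness) of $h$. The only difference is presentational: where you write out the Cox-process construction and the explicit Radon--Nikodym density, the paper treats the Markov additive property as evident and cites the change-of-measure theorem for point processes \cite[Theorem 19.4]{LS01}.
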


\begin{pf}
That $(x_t,y_t)_{t\ge0}$ defines a Markov additive process is evident,
and the existence of a unique invariant probability under the assumptions
of this section was stated above. To establish nondegeneracy, it suffices
to note that as $h$ is strictly positive,
the conditional law of $y_{0,\delta}$ given $x_{0,\delta}$
is equivalent to the law $\varphi_\delta$ of a unit-rate Poisson process
by \cite{LS01}, Theorem~19.4.
\end{pf}

We will require below the stronger assumption that the observation
function $h$ is Lipschitz continuous with respect to a suitable metric.
Note that for any choice of scalars $\alpha_i>0$ (for
$i\in\mathbb{Z}$) such that
$\sum_i\alpha_i<\infty$, the quantity
\[
d \bigl(\sigma,\sigma' \bigr) = \sum_{i\in\mathbb{Z}}
\alpha_i\mathbf{1}_{\sigma_i\ne\sigma_i'},\qquad \sigma,\sigma'\in E
= \{0,1\}^\mathbb{Z}, %
\]
metrizes the product topology of $\{0,1\}^\mathbb{Z}$. We will assume
throughout this section that $h$ is Lipschitz with respect to
$d$ for a suitable choice of $(\alpha_i)_{i\in\mathbb{Z}}$.

We now aim to establish stability of the filter. As we can naturally
write $E=\prod_{i\in I}E^i$ with $I=\mathbb{Z}$ and $E^i=\{0,1\}$, we
are in the setting of Theorem~\ref{thmm:clmfs}. To apply it, we must
establish the local mixing property. To this end, we will use two
essential tools: a uniform ergodicity result due to Holley and Stroock
\cite{HS89}, and the well-known Wasserstein coupling
\cite{Lig05}, Section~III.1.

\begin{prop}
$(x_t,y_t)_{t\ge0}$ is locally mixing.
\end{prop}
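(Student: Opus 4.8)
The plan is to establish local mixing of $(x_t,y_t)_{t\ge 0}$ directly from the definition, namely that $\|\mathbf{P}^x-\mathbf{P}\|_{\mathcal{F}^J_{t,\infty}}\to 0$ as $t\to\infty$ for every $x\in E$ and every finite $J\subseteq\mathbb{Z}$. First I would reduce to an estimate between initial point masses: as total variation is jointly convex, $\|\mathbf{P}^x-\mathbf{P}\|_{\mathcal{F}^J_{t,\infty}}\le\int\|\mathbf{P}^x-\mathbf{P}^{\sigma'}\|_{\mathcal{F}^J_{t,\infty}}\,\lambda(d\sigma')$, and since the integrand is bounded by $2$ it suffices by dominated convergence to prove $\|\mathbf{P}^\sigma-\mathbf{P}^{\sigma'}\|_{\mathcal{F}^J_{t,\infty}}\to 0$ for arbitrary $\sigma,\sigma'\in E$. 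By attractiveness I would run the monotone (basic) Wasserstein coupling \cite[section III.1]{Lig05} of the spin dynamics simultaneously from $a:=\sigma\wedge\sigma'$, $\sigma$, $\sigma'$ and $b:=\sigma\vee\sigma'$, writing $x_s,\tilde x_s$ for the copies started at $\sigma,\sigma'$; monotonicity gives $x_s^a\le x_s,\tilde x_s\le x_s^b$ for all $s$, so the set where $x_s$ and $\tilde x_s$ disagree is contained in $D_s:=\{i:x_s^{a,i}\ne x_s^{b,i}\}$ and $d(x_s,\tilde x_s)\le d(x_s^a,x_s^b)$ pointwise. Conditionally on the coupled spin paths the two observation processes have conditionally independent increments, so I couple their increments over $[t,\infty)$ by a maximal coupling; this produces $\mathbf{Q}\in\mathcal{C}(\mathbf{P}^\sigma,\mathbf{P}^{\sigma'})$, and the coupling characterization of total variation \cite[p.~19]{Lin02} on the generated $\sigma$-field $\mathcal{F}^J_{t,\infty}$ gives
\[
  \|\mathbf{P}^\sigma-\mathbf{P}^{\sigma'}\|_{\mathcal{F}^J_{t,\infty}}
  \le 2\,\mathbf{Q}\big[x^J_{t,\infty}\ne \tilde x^J_{t,\infty}\big]
  + 2\,\mathbf{Q}\big[(y_s-y_t)_{s\ge t}\ne(\tilde y_s-\tilde y_t)_{s\ge t}\big],
\]
so it remains to show that both terms vanish as $t\to\infty$.

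For the first term I would invoke Holley and Stroock's uniform exponential ergodicity for one-dimensional attractive reversible spin systems \cite{HS89}: since $\lambda$ is (by uniqueness) translation invariant and the dynamics are translation invariant, this gives $C,\gamma>0$ with $\sup_{\sigma,i}|\mathbf{E}^\sigma[x_s^i]-\lambda(\sigma_i)|\le Ce^{-\gamma s}$, whence $\mathbf{Q}[i\in D_s]=\mathbf{E}^b[x_s^i]-\mathbf{E}^a[x_s^i]\le 2Ce^{-\gamma s}$ and $g(s):=\mathbf{Q}[D_s\cap J\ne\emptyset]\le 2C|J|e^{-\gamma s}$. To upgrade this fixed-time bound into control of $\mathbf{Q}[\exists s\ge t:D_s\cap J\ne\emptyset]$ I would run a holding-time argument: since $\sup_{i,\sigma}c_i(\sigma)<\infty$, once $D_s\cap J$ becomes nonempty no flip occurs at any site of $J$ for at least an exponential time of bounded rate, so $\int_t^\infty g(s)\,ds\ge c_0\,\mathbf{Q}[\exists s\ge t:D_s\cap J\ne\emptyset]$ for a constant $c_0>0$ depending only on $J$ and the rate bound. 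Hence $\mathbf{Q}[x^J_{t,\infty}\ne\tilde x^J_{t,\infty}]\le\mathbf{Q}[\exists s\ge t:D_s\cap J\ne\emptyset]\le 2C|J|(c_0\gamma)^{-1}e^{-\gamma t}\to 0$.

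For the second term, conditionally on the coupled spin paths the two observation increment processes on $[t,\infty)$ are inhomogeneous Poisson processes with intensities $h(x_s)$ and $h(\tilde x_s)$; the maximal coupling of two such laws fails with probability equal to half their total variation distance, which is controlled exactly as in the proof of Lemma \ref{lem:hell} by $\sqrt{2(1-H)}$ with Hellinger affinity $H=\exp(-\tfrac12\int_t^\infty(\sqrt{h(x_s)}-\sqrt{h(\tilde x_s)})^2\,ds)$. Since $h$ is continuous and strictly positive on the compact space $E$ it is bounded above and below, and since $h$ is Lipschitz for $d$ we get $(\sqrt{h(x_s)}-\sqrt{h(\tilde x_s)})^2\le\kappa\,d(x_s,\tilde x_s)^2\le\kappa\,d(x_s^a,x_s^b)^2$ for a constant $\kappa$. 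Taking expectations and using Jensen's inequality, $d(x_s^a,x_s^b)^2\le(\sum_i\alpha_i)\,d(x_s^a,x_s^b)$, and $\mathbf{E}[d(x_s^a,x_s^b)]=\sum_i\alpha_i\,\mathbf{Q}[i\in D_s]\le 2C(\sum_i\alpha_i)e^{-\gamma s}$, the second term is bounded by a constant times $(\int_t^\infty e^{-\gamma s}\,ds)^{1/2}$, which tends to $0$. Combining the two estimates yields $\|\mathbf{P}^\sigma-\mathbf{P}^{\sigma'}\|_{\mathcal{F}^J_{t,\infty}}\to 0$, and hence local mixing.

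The step I expect to be the main obstacle is the observation term: because the rate $h$ depends on the \emph{entire} (infinite-dimensional) spin configuration, no coupling of finitely many spin coordinates can force the observations to agree exactly, so a quantitative ``deconvolution'' is unavoidable. The resolution is that Lipschitz continuity of $h$ combines with the exponential contraction of discrepancies furnished by Holley--Stroock to make $\int_t^\infty d(x_s^a,x_s^b)^2\,ds$ integrable and vanishing as $t\to\infty$, which is precisely what the Girsanov/Hellinger estimate for Poisson observations requires; the holding-time argument turning fixed-time agreement of the $J$-spins into agreement throughout $[t,\infty)$ is a secondary, purely technical point.
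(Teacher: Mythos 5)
Your proposal is correct, and its skeleton is the same as the paper's: the monotone (Wasserstein) coupling for attractive systems, the uniform exponential bound of Holley--Stroock \cite{HS89} upgraded to all sites by translation invariance, a bounded-rate holding-time argument converting the integrated discrepancy bound into eventual agreement of the $J$-coordinates, and a nonsingularity argument for the Poisson observations driven by the Lipschitz property of $h$. You diverge in two places. First, the paper avoids your four-copy monotone coupling by comparing every $\mathbf{P}^x$ to $\mathbf{P}^o$ with $o$ the zero configuration (so $o\le x$ and the two-copy coupling of \cite[Theorem III.1.5]{Lig05} suffices), then integrates against $\lambda$; your reduction to arbitrary pairs $\sigma,\sigma'$ with $a=\sigma\wedge\sigma'$, $b=\sigma\vee\sigma'$ works too, but you should note that order preservation for several simultaneously coupled copies is an extension of the two-copy statement (standard for attractive systems, yet not literally the cited theorem). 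Second, for the observation component the paper keeps a single observation process $y$ (with rate $h(x_t)$) in the coupling and represents $\mathbf{P}^x$ on $\mathcal{F}^J_{t,\infty}$ through the Girsanov density $\Lambda_t$ for point processes \cite[Theorem 19.4, Lemma 19.6]{LS01}, concluding by Scheff\'e's lemma; you instead build a conditional maximal coupling of the post-$t$ observation increments and bound its failure probability by the Hellinger affinity of two inhomogeneous Poisson laws. Your route yields a quantitative $e^{-\gamma t/2}$-type rate but needs $h$ bounded above and away from zero (automatic here since $h$ is continuous and strictly positive on the compact space $E$) and rests on the Poisson analogue of Lemma \ref{lem:hell}, which is standard but not the Gaussian statement you cite; the paper's Girsanov/Scheff\'e argument is purely qualitative and sidesteps the Hellinger computation. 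Your quantitative first-entrance-time version of the holding-time estimate is in fact a slightly cleaner formulation of the stopping-time claim proved at the end of the paper's argument.
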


\begin{pf}
Fix a point $x\in E$ and a finite subset $J\subseteq\mathbb{Z}$,
$|J|<\infty$ throughout the proof, and let
$\mathcal{F}_{s,t}^J=\sigma\{x_{s,t}^J,y_{s,t}\}$. It evidently
suffices to show that
\[
\bigl\|\mathbf{P}^x-\mathbf{P}^o\bigr\|_{\mathcal{F}_{t,\infty}^J}
\mathop{\longrightarrow}^{t\to\infty}0, %
\]
where $o\in E$ is the zero configuration. Let $\mathbf{Q}\in
\mathcal{C}(\mathbf{P}^o,\mathbf{P}^x)$ be the Wasserstein coupling
\cite{Lig05}, Section III.1. As obviously $o\le x$, we have
\cite{Lig05}, Theorem III.1.5
\[
x_t \le x_t'\qquad \mbox{for all }t\ge0,
\mathbf{Q}\mbox{-a.s.} %
\]
To proceed, we recall a result of Holley and Stroock \cite{HS89}, Theorem~0.4:
\[
\sup_{\sigma,\sigma'\in E}\bigl |\mathbf{P}^\sigma \bigl[x_t^0=1
\bigr]-\mathbf{P}^{\sigma'} \bigl[x_t^0=1 \bigr]\bigr|
\le Ce^{-\gamma t}\qquad \mbox{for all }t\ge0 %
\]
for some constants $C,\gamma>0$.
By translation invariance, it follows that
\[
\sup_{i\in\mathbb{Z}} \sup_{\sigma,\sigma'\in E} \bigl|
\mathbf{P}^\sigma \bigl[x_t^i=1 \bigr]-
\mathbf{P}^{\sigma'} \bigl[x_t^i=1 \bigr]\bigr| \le
Ce^{-\gamma t} \qquad\mbox{for all }t\ge0. %
\]
Therefore, by monotonicity,
\[
\mathbf{Q} \bigl[x_t^i\ne x_t^{i\prime}
\bigr] = \mathbf{E}_{\mathbf{Q}} \bigl[x_t^{i\prime}-x_t^i
\bigr] \le C e^{-\gamma t} \qquad\mbox{for all }t\ge0, i\in\mathbb{Z}. %
\]
Using the Lipschitz property of $h$, it follows easily that
\[
\mathbf{E}_{\mathbf{Q}} \biggl[ \int_0^\infty\bigl|h(x_t)-h
\bigl(x_t' \bigr)\bigr| \,dt \biggr]<\infty \quad\mbox{and}\quad
\mathbf{E}_{\mathbf{Q}} \biggl[\int_0^\infty
\mathbf{1}_{x_t^J\ne x_t^{J\prime}} \,dt \biggr]<\infty. %
\]
We claim that the second inequality implies that $x_t^J=x_t^{J\prime}$
for all $t$ sufficiently large $\mathbf{Q}$-a.s.; we postpone the
verification of this claim until the end of the proof. Assuming the claim,
we now complete the proof of local mixing.

Let us extend the Wasserstein coupling $\mathbf{Q}$ to the triple
$(x_t,x_t',y_t)_{t\ge0}$ by letting $(y_t)_{t\ge0}$
be an inhomogeneous Poisson process with rate $\lambda_t=h(x_t)$
conditionally on $(x_t,x_t')_{t\ge0}$. Define for any
$t\ge0$ the random variable
\[
\Lambda_t = \exp \biggl[ \int_t^\infty
\bigl\{\log h \bigl(x_{s-}' \bigr)-\log
h(x_{s-}) \bigr\} \,dy_s -\int_t^\infty
\bigl\{h \bigl(x_s' \bigr)-h(x_s) \bigr\}
\,ds \biggr]. %
\]
Applying \cite{LS01}, Lemma~19.6, conditionally yields
$\mathbf{E}[\Lambda_t]=1$ for all $t\ge0$. By the
change of measure theorem for Poisson processes
\cite{LS01}, Theorem~19.4,
\[
\mathbf{P}^o(A) = \mathbf{E}_{\mathbf{Q}} \bigl[
\mathbf{1}_A \bigl(x_{t,\infty}^{J},y_{t,\infty}
\bigr) \bigr], \qquad\mathbf{P}^x(A) = \mathbf{E}_{\mathbf{Q}} \bigl[
\mathbf{1}_A \bigl(x_{t,\infty}^{J\prime},
y_{t,\infty} \bigr) \Lambda_t \bigr] %
\]
for any $A\in\mathcal{F}_{t,\infty}^J$. Thus, we can estimate
\[
\bigl\|\mathbf{P}^x-\mathbf{P}^o\bigr\|_{\mathcal{F}_{t,\infty}^J} \le
\mathbf{Q} \bigl[x_{t,\infty}^{J}\ne x_{t,\infty}^{J\prime}
\bigr] + \mathbf{E}_{\mathbf{Q}}\bigl[|1-\Lambda_t|\bigr]\mathop{\longrightarrow}^{t\to
\infty}0, %
\]
where the convergence follows by
Scheff\'e's lemma and the above claim.

It remains to prove the claim. To this end, define the stopping times
$\tau_0=0$ and
$\tau_n' = \inf\{t\ge\tau_n\dvtx x_t^J=x_t^{J\prime}\}$ and
$\tau_{n+1} = \inf\{t\ge\tau_n'\dvtx x_t^J\ne x_t^{J\prime}\}$ for
$n\ge0$. By right-continuity $\tau_n'>\tau_n$
on $\{\tau_n<\infty\}$ for all $n\ge1$, and
\[
\mathbf{E}_{\mathbf{Q}} \Biggl[ \sum_{n=0}^\infty
\bigl(\tau_n'-\tau_n \bigr)
\mathbf{1}_{\tau_n<\infty} \Biggr] = \mathbf{E}_{\mathbf{Q}} \biggl[ \int
_0^\infty\mathbf{1}_{x_t^J\ne x_t^{J\prime}} \,dt \biggr] <
\infty. %
\]
Now denote by $\tau_n''=\inf\{t\ge\tau_n\dvtx (x_t^J,x_t^{J\prime})\ne
(x_{\tau_n}^J,x_{\tau_n}^{J\prime})\}$. As the Wasserstein coupling
is itself a particle system with uniformly bounded rates,
it is a routine exercise to verify that there exists a constant $c>0$
such that
\[
\mathbf{E}_{\mathbf{Q}} \bigl[\tau_n'-
\tau_n|\mathcal{F}_{\tau_n} \bigr] \mathbf{1}_{\tau_n<\infty}
\ge \mathbf{E}_{\mathbf{Q}} \bigl[\tau_n''-
\tau_n|\mathcal{F}_{\tau_n} \bigr] \mathbf{1}_{\tau_n<\infty}
\ge c \mathbf{1}_{\tau_n<\infty},\qquad \mathbf{Q}\mbox{-a.s.} %
\]
It follows that $\tau_n=\infty$ eventually
$\mathbf{Q}$-a.s., which yields the claim.
\end{pf}

We have now verified all the assumptions of Theorem~\ref{thmm:clmfs}.
Thus, we have
\[
\bigl\|\pi_t^\mu-\pi_t^\nu
\bigr\|_{\mathcal{E}^J}\mathop{\longrightarrow}^{t\to\infty}0 \qquad\mbox{in }\mathbf{P}^\gamma
\mbox{-probability} \mbox{ for all }|J|<\infty, \mu,\nu,\gamma\in\mathcal{P}(E),
\]
where $\mathcal{E}^J\subseteq\mathcal{B}(E)$ be the cylinder $\sigma$-field
generated by the spins $J$.

\begin{rem}
The proof just given works only in one spatial dimension $I=\mathbb{Z}$.
In a higher-dimensional lattice $I=\mathbb{Z}^d$, the (unconditional)
ergodic theory of the spin system becomes much more subtle as phase
transitions typically appear. A Dobrushin-type sufficient condition
for local mixing in any dimension is given by F\"ollmer
\cite{Fol79} for a related discrete-time model.
With some more work, this approach can also be applied
to continuous time spin systems in the high-temperature regime
(e.g., by showing that the requisite bounds
hold for spatially truncated and time-discretized models, uniformly in
the truncation and discretization parameters).
\end{rem}

\subsection{Stochastic differential delay equations}
\label{sec:exdelay}

Our final example is concerned with filtering in stochastic differential
delay equations. Time delays arise naturally in various engineering and
biological applications, and the corresponding filtering problem has been
investigated by a number of authors \cite{Vin77,KW78,CFN07}. In particular,
some results on filter stability for linear delay equations have
been investigated in \cite{Vin77,KW78} by means of the associated
Kalman equations. We tackle here the much more difficult nonlinear case.

Fix throughout this section a delay $r\in\mathbb{R}_+$.
Following \cite{CFN07}, for example, we introduce the following
nonlinear filtering model with time delay.
The unobserved process is defined by the stochastic differential delay
equation
\[
dx(t) = f(x_t) \,dt + g(x_t) \,dW_t,
\]
where $(W_t)_{t\ge0}$ is $m$-dimensional Brownian motion,
$x(t)$ takes values in $\mathbb{R}^n$, and we have introduced the
notation $x_t = (x(t+s))_{s\in[-r,0]}\in C([-r,0];\mathbb{R}^n)$.
The $\mathbb{R}^d$-valued observations are defined by the white noise
model
\[
dy_t = h(x_t) \,dt + dB_t, %
\]
where $(B_t)_{t\ge0}$ is $d$-dimensional Brownian motion
independent of $(x_t)_{t\ge0}$.

In the following, we will exploit heavily the ergodicity results for
stochastic delay equations established in \cite{HMS11}. To this end, we
work under the following assumptions. Here and in the sequel, we denote
by $\|x\|=\sup_{t\in[-r,0]}|x(t)|$ for $x\in C([-r,0];\mathbb{R}^n)$,
and by $|M|^2=\mathrm{Tr}[MM^*]$ for any matrix $M$.

\begin{aspt}
Assume (a) there exists $g^{-1}\dvtx C([-r,0];\mathbb{R}^n)\to
\mathbb{R}^{m\times n}$ with $\|g^{-1}\|_\infty<\infty$ and
$g(x)g^{-1}(x)=\mathrm{Id}_n$ for all
$x$; (b) $f$ is continuous and
bounded on bounded subsets of $C([-r,0];\mathbb{R}^n)$; (c)
for all $x,y$, we have
$2\langle f(x)-f(y),x(0)-y(0)\rangle^+
+|g(x)-g(y)|^2+|h(x)-h(y)|^2\le L\|x-y\|^2$.
\end{aspt}

Under this assumption, the equation for $(x(t))_{t\ge0}$ possesses a
unique strong solution for any initial condition $(x(t))_{t\in[-r,0]}$
such that $(x_t)_{t\ge0}$ is a $C([-r,0];\mathbb{R}^n)$-valued
strong Markov process \cite{HMS11}. Thus, the pair
$(x_t,y_t)$ is evidently a nondegenerate Markov additive process
in $C([-r,0];\mathbb{R}^n)\times\mathbb{R}^d$.

The previous assumption does not ensure the existence of
an invariant probability. Rather than imposing explicit sufficient
conditions (see, e.g., \cite{ESV10,DZ96}), it will suffice
simply to assume that such a probability exists.

\begin{aspt}
$(x_t)_{t\ge0}$ admits an invariant
probability $\lambda$.
\end{aspt}

To establish stability of the filter, we will apply Theorem~\ref{thmm:contwfstab}. To construct an asymptotic coupling, the key
result that we will use is the following.

\begin{thmm}
\label{thmm:delay}
For every $x,x'\in C([-r,0];\mathbb{R}^n)$, there exists a
coupling $\mathbf{Q}^{x,x'}\in\mathcal{C}(\mathbf{P}^x,\mathbf{P}^{x'})$
such that the map $(x,x')\mapsto\mathbf{Q}^{x,x'}$ is measurable and
\[
\inf_{\|x\|,\|x'\|\le R} \mathbf{Q}^{x,x'} \Bigl[ \sup
_{t\ge0}e^t\bigl\|x_t-x_t'
\bigr\|<\infty \Bigr]=\beta_R>0 \qquad\mbox{for every }R<\infty. %
\]
\end{thmm}

We postpone the proof of this result to the end of this section.
Let us now show how the result can be used to verify the assumptions
of Theorem~\ref{thmm:contwfstab}.

We first construct the asymptotic coupling. Let us choose $R>0$ such
that $\lambda[\|x\|<R]>1/2$. By \cite{HMS11}, Theorem~3.7 and
the Portmanteau theorem, we have $\mathbf{P}^x[\|x_t\|<R]\ge1/2$
eventually as $t\to\infty$ for every $x\in C([-r,0];\mathbb{R}^n)$.
Let $\alpha=\beta_{R}/4$. Given any $x,x'\in C([-r,0];\mathbb{R}^n)$,
we now construct a coupling $\mathbf{Q}\in\mathcal{C}(\mathbf{P}^x,
\mathbf{P}^{x'})$ as follows. First, choose $s>0$ such that
$\mathbf{P}^x[\|x_s\|<R]\ge1/2$ and $\mathbf{P}^{x'}[\|x_s\|<R]\ge1/2$.
We then define the coupling $\mathbf{Q}$ such that
\[
\mathbf{Q} \bigl[x_{0,s},x_{0,s}'\in\cdot
\bigr]= \mathbf{P}^x|_{\mathcal{F}_{0,s}}\otimes
\mathbf{P}^{x'}|_{\mathcal{F}_{0,s}},\qquad
\mathbf{Q} \bigl[x_{s,\infty},x_{s,\infty}'\in\cdot |
\mathcal{F}_{0,s} \bigr] = \mathbf{Q}^{x_{s},x_{s}'}. %
\]
By construction, we have
\[
\mathbf{Q} \Bigl[ \sup_{t\ge0}e^t
\bigl\|x_t-x_t'\bigr\|<\infty \Bigr]\ge\alpha.
\]
Thus, we have evidently verified the first assumption of
Theorem~\ref{thmm:contwfstab} for $\tilde d(x,x')=\|x-x'\|$ and
$\Delta=1$ (e.g.). On the other hand, the second assumption follows
easily as in Lemma~\ref{lem:h1obs}, as we have assumed
the Lipschitz property of $h$. Thus, we have verified the assumptions of
Theorem~\ref{thmm:contwfstab}, so
\[
\bigl\|\pi_t^\mu-\pi_t^\nu
\bigr\|_{\mathrm{BL}} \mathop{\longrightarrow}^{t\to\infty}0 \qquad\mbox{in }\mathbf{P}^\gamma
\mbox{-probability} \mbox{ for all } \mu,\nu,\gamma\in\mathcal{P} \bigl(C
\bigl([-r,0];\mathbb{R}^n \bigr) \bigr), %
\]
that is, we have established filter stability in the bounded-Lipschitz
norm.

It remains to prove Theorem~\ref{thmm:delay}. This is a direct extension
of the proof of Theorem~3.1 in \cite{HMS11}; we finish the section
by sketching the necessary steps.

\begin{pf*}{Proof of Theorem~\ref{thmm:delay}}
In the proof of \cite{HMS11}, Theorem~3.1, a kernel
$(x,x')\mapsto\tilde{\mathbf Q}^{x,x'}$ was constructed on
$\Omega\times\Omega$ with the following properties. First, there
exists a constant $\gamma>0$ independent of $x,x'$ such that
\[
\tilde{\mathbf Q}^{x,x'} \Bigl[ \sup_{t\ge0}e^t
\bigl\|x_t-x_t'\bigr\|<\infty \Bigr]\ge\gamma\qquad
\mbox{for all }x,x'\in C \bigl([-r,0];\mathbb{R}^n
\bigr). %
\]
Second, there is a $\tilde{\mathbf Q}^{x,x'}$-Brownian motion
$(\tilde W_t)_{t\ge0}$ and an adapted
process $(z_t)_{t\ge0}$ that satisfies
$\int_0^\infty|z_t|^2 \,dt\le C\|x-x'\|^2$, $\tilde{\mathbf Q}^{x,x'}$-a.s. such that
\[
\tilde{\mathbf Q}^{x,x'}[x_{0,\infty}\in A]=\mathbf{P}^x(A),\qquad
\tilde{\mathbf Q}^{x,x'} \bigl[\mathbf{1}_A
\bigl(x_{0,\infty}' \bigr)\Lambda \bigr]= \mathbf{P}^{x'}(A)
\]
for every measurable set $A$, where $\Lambda$ is the Girsanov density
\[
\Lambda= \exp \biggl[ \int_0^\infty
z_t \,d\tilde W_t -\frac{1}{2}\int
_0^\infty |z_t|^2 \,dt
\biggr]. %
\]
Let $\mathbf{R}^{x,x'}\in
\mathcal{C}(\tilde{\mathbf Q}^{x,x'},\mathbf{P}^{x'})$ be the coupling
maximizing $\mathbf{R}^{x,x'}[x_{0,\infty}'=x_{0,\infty}'']$.
It is classical that $2 \mathbf{R}^{x,x'}[x_{0,\infty}'\ne x_{0,\infty}'']=
\|\tilde{\mathbf Q}^{x,x'}[x_{0,\infty}'\in\cdot ]-\mathbf{P}^{x'}\|$ and
that the maximal coupling can be chosen to be measurable in $x,x'$ (by the
existence of a measurable version of the Radon--Nikodym density
between kernels \cite{DM82}, Theorem~V.58).\vspace*{1.5pt} As $\int_0^\infty
|z_t|^2 \,dt\le C\|x-x'\|^2$, $\tilde{\mathbf Q}^{x,x'}$-a.s., we can chose
$\delta>0$ sufficiently small that
$\mathbf{R}^{x,x'}[x_{0,\infty}'\ne x_{0,\infty}'']\le\gamma/2$
whenever $\|x-x'\|\le\delta$. Then evidently
\[
\mathbf{R}^{x,x'} \Bigl[ \sup_{t\ge0}e^t
\bigl\|x_t-x_t''\bigr\|<\infty \Bigr]
\ge\frac{\gamma}{2}\qquad \mbox{whenever }\bigl\|x-x'\bigr\|\le\delta.
\]
Now define for any $x,x'\in C([-r,0];\mathbb{R}^n)$ the measure
$\mathbf{Q}^{x,x'}$ such that
\begin{eqnarray*}
\mathbf{Q}^{x,x'} \bigl[x_{0,2r},x_{0,2r}'
\in\cdot \bigr]&=& \mathbf{P}^x|_{\mathcal{F}_{0,2r}}\otimes
\mathbf{P}^{x'}|_{\mathcal{F}_{0,2r}},
\\
\mathbf{Q}^{x,x'} \bigl[x_{2r,\infty},x_{2r,\infty}'
\in\cdot | \mathcal{F}_{0,2r} \bigr] &=& \mathbf{R}^{x_{2r},x_{2r}'}
\bigl[x_{0,\infty},x_{0,\infty}''\in\cdot
\bigr].
\end{eqnarray*}
Then $\mathbf{Q}^{x,x'}\in\mathcal{C}(\mathbf{P}^x,\mathbf{P}^{x'})$,
$(x,x')\mapsto\mathbf{Q}^{x,x'}$ is measurable, and
\[
\inf_{\|x\|,\|x'\|\le R} \mathbf{Q}^{x,x'} \Bigl[ \sup
_{t\ge0}e^t\bigl\|x_t-x_t'
\bigr\|<\infty \Bigr] \ge \frac{\gamma}{2} \biggl( \inf_{\|x\|\le R}
\mathbf{P}^{x} \biggl[ \|x_{2r}\|\le\frac{\delta}{2}
\biggr] \biggr)^2. %
\]
It remains to note that the right-hand side is
positive by \cite{HMS11}, Lemma~3.8.
\end{pf*}

\begin{rem}[(On infinite-dimensional observations)]
\label{rem:idimobs}
All the examples that we have discussed in this section are concerned with
an infinite-dimensional unobserved process and a finite-dimensional observed
process. In this setting, it is natural to work with observation
densities, and the nondegeneracy assumptions of our main results are
easily verified. It is less evident in what situations the results in
this paper can be expected to be applicable when both unobserved and
observed processes are infinite-dimensional. We conclude Section~\ref{sec:examples} by briefly discussing this issue.

In the case of unobserved models that possess a significant degree of
spatial regularity, such as those in Sections \ref{sec:exheat} and
\ref{sec:exsns}, there are natural infinite-dimensional observation models
that are amenable to the theory developed in this paper. For example, in
the setting of Section~\ref{sec:exsns}, consider that we observe the
entire fluid velocity field corrupted by spatial white noise (rather than
at a finite number of spatial locations): that is, each Fourier mode of
the field is observed in an independent Gaussian noise $\xi_n^k\sim
N(0,I)$,
\[
\langle e_k,Y_n\rangle= \bigl\langle e_k,
\mathcal{K}v(t_n, \cdot ) \bigr\rangle+ \xi_n^k
\]
for $k\in\mathbb{Z}^2\setminus\{(0,0)\}$. As the fluid velocity field is
square-integrable, its Fourier coefficients are square-summable, and thus
the conditional law of the observation $Y_n$ given the unobserved process
$X_n$ has a positive density with respect to the law of the noise
$(\xi_n^k)$ by the Kakutani theorem. The nondegeneracy and continuity
assumptions in our weak-* stability results are therefore easily verified.
One could argue, however, that this observation model is still
``effectively'' finite-dimensional: due to the roughness of the noise,
only a finite number of (large) Fourier modes affect substantially the law
of the observations, while the remaining (small) modes are buried
in the noise.

In the above example, the same argument applies when the observations are
corrupted by spatially regular noise, provided that the fluid velocity
field is sufficiently smooth as compared to the noise.
However, if the noise is too smooth as compared to the unobserved model,
then nondegeneracy will fail for precisely the same reason that the
unobserved model may fail to be Harris. This example illustrates that
nondegeneracy in the presence of infinite-dimensional observations can be
a delicate issue.

More broadly, we recall that at the heart of the difficulties encountered
in infinite dimension is that most measures are mutually singular (cf. Example~\ref{ex:trivial}). The theory developed in this paper surmounts
this problem by considering \emph{local} notions of ergodicity.
Nonetheless, our main Assumptions \ref{aspt:locerg} and \ref{aspt:nondeg}
still rely on some degree of nonsingularity: Assumption~\ref{aspt:locerg}
allows us to localize the unobserved component, but still the entire
observation variable $Y_k$ must be included in the local filtration; and
Assumption~\ref{aspt:nondeg} requires the coupling between the unobserved
and observed components to be nonsingular. In practice, this implies that
while the unobserved process $X_k$ may be infinite-dimensional, the
observed process $Y_k$ must typically be finite-dimensional or at least
``effectively'' finite-dimensional in order to apply the general theory
developed in this paper. As was illustrated in this section, our general
theory covers a wide range of models of practical interest; however,
models in which the observations are degenerate are excluded (e.g.,
this would be the case if in the setting of Section~\ref{sec:exspin} each
spin $x_t^i$ were observed in independent noise). In the latter setting,
new probabilistic phenomena arise, such as conditional phase
transitions, that are of significant interest in their own right; such
issues will be discussed elsewhere.
\end{rem}

\section*{Acknowledgments}
We would like to thank M. Hairer and J. Mattingly for helpful
discussions on the papers \cite{HM06,HMS11,EMS01,Mat02}. We are grateful
to the referees for comments that helped us improve the presentation.

%



\printaddresses


\begin{thebibliography}{51}

\bibitem{Ber86}
%
\begin{barticle}[mr]
\bauthor{\bsnm{Berbee},~\bfnm{Henry}\binits{H.}}
(\byear{1986}).
\btitle{Periodicity and absolute regularity}.
\bjournal{Israel J. Math.}
\bvolume{55}
\bpages{289--304}.
\bid{doi={10.1007/BF02765027}, issn={0021-2172}, mr={0876396}}
\end{barticle}
%
\bptok{imsref}%
\endbibitem

\bibitem{BD62}
%
\begin{barticle}[mr]
\bauthor{\bsnm{Blackwell},~\bfnm{David}\binits{D.}} \AND
\bauthor{\bsnm{Dubins},~\bfnm{Lester}\binits{L.}}
(\byear{1962}).
\btitle{Merging of opinions with increasing information}.
\bjournal{Ann. Math. Statist.}
\bvolume{33}
\bpages{882--886}.
\bid{issn={0003-4851}, mr={0149577}}
\end{barticle}
%
\bptok{imsref}%
\endbibitem

\bibitem{Bud03}
%
\begin{barticle}[mr]
\bauthor{\bsnm{Budhiraja},~\bfnm{A.}\binits{A.}}
(\byear{2003}).
\btitle{Asymptotic stability, ergodicity and other asymptotic
properties of the nonlinear filter}.
\bjournal{Ann. Inst. Henri Poincar\'e Probab. Stat.}
\bvolume{39}
\bpages{919--941}.
\bid{doi={10.1016/S0246-0203(03)00022-0}, issn={0246-0203}, mr={2010391}}
\end{barticle}
%
\bptok{imsref}%
\endbibitem

\bibitem{CFN07}
%
\begin{barticle}[mr]
\bauthor{\bsnm{Calzolari},~\bfnm{Antonella}\binits{A.}},
\bauthor{\bsnm{Florchinger},~\bfnm{Patrick}\binits{P.}} \AND
\bauthor{\bsnm{Nappo},~\bfnm{Giovanna}\binits{G.}}
(\byear{2007}).
\btitle{Convergence in nonlinear filtering for stochastic delay systems}.
\bjournal{SIAM J. Control Optim.}
\bvolume{46}
\bpages{1615--1636 (electronic)}.
\bid{doi={10.1137/050646135}, issn={0363-0129}, mr={2361986}}
\end{barticle}
%
\bptok{imsref}%
\endbibitem

\bibitem{CvH10}
%
\begin{barticle}[mr]
\bauthor{\bsnm{Chigansky},~\bfnm{Pavel}\binits{P.}} \AND
\bauthor{\bparticle{van} \bsnm{Handel},~\bfnm{Ramon}\binits{R.}}
(\byear{2010}).
\btitle{A complete solution to {B}lackwell's unique ergodicity problem
for hidden {M}arkov chains}.
\bjournal{Ann. Appl. Probab.}
\bvolume{20}
\bpages{2318--2345}.
\bid{doi={10.1214/10-AAP688}, issn={1050-5164}, mr={2759736}}
\end{barticle}
%
\bptok{imsref}%
\endbibitem

\bibitem{Cin72}
%
\begin{barticle}[mr]
\bauthor{\bsnm{{\c{C}}inlar},~\bfnm{Erhan}\binits{E.}}
(\byear{1972}).
\btitle{Markov additive processes. {I}, {II}}.
\bjournal{Z. Wahrsch. Verw. Gebiete}
\bvolume{24}
\bpages{85--93};
\bnote{ibid. \textbf{24} (1972), 95--121}.
\bid{mr={0329047}}
\end{barticle}
%
\bptok{imsref}%
\endbibitem

\bibitem{Cog84}
%
\begin{barticle}[mr]
\bauthor{\bsnm{Cogburn},~\bfnm{Robert}\binits{R.}}
(\byear{1984}).
\btitle{The ergodic theory of {M}arkov chains in random environments}.
\bjournal{Z.~Wahrsch. Verw. Gebiete}
\bvolume{66}
\bpages{109--128}.
\bid{doi={10.1007/BF00532799}, issn={0044-3719}, mr={0743088}}
\end{barticle}
%
\bptok{imsref}%
\endbibitem

\bibitem{CR11}
%
\begin{bbook}[mr]
\beditor{\bsnm{Crisan},~\bfnm{Dan}\binits{D.}} \AND
\beditor{\bsnm{Rozovski{\u\i}},~\bfnm{Boris}\binits{B.}}, eds.
(\byear{2011}).
\btitle{The {O}xford Handbook of Nonlinear Filtering}.
\bpublisher{Oxford Univ. Press},
\blocation{Oxford}.
\bid{mr={2882749}}
\end{bbook}
%
\bptok{imsref}%
\endbibitem

\bibitem{DZ96}
%
\begin{bbook}[mr]
\bauthor{\bsnm{Da Prato},~\bfnm{G.}\binits{G.}} \AND
\bauthor{\bsnm{Zabczyk},~\bfnm{J.}\binits{J.}}
(\byear{1996}).
\btitle{Ergodicity for Infinite-Dimensional Systems}.
\bseries{London Mathematical Society Lecture Note Series}
\bvolume{229}.
\bpublisher{Cambridge Univ. Press},
\blocation{Cambridge}.
\bid{doi={10.1017/CBO9780511662829}, mr={1417491}}
\end{bbook}
%
\bptok{imsref}%
\endbibitem

\bibitem{DM82}
%
\begin{bbook}[mr]
\bauthor{\bsnm{Dellacherie},~\bfnm{Claude}\binits{C.}} \AND
\bauthor{\bsnm{Meyer},~\bfnm{Paul-Andr{\'e}}\binits{P.-A.}}
(\byear{1982}).
\btitle{Probabilities and Potential. {B}: Theory of Martingales}.
\bseries{North-Holland Mathematics Studies}
\bvolume{72}.
\bpublisher{North-Holland},
\blocation{Amsterdam}.
\bid{mr={0745449}}
\end{bbook}
%
\bptok{imsref}%
\endbibitem

\bibitem{Der76}
%
\begin{barticle}[mr]
\bauthor{\bsnm{Derriennic},~\bfnm{Yves}\binits{Y.}}
(\byear{1976}).
\btitle{Lois ``z\'ero ou deux'' pour les processus de {M}arkov.
{A}pplications aux marches al\'eatoires}.
\bjournal{Ann. Inst. H. Poincar\'e Sect. B (N.S.)}
\bvolume{12}
\bpages{111--129}.
\bid{mr={0423532}}
\end{barticle}
%
\bptok{imsref}%
\endbibitem

\bibitem{EM01}
%
\begin{barticle}[mr]
\bauthor{\bsnm{E},~\bfnm{Weinan}\binits{W.}} \AND
\bauthor{\bsnm{Mattingly},~\bfnm{Jonathan~C.}\binits{J.~C.}}
(\byear{2001}).
\btitle{Ergodicity for the {N}avier--{S}tokes equation with degenerate
random forcing: Finite-dimensional approximation}.
\bjournal{Comm. Pure Appl. Math.}
\bvolume{54}
\bpages{1386--1402}.
\bid{doi={10.1002/cpa.10007}, issn={0010-3640}, mr={1846802}}
\end{barticle}
%
\bptok{imsref}%
\endbibitem

\bibitem{EMS01}
%
\begin{barticle}[mr]
\bauthor{\bsnm{E},~\bfnm{Weinan}\binits{W.}},
\bauthor{\bsnm{Mattingly},~\bfnm{J.~C.}\binits{J.~C.}} \AND
\bauthor{\bsnm{Sinai},~\bfnm{Y.}\binits{Y.}}
(\byear{2001}).
\btitle{Gibbsian dynamics and ergodicity for the stochastically forced
{N}avier--{S}tokes equation}.
\bjournal{Comm. Math. Phys.}
\bvolume{224}
\bpages{83--106}.
\bid{doi={10.1007/s002201224083}, issn={0010-3616}, mr={1868992}}
\end{barticle}
%
\bptok{imsref}%
\endbibitem

\bibitem{ESV10}
%
\begin{barticle}[mr]
\bauthor{\bsnm{Es-Sarhir},~\bfnm{Abdelhadi}\binits{A.}},
\bauthor{\bsnm{Scheutzow},~\bfnm{Michael}\binits{M.}} \AND
\bauthor{\bparticle{van} \bsnm{Gaans},~\bfnm{Onno}\binits{O.}}
(\byear{2010}).
\btitle{Invariant measures for stochastic functional differential
equations with superlinear drift term}.
\bjournal{Differential Integral Equations}
\bvolume{23}
\bpages{189--200}.
\bid{issn={0893-4983}, mr={2588808}}
\end{barticle}
%
\bptok{imsref}%
\endbibitem

\bibitem{Fol79}
%
\begin{barticle}[mr]
\bauthor{\bsnm{F{\"o}llmer},~\bfnm{H.}\binits{H.}}
(\byear{1979}).
\btitle{Tail structure of {M}arkov chains on infinite product spaces}.
\bjournal{Z. Wahrsch. Verw. Gebiete}
\bvolume{50}
\bpages{273--285}.
\bid{doi={10.1007/BF00534151}, issn={0044-3719}, mr={0554547}}
\end{barticle}
%
\bptok{imsref}%
\endbibitem

\bibitem{HM06}
%
\begin{barticle}[mr]
\bauthor{\bsnm{Hairer},~\bfnm{Martin}\binits{M.}} \AND
\bauthor{\bsnm{Mattingly},~\bfnm{Jonathan~C.}\binits{J.~C.}}
(\byear{2006}).
\btitle{Ergodicity of the 2{D} {N}avier--{S}tokes equations with
degenerate stochastic forcing}.
\bjournal{Ann. of Math. (2)}
\bvolume{164}
\bpages{993--1032}.
\bid{doi={10.4007/annals.2006.164.993}, issn={0003-486X}, mr={2259251}}
\end{barticle}
%
\bptok{imsref}%
\endbibitem

\bibitem{HMS11}
%
\begin{barticle}[mr]
\bauthor{\bsnm{Hairer},~\bfnm{M.}\binits{M.}},
\bauthor{\bsnm{Mattingly},~\bfnm{J.~C.}\binits{J.~C.}} \AND
\bauthor{\bsnm{Scheutzow},~\bfnm{M.}\binits{M.}}
(\byear{2011}).
\btitle{Asymptotic coupling and a general form of {H}arris' theorem
with applications to stochastic delay equations}.
\bjournal{Probab. Theory Related Fields}
\bvolume{149}
\bpages{223--259}.
\bid{doi={10.1007/s00440-009-0250-6}, issn={0178-8051}, mr={2773030}}
\end{barticle}
%
\bptok{imsref}%
\endbibitem

\bibitem{Haj58}
%
\begin{barticle}[mr]
\bauthor{\bsnm{Hajnal},~\bfnm{J.}\binits{J.}}
(\byear{1958}).
\btitle{Weak ergodicity in non-homogeneous {M}arkov chains}.
\bjournal{Proc. Cambridge Philos. Soc.}
\bvolume{54}
\bpages{233--246}.
\bid{mr={0096306}}
\end{barticle}
%
\bptok{imsref}%
\endbibitem

\bibitem{HS89}
%
\begin{barticle}[mr]
\bauthor{\bsnm{Holley},~\bfnm{Richard~A.}\binits{R.~A.}} \AND
\bauthor{\bsnm{Stroock},~\bfnm{Daniel~W.}\binits{D.~W.}}
(\byear{1989}).
\btitle{Uniform and {$L^2$} convergence in one-dimensional stochastic
{I}sing models}.
\bjournal{Comm. Math. Phys.}
\bvolume{123}
\bpages{85--93}.
\bid{issn={0010-3616}, mr={1002033}}
\end{barticle}
%
\bptok{imsref}%
\endbibitem

\bibitem{Kal02}
%
\begin{bbook}[mr]
\bauthor{\bsnm{Kallenberg},~\bfnm{Olav}\binits{O.}}
(\byear{2002}).
\btitle{Foundations of Modern Probability},
\bedition{2nd} ed.
\bpublisher{Springer},
\blocation{New York}.
\bid{mr={1876169}}
\end{bbook}
%
\bptok{imsref}%
\endbibitem

\bibitem{KPS10}
%
\begin{barticle}[mr]
\bauthor{\bsnm{Komorowski},~\bfnm{Tomasz}\binits{T.}},
\bauthor{\bsnm{Peszat},~\bfnm{Szymon}\binits{S.}} \AND
\bauthor{\bsnm{Szarek},~\bfnm{Tomasz}\binits{T.}}
(\byear{2010}).
\btitle{On ergodicity of some {M}arkov processes}.
\bjournal{Ann. Probab.}
\bvolume{38}
\bpages{1401--1443}.
\bid{doi={10.1214/09-AOP513}, issn={0091-1798}, mr={2663632}}
\end{barticle}
%
\bptok{imsref}%
\endbibitem

\bibitem{KS12}
%
\begin{bbook}[author]
\bauthor{\bsnm{Kuksin},~\bfnm{S.}\binits{S.}} \AND
\bauthor{\bsnm{Shirikyan},~\bfnm{A.}\binits{A.}}
(\byear{2012}).
\btitle{Mathematics of Two-Dimensional Turbulence}.
\bseries{Cambridge Tracts in Mathematics}
\bvolume{194}.
\bpublisher{Cambridge Univ. Press},
\blocation{Cambridge}.
\end{bbook}
%
\bptok{imsref}%
\endbibitem

\bibitem{Kun71}
%
\begin{barticle}[mr]
\bauthor{\bsnm{Kunita},~\bfnm{Hiroshi}\binits{H.}}
(\byear{1971}).
\btitle{Asymptotic behavior of the nonlinear filtering errors of
{M}arkov processes}.
\bjournal{J. Multivariate Anal.}
\bvolume{1}
\bpages{365--393}.
\bid{issn={0047-259X}, mr={0301812}}
\end{barticle}
%
\bptok{imsref}%
\endbibitem

\bibitem{Kur98}
%
\begin{barticle}[mr]
\bauthor{\bsnm{Kurtz},~\bfnm{Thomas~G.}\binits{T.~G.}}
(\byear{1998}).
\btitle{Martingale problems for conditional distributions of {M}arkov
processes}.
\bjournal{Electron. J. Probab.}
\bvolume{3}
\bpages{29 pp. (electronic)}.
\bid{doi={10.1214/EJP.v3-31}, issn={1083-6489}, mr={1637085}}
\end{barticle}
%
\bptok{imsref}%
\endbibitem

\bibitem{KW78}
%
\begin{barticle}[mr]
\bauthor{\bsnm{Kwong},~\bfnm{Raymond~H.}\binits{R.~H.}} \AND
\bauthor{\bsnm{Willsky},~\bfnm{Alan~S.}\binits{A.~S.}}
(\byear{1978}).
\btitle{Estimation and filter stability of stochastic delay systems}.
\bjournal{SIAM J. Control Optim.}
\bvolume{16}
\bpages{660--681}.
\bid{issn={0363-0129}, mr={0490328}}
\end{barticle}
%
\bptok{imsref}%
\endbibitem

\bibitem{Lig05}
%
\begin{bbook}[mr]
\bauthor{\bsnm{Liggett},~\bfnm{Thomas~M.}\binits{T.~M.}}
(\byear{2005}).
\btitle{Interacting Particle Systems}.
\bpublisher{Springer},
\blocation{Berlin}.
\bid{mr={2108619}}
\end{bbook}
%
\bptok{imsref}%
\endbibitem

\bibitem{Lin02}
%
\begin{bbook}[mr]
\bauthor{\bsnm{Lindvall},~\bfnm{Torgny}\binits{T.}}
(\byear{2002}).
\btitle{Lectures on the Coupling Method}.
\bpublisher{Dover},
\blocation{Mineola, NY}.
\bid{mr={1924231}}
\end{bbook}
%
\bptok{imsref}%
\endbibitem

\bibitem{LS01}
%
\begin{bbook}[mr]
\bauthor{\bsnm{Liptser},~\bfnm{Robert~S.}\binits{R.~S.}} \AND
\bauthor{\bsnm{Shiryaev},~\bfnm{Albert~N.}\binits{A.~N.}}
(\byear{2001}).
\btitle{Statistics of Random Processes. {I}: General Theory},
\bedition{expanded} ed.
\bseries{Applications of Mathematics (New York)}
\bvolume{5}.
\bpublisher{Springer},
\blocation{Berlin}.
\bid{mr={1800857}}
\end{bbook}
%
\bptok{imsref}%
\endbibitem

\bibitem{Mar04}
%
\begin{bincollection}[mr]
\bauthor{\bsnm{Martinelli},~\bfnm{Fabio}\binits{F.}}
(\byear{2004}).
\btitle{Relaxation times of {M}arkov chains in statistical mechanics
and combinatorial structures}.
In \bbooktitle{Probability on Discrete Structures}.
\bseries{Encyclopaedia Math. Sci.}
\bvolume{110}
\bpages{175--262}.
\bpublisher{Springer},
\blocation{Berlin}.
\bid{mr={2023653}}
\end{bincollection}
%
\bptok{imsref}%
\endbibitem

\bibitem{Mat02}
%
\begin{barticle}[mr]
\bauthor{\bsnm{Mattingly},~\bfnm{Jonathan~C.}\binits{J.~C.}}
(\byear{2002}).
\btitle{Exponential convergence for the stochastically forced
{N}avier--{S}tokes equations and other partially dissipative dynamics}.
\bjournal{Comm. Math. Phys.}
\bvolume{230}
\bpages{421--462}.
\bid{doi={10.1007/s00220-002-0688-1}, issn={0010-3616}, mr={1937652}}
\end{barticle}
%
\bptok{imsref}%
\endbibitem

\bibitem{Mat07}
%
\begin{bmisc}[author]
\bauthor{\bsnm{Mattingly},~\bfnm{J.~C.}\binits{J.~C.}}
(\byear{2007}).
\bhowpublished{Ergodicity of dissipative {SPDE}s.
In \textit{Lecture notes, \'Ecole d'\'et\'e de Probabilit\'es de
Saint-Flour}, July 8--21}.
\end{bmisc}
%
\bptok{imsref}%
\endbibitem

\bibitem{MT09}
%
\begin{bbook}[mr]
\bauthor{\bsnm{Meyn},~\bfnm{Sean}\binits{S.}} \AND
\bauthor{\bsnm{Tweedie},~\bfnm{Richard~L.}\binits{R.~L.}}
(\byear{2009}).
\btitle{Markov Chains and Stochastic Stability},
\bedition{2nd} ed.
\bpublisher{Cambridge Univ. Press},
\blocation{Cambridge}.
\bid{mr={2509253}}
\end{bbook}
%
\bptok{imsref}%
\endbibitem

\bibitem{Num84}
%
\begin{bbook}[mr]
\bauthor{\bsnm{Nummelin},~\bfnm{Esa}\binits{E.}}
(\byear{1984}).
\btitle{General Irreducible {M}arkov Chains and Nonnegative Operators}.
\bseries{Cambridge Tracts in Mathematics}
\bvolume{83}.
\bpublisher{Cambridge Univ. Press},
\blocation{Cambridge}.
\bid{doi={10.1017/CBO9780511526237}, mr={0776608}}
\end{bbook}
%
\bptok{imsref}%
\endbibitem

\bibitem{Ore91}
%
\begin{barticle}[mr]
\bauthor{\bsnm{Orey},~\bfnm{Steven}\binits{S.}}
(\byear{1991}).
\btitle{Markov chains with stochastically stationary transition probabilities}.
\bjournal{Ann. Probab.}
\bvolume{19}
\bpages{907--928}.
\bid{issn={0091-1798}, mr={1112400}}
\end{barticle}
%
\bptok{imsref}%
\endbibitem

\bibitem{OS70}
%
\begin{barticle}[mr]
\bauthor{\bsnm{Orstein},~\bfnm{Donald}\binits{D.}} \AND
\bauthor{\bsnm{Sucheston},~\bfnm{Louis}\binits{L.}}
(\byear{1970}).
\btitle{An operator theorem on {$L_{1}$} convergence to zero with
applications to {M}arkov kernels}.
\bjournal{Ann. Math. Statist.}
\bvolume{41}
\bpages{1631--1639}.
\bid{issn={0003-4851}, mr={0272057}}
\end{barticle}
%
\bptok{imsref}%
\endbibitem

\bibitem{Pac79}
%
\begin{barticle}[mr]
\bauthor{\bsnm{Pachl},~\bfnm{Jan~K.}\binits{J.~K.}}
(\byear{1979}).
\btitle{Measures as functionals on uniformly continuous functions}.
\bjournal{Pacific J. Math.}
\bvolume{82}
\bpages{515--521}.
\bid{issn={0030-8730}, mr={0551709}}
\end{barticle}
%
\bptok{imsref}%
\endbibitem

\bibitem{Rev84}
%
\begin{bbook}[mr]
\bauthor{\bsnm{Revuz},~\bfnm{D.}\binits{D.}}
(\byear{1984}).
\btitle{Markov Chains},
\bedition{2nd} ed.
\bseries{North-Holland Mathematical Library}
\bvolume{11}.
\bpublisher{North-Holland},
\blocation{Amsterdam}.
\bid{mr={0758799}}
\end{bbook}
%
\bptok{imsref}%
\endbibitem

\bibitem{Rud04}
%
\begin{barticle}[mr]
\bauthor{\bsnm{Rudolph},~\bfnm{Daniel~J.}\binits{D.~J.}}
(\byear{2004}).
\btitle{Pointwise and {$L^1$} mixing relative to a sub-sigma algebra}.
\bjournal{Illinois J. Math.}
\bvolume{48}
\bpages{505--517}.
\bid{issn={0019-2082}, mr={2085423}}
\end{barticle}
%
\bptok{imsref}%
\endbibitem

\bibitem{Shi96}
%
\begin{bbook}[mr]
\bauthor{\bsnm{Shiryaev},~\bfnm{A.~N.}\binits{A.~N.}}
(\byear{1996}).
\btitle{Probability},
\bedition{2nd} ed.
\bseries{Graduate Texts in Mathematics}
\bvolume{95}.
\bpublisher{Springer},
\blocation{New York}.
\bid{mr={1368405}}
\end{bbook}
%
\bptok{imsref}%
\endbibitem

\bibitem{Ste89}
%
\begin{bincollection}[mr]
\bauthor{\bsnm{Stettner},~\bfnm{{\L}ukasz}\binits{{\L}.}}
(\byear{1989}).
\btitle{On invariant measures of filtering processes}.
In \bbooktitle{Stochastic Differential Systems ({B}ad {H}onnef, 1988)}.
\bseries{Lecture Notes in Control and Inform. Sci.}
\bvolume{126}
\bpages{279--292}.
\bpublisher{Springer},
\blocation{Berlin}.
\bid{doi={10.1007/BFb0043792}, mr={1236074}}
\end{bincollection}
%
\bptok{imsref}%
\endbibitem

\bibitem{Stu10}
%
\begin{barticle}[mr]
\bauthor{\bsnm{Stuart},~\bfnm{A.~M.}\binits{A.~M.}}
(\byear{2010}).
\btitle{Inverse problems: A {B}ayesian perspective}.
\bjournal{Acta Numer.}
\bvolume{19}
\bpages{451--559}.
\bid{doi={10.1017/S0962492910000061}, issn={0962-4929}, mr={2652785}}
\end{barticle}
%
\bptok{imsref}%
\endbibitem

\bibitem{TvH12}
%
\begin{barticle}[mr]
\bauthor{\bsnm{Tong},~\bfnm{Xin~Thomson}\binits{X.~T.}} \AND
\bauthor{\bparticle{van} \bsnm{Handel},~\bfnm{Ramon}\binits{R.}}
(\byear{2012}).
\btitle{Ergodicity and stability of the conditional distributions of
nondegenerate {M}arkov chains}.
\bjournal{Ann. Appl. Probab.}
\bvolume{22}
\bpages{1495--1540}.
\bid{doi={10.1214/11-AAP800}, issn={1050-5164}, mr={2985168}}
\end{barticle}
%
\bptok{imsref}%
\endbibitem

\bibitem{vH09}
%
\begin{barticle}[mr]
\bauthor{\bparticle{van} \bsnm{Handel},~\bfnm{Ramon}\binits{R.}}
(\byear{2009}).
\btitle{The stability of conditional {M}arkov processes and {M}arkov
chains in random environments}.
\bjournal{Ann. Probab.}
\bvolume{37}
\bpages{1876--1925}.
\bid{doi={10.1214/08-AOP448}, issn={0091-1798}, mr={2561436}}
\end{barticle}
%
\bptok{imsref}%
\endbibitem

\bibitem{vH09uo}
%
\begin{barticle}[mr]
\bauthor{\bparticle{van} \bsnm{Handel},~\bfnm{Ramon}\binits{R.}}
(\byear{2009}).
\btitle{Uniform observability of hidden {M}arkov models and filter
stability for unstable signals}.
\bjournal{Ann. Appl. Probab.}
\bvolume{19}
\bpages{1172--1199}.
\bid{doi={10.1214/08-AAP576}, issn={1050-5164}, mr={2537203}}
\end{barticle}
%
\bptok{imsref}%
\endbibitem

\bibitem{vH09spa}
%
\begin{barticle}[mr]
\bauthor{\bparticle{van} \bsnm{Handel},~\bfnm{Ramon}\binits{R.}}
(\byear{2009}).
\btitle{Uniform time average consistency of {M}onte {C}arlo particle filters}.
\bjournal{Stochastic Process. Appl.}
\bvolume{119}
\bpages{3835--3861}.
\bid{doi={10.1016/j.spa.2009.09.004}, issn={0304-4149}, mr={2552307}}
\end{barticle}
%
\bptok{imsref}%
\endbibitem

\bibitem{vH12}
%
\begin{barticle}[mr]
\bauthor{\bparticle{van} \bsnm{Handel},~\bfnm{Ramon}\binits{R.}}
(\byear{2012}).
\btitle{On the exchange of intersection and supremum of {$\sigma
$}-fields in filtering theory}.
\bjournal{Israel J. Math.}
\bvolume{192}
\bpages{763--784}.
\bid{doi={10.1007/s11856-012-0045-9}, issn={0021-2172}, mr={3009741}}
\end{barticle}
%
\bptok{imsref}%
\endbibitem

\bibitem{Vin77}
%
\begin{barticle}[mr]
\bauthor{\bsnm{Vinter},~\bfnm{Richard~B.}\binits{R.~B.}}
(\byear{1977}).
\btitle{Filter stability for stochastic evolution equations}.
\bjournal{SIAM J. Control Optim.}
\bvolume{15}
\bpages{465--485}.
\bid{issn={0363-0129}, mr={0497215}}
\end{barticle}
%
\bptok{imsref}%
\endbibitem

\bibitem{VR59}
%
\begin{barticle}[mr]
\bauthor{\bsnm{Volkonski{\u\i}},~\bfnm{V.~A.}\binits{V.~A.}} \AND
\bauthor{\bsnm{Rozanov},~\bfnm{Yu.~A.}\binits{Yu.~A.}}
(\byear{1959}).
\btitle{Some limit theorems for random functions. {I}}.
\bjournal{Theory Probab. Appl.}
\bvolume{4}
\bpages{178--197}.
\bid{mr={0121856}}
\end{barticle}
%
\bptok{imsref}%
\endbibitem

\bibitem{vW83}
%
\begin{barticle}[mr]
\bauthor{\bparticle{von} \bsnm{Weizs{\"a}cker},~\bfnm{Heinrich}\binits{H.}}
(\byear{1983}).
\btitle{Exchanging the order of taking suprema and countable
intersections of {$\sigma$}-algebras}.
\bjournal{Ann. Inst. H. Poincar\'e Sect. B (N.S.)}
\bvolume{19}
\bpages{91--100}.
\bid{issn={0020-2347}, mr={0699981}}
\end{barticle}
%
\bptok{imsref}%
\endbibitem

\bibitem{Wal82}
%
\begin{bbook}[mr]
\bauthor{\bsnm{Walters},~\bfnm{Peter}\binits{P.}}
(\byear{1982}).
\btitle{An Introduction to Ergodic Theory}.
\bseries{Graduate Texts in Mathematics}
\bvolume{79}.
\bpublisher{Springer},
\blocation{New York}.
\bid{mr={0648108}}
\end{bbook}
%
\bptok{imsref}%
\endbibitem

\bibitem{Yor77}
%
\begin{bincollection}[mr]
\bauthor{\bsnm{Yor},~\bfnm{Marc}\binits{M.}}
(\byear{1977}).
\btitle{Sur les th\'eories du filtrage et de la pr\'ediction}.
In \bbooktitle{S\'eminaire de {P}robabilit\'es, {XI} ({U}niv.
{S}trasbourg, {S}trasbourg, 1975/1976)}.
\bseries{Lecture Notes in Math.}
\bvolume{581}
\bpages{257--297}.
\bpublisher{Springer},
\blocation{Berlin}.
\bid{mr={0471060}}
\end{bincollection}
%
\bptok{imsref}%
\endbibitem

\end{thebibliography}
\end{document}